\numberwithin{equation}{section}
\newcounter{myenumi}
\newcommand{\itemref}[1]{\eqref{#1}}
\newcommand{\myfont}{\sffamily}
\newcommand{\myparagraph}[1]{\noindent\textbf{\myfont{#1}}}
\newtheoremstyle{mythmstyle}
  {\topsep}
  {\topsep}
  {\itshape}
  {}
  {\bfseries \myfont}
  {.}
  {.5em}
  {}
\newtheoremstyle{mydefstyle}
  {\topsep}
  {\topsep}
  {\normalfont}
  {}
  {\bfseries \myfont}
  {.}
  {.5em}
  {}
\theoremstyle{mythmstyle}       
\newtheorem{theorem}{Theorem}[section]
\newtheorem{proposition}[theorem]{Proposition}
\newtheorem{lemma}[theorem]{Lemma}
\newtheorem{corollary}[theorem]{Corollary}
\newcounter{intro}
\theoremstyle{mydefstyle}        
\newtheorem{definition}[theorem]{Definition}
\newtheorem{example}[theorem]{Example}
\newtheorem{remark}[theorem]{Remark}
\newtheorem{remarks}[theorem]{Remarks}
\newtheorem*{remark*}{Remark}
\let\expandafter\oldproof\csname\string\proof\endcsname
\let\oldendproof\endproof
\renewenvironment{proof}[1][\bfseries\myfont\proofname]{%
  \oldproof[\bfseries \myfont #1]%
}{\oldendproof}
\renewcommand\section{\@startsection{section}{1}%
  \z@{.7\linespacing\@plus\linespacing}{.5\linespacing}%
  {\Large\myfont\bfseries}}
\renewcommand\subsection{\@startsection{subsection}{2}%
  \z@{-.5\linespacing\@plus-.7\linespacing}{.5\linespacing}%
  {\large\myfont\bfseries}}
\renewcommand\subsubsection{\@startsection{subsubsection}{3}%
  \z@{.5\linespacing\@plus.7\linespacing}{-.5em}%
  {\myfont\bfseries}}
\renewenvironment{abstract}{%
  \ifx\maketitle\relax
    \ClassWarning{\@classname}{Abstract should precede
      \protect\maketitle\space in AMS document classes; reported}%
  \fi
  \global\setbox\abstractbox=\vtop \bgroup
    \normalfont\Small
    \list{}{\labelwidth\z@
      \leftmargin3pc \rightmargin\leftmargin
      \listparindent\normalparindent \itemindent\z@
      \parsep\z@ \@plus\p@
      
    }%
    \item[\hskip\labelsep
      \myfont\bfseries
    \abstractname.]%
}{%
  \endlist\egroup
  \ifx\@setabstract\relax \@setabstracta \fi
}
\renewcommand\contentsnamefont{\myfont\bfseries}
\renewcommand\@starttoc[2]{\begingroup
  \setTrue{#1}%
  \par\removelastskip\vskip\z@skip
  \@startsection{}\@M\z@{\linespacing\@plus\linespacing}%
    {.5\linespacing}{
      \contentsnamefont}{#2}%
  \ifx\contentsname#2%
  \else \addcontentsline{toc}{section}{#2}\fi
  \makeatletter
  \@input{\jobname.#1}%
  \if@filesw
    \@xp\newwrite\csname tf@#1\endcsname
    \immediate\@xp\openout\csname tf@#1\endcsname \jobname.#1\relax
  \fi
  \global\@nobreakfalse \endgroup
  \addvspace{32\p@\@plus14\p@}%
  \let\tableofcontents\relax
}
\renewcommand\@settitle{\begin{center}%
  \baselineskip14\p@\relax
    \LARGE
    \bfseries
    \myfont
  \@title
  \end{center}%
}
\renewcommand\@setauthors{%
  \begingroup
  \def\thanks{\protect\thanks@warning}%
  \trivlist
  \centering\footnotesize \@topsep30\p@\relax
  \advance\@topsep by -\baselineskip
  \item\relax
  \author@andify\authors
  \def\\{\protect\linebreak}%
  \large
  \myfont\bfseries\authors
  \ifx\@empty\contribs
  \else
    ,\penalty-3 \space \@setcontribs
    \@closetoccontribs
  \fi
  \endtrivlist
  \normalfont\myfont\@setaddresses
  \endgroup
}
\renewcommand\@setaddresses{\par
  \nobreak \begingroup
\footnotesize
  \def\author##1{\nobreak\addvspace\bigskipamount}%
  \def\\{\unskip, \ignorespaces}%
  \interlinepenalty\@M
  \def\address##1##2{\begingroup
    \par\addvspace\bigskipamount\indent
    \@ifnotempty{##1}{(\ignorespaces##1\unskip) }%
    {
      \ignorespaces##2}\par\endgroup}%
  \def\curraddr##1##2{\begingroup
    \@ifnotempty{##2}{\nobreak\indent\curraddrname
      \@ifnotempty{##1}{, \ignorespaces##1\unskip}\/:\space
      ##2\par}\endgroup}%
  \def\email##1##2{\begingroup
    \@ifnotempty{##2}{\nobreak\indent\emailaddrname
      \@ifnotempty{##1}{, \ignorespaces##1\unskip}\/:\space
      \ttfamily##2\par}\endgroup}%
  \def\urladdr##1##2{\begingroup
    \def~{\char`\~}%
    \@ifnotempty{##2}{\nobreak\indent\urladdrname
      \@ifnotempty{##1}{, \ignorespaces##1\unskip}\/:\space
      \ttfamily##2\par}\endgroup}%
  \addresses
  \endgroup
}
\renewcommand\enddoc@text{\ifx\@empty\@translators \else\@settranslators\fi
}
\renewcommand\@secnumfont{\myfont\bfseries} 
\renewcommand\maketitle{\par
  \@topnum\z@ 
  \@setcopyright
  \thispagestyle{firstpage}
  \ifx\@empty\shortauthors \let\shortauthors\shorttitle
  \else \andify\shortauthors
  \fi
  \@maketitle@hook
  \begingroup
  \@maketitle
  \toks@\@xp{\shortauthors}\@temptokena\@xp{\shorttitle}%
  \toks4{\def\\{ \ignorespaces}}
  \edef\@tempa{%
    \@nx\markboth{\the\toks4
      \@nx\MakeUppercase{\the\toks@}}{\the\@temptokena}}%
  \@tempa
  \endgroup
  \c@footnote\z@
  \@cleartopmattertags
}
\newcommand{\Sec}[1]{Section~\ref{sec:#1}}
\newcommand{\Secs}[2]{Sections~\ref{sec:#1} and~\ref{sec:#2}}
\newcommand{\App}[1]{Appendix~\ref{app:#1}}
\newcommand{\Subsec}[1]{Subsection~\ref{ssec:#1}}
\newcommand{\Subsecs}[2]{Subsections~\ref{ssec:#1} and~\ref{ssec:#2}}
\newcommand{\Fig}[1]{Figure~\ref{fig:#1}}
\newcommand{\Thm}[1]{Theorem~\ref{thm:#1}}
\newcommand{\Thms}[2]{Theorems~\ref{thm:#1} and~\ref{thm:#2}}
\newcommand{\Ex}[1]{Example~\ref{ex:#1}}
\newcommand{\Lem}[1]{Lemma~\ref{lem:#1}}
\newcommand{\Lems}[2]{Lemmas~\ref{lem:#1} and~\ref{lem:#2}}
\newcommand{\Cor}[1]{Corollary~\ref{cor:#1}}
\newcommand{\Prp}[1]{Proposition~\ref{prp:#1}}
\newcommand{\Prps}[2]{Propositions~\ref{prp:#1} and~\ref{prp:#2}}
\newcommand{\Prpenum}[2]{Proposition~\ref{prp:#1}~(\ref{#2})}
\newcommand{\Rem}[1]{Remark~\ref{rem:#1}}
\newcommand{\Rems}[2]{Remarks~\ref{rem:#1} and~\ref{rem:#2}}
\newcommand{\Remenum}[2]{Remark~\ref{rem:#1}~(\ref{#2})}
\newcommand{\Def}[1]{Definition~\ref{def:#1}}
\newcommand{\Defenum}[2]{Definition~\ref{def:#1}~(\ref{#2})}
\newcommand{\abs}[2][{}]{\lvert{#2}\rvert_{{#1}}}    
\newcommand{\abssqr}[2][{}]{\lvert{#2}\rvert^2_{#1}} 
\newcommand{\bigabs}[2][{}]{\bigl\lvert{#2}\bigr\rvert_{#1}}     
\newcommand{\bigabssqr}[2][{}]{\bigl\lvert{#2}\bigr\rvert^2_{#1}}
\newcommand{\Bigabs}[2][{}]{\Bigl\lvert{#2}\Bigr\rvert_{#1}}     
\newcommand{\Bigabssqr}[2][{}]{\Bigl\lvert{#2}\Bigr\rvert^2_{#1}}
\newcommand{\normsymb}{\|}
\newcommand{\bignormsymb}[1]{#1\|}
\newcommand{\norm}[2][{}]{\normsymb{#2}\normsymb_{{#1}}}    
\newcommand{\normsqr}[2][{}]{\normsymb{#2}\normsymb^2_{#1}} 
\newcommand{\bignormsqr}[2][{}]{\bignormsymb{\bigl}{#2}%
                                \bignormsymb{\bigr}^2_{#1}}
\newcommand{\iprod}[3][{}]{\langle{#2},{#3}\rangle_{#1}}  
\newcommand{\set}[2]{\{ \, #1 \, | \, #2 \, \} }      
\newcommand{\bigset}[2]{\bigl\{ \, #1 \, \bigl|\bigr. \, #2 \, \bigr\} }
\newcommand{\Bigset}[2]{\Bigl\{ \, #1 \, \Bigl|\Bigr. \, #2 \, \Bigr\} }
\DeclareMathOperator*{\bigdcup}{\mathaccent\cdot{\bigcup}}
\DeclareMathOperator*{\dcup}   {\mathaccent\cdot\cup}
\newcommand{\map}[3]{ #1 \colon #2 \longrightarrow #3}    
\newcommand{\bd}  {\partial}          
\newcommand{\intr}[1]{\ring{{#1}}}    
\newcommand{\restr}[1]{{\restriction}_{#1}} 
\def\Xint#1{\mathchoice
   {\XXint\displaystyle\textstyle{#1}}%
   {\XXint\textstyle\scriptstyle{#1}}%
   {\XXint\scriptstyle\scriptscriptstyle{#1}}%
   {\XXint\scriptscriptstyle\scriptscriptstyle{#1}}%
   \!\int}
\def\XXint#1#2#3{{\setbox0=\hbox{$#1{#2#3}{\int}$}
     \vcenter{\hbox{$#2#3$}}\kern-.5\wd0}}
\def\XXsum#1#2#3{{\setbox0=\hbox{$#1{#2#3}{\int}$}
     \vcenter{\hbox{$#2#3$}}\kern-.60\wd0}}
\newcommand{\dashint}{\Xint-}   
\newcommand{\avint}{{\textstyle\dashint}}   
\newcommand{\card}[1]{\lvert#1\rvert}   
\newcommand{\dd}    {\, \mathrm d}    
\DeclareMathOperator{\dom}    {dom}
\DeclareMathOperator{\id}     {id}   
\DeclareMathOperator{\supp}   {supp}
\DeclareMathOperator{\vol}    {vol}
\newcommand{\specsymb} {\sigma} 
\newcommand{\spec}[2][{}]   {\specsymb_{\mathrm{#1}}(#2)}
\newcommand{\eps}{\varepsilon} 
\newcommand{\Eps}{\mathrm E} 
\renewcommand{\phi}{\varphi}   
\renewcommand{\rho}{\varrho}   
\newcommand{\conj}[1]{\overline {#1}} 
\newcommand{\R}{\mathbb{R}} 
\newcommand{\C}{\mathbb{C}} 
\newcommand{\N}{\mathbb{N}} 
\newcommand{\1}{\mathbbm 1}                    
\newcommand{\e}{\mathrm e}  
\newcommand{\wt}{\widetilde}           
\newcommand{\leCS}{\stackrel{\mathrm{CS}}\le}
\newcommand{\leCY}{\stackrel{\mathrm{CY}}\le}
\newcommand{\HS}{\mathscr H}           
\newcommand{\Sobsymb} {\mathsf H} 
\newcommand{\Contsymb} {\mathsf C}     
\newcommand{\Lsymb}    {\mathsf L}     
\newcommand{\lsymb}    {\ell}          
\newcommand{\Sobspace}[1][1]{\Sobsymb^{#1}} 
\newcommand{\Contspace}[1][{}]{\Contsymb^{#1}}     
\newcommand{\Lpspace}[1][p]    {\Lsymb_{#1}}     
\newcommand{\lpspace}[1][p]    {\lsymb_{#1}}     
\newcommand{\Lsqrspace}    {\Lpspace[2]}     
\newcommand{\lsqrspace}    {\lpspace[2]}          
\newcommand{\Cont}[2][{}]{\Contspace[#1]({#2})}
\newcommand{\Lsqr}[2][{}]{\Lsqrspace^{#1}({#2})} 
\newcommand{\lsqr}[2][{}]{\lsqrspace^{#1}({#2})}   
\newcommand{\Sob}[2][1]{\Sobspace [#1]({#2})}         
\newcommand{\Err}{\mathrm O}
\newcommand{\quadtext}[1]{\quad\text{#1}\quad}
\newcommand{\qquadtext}[1]{\qquad\text{#1}\qquad}
\newcommand{\energy}{\mathcal E}    
\newcommand{\pcf}{pcf\xspace}
\newcommand{\deltaA}{\delta_{\mathrm a}}
\newcommand{\deltaB}{\delta_{\mathrm b}}
\newcommand{\deltaC}{\delta_{\mathrm c}}
\newcommand{\deltaD}{\delta_{\mathrm d}}
\newcommand{\wtdeltaC}{\delta_{\mathrm c}}
\newcommand{\wtdeltaD}{\delta_{\mathrm d}}
\newcommand{\MetGr}{M}
\newcommand{\Mfd}{X}
\newcommand{\Xv}[1][v]{{X_{#1}}} 
\newcommand{\intrXv}[1][v]{{\intr X_{#1}}} 
\newcommand{\cXv}[1][v]{{\check X_{#1}}} 
\newcommand{\Xe}[1][e]{{X_{#1}}} %
\newcommand{\Xve}[1][v,e]{{X_{#1}}} %
\newcommand{\Mv}[1][v]{{\MetGr_{#1}}} 
\newcommand{\intrMv}[1][v]{{\intr \MetGr_{#1}}} 
\newcommand{\cMv}[1][v]{{\check \MetGr_{#1}}} 
\newcommand{\Me}[1][e]{{\MetGr_{#1}}} %
\newcommand{\muVar}{\overline \mu} 
\newcommand{\gammaVar}{\overline \gamma} 
\newcommand{\ellVar}{\overline \ell} 
\newcommand{\vxeps}{{v,\eps}}
\newcommand{\edeps}{{e,\eps}}
\DeclareMathOperator{\dist}{dist}
\begin{document}

\title
{Approximation
  of fractals by manifolds and other graph-like spaces}

\author{Olaf Post}
\address{Fachbereich 4 -- Mathematik,
  Universit\"at Trier,
  54286 Trier, Germany}
\email{olaf.post@uni-trier.de}

\author{Jan Simmer}%
\address{Fachbereich 4 -- Mathematik,
  Universit\"at Trier,
  54286 Trier, Germany}
\email{simmer@uni-trier.de}

\date{\today, \thistime,  \emph{File:} \texttt{\jobname.tex}} 

\begin{abstract}
  We define a distance between energy forms on a graph-like metric
  measure space and on a discrete weighted graph using the concept of
  quasi-unitary equivalence.  We apply this result to metric graphs and
  graph-like manifolds (e.g.\ a small neighbourhood of an embedded
  metric graph) as metric measure spaces with energy forms associated
  with canonical Laplacians, e.g., the Kirchhoff Laplacian on a metric
  graph resp.\ the (Neumann) Laplacian on a manifold (with boundary)
  and express the distance of the associated energy forms in terms of
  geometric quantities.

  We showed in~\cite{post-simmer:pre17a} 
  that the approximating sequence of energy forms on weighted graphs
  used in the definition of an energy form on a \pcf fractal converge
  in the sense that the distance in the quasi-unitary equivalence
  tends to $0$.  By transitivity of
  quasi-unitary equivalence, we conclude that we can approximate the
  energy form on a \pcf fractal by a sequence of energy forms on
  metric graphs and graph-like manifolds.  In particular, we show that
  there is a sequence of domains converging to a \pcf fractal such
  that the corresponding (Neumann) energy forms converge to the
  fractal energy form.
  
  Quasi-unitary equivalence of energy forms implies a norm estimate
  for the difference of the resolvents of the associated Laplace
  operators.  As a consequence, suitable functions of the Laplacians
  are close resp.\ converge as well in operator norm, e.g.\ the
  corresponding heat operators and spectral projections.  The same is
  true for the spectra and the eigenfunctions in all above examples.
\end{abstract}


\maketitle



%
\section{Introduction}
\label{sec:intro}
%
The aim of this article is to approximate 
energy forms on metric spaces by energy forms on discrete graphs.  An
energy form here is a closed non-negative and densely defined
quadratic form in the corresponding $\Lsqrspace$-space.  We achieve
the approximation by defining a sort of ``distance'' between two
energy forms acting on different Hilbert spaces with the notion of
quasi-unitary equivalence introduced in~\cite{post:06} (see
also~\cite{post:12}).  We consider in this article two main examples,
namely we compare discrete weighted graphs with metric graphs and
graph-like manifolds.  We then apply these abstract results to \pcf
fractals where we prove that a suitable \pcf fractal, and its
corresponding sequence of metric graphs and graph-like manifolds
together with their energy forms are close in the sense of
quasi-unitary equivalence.

\subsection{Main results}
Let $G=(V,E)$ be a discrete graph with vertex and edge weight
functions $\map \mu V {(0,\infty)}$ and $\map \gamma E {(0,\infty)}$
respectively.  We consider the weighted Hilbert space $\lsqr{V,\mu}$
with norm given by $\normsqr[\lsqr{V,\mu}] f:=\sum_{v \in V}
\abssqr{f(v)}\mu(v)<\infty$ and the canonical non-negative quadratic
form given by
\begin{equation*}
  \energy(f) := \sum_{e=\{v,v'\} \in E} \gamma_e \abssqr{f(v)-f(v')}.
\end{equation*}
We assume that this form is bounded (which is equivalent to the fact
that the relative weight defined in~\eqref{eq:rel.weight} is bounded).
Moreover, let $X$ be a metric space with Borel measure $\nu$, together
with a non-negative and closed quadratic form $\energy_X$.

We embed the graph $G$ into a metric measure space $X$ with measure
$\nu$ and energy form $\energy_X$ via a partition of unity $\map
{\psi_v} X {[0,1]}$ ($v \in V$) such that $X_v:=\supp \psi_v$ is
compact and connected and such that $\intr X_v \cap \intr X_{v'} \ne \emptyset$
iff there is an edge between $v$ and $v' \in V$.  The
\emph{identification operators} needed in order to express the notion
of \emph{quasi-unitary equivalence} (see \Def{quasi-uni}) are given by
\begin{align*}
  \map J 
  {&\lsqr{V,\mu}}
  {\Lsqr{X,\nu}}, 
  &
  Jf &:= c \sum_{v \in V} f(v) \psi_v \\
  \map {J'}   {&\Lsqr{X,\nu}}{\lsqr{V,\mu}},
  &
  (J'u)(v) 
  &:= \frac 1 c \cdot \frac 1{\nu(v)} \int_X u \psi_v \dd \nu,
\end{align*}
where $\nu(v):=\int_X \psi_v \dd \nu$ and where $c>0$ is the so-called
\emph{isometric rescaling factor}.  We assume that $\nu(v)/\mu(v)$ is
equal or close to the constant $c^{-2}$.  Typically, we choose
$\psi_v$ to be \emph{harmonic} (in a suitable sense, specified later
in the examples).  We also need to relate the energy forms and
therefore use also identification operators acting on the form
domains, see the proof of \Thm{q-u-e} for details.

Our first main result is the following:
\begin{theorem}[see \Thm{q-u-e}]
  \label{thm:main1}
  Let the discrete weighted graph $(G,\mu,\gamma)$ be uniformly
  embedded into the metric measure space $(X,\nu,\energy_X)$ (see
  \Def{graph.emb.in.met.space}). Then, $\energy$ and $\wt\energy:=\tau
  \energy_X$ are $\delta$-quasi-unitarily equivalent, where $\delta$
  can be expressed entirely in quantities of the weighted graph and
  the metric measure space.
\end{theorem}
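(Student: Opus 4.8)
The plan is to unwind \Def{quasi-uni} and verify its conditions one at a time for the given $J$, $J'$, together with suitable identification operators $J^1,J^{1\prime}$ on the form domains, tracking an explicit error bound $\delta$ at every step. Since $\energy$ is assumed bounded, its form domain is all of $\lsqr{V,\mu}$ with form norm $\normsqr[1]{f}\defeq\normsqr[\lsqr{V,\mu}]{f}+\energy(f)$, while the form domain of $\wt\energy=\tau\energy_X$ is $\dom\energy_X$ with form norm $\normsqr[1]{u}\defeq\normsqr[\Lsqr{X,\nu}]{u}+\wt\energy(u)$. The $\delta$ produced this way will only involve: the relative weight of $(G,\mu,\gamma)$ (bounded, so that $\energy(f)$ controls the differences $f(v)-f(v')$); the deviations of $c^{-2}$ from the quotients $\nu(v)/\mu(v)$; the overlap integrals $\int_X\psi_v\psi_{v'}\dd\nu$ compared with $\nu(v)$ and with $\gamma_{\{v,v'\}}$; and the local Poincaré (and trace/Friedrichs) constants of the pieces $X_v$ and of the overlaps $X_v\cap X_{v'}$ — all of which, by definition of a uniform embedding, are bounded by data of the graph and of the metric measure space.

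First I would fix the form-level operators. Since $Jf=c\sum_v f(v)\psi_v$ is a locally finite combination of the $\psi_v\in\dom\energy_X$, it lies in $\dom\energy_X$, so I take $J^1\defeq J$; likewise $J^{1\prime}\defeq J'$, because $(J'u)(v)=\frac1{c\nu(v)}\int_X u\psi_v\dd\nu$ always defines an element of $\lsqr{V,\mu}=\dom\energy$ (use $\lvert(J'u)(v)\rvert^2\nu(v)\le c^{-2}\int_{X_v}\lvert u\rvert^2\dd\nu$ and sum, absorbing the overlap). With these choices the compatibility bounds $\norm{Jf-J^1f}$ and $\norm{J'u-J^{1\prime}u}$ vanish, and the remaining Hilbert-space conditions are partition-of-unity bookkeeping. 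On a piece $X_v$, writing $\psi_v=1-\sum_{v'\sim v}\psi_{v'}$ there gives $Jf=cf(v)+c\sum_{v'\sim v}(f(v')-f(v))\psi_{v'}$ on $X_v$, so $Jf$ differs from the locally constant value $cf(v)$ by a term whose square-norm over $X_v$ is bounded by a geometric constant times $\sum_{v'\sim v}\abssqr{f(v)-f(v')}$; summing over $v$ and compensating the multiple counting of the overlaps by the same overlap integrals yields the quasi-isometry estimate $\bigabs{\normsqr[\Lsqr{X,\nu}]{Jf}-\normsqr[\lsqr{V,\mu}]{f}}\le\delta\,\normsqr[1]{f}$, whose diagonal part is precisely where $c^{-2}\approx\nu(v)/\mu(v)$ is used. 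The near-adjointness $\bigabs{\iprod[\Lsqr{X,\nu}]{Jf}{u}-\iprod[\lsqr{V,\mu}]{f}{J'u}}\le\delta\,\norm[\lsqr{V,\mu}]{f}\,\norm[\Lsqr{X,\nu}]{u}$ is immediate: the difference equals $\sum_v f(v)\conj{\bigl(\int_X u\psi_v\dd\nu\bigr)}\bigl(c-\tfrac1c\tfrac{\mu(v)}{\nu(v)}\bigr)$ and $\bigabs{c-\tfrac1c\mu(v)/\nu(v)}=\tfrac1c\bigabs{c^2-\mu(v)/\nu(v)}$ is small, after which Cauchy--Schwarz plus the overlap bound finishes it. Finally a direct computation gives the clean identity $f(v)-(J'Jf)(v)=\tfrac1{\nu(v)}\sum_{v'\sim v}(f(v)-f(v'))\int_X\psi_v\psi_{v'}\dd\nu$, so $\norm[\lsqr{V,\mu}]{f-J'Jf}\le\delta\,\norm[1]{f}$ via the relative weight; the companion estimate $\norm[\Lsqr{X,\nu}]{u-JJ'u}\le\delta\,\norm[1]{u}$ is the first point where a genuine averaging of $u$ over the pieces enters and the Poincaré/Friedrichs constants of the $X_v$ are used.

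The crux is the energy comparison $\bigabs{\wt\energy(J^1f,u)-\energy(f,J^{1\prime}u)}\le\delta\,\norm[1]{f}\,\norm[1]{u}$ for $f\in\lsqr{V,\mu}$, $u\in\dom\energy_X$, and here harmonicity of the $\psi_v$ (harmonic in $\intr X_v$, in the sense fixed in the examples) does the work. Sesquilinearity gives $\wt\energy(Jf,u)=\tau c\sum_v f(v)\,\energy_X(\psi_v,u)$, while regrouping the graph form by vertices gives $\energy(f,J'u)=\sum_v f(v)\sum_{v'\sim v}\gamma_{\{v,v'\}}\conj{\bigl((J'u)(v)-(J'u)(v')\bigr)}$; so it suffices to compare, for each fixed $v$, the linear functionals $\tau c\,\energy_X(\psi_v,\cdot)$ and $\sum_{v'\sim v}\gamma_{\{v,v'\}}\conj{\bigl((J'\cdot)(v)-(J'\cdot)(v')\bigr)}$ of $u$, and then recombine by Cauchy--Schwarz. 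Because $\sum_v\psi_v=1$, the functional $\energy_X(\psi_v,\cdot)$ annihilates constants and is supported on the overlaps $X_v\cap X_{v'}$, $v'\sim v$; on each such overlap harmonicity lets one subtract suitable local constants from $u$ and, by the Poincaré inequality on $X_v\cap X_{v'}$, identify $\energy_X(\psi_v,u)$ up to a controlled error with a multiple of $\conj{\bigl((J'u)(v')-(J'u)(v)\bigr)}$ (recall that $c(J'u)(w)$ is the $\psi_w$-weighted mean of $u$). The constant $\tau$ is then not free: matching these leading terms forces $\tau$ so that $\tau c^2$ times the energy of $\psi_v$ across the overlap $X_v\cap X_{v'}$ equals $\gamma_{\{v,v'\}}$ to leading order, and a uniform embedding is exactly a setup in which one global $\tau$ does this for every edge up to the error being tracked. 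Collecting the Poincaré errors on all overlaps together with the quotient- and overlap-errors of the previous paragraph produces the announced $\delta$, expressed entirely through the graph and the metric measure space.

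The main obstacle is this last energy estimate. Two things must be controlled uniformly and then packaged into one formula: the local Poincaré (and trace/Friedrichs) inequalities on the pieces $X_v$ and the overlaps $X_v\cap X_{v'}$ have to hold with constants expressible in the given geometric data, and the many individual error contributions — diagonal deviations, overlap integrals, Poincaré constants, the relative weight — must be assembled into the single quantity $\delta$ asserted in the statement. This uniform packaging is precisely the content of \Def{graph.emb.in.met.space}; once it is in force, each step above is a bounded, if lengthy, computation, and one reads off $\delta\to0$ from the constituent quantities tending to $0$.
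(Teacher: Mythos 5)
Your first two blocks (boundedness of $J$, $J'$, near-adjointness via the deviation of $c^2\nu(v)/\mu(v)$ from $1$, the identity for $f-J'Jf$, and the estimate of $u-JJ'u$ via the weighted second eigenvalue) follow the paper's \Prp{q-u.b} closely and are fine. The genuine gap is in the energy comparison \eqref{eq:quasi-uni.d}. You claim it can be extracted from \Def{graph.emb.in.met.space} alone, by using harmonicity of the $\psi_v$, Poincar\'e inequalities on the overlaps $X_v\cap X_{v'}$, and a matching of $\tau c^2$ times the energy of $\psi_v$ across each overlap with $\gamma_{\{v,v'\}}$ ``to leading order''. None of these is part of the uniform-embedding hypothesis: that definition only fixes the partition of unity, the decomposition of the energy, the weighted eigenvalue bound $\lambda_2>0$ and the (almost) compatibility of the vertex weights. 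It says nothing about harmonicity of $\psi_v$ (the paper mentions this only as the typical choice \emph{in the examples}), nothing about Poincar\'e constants of the overlaps, and nothing relating $\energy_X(\psi_v,\cdot)$ to the edge weights $\gamma_e$ or to $\tau$. So your sentence ``a uniform embedding is exactly a setup in which one global $\tau$ does this for every edge up to the error being tracked'' assumes precisely what has to be assumed separately. The paper resolves this by adding the explicit hypotheses of \Thm{q-u-e}: the existence of functionals $\Gamma_v$ and $\Gamma_{v,e}$ satisfying \eqref{eq:met.q.u.c}, the \emph{exact} identity \eqref{eq:met.q.u.d}, and the smallness estimate \eqref{eq:met.q.u.d'}; the resulting $\delta$ in \eqref{eq:q-u-e.delta} contains the extra quantities $\wtdeltaC(v)$, $\wtdeltaD(v)$, which are then verified case by case (metric graphs in \Lem{av.mg}, manifolds in \Lems{diff.av.mfd}{av.mfd}). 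Theorem~\ref{thm:main1} is an informal restatement of that theorem, so a proof must either carry these hypotheses or verify them; your argument does neither.

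A second, related divergence: you take $J^{\prime 1}:=J'$, i.e.\ the form-level discretisation is the weighted average $\dashint_X u\,\dd\nu_v$. The paper deliberately decouples $J^{\prime 1}$ from $J'$ by setting $(J^{\prime 1}u)(v)=\Gamma_v u/c$ (typically a point evaluation or an average over the core $\cXv$), and \Rem{q-u-e} explains why your choice, i.e.\ $\Gamma_v u=\dashint_X u\,\dd\nu_v$, is bad: the $\Gamma_{v,e}$ then forced by \eqref{eq:met.q.u.d} makes the error in \eqref{eq:met.q.u.d'} carry the \emph{large} factor $\gamma_e$, so the resulting $\delta$ is not small in the intended applications (where $\mu_\infty/\gamma_0\to0$). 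So even granting your extra Poincar\'e-type assumptions on the overlaps, the route through $J^{\prime 1}=J'$ would produce an error that does not tend to zero in the fractal/metric-graph/manifold approximations, defeating the purpose of the theorem. To repair the proposal you should introduce the maps $\Gamma_v,\Gamma_{v,e}$ and the estimates \eqref{eq:met.q.u.c}--\eqref{eq:met.q.u.d'} as standing hypotheses (or verify them from concrete structure in each example), and run the energy comparison through the exact identity \eqref{eq:met.q.u.d} rather than through an approximate Poincar\'e argument on the overlaps.
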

Here, $\tau$ is a suitable energy rescaling factor.  It will give us
some freedom in choosing a suitable length or weight scaling in our
examples.  The precise definition of $\delta$ is given in \Thm{q-u-e},
and its meaning is explained in \Rems{q-u.b}{q-u-e}.

\subsubsection*{Quasi-unitary equivalence}
The notion of $\delta$-quasi-unitary equivalence (reviewed briefly in
\App{norm.convergence}) is a generalisation of two concepts (see also
\Rem{quasi-uni}): if $\delta=0$ then $0$-quasi-unitary equivalence is
just ordinary \emph{unitary equivalence} and if \emph{both Hilbert
  spaces are the same} and if $J=J'=\id$ then it implies that the
operator norm of the difference of resolvents is bounded by $\delta$.
One condition to check for $\delta$-quasi-unitary equivalence is that
\begin{gather*}
  (f-J'Jf)(v)
  =\frac 1{\nu(v)}\sum_{v' \sim v}
     \bigl(f(v)-f(v')\bigr)\iprod{\psi_v}{\psi_{v'}}
  \quad\text{and}\\
  u-JJ'u
  =\sum_{v \in V} \Bigl(u-\frac 1{\nu(v)} \int_X u \psi_v \dd \nu\Bigr) \psi_v
\end{gather*}
are small in suitable norms.  The first one can be estimated by
\begin{equation*}
  \normsqr[\lsqr{V,\mu}]{f-J'Jf} 
  \le \frac {2\mu_\infty}{\gamma_0} \energy(f),
  \quadtext{where}
  \mu_\infty:= \sup_{v \in V} \mu(v)
  \quadtext{and}
  \gamma_0:= \inf_{e \in E} \gamma_e,
\end{equation*}
and $2\mu_\infty/\gamma_0$ is one of the terms appearing in $\delta$.
Note that $\mu_\infty/\gamma_0$ is related to the maximal inverse
relative weight, i.e., if we want to approximate an unbounded form on
$X$ (as in Case~\ref{app.a} below), the minimum of the relative weight
should tend to $\infty$.  The expression for $u-JJ'u$ contains the
orthogonal projection onto the first (constant) eigenfunction of a
weighted eigenvalue problem on $\Xv$, and hence can be estimated by
the inverse of the second (weighted) eigenvalue
$\lambda_2(\Xv,\psi_v)$ times the energy form restricted to $\Xv$ (see
\Lem{2nd.ev} for details).  Since $\lambda_2(\Xv,\psi_v)$ is bounded
from below by a suitable isoperimetric or Cheeger constant, we need
this isoperimetric constant to be uniformly large; this means that the
sets $\Xv$ are ``well connected''.

From the abstract theory of quasi-unitary equivalence of energy forms
we deduce the following (for more consequences we also refer
to~\cite[Sec.~3]{khrabustovskyi-post:pre17}):
\begin{theorem}[{\cite[Ch.~4]{post:12}}]
  Assume that $\energy$ and $\wt \energy$ are $\delta$-quasi-unitarily
  equivalent, then the associated opertors $\Delta\ge 0$ and $\wt
  \Delta\ge 0$ fulfil the following:
  \begin{align*}
    \norm{\eta(\Delta) - J^*\eta(\wt \Delta)J} 
    &\le C_\eta \delta, &
    \norm{\eta(\wt \Delta) - J\eta(\Delta)J^*} 
    &\le C'_\eta \delta,\\
    \dist\Bigl(\frac1{1+\spec \Delta},\frac1{1+\spec {\wt \Delta}}\Bigr)
    &\le \psi(\delta),&
    \bigabs{\lambda_k(\Delta)-\lambda_k(\wt \Delta)}
    &\le C_k \delta,
  \end{align*}
  where $\eta$ is a suitable function continuous in a neighbourhood of
  $\spec \Delta$, e.g., $\eta_z(\lambda)=(\lambda-z)^{-1}$ (resolvent
  in $z$), $\eta_t(\lambda)=\e^{-t\lambda}$ (heat operator) or
  $\phi=\1_I$ with $\bd I \cap \spec \Delta=\emptyset$ (spectral
  projection).  Moreover, $C_\eta$ and $C'_\eta$ depend only on
  $\eta$, $\psi(\delta) \to 0$ as $\delta \to 0$ and $\dist$ denotes
  the Hausdorff distance.  The last statement refers to the $k$-th
  eigenvalue of $\Delta$ resp.\ $\wt \Delta$ (counted with respect to
  multiplicity), and $C_k$ depends only on upper bounds of
  $\lambda_k(\Delta)$ and $\lambda_k(\wt \Delta)$.
\end{theorem}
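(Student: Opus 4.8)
The plan is to reconstruct the argument of \cite[Ch.~4]{post:12}; it proceeds in four stages, bootstrapping from the resolvents to arbitrary functions of the operators and then to the spectra and the eigenvalues.

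First I would unwind the definition of $\delta$-quasi-unitary equivalence (\Def{quasi-uni}): apart from the identification operators $J$, $J'$ on the $\Lsqrspace$-spaces one is given operators $J^1$, ${J'}^1$ acting on the form domains, and the defects $1-J'J$, $1-JJ'$, $Jf-J^1f$, $J'u-{J'}^1u$ and the sesquilinear form $(f,u)\mapsto\wt\energy(J^1 f,u)-\energy(f,{J'}^1 u)$ are each controlled by $\delta$ times the appropriate form norms. From these I would derive the basic resolvent estimate: with $R:=(\Delta+1)^{-1}$ and $\wt R:=(\wt\Delta+1)^{-1}$, one rewrites $R-J'\wt R J$ as $R$ composed with the form-compatibility defect composed with $\wt R$, plus terms built from the $\Lsqrspace$-defects applied to $R$ and $\wt R$; since $R^{1/2}$ and $\wt R^{1/2}$ map boundedly into the respective form domains, this gives $\norm{R-J'\wt R J}\le C\delta$ and, passing to adjoints (using that $J'$ and $J^*$ agree up to a $\delta$-defect), the stated inequalities $\norm{R-J^*\wt R J}\le C\delta$ and $\norm{\wt R-JRJ^*}\le C\delta$ with an absolute constant $C$. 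This is the content recalled in \App{norm.convergence}.

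Next I would pass from the resolvents to a function $\eta$ continuous near $\spec{\Delta}$. Both $R$ and $\wt R$ are bounded, self-adjoint and non-negative with spectrum in $(0,1]$, and $\eta(\Delta)=g(R)$ for $g(r):=\eta(r^{-1}-1)$, which is continuous near $\spec{R}$ and, in all the cases at hand, extends continuously to $0$ (for the resolvents $\eta_z$ and the heat operators $\eta_t$ because they vanish at $+\infty$; for a spectral projection $\1_I$ because it is eventually constant). Approximating $g$ uniformly on $\spec{R}\cup\spec{\wt R}$ by a polynomial $p$ with error $\eps$ and telescoping $\norm{p(R)-J^*p(\wt R)J}$ over the powers of the resolvent --- each factor contributing $C\delta$ by the first step --- I obtain $\norm{\eta(\Delta)-J^*\eta(\wt\Delta)J}\le 2\eps+C(p)\,\delta+C\delta$, and choosing $\eps$ of the order of $\delta$ gives the bound $C_\eta\delta$; the conjugate estimate is symmetric. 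For $\eta=\1_I$ with $\bd I\cap\spec{\Delta}=\emptyset$ one first invokes the spectral estimate below so that $\bd I$ also misses $\spec{\wt\Delta}$ once $\delta$ is small, and then replaces $\1_I$ by a continuous function agreeing with it on a neighbourhood of $\spec{\Delta}\cup\spec{\wt\Delta}$; this replacement is invisible in the functional calculus of both operators. This establishes the two operator-norm estimates, the heat operators and spectral projections being the special cases $\eta=\eta_t$ and $\eta=\1_I$.

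Finally, the spectral and eigenvalue statements. For the Hausdorff distance I would use $\norm{R-J'\wt R J}\le C\delta$ together with the near-isometry of $J^1$: given $\mu\in\spec{R}$ and an almost-eigenvector $f$, the renormalised vector $J^1 f$ is an almost-eigenvector of $\wt R$ at $\mu$ up to an error $\psi(\delta)$ with $\psi(\delta)\to0$ as $\delta\to0$, and symmetrically; hence $\dist\bigl(\spec{R},\spec{\wt R}\bigr)\le\psi(\delta)$, which is exactly $\dist\bigl((1+\spec{\Delta})^{-1},(1+\spec{\wt\Delta})^{-1}\bigr)\le\psi(\delta)$. For the $k$-th eigenvalue I would apply the min--max principle to the quadratic forms: a $k$-dimensional test space $L\subset\dom\energy$ nearly attaining $\lambda_k(\Delta)$ is carried by $J^1$ to a $k$-dimensional subspace $J^1 L\subset\dom\wt\energy$ (injectivity of $J^1$ on $L$ follows from smallness of $1-J'J$ on $L$), on which both the $\Lsqrspace$-norm and the energy are distorted only by a factor $1+\Err(\delta)$, the implied constant controlled by the Rayleigh-quotient bound $\lambda_k(\Delta)$ on $L$; this gives $\lambda_k(\wt\Delta)\le\lambda_k(\Delta)+C_k\delta$, and exchanging the two forms via ${J'}^1$ yields the reverse inequality with $C_k$ depending only on upper bounds for $\lambda_k(\Delta)$ and $\lambda_k(\wt\Delta)$. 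The genuinely delicate point throughout is the bookkeeping forced by $J$ not being unitary: each defect operator must be propagated through every functional-calculus step with uniform control of the constants, the form-level maps $J^1$, ${J'}^1$ (not $J$, $J'$) must be used wherever energies are compared, and the spectral estimate has to be proved before --- and then fed back into --- the statement on spectral projections. Once the resolvent estimate is in place the remaining steps are standard perturbation theory; the full argument is carried out in \cite[Ch.~4]{post:12}, with further consequences in \cite[Sec.~3]{khrabustovskyi-post:pre17}.
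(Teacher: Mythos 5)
The paper itself offers no proof of this theorem --- it is quoted verbatim from \cite[Ch.~4]{post:12} --- and your outline follows exactly the route of that reference: the resolvent estimate $\norm{R-J'\wt RJ}\le C\delta$ extracted from the four defining conditions, the passage to $\eta(\Delta)=g(R)$ with $g(r)=\eta(r^{-1}-1)$ by polynomial approximation on the resolvent spectra (with $\1_I$ reduced to a continuous $\eta$ once the spectral estimate puts $\bd I$ outside $\spec{\wt\Delta}$), and the eigenvalue bounds by min--max with test spaces transported by $J^1$ and $J^{\prime1}$. Those parts, including the constants $C_\eta$, $C_k$, are sound.

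One step would fail as literally written, namely the Hausdorff-distance argument. Renormalising $J^1f$ (or $Jf$) requires a lower bound on $\norm{Jf}$, but quasi-unitarity only gives $\norm{f-J'Jf}\le\delta\norm[1]f$, and for an almost-eigenvector of $R$ at $\mu\in\spec R$ one has $\norm[1]f\approx\mu^{-1/2}$, so the bound degenerates as $\mu\to0$, i.e.\ for high-energy spectral points. This is not just a technicality: take $\HS=\C^2$ with $\Delta$ having eigenvalues $0$ and $\lambda=\delta^{-2}-1$, $\wt\HS=\C$ with $\wt\Delta=0$, $J$ the projection onto the first coordinate and $J'=J^*$ (and $J^1=J$, $J^{\prime1}=J'$); all conditions of \Def{quasi-uni} hold with this $\delta$, yet $\frac1{1+\spec\Delta}=\{1,\delta^2\}$ and $\frac1{1+\spec{\wt\Delta}}=\{1\}$ are at Hausdorff distance $1-\delta^2$, so no universal $\psi(\delta)\to0$ can bound the full Hausdorff distance. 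What your argument actually yields (split at, say, $\mu\ge\delta^{2/3}$, where the normalisation works and gives $\dist(\mu,\spec{\wt R})\le C\delta$, while smaller $\mu$ are trivially within $\delta^{2/3}$ of $0$) is Hausdorff closeness of the two sets \emph{after appending the point $0$}, equivalently uniform closeness of the spectra on compact intervals $[0,\Lambda]$ with constants depending on $\Lambda$; this is the precise form in which the result is established in the cited monograph and in which it is used in the present paper (note that the functional-calculus statements are unaffected, since admissible $\eta$ decay at infinity). You should therefore either restrict to compact spectral regions or work in the compactified resolvent picture before asserting the Hausdorff bound; as stated, the step ``the renormalised $J^1f$ is an almost-eigenvector of $\wt R$ at $\mu$'' is false near $\mu=0$.
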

We also have convergence of corresponding eigenfunctions in energy
norm, see \cite[Prop.~2.5]{post-simmer:pre17a} for details.

\subsubsection*{Applications}
We have the following applications in mind:
\begin{enumerate}[label=\myfont{\textbf{\Alph*.}},ref=\Alph*]
\item
  \label{app.a}
  \myparagraph{Given a \pcf fractal, choose a sequence of graphs:} Let
  $K$ be a \pcf fractal, and let $G=G_m$ be a sequence of weighted
  graphs approximating the fractal and its energy form; this example
  has been treated in~\cite{post-simmer:pre17a}; here, $c=1$ and
  $\tau=1$.

\item
  \label{app.b}
  \myparagraph{Given a discrete graph construct a metric graph:} given
  a weighted discrete graph $G$ we construct a metric graph $M$ with
  standard (also called Kirchhoff) Laplacian and with edge lengths
  reciprocally proportional with the discrete edge weights; the
  corresponding energy forms then are $\delta$-quasi-unitarily
  equivalent, see \Thm{mg.q-u-e}; here, $\delta$ is of the same order
  as the \emph{maximal inverse relative weight} defined by the
  discrete weighted graph (see~\eqref{eq:rel.weight} and
  \Remenum{unif.weighted.graph}{unif.weighted.graph.a''}).

\item
  \label{app.b'}
  \myparagraph{Given a metric graph, construct a sequence of discrete
    subdivision graphs:} given a metric graph $M$, then there is a
  sequence of discrete weighted subdivision graphs $SG_m$ such that
  the corresponding metric and discrete energy forms are
  $\delta_m$-quasi-unitarily equivalent, where $\delta_m$ is of order
  of the mesh width of the (metric) subdivision graph $SM_m$, see
  \Cor{subdiv.graph}; note that the energy forms on $M$ and $SM_m$ are
  unitarily equivalent as vertices of degree $2$ have no effect.

\item
  \label{app.c}
  \myparagraph{Given a discrete graph construct a graph-like
    manifold:} given a weighted discrete graph $G$ we construct a
  graph-like manifold $X$ with longitudinal length scale (edge
  lengths) again reciprocally proportional with the discrete edge
  weights; then the corresponding energy forms are
  $\delta$-quasi-unitarily equivalent; here, $\delta$ is of similar
  type as in the metric graph case, but has an additional parameter:
  the ratio of the transversal and longitudinal length scale, see
  \Thm{mfd.q-u-e} and \Cor{mfd.q-u-e}.

\item
  \label{app.d}
  \myparagraph{Given a sequence of discrete graphs, construct a
    sequence of metric graphs and graph-like manifolds:} We apply
  Cases~\ref{app.b} and~\ref{app.c} to the sequence $(G_m)_m$ of
  weighted discrete graphs approximating a given \pcf fractal $K$ from
  Case~\ref{app.a}, and hence obtain a sequence of metric graphs
  resp.\ graph-like manifolds $M_m$ resp.\ $X_m$ with energy forms
  being $\delta_m$-quasi-unitarily equivalent with the one on $G_m$,
  where $\delta_m \to 0$ exponentially fast, see
  \Thms{mg.frac.q-u-e}{mfd.frac.q-u-e}.
\end{enumerate}

For more information on fractals we refer
to~\cite{kigami:01,strichartz:06}, see also~\Subsec{fractals}
and~\cite[Sec.~3]{post-simmer:pre17a}.  For more information on metric
graphs and graph-like manifolds, we refer to the corresponding
\Subsecs{met.graphs}{mfds} and the references cited there.

\subsubsection*{Approximation of fractals by metric graphs}
Let us here be more precise on Case~\ref{app.d}: Consider a \pcf
fractal $K$ with energy form $\energy_K$ which can be approximated by
a sequence of discrete graphs $(G_m)_{m\in\N_0}$ and energy forms
$\energy_{G_m}$.  In~\cite{post-simmer:pre17a} we proved that
$\energy_K$ and $\energy_{G_m}$ are $\delta_m$-quasi-unitary
equivalent with $\delta_m = \Err((r/N)^{m/2})$ where $N \ge 2$ is the
number of fixed points of the IFS and $r \in (0,1)$ the energy
renormalisation parameter of the self-similarity (see
\Subsec{fractals} for details).  Now, the idea is to show that the
energy forms on the discrete graph $G_m$ and the corresponding metric
graph $\MetGr_m$ are $\delta'_m$-quasi-unitarily equivalent with
$\delta_m'=\Err((r/N)^{m/2})$ and then use the transitivity of
quasi-unitary equivalence, see \Prps{trans.q-u-e}{trans.q-u-e.op}.

Let $G_m=(V_m,E_m)$ be one of the approximating discrete graphs with
vertex and edge weights $\map {\mu_m}{V_m}{(0,\infty)}$ and $\map
{\gamma_m}{E_m}{(0,\infty)}$.  The length function $\ell_m\colon
E_m\to (0,\infty)$ of the metric graph $\MetGr_m$ is chosen to be
proportional to $1/\gamma_{m,e}$, in particular, $\ell_{m,e}$ decays
exponentially in $m$.  On a metric graph, we identify an edge $e \in
E$ with the interval $[0,\ell_{m,e}]$ with boundary points identified
according to the graph structure.  Moreover the canonical measure on
$\MetGr_m$ is given by the sum of the Lebesgue measures on the
intervals.  Hence, the associated Hilbert space is $\Lsqr{\MetGr_m,
  \nu_m}$ with norm given by
\begin{equation*}
  \normsqr[\Lsqr{\MetGr_m,\nu_m}] u
  = \sum_{e \in E} \int_0^{\ell_e} \abssqr{u_e(x)} \dd x
\end{equation*}
where we consider $u$ as a family $(u_e)_{e \in E_m}$ with $\map
{u_e}{[0,\ell_e]} \C$.  A canonical energy form on $\MetGr_m$ is
\begin{equation*}
  \energy_{\MetGr_m}(u)=\normsqr[\Lsqr{\MetGr_m,\nu_m}] {u'}
  = \sum_{e \in E} \int_{a_e}^{b_e} \abssqr{u'_e(x)} \dd x,
  \quad
  \dom(\energy_{\MetGr_m})=\Sob {\MetGr_m},
\end{equation*}
where $\Sob{\MetGr_m}$ consists of functions $u_e \in
\Sob{[0,\ell_e]}$ such that $u$ is continuous on $\MetGr_m$.  The
associated operator is the usual standard (also called Kirchhoff)
Laplacian.  The functions in the partition of unity $(\psi_{m,v})_{v
  \in V_m}$ used for the identification operators $J$ and $J'$ fulfil
$\psi_{m,v}(v)=1$ and $\psi_{m,v}(v')=0$ for all $v'\in
V_m\setminus\{v\}$.  Moreover, we assume that the functions
$\psi_{m,v}$ are harmonic functions, i.e., affine linear on the edges.
An application of \Thm{main1} is as follows (for a version where we
quantify the error term $\wt \delta_m$, see \Cor{mg.frac.q-u-e'}):

\begin{theorem}
  \label{thm:main2}[\Cor{mg.frac.q-u-e}]
  The fractal energy form $\energy_K$ and the rescaled approximating
  metric graph energy forms $\wt \energy_m = \tau_m
  \energy_{\MetGr_m}$ are $\wt \delta_m$-quasi-unitarily equivalent,
  where $\wt \delta_m \to 0$ as $m \to \infty$.
\end{theorem}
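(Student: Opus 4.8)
The plan is to chain together two quasi-unitary equivalences using the transitivity statements quoted in the introduction (\Prps{trans.q-u-e}{trans.q-u-e.op}). The first link is already supplied by~\cite{post-simmer:pre17a}: the fractal energy form $\energy_K$ and the discrete energy forms $\energy_{G_m}$ on the approximating graphs are $\delta_m$-quasi-unitarily equivalent with $\delta_m = \Err((r/N)^{m/2}) \to 0$. The second link is the new ingredient, and it is exactly \Thm{main1} (i.e.\ \Thm{q-u-e}) applied to the embedding of the discrete graph $G_m$ into the metric graph $\MetGr_m$ viewed as a metric measure space $(\MetGr_m, \nu_m, \energy_{\MetGr_m})$. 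One first checks that this embedding is uniform in the sense of \Def{graph.emb.in.met.space}: the sets $\Xv = \supp\psi_{m,v}$ are the closed stars around $v$, which are compact, connected, and overlap precisely along edges, and the harmonic (affine-linear) partition functions $\psi_{m,v}$ with $\psi_{m,v}(v)=1$ satisfy the required normalisation. Then \Thm{main1} gives that $\energy_{G_m}$ and $\wt\energy_m := \tau_m \energy_{\MetGr_m}$ are $\delta'_m$-quasi-unitarily equivalent for a suitable energy rescaling $\tau_m$ and isometric rescaling $c_m$, with $\delta'_m$ expressed entirely in geometric data of $G_m$ and $\MetGr_m$.

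The crux is to show that this second error $\delta'_m$ also tends to $0$, and in fact at the same exponential rate (this quantified version is \Cor{mg.frac.q-u-e'}). Here I would unpack the formula for $\delta$ in \Thm{q-u-e}: the two terms singled out in the introduction are $2\mu_\infty/\gamma_0$ (from $\norm{f-J'Jf}$) and a term controlled by $\sup_v \lambda_2(\Xv,\psi_{m,v})^{-1}$ (from $\norm{u-JJ'u}$). For the metric graph with edge lengths $\ell_{m,e} \propto 1/\gamma_{m,e}$, one computes: the measure $\nu_m(v) = \int \psi_{m,v}\dd\nu_m$ is a sum of half-edge-length contributions $\tfrac12 \ell_{m,e}$, so the ratio $\nu_m(v)/\mu_m(v)$ is (up to the choice of $c_m$) forced to be comparable to a constant exactly when the graph is uniformly embedded; the relevant scale-invariant quantity is the mesh width, i.e.\ $\max_e \ell_{m,e}$. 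The second (weighted) eigenvalue $\lambda_2(\Xv,\psi_{m,v})$ on a star of edges of length $\le \ell_{\max,m}$ scales like $\ell_{\max,m}^{-2}$ by a one-dimensional Poincaré/Cheeger estimate on an interval, so its inverse is $\Err(\ell_{\max,m}^2)$. Choosing the energy rescaling $\tau_m$ so that the natural length scale of $\MetGr_m$ matches the discrete scaling, both contributions to $\delta'_m$ are bounded by a constant times $\ell_{\max,m}$, and since $\ell_{m,e} \propto 1/\gamma_{m,e}$ decays like $(r/N)^{m}$ (or its square root, depending on normalisation), one gets $\delta'_m = \Err((r/N)^{m/2})$.

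Finally, transitivity of quasi-unitary equivalence (\Prps{trans.q-u-e}{trans.q-u-e.op}) yields that $\energy_K$ and $\wt\energy_m = \tau_m\energy_{\MetGr_m}$ are $\wt\delta_m$-quasi-unitarily equivalent with $\wt\delta_m \le C(\delta_m + \delta'_m)$ — more precisely, transitivity composes the identification operators and the triangle-type estimate absorbs the cross terms, using that the operator norms of the $J$'s are uniformly bounded — hence $\wt\delta_m = \Err((r/N)^{m/2}) \to 0$. The main obstacle I expect is the bookkeeping in the second step: one must verify that the geometric quantities entering $\delta'_m$ in \Thm{q-u-e} (relative-weight fluctuations, the constant in $\nu_m(v)/\mu_m(v) \approx c_m^{-2}$, and the Cheeger constants of the stars $\Xv$) are all controlled uniformly in $m$ and scale correctly under the renormalisation, so that the rate survives; the abstract transitivity and the convergence $\delta_m \to 0$ from~\cite{post-simmer:pre17a} are then essentially formal.
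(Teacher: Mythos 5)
Your proposal is correct and takes essentially the same route as the paper: the paper proves the discrete-to-metric-graph link by checking compatibility (\Lem{met.comp}) and applying \Thm{mg.q-u-e}, obtaining $\delta'_m=\Err((r/N)^{m/2})$ from $\mu_{m,\infty}/\gamma_{m,0}=\Err((r/N)^m)$, and then combines this with the fractal-to-discrete equivalence of~\cite{post-simmer:pre17a} via transitivity (\Prp{trans.q-u-e}), exactly as you outline. The only caveat is that the form-level transitivity in the paper yields just $\wt\delta_m\to 0$ rather than the linear bound $C(\delta_m+\delta'_m)$ you assert (the quantified rate is reserved for the operator-level statement \Cor{mg.frac.q-u-e'} via \Prp{trans.q-u-e.op}), but this suffices for the statement at hand.
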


A natural choice is to let the metric graphs $\MetGr_m$ be generated
by the IFS, i.e., we start with $\MetGr_0$ (embedded as $K$ in
$\R^d$), and define $\MetGr_m$ as the $m$-th iterate under the IFS
with similitude factor $\theta \in (0,1)$ (see \Subsec{fractals} for
details).  In this case, the lengths at generation $m$ are of order
$\theta^m$, and the energy rescaling factor is
$\tau_m=\Err((N\theta^2/r)^m)$.  For the Sierpi\'nski triangle, we
have $N=3$, $r=3/5$ and $\theta=1/2$, hence $\tau_m=\Err((5/4)^m)$,
confirming analytically the energy rescaling factor for the
Sierpi\'nski triangle used in the numerical calculations of the
eigenvalues in~\cite[Sec.~3]{bhs:09}.

\subsubsection*{Approximation of fractals by manifolds}
Similarly, we can construct a sequence $(X_m)_m$ of graph-like
manifolds (see \Fig{graph-mfd} and for more details \Subsec{mfds})
with transversal length scale parameter $\eps_m$ (the ``thickness'' of
an edge).  One example of a graph-like manifold $X_m$ is the
$\eps_m$-neighbourhood of $\MetGr_m$ (if embedded in $\R^d$).  In this
case, the corresponding operator associated with the energy form given
by
\begin{equation*}
  \energy_{X_m}(u)=\int_{X_m} \abssqr{\nabla u(x)} \dd x,
  \qquad 
  u \in \dom \energy_{X_m} = \Sob{X_m},
\end{equation*}
is the Neumann Laplacian.  We can also construct boundaryless
manifolds such as the surface of a tubular neighbourhood of a metric
graph $\MetGr_m$ embedded in $\R^3$.
\begin{figure}[h]
  \label{fig:graph-mfd}
  \setlength{\unitlength}{1mm}
  \begin{picture}(120,80)
    \includegraphics[width=0.8\textwidth]{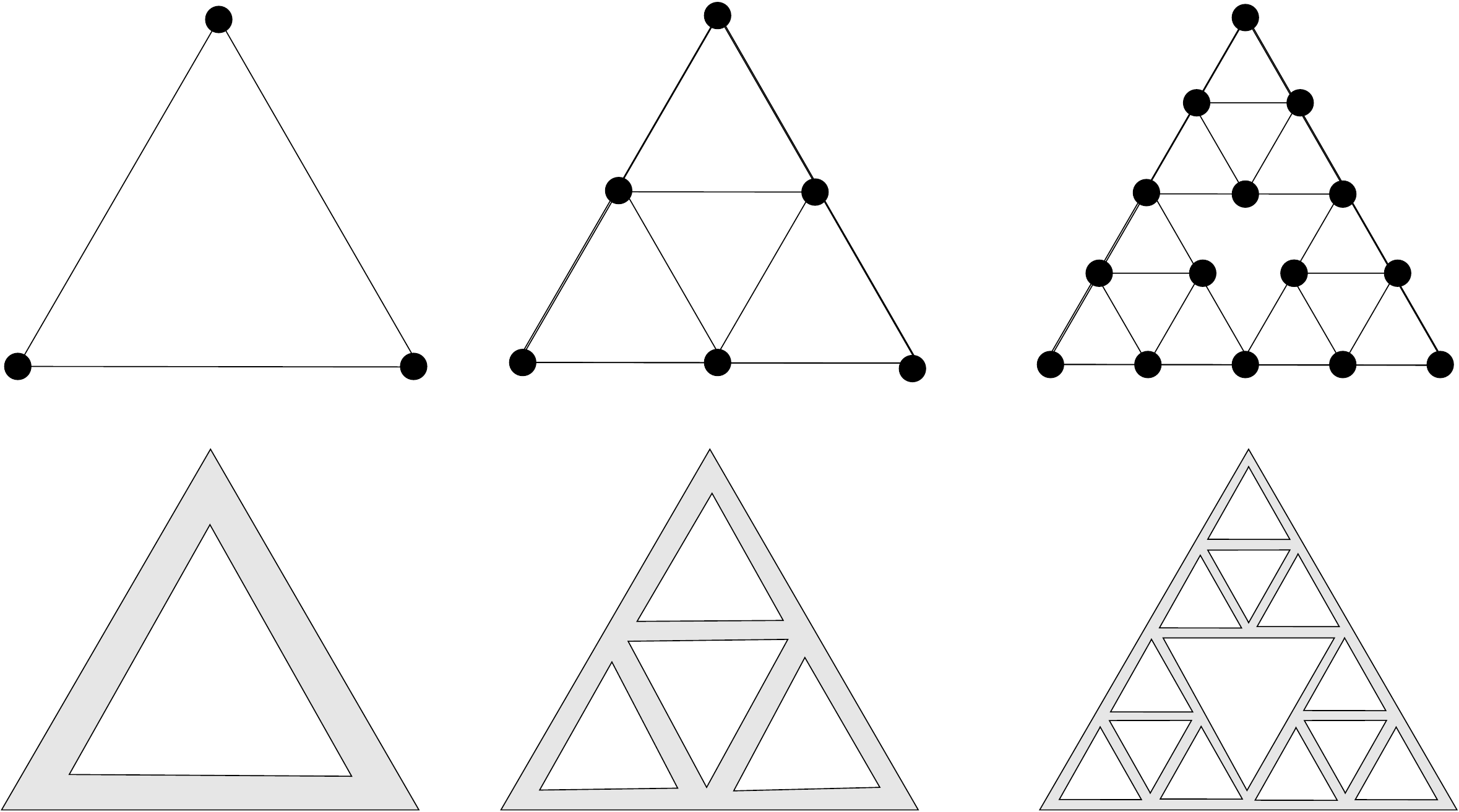}  
    \put(-132,43){$G_0$}
    \put(-88,43){$G_1$}
    \put(-42,43){$G_2$}
    \put(-132,8){$X_0$}
    \put(-87,8){$X_1$}
    \put(-40,8){$X_2$}
  \end{picture}
  \caption{The beginning of the sequence of graphs $(G_m)_m$
    approximating the Sierpi\'nski triangle and a corresponding
    sequence of graph-like manifolds $(X_m)_m$ close to the graph $G_m$
    for the generations $m=0,1,2$.}
\end{figure}
Again, we need an energy rescaling factor $\hat \tau_m$.
\begin{theorem}[\Cor{mfd.frac.q-u-e}]
  \label{thm:main3}
  Assume that the transversal length scale $\eps_m=\eps_0 \Eps^m$
  ($\Eps \in (0,1)$) decays faster than the longitudinal length scale
  $\ell_{m,e}=\Err(\theta^m)$ (i.e., $\Eps<\theta$), then the fractal
  energy form $\energy_K$ and the approximating graph-like manifold
  energy forms $\hat \energy_m := \hat \tau_m \energy_{X_m}$ are $\hat
  \delta_m$-quasi-unitarily equivalent, where $\hat \delta_m \to 0$ as
  $m \to \infty$.
\end{theorem}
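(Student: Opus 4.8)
The plan is to obtain the theorem by chaining two quasi-unitary equivalences and appealing to transitivity, exactly as for the metric-graph statement \Thm{main2}; the only new feature is the additional transversal length parameter $\eps_m$ of a graph-like manifold, whose smallness relative to the longitudinal scale is precisely what the hypothesis $\Eps<\theta$ supplies.

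\emph{The two links.} As input we take from~\cite{post-simmer:pre17a} that the \pcf fractal energy form $\energy_K$ and the energy forms $\energy_{G_m}$ on the approximating discrete graphs $G_m=(V_m,E_m)$ are $\delta_m$-quasi-unitarily equivalent with $\delta_m=\Err((r/N)^{m/2})\to 0$ (there $c=\tau=1$). Next, to each $G_m$ we attach a graph-like manifold $X_m$ as in Case~\ref{app.c}: every edge $e\in E_m$ becomes a thin tube of longitudinal length $\ell_{m,e}$ reciprocally proportional to $\gamma_{m,e}$ (so $\ell_{m,e}=\Err(\theta^m)$ when $X_m$ is generated by the IFS with similitude factor $\theta$) and of transversal size $\eps_m=\eps_0\Eps^m$, and we use the harmonic partition of unity $(\psi_{m,v})_v$ with $\psi_{m,v}(v)=1$ and $\psi_{m,v}(v')=0$ for $v'\ne v$. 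Then \Thm{mfd.q-u-e}, together with the rescaling provided by \Cor{mfd.q-u-e}, gives that $\energy_{G_m}$ and $\hat\energy_m=\hat\tau_m\energy_{X_m}$ are $\delta'_m$-quasi-unitarily equivalent, where $\hat\tau_m$ (and, if needed, an isometric rescaling factor $c_m$) match the Neumann form on the thin tubes to the discrete form on $G_m$, and $\delta'_m$ is expressed entirely in geometric data of $G_m$ and $X_m$.

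\emph{Smallness of $\delta'_m$.} The quantity $\delta'_m$ is a sum of terms of three kinds. First, the terms already appearing in the metric-graph comparison: those controlled by the maximal inverse relative weight (cf.\ $\normsqr[\lsqr{V,\mu}]{f-J'Jf}\le(2\mu_{m,\infty}/\gamma_{m,0})\energy_{G_m}(f)$) and those controlled by the inverse second weighted eigenvalue $\lambda_2(X_{m,v},\psi_{m,v})^{-1}$ of the building blocks (see \Lem{2nd.ev}); since the blocks $X_{m,v}$ are, up to the anisotropic dilation by $\theta^m$ (longitudinal) and $\Eps^m$ (transversal), among finitely many model vertex-stars, the overlap coefficients $\iprod{\psi_{m,v}}{\psi_{m,v'}}$ and the relevant Cheeger constants are uniformly controlled, and after rescaling by $\hat\tau_m$ (and $c_m$) these contributions decay to $0$ exponentially, exactly as in \Thm{main2}. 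Second, the genuinely new contribution has the order of the ratio of transversal to longitudinal length scales, $\eps_m/\ell_{m,e}=\Err((\Eps/\theta)^m)$, which tends to $0$ precisely because $\Eps<\theta$. Third, any remaining terms involving the absolute scales are $\Err(\Eps^m)$ or $\Err(\theta^m)$ and vanish as well. Hence $\delta'_m\to 0$, with an explicit exponential rate (quantified as in the metric-graph analogue \Cor{mg.frac.q-u-e'}).

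\emph{Conclusion and main difficulty.} Composing the two equivalences by transitivity of quasi-unitary equivalence of energy forms, \Prps{trans.q-u-e}{trans.q-u-e.op}, we obtain that $\energy_K$ and $\hat\energy_m=\hat\tau_m\energy_{X_m}$ are $\hat\delta_m$-quasi-unitarily equivalent with $\hat\delta_m\le C(\delta_m+\delta'_m)$ for a universal constant $C$; since $\delta_m\to 0$ and $\delta'_m\to 0$, this gives $\hat\delta_m\to 0$, which is the claim. The main obstacle is the middle step: one must verify that, after the $m$-dependent anisotropic rescaling, every geometric quantity entering $\delta'_m$ — above all the second weighted eigenvalues of the increasingly thin, degenerating blocks $X_{m,v}$ and the coupling between neighbouring blocks — remains under control once compensated by $\hat\tau_m$ (and $c_m$), and that $\Eps<\theta$ is exactly the threshold at which the new transversal error term ceases to be negligible. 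The remainder is bookkeeping of exponential rates and a direct appeal to transitivity.
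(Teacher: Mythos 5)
Your overall strategy is the paper's: chain the $\Err((r/N)^{m/2})$-equivalence of $\energy_K$ and $\energy_{G_m}$ from~\cite{post-simmer:pre17a} with the graph--manifold equivalence of \Thm{mfd.q-u-e}/\Cor{mfd.q-u-e} and invoke \Prps{trans.q-u-e}{trans.q-u-e.op}. But there is a genuine gap in your analysis of the middle link. You assert that the error terms ``already appearing in the metric-graph comparison'' decay ``exactly as in \Thm{main2}'', i.e.\ like $\mu_{m,\infty}/\gamma_{m,0}=\Err((r/N)^m)$. This is false for the term coming from \Lem{diff.av.mfd}: on a manifold the point evaluation $u(v)$ is not defined on $\Sob{\Mfd}$, so $\Gamma_v u$ must be taken as the average over the core vertex neighbourhood $\cXv$, whose volume scales like $\eps_m^d$ while $\nu_0(v)$ scales like $\eps_m^{d-1}\ell_{m,e}$. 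The resulting estimate carries the factor $1/\alpha(v)$ with $\alpha(v)\sim\eps_m/\ell_{m,e}$, so this contribution is of order $(\ell_{m,0}/\eps_m)\cdot(\mu_{m,\infty}/\gamma_{m,0})=\Err\bigl(((\theta/\Eps)(r/N))^m\bigr)$ --- this is exactly the second term in \Cor{mfd.q-u-e} and the reason \Thm{mfd.frac.q-u-e} imposes the \emph{two-sided} condition $(r/N)\Lambda<\Eps<\Lambda$. Your argument therefore does not yield $\delta'_m\to 0$ for every $\Eps<\theta$: if the transversal scale shrinks too fast (e.g.\ $\Eps\le (r/N)\theta$; for the Sierpi\'nski triangle, $\Eps\le 1/10$ with $\theta=1/2$), the bound you are implicitly using diverges.

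The fix is to add the lower bound $\Eps>(r/N)\Lambda$ (as the paper does), after which the two competing error terms $\Err((\Eps/\Lambda)^{m/2})$ and $\Err(((\Lambda/\Eps)(r/N))^{m/2})$ both vanish and are balanced at $\Eps=(r/N)^{1/2}\Lambda$. A second, smaller point: the uniform lower bound on $\lambda_2(X_{v,\eps_m})$ for the degenerating vertex stars is not automatic from ``finitely many model blocks up to anisotropic dilation'' (the dilation is anisotropic, so the blocks are not similar to a fixed model); the paper obtains it in \Prp{2nd.ev.mfd} via the spectral convergence of graph-like manifolds to metric graphs, valid only for $\eps_m\le \ell_{m,0}/C_v^2$, which your hypothesis $\Eps<\theta$ does eventually guarantee.
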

For a version where we quantify the error term $\wt \delta_m$, see
\Cor{mfd.frac.q-u-e'}.  In particular, we can approximate the energy
form on a fractal by (rescaled) energy forms on a family of smooth
manifolds.  Unfortunately, we cannot treat the case $\Eps=\theta$,
i.e., when the longitudinal and transversal length scale are of the
same.  This case occurs e.g.\ when starting with a suitable
neighbourhood $\Mfd_0$ of the first metric graph $\MetGr_0$, and then
apply the IFS to generate a sequence of graph-like manifolds
$(\Mfd_m)_m$, see \Rem{no-mfd-ifs} for details.

\subsection{Previous and related works}
\label{ssec:prev.works}

Variational convergence (such as $\Gamma$- or Mosco convergence) of
discrete energy forms to suitable energy forms on metric measure
spaces is an often treated topic; we mention here only two of them and
refer to the references therein: Kasue~\cite{kasue:10} considers
sequences of compact metric spaces with resistance metric and energy
forms, e.g.\ finite resistance networks (i.e., weighted graphs with
trivial vertex weights $\mu=1$ and variable edge weights $\gamma$) and
$\Gamma$-convergence of such sequences, e.g., to infinite graphs.
Hinz and Teplyaev~\cite[Thm.~1.2]{hinz-teplyaev:16} consider
approximations of a bounded Dirichlet form by a sequence of finite
weighted graphs in the sense of Mosco.  The finite weighted graphs
appear as Dirichlet forms on a finitely generated measure space.  The
partition of unity is just the corresponding finite family of
indicator functions; as the limit Dirichlet form is bounded, these
indicator functions are in its domain.  Hinz and Teplyaev then use the
fact that any Dirichlet form can be approximated by a bounded
Dirichlet.  Note that the Mosco convergence is equivalent with some
notion of \emph{strong} resolvent convergence for varying Hilbert
spaces, see~\cite[Thm.~2.4]{kuwae-shioya:03}, hence our results are
stronger as they provide convergence in \emph{operator norm};
nevertheless we believe that the conditions of quasi-unitary
equivalence are often easier to check than the one for Mosco
convergence.

Discretisations of metric measure spaces have also been used in order
to check certain types of Poincar\'e inequalities on the metric
measure space by a suitable version on a graph, e.g.\
in~\cite{gill-lopez:15} (see also the references therein).  As in our
work, Gill and Lopez embed the graph into the metric measure space via
a partition of unity, On the other hand, Cheeger and
Kerner~\cite{cheeger-kleiner:15} use the opposite approach and
construct metric measure spaces fulfilling a Poincar\'e inequality
from an increasing sequence of discrete graphs (called metric measure
graphs there).

There are quite some articles about the relation between (mostly
infinite or classes of finite) graphs and (non-compact or classes of
compact) Riemannian manifolds under the name \emph{discretisation of a
  manifold}; most authors are interested in questions whether certain
properties are invariant under so-called \emph{rough isometries},
using a related property on a discrete graph.  We mention here only
the works
of~\cite{dodziuk-patodi:76,kanai:86a,kanai:86b,coulhon:92,mantuano:05,
  cgr:pre16} and references therein; and the
monograph~\cite{chavel:01} where Chavel defines similar maps as our
$J$ and $J'$, called \emph{smoothing} and \emph{discretisation} there.
The interest in all these works is to have uniform control of classes
of manifolds and discrete graphs, e.g.\ that $f-J'Jf$ is bounded by a
constant times the energy norm of $f$, but the constant is not
supposed to be small as in our case.  We will deal with the
\emph{approximation} of manifolds and their energy forms by discrete
graphs in a forthcoming publication.

Kigami considers in~\cite{kigami:03} energy forms (called resistance
forms there) and shows that they can be approximated by limits of
finite weighted nested graphs; the corresponding energy forms converge
monotonously.  In~\cite[Sec.~5--6]{teplyaev:08} Teplyaev considers
energy forms on sets with finitely ramified cell structure and uses an
approximation by metric graphs e.g.\ to characterise the operator
domain of the original energy form.
In the recent preprint~\cite{hinz-meinert:pre17} Hinz and Meinert use
the metric graph approximation of a Sierpi\'nki triangle to
approximate some non-linear differential equations on a fractal.

For the approximation of fractals by open subsets or manifolds not so
much is known: Berry, Heilman and Strichartz in~\cite{bhs:09} provide
numerical results on the eigenvalues of some \pcf fractals $K$
approximated by Neumann Laplacians on open set $X_m \supset K$
constructed according to the IFS.  We confirm analytically that their
energy rescaling factor for the Sierpi\'nski triangle and an
approximating sequence of open subsets in $\R^2$ is $\tau_m=5/4$ (see
Case~1 in \Ex{mfd.sierpinski}).  Better numerical results are obtained
by Blasiak, Strichartz and U\u gurcan~\cite[Sec.~3]{bsu:08} by a
sequence of open sets $X_m \supset K$ \emph{not} constructed from the
IFS.  We follow in our manifold example a similar strategy as our
approximating sets $X_m$ (if $K \subset \R^2$) are neither generated
by the IFS; moreover, they are neither subsets nor supsets of $K$.
Some analytic work is done in~\cite{mosco-vivaldi:15} (see also the
references therein): Mosco and Vivaldi construct a sequence of
\emph{weighted} energy forms on open domains that Mosco-converge to an
energy form on a nested fractal such as the Koch curve or the
Sierpi\'nski triangle.  The weights are chosen in such a way that the
passage along a vertex becomes very narrow.  We encounter a smilar
problem, see \Rem{no-mfd-ifs}.

\subsection{Structure of the article}
In \Sec{met.conv} we define when a discrete graph is uniformly
embedded into a metric measure space and prove \Thm{main1}.  In
\Secs{met.graphs}{mfds} we apply these results to metric graphs and
graph-like manifold.  Finally, in \Sec{fractals.met} we apply the
results from~\cite{post-simmer:pre17a} and the transitivity of
quasi-unitary equivalence (\Prps{trans.q-u-e}{trans.q-u-e.op}) and
deduce that a large class of \pcf fractals and its energy form can be
approximated by a suitable sequence of metric graphs resp.\ graph-like
manifolds together with their (renormalised) energy forms.
\App{norm.convergence} contains a brief introduction to the concept of
quasi-unitary equivalence.  In \App{2nd.ev}, we collect some estimates
on graph-like manifolds.

\subsection{Acknowledgements}
The authors would like to thank Uta Freiberg and Michael Hinz for
valuable comments and discussions.

%
\section{
  Convergence of energy forms on discrete graphs and metric spaces}
\label{sec:met.conv}
%

In this section, we provide a rather general setting: we measure how
``close'' a discrete graph is to a metric space, both with a suitable
energy form.  The ``distance'' is measured in terms of a parameter
$\delta \ge 0$ appearing in the definition of quasi-unitary
equivalence, see \Def{quasi-uni}.

We say that $\energy$ is an \emph{energy form} on a measure space
$(X,\nu)$, if $\energy$ is a non-negative, densely defined and closed
quadratic form, i.e., if $\energy(f) \ge 0$ for all $f \in \dom
\energy$, $\dom \energy$ is dense in $\Lsqr{X,\nu}$ and $\dom \energy$
with norm given by
\begin{equation*}
  \normsqr[\energy] f 
  := \normsqr[\Lsqr{X,\nu}] f + \energy(f)
\end{equation*}
is complete (i.e., a Hilbert space).  

\subsection{Metric spaces, energy forms and embedded graphs}
\label{ssec:the.spaces}

\subsubsection*{The graph and its energy form}
Let $G=(V,E,\bd)$ be a discrete graph, i.e., $V$ and $E$ are finite or
countable sets and $\map \bd E {V \times V}$, $\bd e =
(\bd_-e,\bd_+e)$, associates to each edge $e \in E$ its \emph{initial}
and \emph{terminal vertex} $\bd_-e$ and $\bd_+e$, respectively.  We
assume here that $G$ is \emph{simple}, i.e., that there are no loops
(i.e., edges with $\bd_-e=\bd_+e$) nor multiple edges (i.e., edges
$e_1,e_2$ with $\bd_+e_1=\bd_\pm e_2$ and $\bd_-e_1=\bd_\mp e_2$).  We
write $v \sim v'$ for two vertices if there is an edge $e$ with
$v=\bd_\pm e$ and $v'=\bd_\mp e$.  We also set
\begin{equation*}
  E_v^\pm := \set{e \in E}{\bd_\pm e=v} \qquadtext{and}
  E_v := E_v^- \dcup E_v^+.
\end{equation*}

In order to have a Hilbert space structure and an energy form, we need
two \emph{weight functions} $\map \mu V {(0,\infty)}$ and $\map \gamma
E {(0,\infty)}$, the \emph{vertex} and \emph{edge weights}. 

The following quantitative control using the variation of the weights
(and the maximal degree) will be useful:
\begin{definition}
  \label{def:unif.weighted.graph}
  Let $(G,\mu,\gamma)$ be a weighted graph.
  \begin{enumerate}
  \item 
    \label{unif.weighted.graph.a}
    The \emph{relative weight} (also called \emph{$\mu$-degree}) is
    the vertex weight defined by
    \begin{equation}
      \label{eq:rel.weight}
      \rho(v)
      :=\frac{\sum_{e \in E_v} \gamma_e}{\mu(v)}.
    \end{equation}
    
  \item 
    \label{unif.weighted.graph.a'}
    We call
    \begin{equation*}
      \frac {\mu_\infty}{\gamma_0}
      := \frac{\sup_{v \in V}\mu(v)} {\inf_{e \in E} \gamma_e}
    \end{equation*}
    the \emph{maximal inverse relative weight} of the weighted graph.
  \item 
    \label{unif.weighted.graph.b}
    Let $d_\infty \in \N$, $\muVar>0$ and $\gammaVar >0$.  The
    weighted graph is called
    \emph{$(d_\infty,\muVar,\gammaVar)$-uniform} or simply
    \emph{uniform} if the \emph{degree} $\deg v:= \card {E_v}$ is
    uniformly bounded, i.e., $\deg v \le d_\infty$, and if
    \begin{subequations}
      \label{eq:unif.weighted.graph}
      \begin{align}
        \label{eq:unif.weighted.grapha}
        \mu_0:=\inf_{v \in V}\mu(v)
        &\le \mu_\infty :=\sup_{v \in V}\mu(v) 
        \le \muVar \mu_0 \quad\text{and}\\
        \gamma_0:=\inf_{e \in E}\gamma_e 
        &\le \gamma_\infty :=\sup_{e \in E}\gamma_e
        \le \gammaVar \gamma_0.
      \end{align}
    \end{subequations}
  \end{enumerate}
\end{definition}

\begin{remark}
  \label{rem:unif.weighted.graph}
  \indent
  \begin{enumerate}
  \item
  \label{unif.weighted.graph.a''}
    The relative weight is bounded by
    \begin{equation}
      \label{eq:rel.weight.lower}
      \frac{\gamma_0}{\mu_\infty} 
      \le \rho(v) 
      \le \frac{d_\infty \gamma_\infty}{\mu_0} 
    \end{equation}
    if the corresponding numbers are in $(0,\infty)$.  In particular,
    $1/\rho(v)$ is bounded from above by $\mu_\infty/\gamma_0$, hence
    the name \emph{maximal inverse relative weight}.

    Moreover, the relative weight is \emph{bounded} for a
    $(d_\infty,\muVar,\gammaVar)$-uniform weighted graph, as the
    latter implies $0<\mu_0\le \mu_\infty<\infty$ and $0<\gamma_0 \le
    \gamma_\infty<\infty$.
  \item For a $(d_\infty,\muVar,\gammaVar)$-uniform
    weighted graph, we also have
    \begin{equation}
      \label{eq:rel.weight.bdd}
      \frac1{\gammaVar d_\infty \muVar} \cdot \frac {\mu_\infty}{\gamma_0}
      \le \frac 1{\rho(v)}
      \le \frac {\mu_\infty}{\gamma_0},
    \end{equation}
    i.e., the inverse of the relative weight is \emph{controlled} by
    the maximal inverse relative weight $\mu_\infty/\gamma_0$.  The latter
    quotient will be used later on as error, e.g.\ in \Prp{q-u.b}.
  \end{enumerate}
\end{remark}

The Hilbert space associated with a weighted graph is
\begin{equation*}
  \HS := \lsqr {V,\mu}, \qquad 
  \normsqr[\lsqr{V,\mu}] f := \sum_{v \in V} \abssqr{f(v)} \mu(v)
\end{equation*}
and the energy form is given by
\begin{equation}
  \label{eq:graph.energy}
  \energy(f) := \sum_{e \in E} \gamma_e \abssqr{(df)_e},
  \qquad (df)_e := f(\bd_+e)-f(\bd_-e).
\end{equation}
and bounded by $2\rho_\infty := 2\sup_v \rho(v)$.  Throughout this
article, we assume that the relative weight $\rho$ is bounded, i.e.,
$\rho_\infty<\infty$.  In particular, the corresponding discrete
Laplacian $\Delta_{(G,\mu,\gamma)}$ is also bounded by $2\rho_\infty$,
and acts as
\begin{equation}
  \label{eq:disc.lapl}
  (\Delta_{(G,\mu,\gamma)} \phi)(v)
  = \frac 1 {\mu(v)}\sum_{e \in E_v} \gamma_e \bigl(\phi(v)-\phi(v_e)\bigr),
\end{equation}
where $v_e$ is the vertex on $e$ opposite to $v$.

\subsubsection*{The metric measure space and energy form}
As space approximated by the graph $G$ we choose a metric measure
space $X$ with Borel measure $\nu$.  Then we have a
canonical Hilbert space, namely
\begin{equation*}
  \wt \HS := \Lsqr{X,\nu}, \qquad
  \normsqr[\Lsqr{X,\nu}] u := \int_X \abssqr u \dd \nu.
\end{equation*}
We assume that $X$ has a natural energy form\footnote{It is indeed in
  all our examples a Dirichlet form but we will not need this fact in
  our analysis.}  $\energy_X$ with domain $\wt \HS^1:=\dom \energy_X$.
We specify further properties later on.  We are mainly interested in
three examples together with their natural energy forms:
\begin{enumerate}
\item $X$ is a \pcf self-similar fractal (see
  e.g.~\cite[Sec.~3]{post-simmer:pre17a});
\item $X$ is a metric graph;
\item $X$ is a graph-like manifold.
\end{enumerate}
We will explain metric graphs and graph-like manifolds in
\Subsecs{met.graphs}{mfds}.  As we need to rescale the energy in some
cases, we introduce a constant $\tau>0$, called \emph{energy
  rescaling factor} and set
\begin{equation*}
  \wt \energy := \tau \energy_X
\end{equation*}
with the same domain.  We specify $\tau$ later on.

We come now to our main definition, relating a discrete graph with a
metric measure space and its energy form:
\begin{figure}[h]
  \label{fig:graph-emb-met}
  \centering
  \setlength{\unitlength}{1mm}
  \begin{picture}(120,40)
    \includegraphics[width=0.8\textwidth]{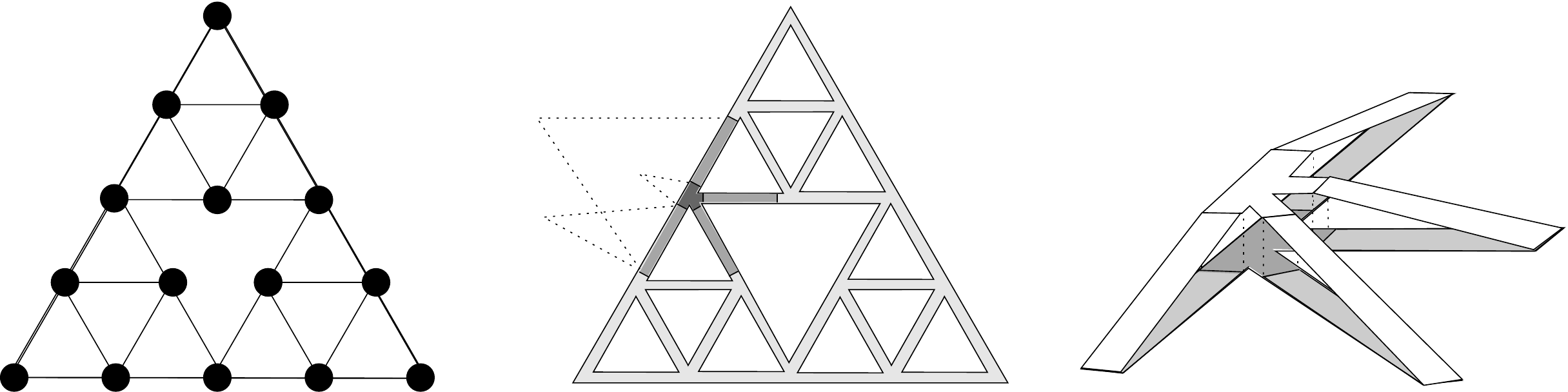}  
    \put(-118,28){$G$}
    \put(-124,15){$v$}
    \put(-129,8){$v'$}
    \put(-121,10.5){$e$}
    \put(-71,28){$X$}
    \put(-81.5,18){$\cXv$}
    \put(-90.7,22){$\Xv$}
    \put(-90,14){$\Xe$}
    \put(-35,18){$\psi_v$}
    \put(-27,4){$\Xv$}
  \end{picture}
  \caption{A graph uniformly embedded into a metric measure space,
    here a graph-like manifold.  Left: the discrete graph.  Middle: a
    metric space with vertex neighbourhood $\Xv$ (grey and dark grey) and core
    vertex neighbourhood $\cXv$ (dark grey).  Right: A function
    $\psi_v$ of the partition of unity on $\Xv$.}
\end{figure}
\begin{definition}
  \label{def:graph.emb.in.met.space}
  Let $X$ be a metric measure space with Borel measure $\nu$ and
  energy form $\energy_X$.  A weighted graph $(G,\mu,\gamma)$ is
  \emph{uniformly embedded} into $(X,\nu,\energy_X)$, if the following
  conditions hold:
  \begin{subequations}
    \begin{enumerate}
    \item
      \label{graph.emb.in.met.space.a}
      \myparagraph{Partition of unity and relation to graph
        structure:} There is a family of functions $\Psi:=(\psi_v)_{v
        \in V}$ with $\psi_v \in \dom \energy_X$ and $\sum_{v \in V}
      \psi_v = \1_X$ such that
      \begin{equation}
        \label{eq:graph.emb.part.1}
        \text{$\Xv:= \supp \psi_v$ is connected}
        \quadtext{and}
        \intrXv \cap \intrXv[v'] \ne \emptyset
        \;\iff\; v \sim v'.
      \end{equation}
    \item
      \label{graph.emb.in.met.space.b}
      \myparagraph{Decomposition of energy:} There are energy forms
      $\energy_{\Xv}$ on $(\Xv,\nu(\cdot \restr \Xv))$ ($v \in V$)
      with domains $\dom \energy_{\Xv} = \bigset{u \restr \Xv} {u \in
        \dom \energy_X}$ such that
      \begin{equation}
        \label{eq:energy.local}
        \energy_X(u)
        \le \sum_{v \in V} \energy_{\Xv}(u \restr \Xv)
        \le 2\energy_X(u)
      \end{equation}
      for all $u \in \dom \energy_X$.
    \item
      \label{graph.emb.in.met.space.c}
      \myparagraph{Local energy is uniformly spectrally small:} The form
      $\energy_{\Xv}$ is closable in the \emph{weighted Hilbert space}
      \begin{equation}
        \label{eq:weighted.hs}
        \Lsqr{\Xv,\psi_v}
        := \Bigset{u} 
        {\normsqr[\Lsqr{\Xv,\psi_v}] u 
          :=\int_X \abssqr u \psi_v \dd \nu < \infty},
      \end{equation}
      and the spectrum is purely discrete.  Moreover, the first
      eigenvalue is $\lambda_1(\Xv,\psi_v)=0$ with constant
      eigenfunction $\1_{\Xv}$, and the second eigenvalue (as family
      of $v \in V$) fulfils
      \begin{equation}
        \label{eq:2nd.ev.bdd}
        \tag{\ref{eq:weighted.hs}'}
        0<\lambda_2 
        := \inf_{v \in V} \lambda_2(\Xv,\psi_v).
      \end{equation}
    \item
      \label{graph.emb.in.met.space.d}
      \myparagraph{(Almost) compatibility of the weights:} The
      weighted graph has finite maximal inverse relative weight
      $\mu_\infty/\gamma_0 \in (0,\infty)$, where
      \begin{equation}
        \label{eq:gamma0}
        \mu_\infty := \sup_{v \in V} \mu(v)<\infty
        \qquadtext{and}
        \gamma_0 := \inf_{e \in E}\gamma_e >0.
      \end{equation}
      Moreover,
      \begin{equation}
        \label{eq:def.nu.decomp}
        \tag{\ref{eq:gamma0}'}
        \nu(v):= \int_{\Xv} \psi_v \dd \nu
        = \nu_0(v) + \check \nu(v)
        \quadtext{and}
        \frac 1 {c^2}
        := \frac{\nu_0(v)}{\mu(v)}
      \end{equation}
      for all $v \in V$, where the so-called \emph{isometric rescaling
        factor} $c>0$ is assumed to be \emph{independent} of $v \in
      V$, and where
      \begin{equation}
        \label{eq:def.alpha}
        \tag{\ref{eq:gamma0}''}
        \alpha_\infty := \sup_{v \in V} \abs{\alpha(v)} \le \frac 12
        \qquadtext{with}
        \alpha(v):=\frac{\check \nu(v)}{\nu_0(v)}.
      \end{equation}
    \end{enumerate}
  \end{subequations}
  We call $\Xv$ the \emph{(enlarged) vertex neighbourhood} of $v \in
  V$ in $X$.  Moreover, 
  \begin{equation}
    \label{eq:cxv.xe}
      \cXv := \psi_v^{-1}\{1\} = \set{x \in X_v}{\psi_v(x)=1}
      \quadtext{and}
    \Xe := \Xv[\bd_-e ] \cap \Xv[\bd_+e]
  \end{equation}
  are called the \emph{core vertex neighbourhood} of $v$ and the
  \emph{edge neighbourhood} of $e$ in $X$, respectively.  We call
  $\nu=(\nu(v))_{v \in V}$ the \emph{vertex weight} of $\Psi$ in
  $(X,\nu)$.  We say that $\nu$ and $\mu$ are \emph{compatible
    weights} if $\nu(v)/\mu(v)$ is independent of $v$, i.e., if
  $\nu=\nu_0$ or $\alpha=0$.
\end{definition}

\begin{remark}
  \label{rem:graph.emb.in.met.space}
  \indent
  \begin{enumerate}
  \item Note that we have the relation
    \begin{equation}
      \label{eq:trick'}
      \nu(v) 
      = \sum_{v' \in V} \int_X \psi_v \psi_{v'} \dd \nu
      = \sum_{v' \in V} \iprod {\psi_v}{\psi_{v'}}
      = \normsqr[\Lsqr{X,\nu}] {\psi_v} 
      + \sum_{v' \in V, v' \sim v} \iprod {\psi_v}{\psi_{v'}}
    \end{equation}
    as the partition of unity reflects the discrete graph structure.

  \item For ease of notation, we write $\energy_{\Xv}(u)$ instead of
    the more precise notation $\energy_{\Xv}(u \restr \Xv)$.
    Condition~\eqref{eq:energy.local} allows us to get finer estimates
    in the proof of \Prp{q-u.b}.

  \item Note that $\Lsqr \Xv \subset \Lsqr{\Xv,\psi_v}$
    (see~\eqref{eq:hs.weighted}), hence $\energy_{\Xv}$ need not to be
    closed in $\Lsqr{\Xv,\psi_v}$.  Moreover, the condition
    $\lambda_2>0$ in particular implies that $\lambda_2(\Xv,\psi_v)>0$
    for all $v \in V$, i.e., $\lambda_1(\Xv,\psi_v)=0$ is a
    \emph{simple} eigenvalue.  In our examples, this means that $\Xv$
    is connected (either as graph or as topological space) as we
    already assumed in~\itemref{graph.emb.in.met.space.a}.
  \item 
    \label{energy.spectral.weight}
    Condition~\eqref{eq:def.alpha} is a condition already for
    \emph{finite} graphs, as it assumes that the ``bad'' part of
    $\nu$, i.e., the deviation $\check \nu(v)$ from being compatible,
    is small.  The reason for assuming that $\alpha_\infty \le 1/2$
    becomes clear at the end of the proof of \Prp{q-u.b}.

  \item For a single finite graph $G$, the conditions on $\lambda_2$
    and $\mu_\infty/\gamma_0$ are always fulfilled.  We stress the
    constants $\lambda_2, c, \mu_\infty/\gamma_0 \in (0,\infty)$ and
    $\alpha_\infty \in [0,1/2]$, if we consider an infinite graph
    or an infinite family of graphs.  In both situations, we need a
    uniform control of certain parameters, see \Prp{q-u.b} and
    \Thm{q-u-e}.
  \end{enumerate}
\end{remark}
We denote the \emph{weighted average} of $u$ by
\begin{equation}
  \label{eq:def.dashint}
  \dashint_X u \dd \nu_v 
  := \frac1{\nu(v)}  \int_X u \dd \nu_v, 
  \qquadtext{where}
  \dd \nu_v := \psi_v \dd \nu.
\end{equation}
Note that
\begin{equation*}
  \dashint_X u \dd \nu_v
  = \frac1{\int_X \psi_v \dd \nu}  \int_X u \psi_v \dd \nu
  =  \frac 1{\nu(v)} \iprod[\Lsqr{\Xv,\psi_v}] u {\1_{\Xv}},
\end{equation*}
the latter representation will be useful in the next lemma:
\begin{lemma}
  \label{lem:2nd.ev}
  Assume that $\energy_{\Xv}$
  fulfils~\Defenum{graph.emb.in.met.space}{graph.emb.in.met.space.c}
  then
  \begin{equation}
    \label{eq:2nd.ev}
    \int_X \Bigabssqr{u - \avint_X u \dd \nu_v}
    \dd \nu_v
    \le \frac 1{\lambda_2(\Xv,\psi_v)} \energy_{\Xv}(u),
  \end{equation}
  for all $u \in \dom \energy_{\Xv}$.
\end{lemma}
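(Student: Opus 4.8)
The statement to prove is Lemma~\ref{lem:2nd.ev}, the Poincaré-type inequality $\int_X \bigl| u - \avint_X u\,\dd\nu_v\bigr|^2\,\dd\nu_v \le \frac{1}{\lambda_2(\Xv,\psi_v)}\,\energy_{\Xv}(u)$ for $u \in \dom\energy_{\Xv}$. The plan is to exploit the spectral decomposition of the closure of $\energy_{\Xv}$ in the weighted Hilbert space $\Lsqr{\Xv,\psi_v}$, which is available by hypothesis~\Defenum{graph.emb.in.met.space}{graph.emb.in.met.space.c}: the spectrum is purely discrete, with $\lambda_1(\Xv,\psi_v)=0$ a simple eigenvalue whose eigenfunction is the constant $\1_{\Xv}$, and $\lambda_2(\Xv,\psi_v)>0$ the next one. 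The key observation is that $\avint_X u\,\dd\nu_v = \frac{1}{\nu(v)}\iprod[\Lsqr{\Xv,\psi_v}]{u}{\1_{\Xv}}$ (the identity displayed just before the lemma) is exactly, up to the normalisation $\nu(v) = \normsqr[\Lsqr{\Xv,\psi_v}]{\1_{\Xv}}$, the orthogonal projection of $u$ onto the first eigenspace $\C\1_{\Xv}$ in the weighted inner product. Hence $u - \avint_X u\,\dd\nu_v$ lies in the orthogonal complement of the bottom eigenspace.

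First I would set $P_1 u := \bigl(\avint_X u\,\dd\nu_v\bigr)\1_{\Xv}$, the spectral projection of the self-adjoint operator associated with $\overline{\energy_{\Xv}}$ onto $\ker$, and write $u = P_1 u + (u - P_1 u)$ with $u - P_1 u \perp \1_{\Xv}$ in $\Lsqr{\Xv,\psi_v}$. Then the left-hand side of~\eqref{eq:2nd.ev} is $\normsqr[\Lsqr{\Xv,\psi_v}]{u - P_1 u}$. By the min-max (variational) characterisation of $\lambda_2$ — or equivalently by expanding $u$ in an orthonormal eigenbasis $(\phi_k)$ with eigenvalues $0 = \lambda_1 < \lambda_2 \le \lambda_3 \le \dots$, so that $u - P_1 u = \sum_{k\ge 2}\iprod{u}{\phi_k}\phi_k$ and $\energy_{\Xv}(u) = \overline{\energy_{\Xv}}(u) = \sum_{k\ge 1}\lambda_k|\iprod{u}{\phi_k}|^2 = \sum_{k\ge 2}\lambda_k|\iprod{u}{\phi_k}|^2$ — one gets
\begin{equation*}
  \lambda_2(\Xv,\psi_v)\,\normsqr[\Lsqr{\Xv,\psi_v}]{u - P_1 u}
  = \lambda_2 \sum_{k\ge 2} |\iprod{u}{\phi_k}|^2
  \le \sum_{k\ge 2} \lambda_k |\iprod{u}{\phi_k}|^2
  = \energy_{\Xv}(u),
\end{equation*}
which is~\eqref{eq:2nd.ev} after dividing by $\lambda_2(\Xv,\psi_v)>0$. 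Note $\energy_{\Xv}(u)$ here means $\energy_{\Xv}(u\restr\Xv)$, and since $\energy_{\Xv}$ is closable one has $\energy_{\Xv}(u) = \overline{\energy_{\Xv}}(u)$ for $u \in \dom\energy_{\Xv}$, so the bound passes to the (generally larger) form domain without loss.

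The one point requiring a little care — and the only candidate for a genuine obstacle — is justifying that the quadratic form $\overline{\energy_{\Xv}}$ annihilates constants and that $P_1$ in the weighted space is really the $\nu_v$-weighted average: this is precisely what hypothesis~\eqref{eq:2nd.ev.bdd} together with $\lambda_1(\Xv,\psi_v)=0$, eigenfunction $\1_{\Xv}$, encodes, so it is assumed rather than proved. Everything else is the standard spectral-theorem argument for a non-negative self-adjoint operator with discrete spectrum, applied in the weighted $\Lsqr{\Xv,\psi_v}$ rather than $\Lsqr{\Xv,\nu}$; the weighting changes nothing in the abstract argument since it is carried out intrinsically in that Hilbert space. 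I would present the eigenexpansion version rather than invoking min-max as a black box, since it makes the role of $\lambda_2$ as the spectral gap above the constants completely transparent and keeps the proof self-contained.
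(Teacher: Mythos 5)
Your proposal is correct and follows essentially the same route as the paper: identify $\avint_X u\,\dd\nu_v$ as the orthogonal projection onto the first eigenspace $\C\1_{\Xv}$ in $\Lsqr{\Xv,\psi_v}$ (using $\normsqr[\Lsqr{\Xv,\psi_v}]{\1_{\Xv}}=\nu(v)$) and then invoke the min-max characterisation of $\lambda_2(\Xv,\psi_v)$. Your explicit eigenfunction expansion is merely an unpacking of that same variational argument, so there is no substantive difference.
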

\begin{proof}
  Note first that $\normsqr[\Lsqr{\Xv,\psi_v}] {\1_{\Xv}} = \int_X
  \psi_v \dd \nu = \nu(v)$, hence
  \begin{equation*}
    u- \frac 1 {\nu(v)} \iprod u {\1_{\Xv}} \1_{\Xv}
    = u - \avint_X u \dd \nu_v
  \end{equation*}
  is the projection onto the orthogonal complement of the first
  eigenspace $\C \1_{\Xv}$.  The result then follows from the min-max
  characterisation of eigenvalues.
\end{proof}

In some cases, it will be easier to have an estimate of the
corresponding \emph{unweighted} eigenvalue problem.  As above,
$\lambda_2(\Xv,\psi_v)$ denotes the second (first non-zero) eigenvalue
of the operator associated with $\energy_{\Xv}$ in the weighted
Hilbert space $\Lsqr{\Xv,\psi_v}$.
\begin{lemma}
  \label{lem:2nd.ev.alt}
  Let $\Phi_{2,v}$ be a normalised eigenfunction associated with
  $\lambda_2(\Xv,\psi_v)$.  Moreover, denote by $\lambda_2(\Xv)$ the
  second (first non-zero) eigenvalue of $\energy_{\Xv}$ in the
  \emph{unweighted} Hilbert space $\Lsqr \Xv$.  If $\Phi_{2,v} \in
  \Lsqr \Xv$, then we have
  \begin{equation}
    \label{eq:2nd.ev.lower}
    \lambda_2(\Xv,\psi_v)
    \ge \lambda_2(\Xv).
  \end{equation}
\end{lemma}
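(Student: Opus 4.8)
The plan is to run the min-max (Rayleigh quotient) characterisation of $\lambda_2(\Xv)$ in the \emph{unweighted} Hilbert space $\Lsqr\Xv$ against a corrected version of the weighted eigenfunction $\Phi_{2,v}$, exploiting only the elementary estimate $\normsqr[\Lsqr{\Xv,\psi_v}]{u}\le\normsqr[\Lsqr\Xv]{u}$, valid for every $u$ because $0\le\psi_v\le1$. The one genuine subtlety is that the two eigenvalue problems impose \emph{different} side constraints on admissible test functions: orthogonality to $\1_{\Xv}$ with respect to $\iprod[\Lsqr{\Xv,\psi_v}]{\cdot}{\cdot}$ for $\lambda_2(\Xv,\psi_v)$, but with respect to the unweighted pairing for $\lambda_2(\Xv)$. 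Hence $\Phi_{2,v}$ itself is in general not admissible for the unweighted problem, and must first be projected off the constants in $\Lsqr\Xv$.

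Concretely, I would write $n_v:=\normsqr[\Lsqr\Xv]{\1_{\Xv}}=\int_X\1_{\Xv}\dd\nu$, finite and positive (as $\Xv$ is compact), and set
\begin{equation*}
  a:=\frac1{n_v}\iprod[\Lsqr\Xv]{\Phi_{2,v}}{\1_{\Xv}}
  \qquadtext{and}
  w:=\Phi_{2,v}-a\1_{\Xv}.
\end{equation*}
Both quantities make sense by the hypothesis $\Phi_{2,v}\in\Lsqr\Xv$, and by construction $w\in\dom\energy_{\Xv}\cap\Lsqr\Xv$ satisfies $\iprod[\Lsqr\Xv]{w}{\1_{\Xv}}=0$, so $w$ is admissible for the unweighted problem; moreover $w\ne0$ because $\energy_{\Xv}(\Phi_{2,v})=\lambda_2(\Xv,\psi_v)>0$ while $\energy_{\Xv}(\1_{\Xv})=0$ by \Defenum{graph.emb.in.met.space}{graph.emb.in.met.space.c}. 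Since $\1_{\Xv}$ lies in the kernel of the non-negative form $\energy_{\Xv}$, Cauchy--Schwarz for $\energy_{\Xv}$ gives $\energy_{\Xv}(\Phi_{2,v},\1_{\Xv})=0$, hence $\energy_{\Xv}(w)=\energy_{\Xv}(\Phi_{2,v})=\lambda_2(\Xv,\psi_v)$. Plugging $w$ into the min-max bound $\lambda_2(\Xv)\le\energy_{\Xv}(w)/\normsqr[\Lsqr\Xv]{w}$ (valid since $\1_{\Xv}$ spans the kernel of $\energy_{\Xv}$ in $\Lsqr\Xv$), the whole statement reduces to proving $\normsqr[\Lsqr\Xv]{w}\ge1$.

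This last estimate is the crux, and it is where one pays for the weight mismatch. By the Pythagoras identity for $\Phi_{2,v}=w+a\1_{\Xv}$ in $\Lsqr\Xv$, $\normsqr[\Lsqr\Xv]{w}=P-n_v^{-1}\bigabssqr{\iprod[\Lsqr\Xv]{\Phi_{2,v}}{\1_{\Xv}}}$ with $P:=\normsqr[\Lsqr\Xv]{\Phi_{2,v}}$, and $P\ge\normsqr[\Lsqr{\Xv,\psi_v}]{\Phi_{2,v}}=1$ because $\psi_v\le1$. The weighted normalisation $\int_X\Phi_{2,v}\psi_v\dd\nu=\iprod[\Lsqr{\Xv,\psi_v}]{\Phi_{2,v}}{\1_{\Xv}}=0$ lets me rewrite the unweighted pairing as $\iprod[\Lsqr\Xv]{\Phi_{2,v}}{\1_{\Xv}}=\int_X\Phi_{2,v}(1-\psi_v)\dd\nu$, so Cauchy--Schwarz with the non-negative weight $1-\psi_v$ gives $\bigabssqr{\iprod[\Lsqr\Xv]{\Phi_{2,v}}{\1_{\Xv}}}\le(P-1)(n_v-\nu(v))$ with $\nu(v)=\int_X\psi_v\dd\nu$; here $\int_X\abssqr{\Phi_{2,v}}(1-\psi_v)\dd\nu=P-1$ since $\Phi_{2,v}$ is supported in $\Xv$ and weighted-normalised. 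Since $0\le n_v-\nu(v)\le n_v$ and $P-1\ge0$, this yields $\normsqr[\Lsqr\Xv]{w}\ge P-(P-1)=1$, completing the argument. I do not foresee any serious obstacle; the only care needed is the bookkeeping guaranteeing $\1_{\Xv}\in\Lsqr\Xv$ (compactness of $\Xv$) and invoking the correct variational form of $\lambda_2(\Xv)$.
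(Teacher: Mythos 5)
Your proof is correct, but it takes a genuinely different route from the paper's. The paper invokes the \emph{two-dimensional} Courant--Fischer characterisation: it takes $D_2=\C\1_{\Xv}+\C\Phi_{2,v}$, bounds $\lambda_2(\Xv)$ by the supremum of the unweighted Rayleigh quotient over $D_2$, replaces the unweighted norm in the denominator by the smaller weighted norm (using $0\le\psi_v\le1$ pointwise), and observes that the supremum of the \emph{weighted} Rayleigh quotient over the span of the first two weighted eigenfunctions is exactly $\lambda_2(\Xv,\psi_v)$ --- so no further computation is needed. You instead use the one-dimensional characterisation (infimum over $u$ unweighted-orthogonal to $\1_{\Xv}$), which forces you to project $\Phi_{2,v}$ off the constants in $\Lsqr\Xv$ and then prove the extra estimate $\normsqr[\Lsqr\Xv]{w}\ge 1$; your Cauchy--Schwarz argument with the weight $1-\psi_v$, combining $\iprod[\Lsqr{\Xv,\psi_v}]{\Phi_{2,v}}{\1_{\Xv}}=0$ with $\int_{\Xv}\abssqr{\Phi_{2,v}}(1-\psi_v)\dd\nu=P-1$, does this correctly, and the remaining steps ($\energy_{\Xv}(w)=\energy_{\Xv}(\Phi_{2,v})$ via Cauchy--Schwarz for the non-negative form, $w\ne0$ since $\energy_{\Xv}(w)>0$) are all sound. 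What the paper's approach buys is brevity: the subspace formulation absorbs the ``wrong orthogonality constraint'' automatically. What yours buys is a slightly more quantitative insight --- it shows explicitly that the unweighted projection of $\Phi_{2,v}$ onto $\1_{\Xv}^\perp$ retains at least the weighted unit norm --- at the cost of a more delicate computation that the two-dimensional min-max renders unnecessary.
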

\begin{proof}
  Note first that
  \begin{equation}
    \label{eq:hs.weighted}
    \normsqr[\Lsqr{\Xv,\psi_v}] u
    = \int_{\Xv} \abssqr u \psi_v \dd \nu
    \le \int_{\Xv} \abssqr u \dd \nu
    = \normsqr[\Lsqr \Xv] u
  \end{equation}
  as $0 \le \psi_v \le 1$.  In particular, $\Lsqr \Xv \subset
  \Lsqr{\Xv,\psi_v}$, but the inclusion is in general strict.  By the
  min-max characterisation of the second eigenvalue, we have
  \begin{align*}
    \lambda_2(\Xv)
    = \inf_{D_2}
      \sup_{u \in D_2 \setminus \{0\}}
      \frac{\energy_{\Xv}(u)}{\normsqr[\Lsqr \Xv] u}
    \le \sup_{u \in D_2 \setminus \{0\}}
      \frac{\energy_{\Xv}(u)}{\normsqr[\Lsqr \Xv] u}
    \le \sup_{u \in D_2 \setminus \{0\}}
      \frac{\energy_{\Xv}(u)}{\normsqr[\Lsqr {\Xv,\psi_v}] u}
  \end{align*}
  where $D_2$ runs through all two-dimensional subspaces of
  $\Lsqr{X_v}\cap \dom \energy_{\Xv}$.  As the first eigenfunction for
  both problems is the constant $\1_{\Xv}$ and as the second
  eigenfunction $\Phi_{2,v}$ of the weighted problem is also in the
  unweighted Hilbert space, we can choose $D_2=\C \1_{\Xv}+\C
  \Phi_{2,v}$ as two-dimensional space.  For this choice, the latter
  Rayleigh quotient becomes $\lambda_2(\Xv,\psi_v)$, and the result
  follows.
\end{proof}
\begin{remark*}
  In our applications on metric graphs and graph-like manifolds, we
  choose $\psi_v$ to be harmonic, and due to the geometric
  assumptions, $\psi_v$ will be (piecewise) affine linear.  For this
  choice, the second eigenfunction of the weighted problem is an Airy
  function hence continuous, and therefore belongs also to the
  unweighted Hilbert space.
\end{remark*}

\subsection{The identification operators}
\label{ssec:id.op}
Let us now consider a \emph{weighted} discrete graph $(G,\mu,\gamma)$
with vertex weight $\map \mu V {(0,\infty)}$ and edge weight $\map
\gamma E {(0,\infty)}$.  We have then the associated Hilbert space
$\lsqr{V,\mu}$ as in \Subsec{the.spaces}.  We assume that
$(G,\mu,\gamma)$ is \emph{uniformly embedded} into
$(X,\nu,\energy_X)$, see \Def{graph.emb.in.met.space}, hence the
isoperimetric rescaling factor $c>0$ is defined.

We now define the identification operators $J$ and $J'$ from
$\HS=\lsqr{V,\mu}$ into $\wt \HS=\Lsqr{X,\nu}$ and vice versa: Let
\begin{equation*}
  Jf := c \sum_{v \in V} f(v) \psi_v
\end{equation*}
for $\map f V \C$ with finite support and
\begin{equation*}
  (J'u)(v) 
  = \frac 1 c \cdot \dashint_X u \dd \nu_v.
\end{equation*}
(for the notation $\avint_X$ and $\dd \nu_v$
see~\eqref{eq:def.dashint}).

\begin{remark}
  \label{rem:discretisation}
  If $X$ is a Riemannian manifold, then $J$ is also called
  \emph{smoothing} of functions on $G$ and $J'$ is called
  \emph{discretisation} of functions on $X$, see the
  monograph~\cite[Sec.~VI.5]{chavel:01} for details (and also
  \Subsec{prev.works}).  Moreover, the concept of having a partition
  of unity with respect to a suitable cover of the manifold labelled
  by the graph vertices is called \emph{discretisation} of $X$,
  see~\cite[Sec.~V.3.2]{chavel:01} for details.
\end{remark}

  The energy form spaces are
$\HS^1=\HS=\lsqr{V,\mu}$ (if the relative weight is bounded) and $\wt
\HS^1=\dom \energy_X$ with norms given by
\begin{equation*}
  \normsqr[\HS^1] f 
  := \normsqr[\lsqr{V,\mu}] f + \energy(f)
  \quadtext{and}
  \normsqr[\wt \HS^1] u
  := \normsqr[\Lsqr{X,\nu}] u + \wt \energy(u)
\end{equation*}
respectively, where $\wt \energy(u)=\tau \energy_X$ and where the energy
rescaling parameter $\tau>0$ is specified later on.  Note that
although the spaces $\HS$ and $\HS^1$ are the same, the norms differ,
and this fact matters when considering \emph{families} of graphs where
$\normsqr[\HS^1] f /\normsqr[\HS] f \le 1+\sup_v  \rho(v)$ is \emph{not
  uniformly} bounded.

The following result is of abstract nature:
\begin{proposition}
  \label{prp:q-u.b}
  Assume that $(G,\mu,\gamma)$ is uniformly embedded into
  $(X,\nu,\energy_X)$, then the following holds:
  \begin{enumerate}
  \item
    \label{q-u.b.a}
    $J$ and $J'$ are bounded;
  \item
    \label{q-u.b.b}
    $J$ and $J'$ fulfil~\eqref{eq:quasi-uni.b} with $\delta$ replaced
    by $\deltaB$, where
    \begin{equation*}
      \deltaB^2
      = \max \Bigl\{\frac {2 \mu_\infty}{\gamma_0},
      \frac 2{\tau \lambda_2} \Bigr\}.
    \end{equation*}
  \item
    \label{q-u.b.c}
    $J$ and $J'$ fulfil~\eqref{eq:quasi-uni.a} with $\delta$ replaced
    by 
    \begin{equation}
      \label{eq:def.alpha0}
      \deltaA
      = 2\alpha_\infty,
    \end{equation}
    where $\alpha_\infty$ is defined in~\eqref{eq:def.alpha}.
  \end{enumerate}
  In particular, $J$ is $\delta$-quasi-unitary with adjoint $J'=J^*$
  and $\delta=\max\{\deltaA,\deltaB\}$.
\end{proposition}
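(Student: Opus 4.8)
The plan is to verify the three claims of the proposition in turn, using the definition of uniform embedding and the two lemmas proved above. Throughout I would work with a finitely supported $f\colon V\to\C$ and a $u\in\dom\energy_X$ (or $u\in\Lsqr{X,\nu}$), extending by density at the end.

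\textbf{Part (i): boundedness of $J$ and $J'$.} For $J$, I would compute $\normsqr[\Lsqr{X,\nu}]{Jf}=c^2\int_X\bigabssqr{\sum_v f(v)\psi_v}\dd\nu$. Since the supports $\Xv$ overlap only for $v\sim v'$ and the degree is bounded (the embedding is into a graph with bounded degree via the partition-of-unity condition, although here we only need $\rho$ bounded), I would expand the square and use $\iprod{\psi_v}{\psi_{v'}}\le\frac12(\normsqr{\psi_v}+\normsqr{\psi_{v'}})$ together with~\eqref{eq:trick'}, which identifies $\sum_{v'}\iprod{\psi_v}{\psi_{v'}}=\nu(v)$. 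This gives $\normsqr[\Lsqr{X,\nu}]{Jf}\le c^2\sum_v\abssqr{f(v)}\nu(v)$ up to a harmless factor; then $\nu(v)=\nu_0(v)+\check\nu(v)=(1+\alpha(v))\nu_0(v)=(1+\alpha(v))c^{-2}\mu(v)\le\frac32 c^{-2}\mu(v)$ by~\eqref{eq:def.nu.decomp} and~\eqref{eq:def.alpha}, so $\normsqr[\Lsqr{X,\nu}]{Jf}\le\frac32\normsqr[\lsqr{V,\mu}]f$. For $J'$, I would use Cauchy--Schwarz on $\dashint_X u\dd\nu_v$: $\bigabssqr{(J'u)(v)}\le\frac1{c^2}\dashint_X\abssqr u\dd\nu_v$, multiply by $\mu(v)=c^2\nu_0(v)=c^2\nu(v)/(1+\alpha(v))\le c^2\nu(v)$, and sum over $v$, using $\sum_v\psi_v=\1_X$ to collapse $\sum_v\int_X\abssqr u\psi_v\dd\nu=\normsqr[\Lsqr{X,\nu}]u$. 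This yields $\normsqr[\lsqr{V,\mu}]{J'u}\le\normsqr[\Lsqr{X,\nu}]u$.

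\textbf{Part (ii): the form-domain estimate~\eqref{eq:quasi-uni.b}.} This splits into two pieces, matching the two displayed identities in the introduction. For $\norm[\lsqr{V,\mu}]{f-J'Jf}$ I would use the first identity $(f-J'Jf)(v)=\frac1{\nu(v)}\sum_{v'\sim v}(f(v)-f(v'))\iprod{\psi_v}{\psi_{v'}}$, apply Cauchy--Schwarz over the (at most $d_\infty$) neighbours, and bound $\iprod{\psi_v}{\psi_{v'}}\le\nu(v)$; after multiplying by $\mu(v)\le c^2\nu(v)$ and reorganising the double sum over edges, the weights combine to produce $\normsqr[\lsqr{V,\mu}]{f-J'Jf}\le\frac{2\mu_\infty}{\gamma_0}\energy(f)$, exactly as stated in the introduction (using $\gamma_e\ge\gamma_0$ and $\mu(v)\le\mu_\infty$ to pass from $\energy$ with its edge weights to the bare sum of $\abssqr{f(v)-f(v')}$ and back). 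For $\norm[\Lsqr{X,\nu}]{u-JJ'u}$ I would use the second identity $u-JJ'u=\sum_v\bigl(u-\dashint_X u\dd\nu_v\bigr)\psi_v$; since the $\psi_v$ form a partition of unity with graph-structured overlaps, $\normsqr[\Lsqr{X,\nu}]{u-JJ'u}\le(\text{const})\sum_v\int_X\bigabssqr{u-\dashint_X u\dd\nu_v}\psi_v\dd\nu$, and now \Lem{2nd.ev} bounds each term by $\lambda_2(\Xv,\psi_v)^{-1}\energy_{\Xv}(u)\le\lambda_2^{-1}\energy_{\Xv}(u)$; summing and invoking the right-hand inequality in~\eqref{eq:energy.local} turns $\sum_v\energy_{\Xv}(u)$ into $\le 2\energy_X(u)=\frac2\tau\wt\energy(u)$. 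So this half is bounded by $\frac2{\tau\lambda_2}\wt\energy(u)$. Taking the max of the two constants gives $\deltaB^2$.

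\textbf{Part (iii): $J^*=J'$ up to $\deltaA$, i.e.~\eqref{eq:quasi-uni.a}.} I would compute $\iprod[\Lsqr{X,\nu}]{Jf}{u}=c\sum_v\conj{f(v)}\int_X\psi_v u\dd\nu=c\sum_v\conj{f(v)}\nu(v)(J'u)(v)\cdot c=c^2\sum_v\conj{f(v)}(J'u)(v)\nu(v)$, whereas $\iprod[\lsqr{V,\mu}]{f}{J'u}=\sum_v\conj{f(v)}(J'u)(v)\mu(v)$. The difference is $\sum_v\conj{f(v)}(J'u)(v)\bigl(c^2\nu(v)-\mu(v)\bigr)=\sum_v\conj{f(v)}(J'u)(v)c^2\check\nu(v)$ since $\mu(v)=c^2\nu_0(v)$ and $\nu(v)-\nu_0(v)=\check\nu(v)$. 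Writing $c^2\check\nu(v)=\alpha(v)\mu(v)$ and applying Cauchy--Schwarz in $\lsqr{V,\mu}$ gives $\abs{\iprod{Jf}{u}-\iprod f{J'u}}\le\alpha_\infty\norm[\lsqr{V,\mu}]f\,\norm[\lsqr{V,\mu}]{J'u}\le\alpha_\infty\norm[\lsqr{V,\mu}]f\,\norm[\Lsqr{X,\nu}]u$ by Part (i); since the definition of quasi-unitarity asks for the bound with $\delta$ on this quantity and a symmetric one, the factor $2$ in $\deltaA=2\alpha_\infty$ absorbs the two directions (and the slack in the $J'$ bound). Finally, to get the clean statement that $J$ is $\delta$-quasi-unitary \emph{with adjoint $J'=J^*$}, I would remark that once~\eqref{eq:quasi-uni.a} holds one may replace $J'$ by the genuine Hilbert-space adjoint $J^*$ at the cost of the same $\deltaA$ (a standard observation in the quasi-unitary formalism of \App{norm.convergence}), and then collect $\delta=\max\{\deltaA,\deltaB\}$.

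The main obstacle I expect is bookkeeping the overlap combinatorics cleanly: every estimate where I expand $\bignormsqr{\sum_v(\cdots)\psi_v}$ or reorganise a sum over vertices into a sum over edges needs the bounded-degree/graph-structure condition~\eqref{eq:graph.emb.part.1} to control cross terms, and one must be careful that the constants genuinely collapse to the stated $\frac{2\mu_\infty}{\gamma_0}$ and $\frac2{\tau\lambda_2}$ rather than picking up extra factors of $d_\infty$ — this is presumably where~\eqref{eq:trick'} and the precise splitting in~\eqref{eq:energy.local} are used to keep the bookkeeping tight, and where the hypothesis $\alpha_\infty\le\frac12$ enters (to control $\nu(v)/\mu(v)$ and $\nu_0(v)/\nu(v)$ by absolute constants).
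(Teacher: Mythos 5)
Your overall route coincides with the paper's for parts (i) and (ii) — expand $\normsqr{Jf}$ and use \eqref{eq:trick'}, use the displayed identity for $f-J'Jf$, and treat $u-JJ'u$ via \Lem{2nd.ev} together with \eqref{eq:energy.local} — while your part (iii) is a genuinely different and in fact simpler computation: you rewrite both pairings through $(J'u)(v)$ so that the discrepancy becomes $\sum_v \conj{f(v)}(J'u)(v)\,\alpha(v)\mu(v)$, and then use Cauchy--Schwarz in $\lsqr{V,\mu}$ plus the bound on $J'$. The paper instead writes the discrepancy as $\sum_v\bigl(c-\tfrac{\mu(v)}{c\nu(v)}\bigr)f(v)\iprod{\psi_v}{u}$ and controls it with the auxiliary function $\Xi(\xi)=\bigabs{\sqrt\xi-1/\sqrt\xi}$; your version avoids this bookkeeping and still lands on $\deltaA=2\alpha_\infty$, so it is a legitimate improvement in transparency.

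Two places need tightening, though. First, in the estimate of $f-J'Jf$ your sketch (plain Cauchy--Schwarz over the neighbours plus the individual bound $\iprod{\psi_v}{\psi_{v'}}\le\nu(v)$) produces an extra factor $\deg v\le d_\infty$, which the stated constant $2\mu_\infty/\gamma_0$ does not contain; you flag this risk yourself but do not resolve it. The resolution is exactly the paper's weighted Cauchy--Schwarz, splitting $\gamma_e^{1/2}\gamma_e^{-1/2}$, followed by $\sum_{e\in E_v}\iprod{\psi_{v_e}}{\psi_v}^2\le\bigl(\int_X\psi_v\dd\nu\bigr)\sum_{e\in E_v}\iprod{\psi_{v_e}}{\psi_v}\le\nu(v)^2$, where the \emph{summed} partition-of-unity bound (not a neighbour count) removes $d_\infty$; similarly, the unspecified constant in your $u-JJ'u$ step must be exactly $1$, which comes from Cauchy--Schwarz against $\sum_v\psi_v=\1_X$. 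Second, your claim $\normsqr[\lsqr{V,\mu}]{J'u}\le\normsqr[\Lsqr{X,\nu}]u$ uses $\nu_0(v)\le\nu(v)$, i.e.\ $\check\nu(v)\ge0$; the definition only assumes $\abs{\alpha(v)}\le\tfrac12$ in \eqref{eq:def.alpha}, so $\check\nu(v)$ may be negative and the correct bound is $\norm{J'u}\le(1-\alpha_\infty)^{-1/2}\norm u$ (as in the paper). This does not damage your part (iii), since $\alpha_\infty(1-\alpha_\infty)^{-1/2}\le\sqrt2\,\alpha_\infty\le2\alpha_\infty$ — precisely the slack you anticipated — and the bound $\norm{Jf}\le\sqrt{1+\alpha_\infty}\,\norm f\le(1+2\alpha_\infty)\norm f$ needed for \eqref{eq:quasi-uni.a} is already contained in your part (i) computation, though you should state it explicitly there rather than only the cruder $\sqrt{3/2}$.
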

\begin{proof}
  \itemref{q-u.b.a}~The boundedness of $\map J {\HS=\lsqr{V,\mu}} {\wt
    \HS=\Lsqr{X,\nu}}$ follows from
  \begin{subequations}
    \begin{align}      \nonumber 
      \normsqr[\Lsqr{X,\nu}] {Jf} 
      &= \sum_{v \in V} \sum_{v' \in V} c^2f(v)\conj{f(v')}
           \underbrace{\iprod{\psi_v}{\psi_{v'}}}_{\ge 0} 
      \leCY c^2 \sum_{v \in V} \abssqr{f(v)} 
                \sum_{v' \in V} \iprod{\psi_v}{\psi_{v'}}
      \\\label{eq:norm.est.j} 
      &= c^2 \sum_{v \in V} \abssqr{f(v)} \nu(v)
      \le \sup_{v \in V} \underbrace{\frac{c^2\nu(v)}{\mu(v)}}_{=1+\alpha(v)} 
           \normsqr[\lsqr{V,\mu}] f
      \le (1+\alpha_\infty) \normsqr[\lsqr{V,\mu}] f,
    \end{align}
    where we used the partition of unity property~\eqref{eq:trick'}
    for the second equality.
    The boundedness of $J'$ can be seen from
    \begin{align}
      \nonumber \normsqr[\lsqr{V,\mu}] {J'u} &= \frac 1 {c^2} \sum_{v
        \in V} \frac{\mu(v)}{\nu(v)^2}
      \Bigabssqr{\int_X u \dd \nu_v}
      \label{eq:norm.est.j'}
      \leCS \frac 1 {c^2} \sum_{v \in V} \frac{\mu(v)}{\nu(v)^2}
      \int_X \abssqr u \dd \nu_v \cdot \nu(v)\\
      &\le  \sup_{v \in V} \frac1{1+\alpha(v)}
         \int_X \abssqr u \dd \nu
      \le \frac 1{1-\alpha_\infty} \normsqr[\Lsqr {X,\nu}] u
    \end{align}
  \end{subequations}
  using the partition of unity property $\sum_v \dd \nu_v=\dd \nu$ in
  the second last inequality.

  \itemref{q-u.b.b}~We are now checking the conditions
  of~\eqref{eq:quasi-uni.b}: We have
  \begin{equation*}
    f(v)-(J'Jf)(v) 
    = \frac 1{\nu(v)} \sum_{v' \sim v} \bigl(f(v)-f(v')\bigr)
        \iprod{\psi_{v'}}{\psi_v}
  \end{equation*}
  using~\eqref{eq:trick'}, hence
  \begin{align*}
    \normsqr[\HS]{f-J'Jf}
    &= \sum_{v \in V} \frac {\mu(v)}{\nu(v)^2}
     \Bigabssqr{\sum_{v' \sim v} (f(v)-f(v')) \iprod {\psi_{v'}}{\psi_v}}\\
    &\leCS \sum_{v \in V} \frac {\mu(v)}{\nu(v)^2}
        \sum_{e \in E_v} \gamma_e^{-1} \iprod {\psi_{v_e}}{\psi_v}^2
        \sum_{e \in E_v} \gamma_e \abssqr{f(v)-f(v_e)}.
  \end{align*}
  Here, $v_e$ denotes the vertex opposite to $v$ on $e$.

  We now continue with the second sum of the last formula and estimate
  \begin{align*}
    \sum_{e \in E_v} \gamma_e^{-1} \iprod {\psi_{v_e}}{\psi_v}^2
    &\le \frac 1 {\gamma_0} 
         \sum_{e \in E_v} 
           \Bigl(\int_X \psi_{v_e} \psi_v \dd \nu\Bigr)^2\\
    &\le \frac 1 {\gamma_0} 
         \sum_{e \in E_v} 
         \Bigl(\int_X \psi_{v_e}\psi_v \dd \nu\Bigr)
         \Bigl(\int_X \psi_v \dd \nu \Bigr)
    = \frac 1 {\gamma_0} 
         \Bigl(\int_X \psi_v \dd \nu\Bigr)^2
    = \frac {\nu(v)^2} {\gamma_0}
  \end{align*}
  using~\eqref{eq:gamma0} for the first inequality, $\psi_{v_e} \le
  1$ for the second inequality and the partition of unity property
  for the first equality in the last line.  From this estimate we
  conclude
  \begin{align*}
    \normsqr[\HS]{f-J'Jf}
    &\le \frac{\mu_\infty}{\gamma_0} \sum_{v \in V}
          \sum_{e \in E_v} \gamma_e \abssqr{f(v)-f(v_e)}
    =\frac{2\mu_\infty}{\gamma_0}\sum_{e \in E}
         \gamma_e \abssqr{(df)_e}
    =\frac{2\mu_\infty}{\gamma_0}\energy(f)
  \end{align*}
  (the factor $2$ appears because $\sum_{v \in V}\sum_{e \in E_v}
  a_e=\sum_{e \in E} \sum_{v=\bd_\pm e} a_e=2\sum_{e \in E} a_e$).

  For the second condition in~\eqref{eq:quasi-uni.b}, we note that
  \begin{equation*}
    JJ'u 
    = \sum_{v \in V} \dashint_X u \dd \nu_v  \psi_v
  \end{equation*}
  Moreover, using the partition of unity property we have $u = \sum_{v
    \in V} u \psi_v$ hence
  \begin{align*}
    \normsqr[\wt \HS]{u - JJ'u}
    &=\int_X\Bigabssqr{
             \sum_{v \in V} 
                \Bigl(
                   u - \dashint_X u \dd \nu_v
                \Bigr)
               \psi_v} \dd \nu\\
    &\leCS \int_X \sum_{v \in V} 
             \Bigabssqr{u - \dashint_X u \dd \nu_v}
               \psi_v
               \sum_{v \in V} \psi_v \dd \nu\\
    &=\sum_{v \in V} \int_X 
             \Bigabssqr{u - \dashint_X u \dd \nu_v}
               \psi_v \dd \nu\\
    & \le \frac 1 {\lambda_2}
        \sum _{v \in V} \wt \energy_v(u)
      \le \frac 2{\lambda_2}  \energy_X(u)
      = \frac 2 {\tau \lambda_2} \wt \energy(u)
  \end{align*}
  using~\eqref{eq:2nd.ev}, \eqref{eq:2nd.ev.bdd},
  \eqref{eq:energy.local} and the energy rescaling factor $\tau$ for
  the last line.

  \itemref{q-u.b.c}~For the second condition
  in~\eqref{eq:quasi-uni.a}, we first define the function $\Xi$ by
  \begin{equation*}
    \Xi(\xi)
    = \Bigabs{\sqrt \xi - \frac 1 {\sqrt \xi}}
    = 2\sinh \Bigabs{\frac 12 \log \xi};
  \end{equation*}
  note that we have $\Xi(1)=0$, $\Xi(1/\xi)=\Xi(\xi)$ and $0
  <\Xi(\xi)\le \xi-1$ for $\xi>1$.  Then we have
  \begin{align*}
    \bigabs{\iprod{Jf} u-\iprod f {J'u}}
    &= \Bigabs{\sum_{v \in V} \Bigl(c - \frac{\mu(v)}{c\nu(v)}\Bigr)
        f(v) \iprod {\psi_v} u}\\
    &\le \sup_{v \in V} \Xi \Bigl(\frac {c^2\nu(v)}{\mu(v)} \Bigr)
        \Bigabs{\sum_{v \in V} \sqrt{\mu(v)} f(v) 
          \frac 1{\sqrt{\nu(v)}} \iprod {\psi_v} u
           }  \\
    &\leCS \sup_{v \in V} \Xi \bigl(1+\alpha(v)\bigr)
     \Bigl(
       \sum_{v \in V} \mu(v) \abssqr{f(v)}
       \sum_{v \in V} \frac 1{\nu(v)} \int_X \abssqr u \psi_v \dd \nu
       \int_X \psi_v \dd \nu
     \Bigr)^{1/2}\\
     &\le \max \{\Xi(1+\alpha_\infty),\Xi(1+\inf_{v \in V} \alpha(v))\}
      \norm[\lsqr{V,\mu}] f \norm[\Lsqr{X,\nu}] u.
  \end{align*}
  The first term in the maximum appears when $\alpha(v) \ge 0$, the
  second when $\alpha(v)<0$.  The latter one can further be estimated
  by
  \begin{equation*}
    \Xi(1-\alpha_\infty)
    =\Xi(1/(1-\alpha_\infty))
    \le 1/(1-\alpha_\infty)-1
    =\alpha_\infty/(1-\alpha_\infty)
    \le 2\alpha_\infty
  \end{equation*}
  provided $\alpha_\infty \le 1/2$.
  From~\eqref{eq:norm.est.j}--\eqref{eq:norm.est.j'} and the last
  estimate we see that
  \begin{equation*}
    \deltaA 
    = \max
    \Bigl\{
      \sqrt{1+\alpha_\infty}-1,
      \frac 1{\sqrt{1-\alpha_\infty}}-1,
      \Xi(1+\alpha_\infty),
      2\alpha_\infty
    \Bigr\}
  \end{equation*}
  where the last term wins, hence we chose $\deltaA=2\alpha_\infty$,
  see also \Remenum{graph.emb.in.met.space}{energy.spectral.weight}.
\end{proof}

\begin{remark}
  \label{rem:q-u.b}
  Let us comment on the error terms in $\deltaB$ and $\deltaA$:
  \begin{enumerate}
  \item
    \label{rm.q-u.b.a}
    The maximal inverse relative weight $\mu_\infty/\gamma_0$ is an
    upper bound on the inverse of the relative weight $\rho(v)=\sum_{e
      \in E_v} \gamma_e/\mu(v)\ge \gamma_0/\mu_\infty$, hence a
    necessary condition for the maximal inverse relative weight
    $\mu_\infty/\gamma_0$ to be small is that the relative weight is
    large.  For a uniform weighted graph, this condition is also
    sufficient (see \Rem{unif.weighted.graph}).

    Note that if we plug in $f=\delta_v$ into $f-J'Jf$, we see that
    \begin{equation*}
      \frac{\normsqr{f-J'Jf}}{\energy(f)}
      \ge \frac 1 {\rho(v)}.
    \end{equation*}
    In particular, if the weighted graph is uniform, then
    $1/(\gammaVar d_\infty\muVar)\cdot \mu_\infty/\gamma_0$ is a lower
    bound, hence the error estimate is \emph{optimal}.
  \item
    \label{rm.q-u.b.b}
    The quotient $1/\lambda_2$ in the second error term means that the
    cells $\Xv$ are ``spectrally small'', i.e., they are small and
    sufficiently connected (we hence expect that the energy rescaling
    factor $\tau$ is not small).  Note that a lower bound on the second
    eigenvalue is given by a Cheeger-like isoperimetric constant; and
    a large Cheeger constant means a ``well-connected'' space $\Xv$.
    In particular, if this constant is uniformly bounded and large, we
    will get a small error $1/\lambda_2$.

    This error is also optimal as one can see by plugging in the
    eigenfunction associated with $\lambda_2(\Xv,\psi_v)$ into the
    estimate.
  \item
    \label{rm.q-u.b.c}
    The error term $\alpha_\infty$ measures how far the weights
    $(\nu(v))_{v \in V}$ given by $\nu(v)=\int_X \psi_v \dd \nu$ are
    away from being a constant multiple of the vertex weights $\mu$ on
    the graph.
  \end{enumerate}
\end{remark}

\subsection{Quasi-unitary equivalence of energy forms}
\label{ssec:q-u-e.met}

Let us now show under some additional assumptions how to obtain the
quasi-unitary equivalence of the energy forms on the metric space and
the discrete graph:
\begin{theorem}
  \label{thm:q-u-e}
  Assume that $(G,\mu,\gamma)$ is uniformly embedded into
  $(X,\nu,\energy_X)$ (see \Def{graph.emb.in.met.space}).  Moreover,
  we assume that
  \begin{subequations}
    \begin{enumerate}
    \item
      \label{graph.emb.en.c}
      for each $v \in V$ there exists $\map{\Gamma_v}{\dom
        \energy_{\Xv}}\C$ and $\wtdeltaC(v)\ge 0$ such that
      \begin{equation}
        \label{eq:met.q.u.c}
        \nu_0(v)\Bigabssqr{\Gamma_v u - \dashint_X u \dd \nu_v}
        \le \deltaC(v)^2 \energy_{\Xv}(u)
      \end{equation}
      holds for all $u \in \dom \energy_X$;
    \item
      \label{graph.emb.en.d}
      for each $v \in V$ and $e \in E_v$ there exists
      $\map{\Gamma_{v,e}}{\dom \energy_{\Xv}}\C$ and $\wtdeltaD(v)\ge
      0$ such that
      \begin{equation}
        \label{eq:met.q.u.d}
        c^2\tau \energy_X(u,\psi_v) 
        = \sum_{e \in E_v} \gamma_e (\Gamma_{v,e}u-\Gamma_{v_e,e} u)
      \end{equation}
      and
      \begin{equation}
        \label{eq:met.q.u.d'}
        \tag{\ref{eq:met.q.u.d}'}
        \frac 1{c^2}\sum_{e \in E_v} \gamma_e 
        \abssqr{\Gamma_{v,e} u -\Gamma_v u}
        \le \wtdeltaD(v)^2  \;\energy_{\Xv}(u)
      \end{equation}
      hold for all $u \in \dom \energy_X$.
    \end{enumerate}
  \end{subequations}
  Then $\energy$ and $\wt \energy=\tau \energy_X$ are
  $\delta$-quasi-unitarily equivalent with isometric and energy
  rescaling factors $c>0$ and $\tau>0$, respectively, where
  \begin{equation}
    \label{eq:q-u-e.delta}
    \delta^2:=
    \max \Bigl\{
      2\alpha_\infty,\;
      \frac {2 \mu_\infty}{\gamma_0},
      \frac 2{\tau \lambda_2},\;
      \frac 2\tau \sup_{v \in V} \wtdeltaC(v)^2,\;
      \frac 4\tau \sup_{v \in V} \wtdeltaD(v)^2
    \Bigr\}.
  \end{equation}
\end{theorem}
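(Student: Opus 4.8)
**

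The plan is to prove Theorem~\ref{thm:q-u-e} by invoking Proposition~\ref{prp:q-u.b} for conditions~\eqref{eq:quasi-uni.a} and~\eqref{eq:quasi-uni.b}, and then constructing identification operators $J^1$ and $J'^1$ on the form domains that satisfy the remaining conditions~\eqref{eq:quasi-uni.c} and~\eqref{eq:quasi-uni.d} of the definition of $\delta$-quasi-unitary equivalence. Proposition~\ref{prp:q-u.b} already gives us that $J$ is $\delta$-quasi-unitary with adjoint $J'=J^*$, taking care of the $\deltaA=2\alpha_\infty$ and $\deltaB$ (with $2\mu_\infty/\gamma_0$ and $2/(\tau\lambda_2)$) contributions. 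What remains is to relate the energy forms themselves, using the extra data $\Gamma_v$ and $\Gamma_{v,e}$ supplied by hypotheses~\eqref{graph.emb.en.c} and~\eqref{graph.emb.en.d}.

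First I would define the identification operator on form domains. The natural choice is $J^1 = J$ restricted to $\HS^1 = \lsqr{V,\mu}$ (this is already bounded from $\HS^1$ to $\wt\HS^1$ since $\psi_v \in \dom\energy_X$ and the energy of $Jf$ can be controlled via~\eqref{eq:energy.local} and the partition-of-unity structure — here $\energy_X(Jf)$ telescopes into edge differences $f(v)-f(v_e)$ weighted by $\gamma_e$ via~\eqref{eq:met.q.u.d}). For $J'^1 \colon \wt\HS^1 \to \HS^1$ the obvious candidate $J'$ does not map into the energy domain with good bounds, so instead I would set $(J'^1 u)(v) := \Gamma_v u$ — using the functionals $\Gamma_v$ precisely to produce a ``better'' discretisation that sees the energy. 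Condition~\eqref{eq:met.q.u.c} then measures exactly how far $J'^1 u$ deviates from $J'u$, contributing the $\deltaC$ term: $\normsqr[\HS]{J'u - J'^1 u} = \sum_v \mu(v)\abssqr{\Gamma_v u - \avint_X u\,\dd\nu_v} \le \sum_v \tfrac{\mu(v)}{\nu_0(v)}\deltaC(v)^2 \energy_{\Xv}(u) = c^2 \sum_v \deltaC(v)^2\energy_{\Xv}(u)$, which by~\eqref{eq:energy.local} is bounded by $2c^2 \sup_v\deltaC(v)^2 \energy_X(u) = (2/\tau)\sup_v\deltaC(v)^2\, \wt\energy(u)$ up to the $c^2$ bookkeeping absorbed in the quasi-unitary normalisation.

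The core of the argument is verifying the energy-form compatibility condition~\eqref{eq:quasi-uni.d}, namely that $\abs{\wt\energy(J^1 f, u) - \energy(f, J'^1 u)}$ is small relative to the energy norms. Expanding $\wt\energy(J^1 f, u) = c\tau\sum_v \conj{f(v)}\,\energy_X(\psi_v, u)$ and applying~\eqref{eq:met.q.u.d} rewrites each $c^2\tau\,\energy_X(u,\psi_v)$ as $\sum_{e\in E_v}\gamma_e(\Gamma_{v,e}u - \Gamma_{v_e,e}u)$; meanwhile $\energy(f,J'^1 u) = \sum_{e\in E}\gamma_e\,\conj{(df)_e}\,(d(\Gamma_\bullet u))_e = \sum_{e}\gamma_e\conj{(df)_e}(\Gamma_{\bd_+e}u - \Gamma_{\bd_-e}u)$. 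After a summation-by-parts reorganisation the difference collapses to $\sum_v \conj{f(v)}\sum_{e\in E_v}\gamma_e(\Gamma_{v,e}u - \Gamma_v u)$ (the $\Gamma_{v_e,e}$ terms regroup into $\Gamma_{v'}$-centred sums), and Cauchy–Schwarz against~\eqref{eq:met.q.u.d'} bounds this by $\bigl(\sum_v \mu(v)\abssqr{f(v)}\bigr)^{1/2}\bigl(\sum_v \tfrac{1}{\mu(v)}\bigl(\sum_{e\in E_v}\gamma_e\abssqr{\Gamma_{v,e}u-\Gamma_v u}\bigr) \cdot (\#E_v)\bigr)^{1/2}$ — wait, more carefully one splits using $\abssqr{\sum_{e}\gamma_e(\cdots)} \le (\sum_e \gamma_e)(\sum_e\gamma_e\abssqr{\cdots})$ and the relative-weight bound, yielding $\norm[\HS]{f}$ times a factor controlled by $\sup_v \wtdeltaD(v)^2$ and $\energy_X(u)$. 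This produces the $4/\tau\cdot\sup_v\wtdeltaD(v)^2$ term (the $4$ rather than $2$ accounting for the double-counting of edges over vertices together with the $\abssqr{a+b}\le 2\abssqr a + 2\abssqr b$ split). Collecting all five contributions and taking the maximum gives~\eqref{eq:q-u-e.delta}.

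The main obstacle will be the bookkeeping in the summation-by-parts step for condition~\eqref{eq:quasi-uni.d}: one must carefully track which $\Gamma_{v,e}$ pairs with which vertex, confirm that the ``crossed'' terms $\Gamma_{v_e,e}u$ indeed reassemble into the $\energy(f,J'^1 u)$ sum after relabelling edges by their endpoints, and then verify that only the diagonal defects $\Gamma_{v,e}u - \Gamma_v u$ survive. The appearance of the constant $c^2$ throughout (from the definitions of $J$ and the normalisation $\nu_0(v)/\mu(v)=c^{-2}$) must be threaded consistently so that it cancels against the $c$ in $J^1 f = c\sum_v f(v)\psi_v$ and the $1/c$ in $J'$; this is routine but error-prone. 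Everything else — boundedness of $J^1$, $J'^1$, and the near-adjointness~\eqref{eq:quasi-uni.c} relating $J^1$ to $(J')^*$ on $\HS^1$ — follows from Proposition~\ref{prp:q-u.b}\,\itemref{q-u.b.c} together with the estimate on $\norm[\HS]{J'u - J'^1 u}$ already obtained, since $J^1 = J$ and $J'^1$ differs from $J' = J^*$ only by the $\deltaC$-small correction.
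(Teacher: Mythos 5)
Your proposal follows the paper's proof essentially verbatim: $J^1=J$, the modified discretisation built from $\Gamma_v$, the $\deltaC$-estimate via \eqref{eq:met.q.u.c} and \eqref{eq:energy.local}, and the summation-by-parts plus Cauchy--Schwarz argument against \eqref{eq:met.q.u.d'} for the form compatibility. The only slip is that $J^{\prime 1}$ must carry the factor $1/c$, i.e.\ $(J^{\prime1}u)(v)=\frac1c\Gamma_v u$, so that $\mu(v)/c^2=\nu_0(v)$ makes the stray $c^2$ in your $\deltaC$-estimate disappear (this matters since $c$ is unbounded in the applications), but you flag exactly this bookkeeping yourself and the argument is otherwise the paper's.
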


\begin{remarks}
  \label{rem:q-u-e}
  \indent
  \begin{enumerate}
  \item
    \label{cap.zero}
    If points have positive capacity in $X$, e.g.\ if $X$ is a
    \pcf fractal (see~\cite{post-simmer:pre17a}) or a metric graph
    (see~\Sec{met.graphs}), one can choose
    \begin{equation*}
      \Gamma_vu=\Gamma_{v,e}u = u(v),
    \end{equation*}
    and hence $\wtdeltaD(v)=0$.  In this case,~\eqref{eq:met.q.u.c}
    follows from a H\"older estimate of $u$, and $\wtdeltaC(v)^2$ is
    of order as the diameter of $\Xv$ (for suitable spaces $X$).

  \item The condition in~\eqref{eq:met.q.u.c} means that the
    ``evaluation'' $\Gamma_vu$ is close to the (weighted) average on
    $\Xv$.  In particular, we conclude from~\eqref{eq:met.q.u.c} that
    $\Gamma_v \1_{\Xv}=1$, as $\dashint_X \1_{\Xv} \dd \nu_v=1$ and
    $\energy_{X_v}(\1_{\Xv})=0$ (see
    \Defenum{graph.emb.in.met.space}{graph.emb.in.met.space.c}).

  \item The choice $\Gamma_vu=\avint_X u \dd \nu_v$ is possible, but
    in our applications bad: Although then $\deltaC(v)=0$, one obtains
    $\Gamma_{v,e}u=u(v)$ from~\eqref{eq:met.q.u.d} in the metric graph
    case.  But then the estimate on $u(v)-\avint_X u \dd \nu_v$
    appears in~\eqref{eq:met.q.u.d'} with the edge weight $\gamma_e$
    as factor: this weight is generally \emph{large}, and the
    resulting error term $\deltaD(v)$ will not be small (compared to
    the choice $\Gamma_v u=\Gamma_{v,e}u=u(v)$ as in
    \itemref{cap.zero}, where we have to estimate $u(v)-\avint_X u \dd
    \nu_v$ together with the \emph{small} factor $\nu_0(v)$).  A
    similar remark holds for graph-like manifolds and fractals.

  \item The conditions~\eqref{eq:met.q.u.d}--\eqref{eq:met.q.u.d'} can
    be understood as follows: It is not hard to see that 
    \begin{equation*}
      \map \Gamma {\dom \energy_X}{\lsqr{V,\mu}}
      \qquadtext{with}
      \Gamma u=(\Gamma_v u)_{v \in V}
    \end{equation*}
    is bounded using~\eqref{eq:met.q.u.c} and~\eqref{eq:energy.local}.
    In particular, $(\Gamma,\lsqr{V,\mu})$ is a (generalised) boundary
    pair in the sense of~\cite{post:16}.
    
    By~\eqref{eq:met.q.u.d'}, $\Gamma_v u$ and $\Gamma_{v,e} u$ are
    close to each other.  Assume here for simplicity that $\Gamma_v u
    = \Gamma_{v,e} u$ as in~\itemref{cap.zero}.  If $\psi_v$ is
    \emph{harmonic}, i.e., if $\psi_v$ minimises $\energy_X(\psi_v)$
    among all functions $u \in \dom \energy_X$ with $\Gamma_vu=1$,
    then $\energy_X(u,\psi_v)=\iprod[\lsqr{V,\mu}]{\Lambda_0\Gamma
      u}{\delta_v}$, where $\Lambda_0$ is the Dirichlet-to-Neumann
    operator of the boundary pair $(\Gamma,\lsqr{V,\mu})$ (at the
    spectral value $0$) and where $\delta_v$ is the Kronecker delta,
    see~\cite{post:16}.  In particular,
    conditions~\eqref{eq:met.q.u.d}--\eqref{eq:met.q.u.d'} with
    $\wtdeltaD(v)=0$ mean that $c^2\tau \Lambda_0 =
    \Delta_{(G,\mu,\gamma)}$, i.e., that $c^2\tau \Lambda_0$ equals
    the discrete Laplacian $\Delta_{(G,\mu,\gamma)}$ given
    by~\eqref{eq:disc.lapl}.  Note that this is the operator
    associated with the form $\energy$ on $(G,\mu,\gamma)$.  In
    particular, the discrete energy form $\energy$ equals the
    (rescaled) Dirichlet-to-Neumann form of the boundary pair.  The
    general case $\wtdeltaD(v)>0$ is just a small deviation from this
    situation.
  \end{enumerate}
\end{remarks}

\begin{proof}[Proof of \Thm{q-u-e}]
  We have already shown in \Prp{q-u.b} that $J$ is
  $\max\{\deltaA,\deltaB\}$-quasi-unitary with adjoint $J'$,
  explaining the first three members in the definition of $\delta$.

  For the remaining conditions of quasi-unitary equivalence, we have
  to define the identification operators on the level of the energy
  forms.  Namely, we set
  \begin{equation*}
    \map{J^1} {\HS=\lsqr{V,\mu}} {\wt \HS^1=\dom \energy_X}, \qquad J^1f =Jf,
  \end{equation*}
  and this is well-defined as $\psi_v \in \dom \wt \energy$ and $Jf \in
  \dom \wt \energy$ for any $f \in \HS=\lsqr{V,\mu}$.  For the opposite
  direction, we define
  \begin{equation*}
    \map{J^{\prime 1}}{\wt\HS^1}{\HS=\lsqr{V,\mu}} \qquadtext{by}
    (J^{\prime 1} u)(v) := \frac 1 c (\Gamma_vu), \quad v \in V.
  \end{equation*}
  The first condition of~\eqref{eq:quasi-uni.c} is trivially fulfilled, and
  for the second, we estimate
  \begin{align*}
    \normsqr[\lsqr{V,\mu}] {J'u-J^{\prime 1}u}
    &= \sum_{v \in V} \nu_0(v)
      \Bigabssqr{\dashint_X u \dd \nu_v - \Gamma_vu}\\
    &\le \sup_{v \in V} \wtdeltaC(v)^2
        \sum_{v \in V} \energy_{\Xv}(u)
    \le \frac 2{\tau} \sup_{v \in V} \wtdeltaC(v)^2 \wt \energy(u).
  \end{align*}
  using~\eqref{eq:def.nu.decomp} for the first equality, 
  using~\eqref{eq:met.q.u.c} and~\eqref{eq:energy.local}.

  We now check estimate~\eqref{eq:quasi-uni.d}: we have
  \begin{align*}
    \energy(f, J^{\prime 1} u) - \wt \energy(Jf,u)
    &=\frac
    1c \sum_{e \in E} (df)_e (d\Gamma \conj u)_e \gamma_e
    - c \sum_{v \in V} f(v) \wt \energy(\psi_v,u)\\
    &=\frac 1c \sum_{e \in E}
    \gamma_e (df)_e\bigl(
      (\Gamma_{\bd_+e} \conj u - \Gamma_{\bd_+e,e} \conj u) -
      (\Gamma_{\bd_-e} \conj u - \Gamma_{\bd_-e,e} \conj u)
    \bigr)
  \end{align*}
  using~\eqref{eq:met.q.u.d}, where $\Gamma u=(\Gamma_vu)_{v \in V}$
  and $(dh)_e=h(\bd_+e)-h(\bd_-e)$, and where we use the reordering
  $\sum_{v \in V} \sum_{e \in E_v} = \sum_{e \in E} \sum_{v=\bd_\pm
    e}$.  In particular,
  \begin{align*}
    \bigabssqr{\energy(f, J^{\prime 1} u) - \wt \energy(Jf,u)}
    &\leCS \frac 2{c^2} \energy(f)
    \sum_{e \in E} 
       \gamma_e \sum_{v=\bd_\pm e} \abssqr{\Gamma_v u - \Gamma_{v,e} u}\\
    &= \frac 2{c^2} \energy(f)
    \sum_{v \in V} 
       \sum_{e \in E_v} \gamma_e \abssqr{\Gamma_v u - \Gamma_{v,e} u}\\
    &\le 2\energy(f)
      \sum_{v \in V}
      \wtdeltaD(v)^2 \energy_{\Xv}(u)
    \le \frac 4\tau \sup_{v \in V} \wtdeltaD(v)^2
      \energy(f) \wt \energy(u)
  \end{align*}
  using \eqref{eq:met.q.u.d'} for the second last estimate and
  again~\eqref{eq:energy.local} for the last one.
\end{proof}

%
\section{Convergence of energy forms on metric and discrete graphs}
\label{sec:met.graphs}
%
In this section, the metric measure space $X$ is a metric graph,
called $\MetGr$ here.

\subsection{Metric graphs}
\label{ssec:met.graphs}
We briefly introduce the notion of a metric graph here.  More details
on metric graphs can be found in~\cite{post:12}
or~\cite{berkolaiko-kuchment:13}.  For a metric graph, we need a
discrete graph $(V,E,\bd)$ together with a function $\map \ell E
{(0,\infty)}$.  We will interpret $\ell_e>0$ as the \emph{length} of
an edge $e$.  A \emph{metric graph} $\MetGr$ is now given by
$(G,\ell)$ and can be defined as the topological space
\begin{equation*}
  \MetGr := \bigdcup_{e \in E} \Me / \omega,
\end{equation*}
where $\Me := [0,\ell_e]$, and where $\map \omega {\bigdcup_e \{0,
  \ell_e\}} V$ identifies $0 \in \Me$ with the initial vertex $\bd_-e
\in V$ and $\ell_e \in \Me$ with the terminal vertex $\bd_+e \in V$.
The topological space $\MetGr$ is a metric space by choosing as
distance of two points $x,y \in \MetGr$ the length of the (not
necessarily unique) shortest path $\map{\gamma_{x,y}}{[0,a]} \MetGr$
in $\MetGr$ realising the distance $d(x,y)=a$.  Moreover, we have a
canonical measure $\nu$ on $\MetGr$, given by the sum of the Lebesgue
measures $\dd x_e$ on each interval $\Me$.

The Hilbert space is here
\begin{equation*}
  \wt \HS = \Lsqr{\MetGr, \nu}, \qquad
  \normsqr[\Lsqr{\MetGr,\nu}] u
  = \sum_{e \in E} \int_0^{\ell_e} \abssqr{u_e(x)} \dd x,
\end{equation*}
where we consider $u=(u_e)_{e \in E}$ with $\map {u_e}{[0,\ell_e]} \C$.
The energy form on $\MetGr$ is
\begin{equation}
  \label{eq:met.graph.energy}
  \energy_\MetGr(u)=\normsqr[\Lsqr{\MetGr,\nu}] {u'}
  = \sum_{e \in E} \int_0^{\ell_e} \abssqr{u'_e(x)} \dd x
\end{equation}
with
\begin{equation*}
  \wt\HS^1=\Sob \MetGr 
  =\Cont \MetGr \cap \bigoplus_{e \in E} \Sob{[0,\ell_e]},
\end{equation*}
i.e., $u \in \Sob \MetGr$ if and only if $u_e \in \Sob{[0,\ell_e]}$,
$\sum_e \normsqr[{\Sob{[0,\ell_e]}}] {u_e} < \infty$ (the latter
condition is only necessary if $E$ is not finite) and
\begin{equation*}
  u_e(v):=
  \begin{cases}
    u_e(0), & v=\bd_-e,\\
    u_e(\ell_e), & v=\bd_+e
  \end{cases}
  \qquad
  \text{is independent of $e \in E_v$}
\end{equation*}
for all $v \in V$.  Note that the corresponding operator
$\Delta_{\MetGr}$ acts as $(\Delta_{\MetGr}f)_e=-f_e''$ on each edge
with $f \in \bigoplus_e \Sob[2]{M_e}$, where $f$ is continuous at the
vertices and where $\sum_{e \in E_v} f_e'(v)=0$.  Here, $f_e'(v)$
denotes the \emph{inwards} derivative of $f$ at $v$ along $e \in E_v$.
This operator is called the \emph{standard} (or by many authors also)
\emph{Kirchhoff Laplacian}.


\subsection{Quasi-unitary equivalence of metric and discrete graphs}
\label{ssec:q-u-e.mg}
Let us first specify the partition of unity related to the graph
structure (see \Def{graph.emb.in.met.space}).  Let $\map
{\psi_v}\MetGr {[0,1]}$ be the function, affine linear on each edge
$M_e$, such that $\psi_v(v)=1$ and $\psi_v(v')=0$ if $v' \in V
\setminus \{v\}$.  Then the vertex neighbourhood (i.e., the support of
$\psi_v$) is
\begin{equation*}
  \Mv = \supp \psi_v = \bigdcup_{e \in E_v} \Me/\omega,
\end{equation*}
i.e., the star graph around $v$ consisting of all edges adjacent with
$v$.  Note that with this definition, $\intrMv \cap \intrMv[v']
\ne\emptyset$ if and only if $v \sim v'$ and that the edge
neighbourhood $\Me = \Mv[\bd_-e] \cap \Mv[\bd_+e]$ is the edge (as
interval) $\Me$ as already defined above.  Moreover, the vertex core
is $\cMv=\psi_v^{-1}\{1\}=\{v\}$, i.e., a point in $\MetGr$.

The vertex weights here are given by
\begin{equation}
  \label{eq:vx.meas.mg}
  \nu(v) = \int_\MetGr \psi_v \dd x
  = \sum_{e \in E_v} \frac 1{\ell_e}\int_0^{\ell_e} x \dd x
  =\frac 12 \sum_{e \in E_v} \ell_e
  = \frac 12 \nu(\Mv).
\end{equation}
The following definition assures that the weights $\mu$ and $\nu$ are
compatible:

\begin{definition}
  \label{def:mg.comp}
  We say that a metric graph $\MetGr$ and a weighted discrete graph
  $(G,\mu,\gamma)$ are \emph{compatible}, if the underlying discrete
  graphs are the same and if there exists $c>0$ and $\tau>0$ such that
  the length function $\ell$ and the weights $\mu$ and $\gamma$ fulfil
  \begin{subequations}
    \begin{equation}
      \label{eq:mu.nu.comp.mg}
      \frac 1{2\mu(v)}\sum_{e \in E_v} \ell_e
      =\frac 1{c^2}
    \end{equation}
    for all $v \in V$ and
    \begin{equation}
      \label{eq:ell.tau}
      \ell_e \gamma_e = c^2 \tau
    \end{equation}
  \end{subequations}
  for all $e \in E$.
\end{definition}

\begin{remark}
  \label{rem:mg.comp}
  Let $\MetGr$ and $(G,\mu,\gamma)$ be compatible, then the following holds:
  \begin{enumerate}
  \item 
    \label{mg.comp.a}
    The measures $\mu$ and $\nu$ are compatible in the sense of
    \Def{graph.emb.in.met.space}.  Moreover, $\nu_0(v)=\nu(v)$ and
    $\alpha_\infty=0$, see~\eqref{eq:def.alpha}).

  \item 
    \label{mg.comp.b}
    Condition~\eqref{eq:ell.tau} is dictated
    by~\eqref{eq:why.ell.tau}.  Moreover, a lower bound $\gamma_0
    :=\inf_{e \in E}\gamma_e>0$ implies an upper bound
    \begin{equation}
      \label{eq:ell.infty}
      \ell_\infty:=\sup_{e \in E} \ell_e = \frac {c^2\tau}{\gamma_0}<\infty.
    \end{equation}

  \item 
    \label{mg.comp.c}
    There is still some freedom in the choice of the parameters
    $\ell_e$, $\tau$ and $c$ (and the parameters $\mu(v)$ and
    $\gamma_e$ from the weighed graphs), as they have to fulfil only
    two equations~\eqref{eq:mu.nu.comp.mg}--\eqref{eq:ell.tau}.
    Given $\ell_e$, $\mu(v)$ and $\gamma_e$, we conclude
    \begin{align}
      \label{eq:c.tau.mg}
      c&=\Bigl( \frac 1{2\mu(v)}\sum_{e \in E_v} \ell_e \Bigr)^{-1/2}
      \qquad\text{and} &\tau &= \frac 1{2\mu(v)}\sum_{e \in E_v}
      \ell_e^2\gamma_e.
    \end{align}

  \item 
    \label{mg.comp.d}
    Note that~\eqref{eq:mu.nu.comp.mg} and~\eqref{eq:ell.tau} give a
    restriction on the vertex and edge weights of the weighted graph,
    namely that
    \begin{equation}
      \label{eq:cond.weighted.graph}
      \frac1{2\mu(v)} \sum_{e \in E_v} \frac 1{\gamma_e}
      = \frac1{c^4 \tau}
    \end{equation}
    is independent of $v \in V$.  This condition looks a bit
    surprising --- a natural condition for a weighted graph would be
    that the \emph{relative weight} is independent of $v \in V$
    (see~\eqref{eq:rel.weight}); in this case, the discrete Laplacian
    and a corresponding weighted adjacency operator are related by an
    affine linear transformation.
  \end{enumerate}
\end{remark}
For Sobolev spaces of order $1$ on one-dimensional spaces, the
evaluation in a point is well-defined, hence $u \mapsto u(v)$ makes
sense for $u \in \Sob \MetGr$ (see e.g.~\cite[Ch.~2]{post:12}).  We
therefore define
\begin{equation*}
  \Gamma_v u := u(v).
\end{equation*}
Let us now check~\eqref{eq:met.q.u.c},
\eqref{eq:met.q.u.d}--\eqref{eq:met.q.u.d'}:
\begin{lemma}
  \label{lem:av.mg}
  Let $u \in \Sob \MetGr$, then~\eqref{eq:met.q.u.c} holds, i.e.,
  \begin{equation*}
    \nu(v)\Bigabssqr{\dashint_{\Mv} u \dd \nu_v - \Gamma_vu}
    \le \wtdeltaC(v)^2 \;\energy_{\Mv}(u)
    \qquadtext{with}
    \wtdeltaC(v)^2:=\frac {\ell_\infty^2}2.
  \end{equation*}
  If we also have~\eqref{eq:ell.tau} then~\eqref{eq:met.q.u.d}
  and~\eqref{eq:met.q.u.d'} hold with $\Gamma_{v,e} u = \Gamma_v u$
  and $\wtdeltaD(v)=0$.
\end{lemma}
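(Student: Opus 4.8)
The statement has two parts. The first is a Poincaré–Wirtinger type inequality on the star graph $\Mv$ against the weighted measure $\nu_v = \psi_v\,\nu$; the second is an identity expressing $c^2\tau\,\energy_\MetGr(u,\psi_v)$ as a discrete divergence, together with the trivial observation that $\wtdeltaD(v)=0$ when we take $\Gamma_{v,e}u = \Gamma_v u = u(v)$ (so that (\ref{eq:met.q.u.d'}) reads $0\le 0$). I would handle the two parts separately.

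For (\ref{eq:met.q.u.c}): since $\Gamma_v u = u(v)$ and $u$ is continuous on $\MetGr$, on the star $\Mv = \bigdcup_{e\in E_v}\Me/\omega$ I parametrise each edge $e\in E_v$ as $[0,\ell_e]$ with the endpoint $0$ glued to $v$, so $\psi_v(x) = 1 - x/\ell_e$ on that edge and $u(v) = u_e(0)$. The elementary starting point is that for each $e$ and each $x\in[0,\ell_e]$ one has $u_e(x) - u_e(0) = \int_0^x u_e'(t)\,dt$, hence by Cauchy–Schwarz $|u_e(x)-u(v)|^2 \le x\int_0^{\ell_e}|u_e'|^2 \le \ell_e\,\energy_{\Me}(u)$. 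Now $\nu(v)\bigl|\dashint_{\Mv}u\,d\nu_v - u(v)\bigr|^2 = \nu(v)\bigl|\dashint_{\Mv}(u-u(v))\,d\nu_v\bigr|^2 \le \nu(v)\dashint_{\Mv}|u-u(v)|^2\,d\nu_v = \int_{\Mv}|u-u(v)|^2\,d\nu_v$ by Jensen. Splitting the last integral over the edges of the star and inserting the pointwise bound together with $\int_0^{\ell_e}\psi_v\,dx = \ell_e/2$ gives
\begin{equation*}
  \int_{\Mv}\abssqr{u-u(v)}\dd\nu_v
  \le \sum_{e\in E_v}\frac{\ell_e}2\,\ell_e\,\energy_{\Me}(u)
  \le \frac{\ell_\infty^2}{2}\sum_{e\in E_v}\energy_{\Me}(u)
  = \frac{\ell_\infty^2}{2}\,\energy_{\Mv}(u),
\end{equation*}
which is exactly the claim with $\wtdeltaC(v)^2 = \ell_\infty^2/2$. (A sharper bound using $\int_0^{\ell_e}x\psi_v(x)\,dx = \ell_e^2/6$ is available but the stated constant suffices.)

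For (\ref{eq:met.q.u.d})–(\ref{eq:met.q.u.d'}): by definition $\energy_\MetGr(u,\psi_v) = \sum_{e\in E}\int_0^{\ell_e}u_e'\,\overline{\psi_{v,e}'}\,dx$, and $\psi_{v,e}' \ne 0$ only for $e\in E_v$, where $\psi_{v,e}' = \mp 1/\ell_e$ with sign depending on whether $v = \bd_-e$ or $v=\bd_+e$. Integrating $\int_0^{\ell_e}u_e'\,dx = u_e(\bd_+e) - u_e(\bd_-e)$, one gets $\energy_\MetGr(u,\psi_v) = \frac1{\ell_e}\cdot(\pm(u(v) - u(v_e)))$ summed appropriately, i.e. $\energy_\MetGr(u,\psi_v) = \sum_{e\in E_v}\frac1{\ell_e}\bigl(u(v) - u(v_e)\bigr)$. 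Using the compatibility relation (\ref{eq:ell.tau}), $1/\ell_e = \gamma_e/(c^2\tau)$, so $c^2\tau\,\energy_\MetGr(u,\psi_v) = \sum_{e\in E_v}\gamma_e\bigl(u(v) - u(v_e)\bigr) = \sum_{e\in E_v}\gamma_e(\Gamma_{v,e}u - \Gamma_{v_e,e}u)$ with the choice $\Gamma_{v,e}u = \Gamma_v u = u(v)$, which is (\ref{eq:met.q.u.d}); and (\ref{eq:met.q.u.d'}) holds trivially since $\Gamma_{v,e}u - \Gamma_v u = 0$, so $\wtdeltaD(v)=0$.

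The only mild subtlety — hardly an obstacle — is the bookkeeping of orientations in the star graph: the endpoint of $\Me$ identified with $v$ may be either $0$ or $\ell_e$, so one must be careful that $\psi_{v,e}$ decreases away from $v$ in either case, which flips the sign of $\psi_{v,e}'$ in tandem with the sign in $u_e(\bd_+e)-u_e(\bd_-e)$ so that the product always produces $u(v) - u(v_e)$. Everything else is a direct one-dimensional Poincaré estimate plus the algebraic substitution $1/\ell_e = \gamma_e/(c^2\tau)$.
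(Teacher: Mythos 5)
Your proof is correct and follows essentially the same route as the paper: a fundamental-theorem-of-calculus plus Cauchy--Schwarz/Jensen estimate on the star $\Mv$ (yielding the constant $\ell_\infty^2/2$ via $\int_0^{\ell_e}\psi_v\,\dd x=\ell_e/2$), and for the second part the computation of $\energy_\MetGr(u,\psi_v)$ using the piecewise affine $\psi_v$ together with the substitution $\ell_e\gamma_e=c^2\tau$, so that $\Gamma_{v,e}u=\Gamma_vu=u(v)$ gives $\wtdeltaD(v)=0$. The only cosmetic difference is that you apply Cauchy--Schwarz pointwise before averaging, whereas the paper averages first and then applies Cauchy--Schwarz, which leads to the same bound.
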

\begin{proof}
  Let $v \in V$ and $e \in E_v$.  Assume that $v$ is the terminal
  vertex for all $e \in E_v$, i.e., $v$ corresponds to $\ell_e \in
  \Me$ for all $e \in E_v$.  Then $\dd \nu_v(x)=(x/\ell_e)\dd x$ on
  $\Me$.  The proof of the first assertion is an application of the
  fundamental theorem of calculus, namely we have $u_e(x)-u(v) =
  \int_{\ell_e}^x u_e'(t) \dd t$.  After integration with respect to
  $x$ and the probability measure $\avint_{\Mv} \dd \nu_v$ we obtain
  \begin{align*}
    \dashint_{\Mv} u \dd \nu_v - u(v)
    = \dashint_{\Mv} (u  - u(v)) \dd \nu_v
    &= \frac 1{\nu(v)} \sum_{e \in E_v} \int_0^{\ell_e} (u_e(x)  - u(v)) 
          \dd\nu_v(x)\\
    &= \frac 1{\nu(v)} \sum_{e \in E_v}  \int_0^{\ell_e}
                     \int_{\ell_e}^x u_e'(t) \dd t \dd \nu_v(x),
  \end{align*}
  hence
  \begin{align*}
    \Bigabssqr{\dashint_{\Mv} u \dd \nu_v - u(v)}
    &\le \frac 1{\nu(v)^2} 
       \Bigl(\sum_{e \in E_v} \int_0^{\ell_e} \int_0^{\ell_e} \abs{u_e'(t)} \dd t 
                            \dd \nu_v(x)\Bigr)^2\\
    &= \frac1{\nu(v)^2}
        \Bigl(\sum_{e \in E_v} \int_0^{\ell_e} \frac{\ell_e}2 \abs{u_e'(t)} \dd t 
        \Bigr)^2\\
    &\leCS \frac1{4\nu(v)^2} \Bigl(\sum_{e \in E_v} \ell_e^3 \Bigr)
           \int_{M_v} \abssqr{u'} \dd \nu
    \le \frac{\max_{e \in E_v} \ell_e^2}{2\nu(v)} \; \energy_{\Mv} (u)
  \end{align*}
  using $\dd \nu_v(x)=(x/\ell_e)\dd x$ for the equality
  and~\eqref{eq:vx.meas.mg} for the last inequality.

  For the validity of the last assertion we calculate 
  \begin{equation}
    \label{eq:why.ell.tau}
    c^2 \tau \energy_\MetGr(u,\psi_v)
    = \sum_{e \in E_v} \frac {c^2 \tau}{\ell_e} \int_0^{\ell_e} u_e'(x) \dd x
    = \sum_{e \in E_v} \gamma_e \bigl(\Gamma_v u - \Gamma_{v_e} u \bigr)
  \end{equation}
  using $(\psi_v')_e=1/\ell_e$ for the first and~\eqref{eq:ell.tau}
  for the second equality.  Note that $v$ corresponds to $\ell_e$ and
  $v_e$ to $0$.  In particular, we can choose $\Gamma_{v,e} u =
  \Gamma_v u$, hence $\deltaD(v)=0$.
\end{proof}
We need a lower bound on the second eigenvalue:
\begin{lemma}
  \label{lem:2nd.ev.mg}
  We have 
  \begin{equation*}
    \frac 2{\ell_\infty^2}
    \le \lambda_2(\Mv,\psi_v).
  \end{equation*}
\end{lemma}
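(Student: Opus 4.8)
The plan is to bound $\lambda_2(\Mv,\psi_v)$ from below by establishing a Poincar\'e-type inequality on the star graph $\Mv$ and then reading off the eigenvalue estimate from the min-max principle, running the argument of the proof of \Lem{2nd.ev} in the opposite direction. Since the first eigenfunction of the weighted problem on $(\Mv,\psi_v)$ is the constant $\1_{\Mv}$ (see \Defenum{graph.emb.in.met.space}{graph.emb.in.met.space.c}), the variational characterisation of the second eigenvalue gives
\[
  \frac 1{\lambda_2(\Mv,\psi_v)}
  = \sup_u \frac{\int_{\Mv}\Bigabssqr{u - \avint_{\Mv} u \dd \nu_v}\dd \nu_v}
               {\energy_{\Mv}(u)},
\]
where the supremum runs over all non-constant $u \in \dom \energy_{\Mv}$ (this uses $\normsqr[\Lsqr{\Mv,\psi_v}]{\1_{\Mv}}=\nu(v)$ exactly as in the proof of \Lem{2nd.ev}). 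Hence it suffices to prove the estimate $\int_{\Mv}\Bigabssqr{u - \avint_{\Mv} u \dd \nu_v}\dd \nu_v \le \tfrac12\ell_\infty^2\,\energy_{\Mv}(u)$ for all $u \in \dom \energy_{\Mv}$.

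For this I would first note that the weighted average minimises the $\Lsqr{\Mv,\psi_v}$-distance to the constants, so the left-hand side is bounded by $\int_{\Mv}\abssqr{u-u(v)}\dd\nu_v$, where $u(v)$ is well-defined because functions in $\dom\energy_{\Mv}$ are continuous on the compact star graph $\Mv$. As in the proof of \Lem{av.mg}, parametrise each edge $e \in E_v$ by arc length $s \in [0,\ell_e]$ starting at $v$, so that $\psi_v(s)=1-s/\ell_e$ and $\dd\nu_v(s)=(1-s/\ell_e)\dd s$ on $\Me$ (whether $v$ is the initial or the terminal vertex of $e$ is irrelevant in this coordinate). The fundamental theorem of calculus gives $u_e(s)-u(v)=\int_0^s u_e'(t)\dd t$, hence by Cauchy--Schwarz $\abssqr{u_e(s)-u(v)}\le \ell_e\int_0^{\ell_e}\abssqr{u_e'(t)}\dd t$; integrating against $\dd\nu_v$ over $\Me$ and using $\int_0^{\ell_e}\dd\nu_v=\ell_e/2$ (cf.~\eqref{eq:vx.meas.mg}) yields
\[
  \int_{\Me}\abssqr{u_e-u(v)}\dd\nu_v \le \frac{\ell_e^2}2\int_0^{\ell_e}\abssqr{u_e'(t)}\dd t .
\]
Summing over $e \in E_v$ and estimating $\ell_e \le \ell_\infty$ gives the claimed Poincar\'e inequality, and therefore $\lambda_2(\Mv,\psi_v) \ge 2/\ell_\infty^2$.

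I do not expect a genuine obstacle here; the only mild points are that it is the weighted average rather than the value $u(v)$ that appears in the eigenvalue estimate (handled by the minimisation property above) and the edge orientation (which disappears once one works in the arc-length coordinate centred at $v$). One could alternatively route through \Lem{2nd.ev.alt} and bound the unweighted first non-zero Neumann eigenvalue $\lambda_2(\Mv)$ of the star graph, but the direct estimate above is self-contained and already produces the stated constant; in fact a sharper use of the weight $\psi_v$ would give $\ell_\infty^2/6$ in place of $\ell_\infty^2/2$, which is more than enough since only an upper bound on $1/\lambda_2(\Mv,\psi_v)$ is needed (and the value $\tfrac12\ell_\infty^2$ matches $\wtdeltaC(v)^2$ from \Lem{av.mg}).
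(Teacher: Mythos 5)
Your argument is correct, but it is a genuinely different route from the paper's. The paper first passes to the \emph{unweighted} star-graph eigenvalue via \Lem{2nd.ev.alt}: it observes that the eigenfunctions of the weighted problem solve $-u_e''=\lambda x u_e$ on each edge, hence are (rescaled) Airy functions, hence continuous and in $\Lsqr{\Mv}$, so $\lambda_2(\Mv,\psi_v)\ge\lambda_2(\Mv)$; it then bounds $\lambda_2(\Mv)$ from below by a scaling/monotonicity argument in $\ell_e$, using the explicit second eigenvalue $\pi^2/4\ge 2$ of the star graph with unit edge lengths. You instead prove a weighted Poincar\'e inequality directly: $\int_{\Mv}\bigabssqr{u-\avint_{\Mv}u\dd\nu_v}\dd\nu_v\le\int_{\Mv}\abssqr{u-u(v)}\dd\nu_v\le\tfrac12\ell_\infty^2\,\energy_{\Mv}(u)$, using the minimising property of the weighted mean, the fundamental theorem of calculus from the central vertex, Cauchy--Schwarz, and $\int_{\Me}\dd\nu_v=\ell_e/2$, and then read off the bound from the min--max principle (strictly speaking the inequality is proved on the core $\dom\energy_{\Mv}$ and extends to the closed form's domain by continuity, a point you pass over silently, as does the paper). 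Your computation is correct and even yields the stated constant $2$ directly (and $6$ with the sharper use of the weight, as you note). What you gain is a shorter, self-contained argument that never needs \Lem{2nd.ev.alt} or the Airy-function regularity discussion; what the paper's detour buys is a template that carries over verbatim to the graph-like manifold setting (\Lem{2nd.ev.mfd} and \Prp{2nd.ev.mfd}), where no such explicit one-dimensional estimate is available, plus the marginally better constant $\pi^2/4$.
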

\begin{proof}
  Eigenfunctions of the weighted problem on an edge are solutions of
  the ODE $-u_e''(x) = \lambda x u_e(x)$, where $x \in [0,\ell_e]$,
  and $0$ corresponds to the vertex of degree $1$ on the star graph.
  Such eigenfunctions are linear combinations of (rescaled) Airy
  functions, and hence continuous also at $x=0$.  In particular, the
  solutions are also in the unweighted Hilbert space $\Lsqr \Xv$, and
  we obtain the estimate $\lambda_2(\Xv,\psi_v) \ge \lambda_2(\Xv)$
  from \Lem{2nd.ev.alt}.  Consider now the Rayleigh quotient for the
  unweighted problem, it is given by
  \begin{equation*}
    \frac{\energy_{\Mv}(u)} {\normsqr[\Lsqr \Xv] u}
    = \frac{\sum_{e \in E_v} \int_0^{\ell_e} \abssqr{u_e'(x)} \dd x}
        {\sum_{e \in E_v} \int_0^{\ell_e} \abssqr{u_e(x)} \dd x}
    = \frac{\sum_{e \in E_v} \frac1{\ell_e}\int_0^1 \abssqr{\wt u_e'(t)} \dd t}
        {\sum_{e \in E_v} \ell_e \int_0^1 \abssqr{\wt u_e(t)} \dd t},
  \end{equation*}
  where $x=t \ell_e$ and $\wt u(t)=u(t \ell_e)$; and the latter
  expression is monotonously decreasing in $\ell_e$.  In particular,
  as $\ell_e$ is bounded from above by $\ell_\infty:=\max_{e \in E_v}
  \ell_e$, we have $\lambda_2 \ge \lambda_{2,0}/\ell_\infty^2$, where
  $\lambda_{2,0}=\pi^2/4 \ge 2$ is the second eigenvalue of a star
  graph with all edges having length $1$.
\end{proof}

We are now prepared to prove the main result of this subsection.
\begin{theorem}
  \label{thm:mg.q-u-e}
  Assume that $(G,\mu,\gamma)$ is a weighted graph with
  \begin{equation*}
    \mu_\infty:=\sup_{v \in V} \mu(v) < \infty,
    \qquadtext{and}
    0<\gamma_0:=\inf_{e \in E} \gamma_e
    \le \gamma_\infty:=\sup_{e \in E} \gamma_e
    <\infty.
  \end{equation*}
  Assume in addition that $M$ is a metric graph compatible with
  $(G,\mu,\gamma)$, i.e., its edge lengths $\ell_e$
  fulfil~\eqref{eq:mu.nu.comp.mg}--\eqref{eq:ell.tau}, namely
  \begin{equation*}
    \frac 1{2\mu(v)}\sum_{e \in E_v} \ell_e
    =\frac 1{c^2}
    \qquadtext{and}
    \ell_e=\frac {c^2 \tau}{\gamma_e}
  \end{equation*}
  for some $c>0$ and $\tau>0$, independently of $v \in V$ and $e \in
  E$.  Then the graph energy form $\energy$ associated with the
  weighted discrete graph $(G,\mu,\gamma)$ and the rescaled metric
  graph energy form $\wt \energy=\tau \energy_M$ are
  $\delta$-quasi-unitarily equivalent with
  \begin{equation*}
    \delta^2:=
    2\frac{\gamma_\infty}{\gamma_0}\cdot \frac{\mu_\infty}{\gamma_0}.
  \end{equation*}
\end{theorem}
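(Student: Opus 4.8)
The strategy is to apply \Thm{q-u-e} to the uniform embedding of $(G,\mu,\gamma)$ into the metric graph $X=\MetGr$, after verifying that all hypotheses of \Def{graph.emb.in.met.space} and the two additional conditions \itemref{graph.emb.en.c}--\itemref{graph.emb.en.d} of \Thm{q-u-e} hold, and then to bound each of the five terms in the expression~\eqref{eq:q-u-e.delta} for $\delta^2$ by (a constant multiple of) $2(\gamma_\infty/\gamma_0)(\mu_\infty/\gamma_0)$.

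First I would check that the partition of unity $(\psi_v)_v$ described before \Def{mg.comp} (affine linear hat functions with $\psi_v(v)=1$, $\psi_v(v')=0$) realises a uniform embedding: $\Xv=\Mv$ is the star around $v$, which is connected, and $\intr\Mv\cap\intr\Mv[v']\ne\emptyset$ iff $v\sim v'$, giving~\itemref{graph.emb.in.met.space.a}. For the energy decomposition~\itemref{graph.emb.in.met.space.b} one takes $\energy_{\Mv}(u)=\int_{\Mv}\abssqr{u'}\dd\nu$; since each edge $e$ belongs to exactly the two stars $\Mv[\bd_-e]$ and $\Mv[\bd_+e]$, one gets $\sum_v\energy_{\Mv}(u)=2\energy_\MetGr(u)$, so~\eqref{eq:energy.local} holds (in fact with equality on the right). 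Condition~\itemref{graph.emb.in.met.space.c} is exactly \Lem{2nd.ev.mg}, which gives $\lambda_2\ge 2/\ell_\infty^2$ with $\ell_\infty=c^2\tau/\gamma_0$ by~\eqref{eq:ell.infty}. For~\itemref{graph.emb.in.met.space.d}: $\nu(v)=\tfrac12\sum_{e\in E_v}\ell_e$ by~\eqref{eq:vx.meas.mg}, and~\eqref{eq:mu.nu.comp.mg} says $\nu(v)/\mu(v)=1/c^2$, so $\nu_0(v)=\nu(v)$, $\check\nu(v)=0$, hence $\alpha_\infty=0$; the finiteness of $\mu_\infty/\gamma_0$ is part of the hypothesis. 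The two extra conditions of \Thm{q-u-e} are supplied by \Lem{av.mg}: with $\Gamma_vu=\Gamma_{v,e}u=u(v)$ one has $\wtdeltaD(v)=0$ and $\wtdeltaC(v)^2=\ell_\infty^2/2$, using~\eqref{eq:ell.tau} (which holds since $\ell_e=c^2\tau/\gamma_e$) for~\eqref{eq:met.q.u.d}.

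It then remains to estimate the five terms in~\eqref{eq:q-u-e.delta}. The term $2\alpha_\infty=0$ drops out. The term $2\mu_\infty/\gamma_0$ is already in the desired form (and is $\le$ the claimed bound since $\gamma_\infty/\gamma_0\ge1$). For the two $\wtdeltaC$ and $\wtdeltaD$ terms: $\wtdeltaD(v)=0$ kills the last one, and
\begin{equation*}
  \frac2\tau\sup_v\wtdeltaC(v)^2
  =\frac2\tau\cdot\frac{\ell_\infty^2}2
  =\frac{\ell_\infty^2}\tau
  =\frac1\tau\cdot\frac{(c^2\tau)^2}{\gamma_0^2}
  =\frac{c^4\tau}{\gamma_0^2},
\end{equation*}
and by~\eqref{eq:ell.tau}, $c^2\tau=\ell_e\gamma_e\le\ell_\infty\gamma_\infty$ together with $\ell_\infty\gamma_0=c^2\tau$ gives $c^2\tau\le(c^2\tau/\gamma_0)\gamma_\infty$, i.e.\ this term equals $(c^2\tau/\gamma_0)\cdot(c^2\tau/\gamma_0)\cdot\gamma_0\cdot(1/\gamma_0)$... the cleaner route: pick the free parameter, say normalise $c=1$, so $\tau$ is determined by~\eqref{eq:c.tau.mg} and $\ell_e=\tau/\gamma_e$, whence $\ell_\infty=\tau/\gamma_0$ and the term becomes $\tau/\gamma_0^2=\ell_\infty/\gamma_0$; but also $\tau=\ell_e\gamma_e$ for every $e$ so $\tau\le\ell_\infty\gamma_\infty=(\tau/\gamma_0)\gamma_\infty$ — tautological — so instead use that $\tau$ is an average of $\ell_e^2\gamma_e=\tau\ell_e$ values, giving $\tau\le\ell_\infty\cdot\tau$... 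The honest computation is: $\tau/\gamma_0^2 = (\tau/\gamma_\infty)\cdot(\gamma_\infty/\gamma_0)\cdot(1/\gamma_0)$ and $\tau/\gamma_\infty=\ell_e$ for the edge realising $\gamma_\infty$, which is $\le\ell_\infty$... I would push through this bookkeeping to land on $\le 2(\gamma_\infty/\gamma_0)(\mu_\infty/\gamma_0)$. Finally the term $2/(\tau\lambda_2)\le 2/(\tau\cdot 2/\ell_\infty^2)=\ell_\infty^2/\tau$, which is the same quantity just estimated. Taking the maximum of all five and absorbing the harmless factor gives $\delta^2=2(\gamma_\infty/\gamma_0)(\mu_\infty/\gamma_0)$, as claimed.

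The main obstacle is purely organisational rather than conceptual: carefully tracking the scaling relations $\ell_e=c^2\tau/\gamma_e$ and $\nu(v)=\mu(v)/c^2$ through the five error terms so that the dependence on the auxiliary constants $c$ and $\tau$ cancels and everything collapses to the single geometric ratio $(\gamma_\infty/\gamma_0)(\mu_\infty/\gamma_0)$; in particular one must check that the $1/(\tau\lambda_2)$ term and the $\wtdeltaC$ term, which both scale like $\ell_\infty^2/\tau = c^4\tau/\gamma_0^2$, are each $\le 2(\gamma_\infty/\gamma_0)(\mu_\infty/\gamma_0)$, which uses the compatibility condition~\eqref{eq:cond.weighted.graph} relating $c^4\tau$ to $\tfrac1{2\mu(v)}\sum_{e\in E_v}\gamma_e^{-1}$ and hence to $\mu_\infty/\gamma_0$.
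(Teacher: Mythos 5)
Your overall route is exactly the paper's: verify the uniform embedding (hat functions, star neighbourhoods $\Mv$, $\sum_v\energy_{\Mv}(u)=2\energy_\MetGr(u)$, compatible weights so $\alpha_\infty=0$, and $\lambda_2\ge 2/\ell_\infty^2$ from \Lem{2nd.ev.mg}), feed \Lem{av.mg} into \Thm{q-u-e}, and then bound the five terms of~\eqref{eq:q-u-e.delta}; the $\alpha_\infty$- and $\wtdeltaD$-terms vanish, $2\mu_\infty/\gamma_0$ is below the claimed bound since $\gamma_\infty/\gamma_0\ge1$, and both remaining terms reduce to $\ell_\infty^2/\tau=c^4\tau/\gamma_0^2$. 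All of that is correct and matches the paper.

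The gap is that you never actually carry out the one substantive estimate, namely $c^4\tau/\gamma_0^2\le 2\mu_\infty\gamma_\infty/\gamma_0^2$. Your in-line attempts are circular because they use only~\eqref{eq:ell.tau}, i.e.\ $\ell_e\gamma_e=c^2\tau$, which contains no information about $\mu$, while the claimed bound manifestly needs $\mu_\infty$; also the ``normalise $c=1$'' move is not available, since $c$ and $\tau$ are uniquely determined by $\ell,\mu,\gamma$ via~\eqref{eq:c.tau.mg}. The ingredient you name only in your closing sentence is indeed the one that closes the argument, and it is a two-line computation: by~\eqref{eq:cond.weighted.graph} (which is just \eqref{eq:mu.nu.comp.mg} combined with~\eqref{eq:ell.tau}),
\begin{equation*}
  c^4\tau
  = \frac{2\mu(v)}{\sum_{e\in E_v}\gamma_e^{-1}}
  \le 2\mu(v)\gamma_\infty
  \le 2\mu_\infty\gamma_\infty,
\end{equation*}
since every vertex has at least one adjacent edge and each summand is at least $1/\gamma_\infty$. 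Hence $2/(\tau\lambda_2)\le\ell_\infty^2/\tau=c^4\tau/\gamma_0^2\le 2(\gamma_\infty/\gamma_0)(\mu_\infty/\gamma_0)=\delta^2$, and the same bound handles the $\wtdeltaC$-term; no ``absorbing of a harmless factor'' is needed, as the maximum of the five terms is exactly the claimed $\delta^2$. With this inserted your argument is complete and coincides with the paper's proof.
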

\begin{proof} 
  Note first that $(G,\mu,\gamma)$ is uniformly embedded into
  $(\MetGr,\nu,\energy_\MetGr)$ (see \Def{graph.emb.in.met.space}) by
  the assumptions of the theorem: in particular, $\lambda_2(M_v,\psi)
  \ge 2/\ell_\infty^2$ by \Lem{2nd.ev.mg}.  The remaining assumptions
  of \Thm{q-u-e} are fulfilled by \Lem{av.mg}.  Let us now check the
  individual terms in the error $\delta$ in~\eqref{eq:q-u-e.delta}: As
  the weights are compatible, we have $\alpha_\infty=0$.  Moreover,
  the error term $2\mu_\infty/\gamma_0$ is already covered as
  $\gamma_\infty/\gamma_0 \ge 1$.  For the third term
  in~\eqref{eq:q-u-e.delta} we have
  \begin{equation*}
    \frac 2 {\tau \lambda_2}
    \le \frac{\ell_\infty^2}\tau
    = \frac{c^4\tau}{\gamma_0^2}
    \le \frac{2\mu_\infty\gamma_\infty}{\gamma_0^2}
    = \delta^2
  \end{equation*}
  using again \Lem{2nd.ev.mg} for the first, \eqref{eq:ell.infty} for
  the second and~\eqref{eq:cond.weighted.graph} for the third step.
  The fourth error term in~\eqref{eq:q-u-e.delta} (the one with
  $\deltaC(v)$) is treated in the same way as the third one as
  $2\deltaC(v)^2/\tau \le \ell_\infty^2/\tau$ by \Lem{av.mg}.  The
  last error term is $0$ by \Lem{av.mg}.
\end{proof}

From \Defenum{unif.weighted.graph}{unif.weighted.graph.b}
and~\eqref{eq:rel.weight.bdd} we immediately conclude:
\begin{corollary}
  \label{cor:mg.q-u-e}
  Assume that $(G,\mu,\gamma)$ is a
  $(d_\infty,\muVar,\gammaVar)$-uniform weighted graph with
  corresponding compatible metric graph lengths then $\energy$ and
  $\wt \energy$ are $\delta$-quasi-unitarily equivalent with
  \begin{equation*}
    \delta^2
    =2\gammaVar^2 d_\infty \muVar \cdot \frac 1{\rho_0},
  \end{equation*}
  where $\rho_0=\inf_{v \in V} \rho(v) = \inf_{v \in V}{\frac
    1{\mu(v)}\sum_{e \in E_v}\gamma_e}$ is a lower bound on the
  relative weight.
\end{corollary}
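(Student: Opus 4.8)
The plan is to derive the corollary as a direct consequence of \Thm{mg.q-u-e}, by rewriting the error $\delta^2 = 2(\gamma_\infty/\gamma_0)(\mu_\infty/\gamma_0)$ produced there in terms of the uniformity data $(d_\infty,\muVar,\gammaVar)$ and the lower bound $\rho_0$ on the relative weight. First I would verify that the hypotheses of \Thm{mg.q-u-e} are available: for a $(d_\infty,\muVar,\gammaVar)$-uniform weighted graph one has $0<\mu_\infty<\infty$ and $0<\gamma_0\le\gamma_\infty<\infty$ (picking any vertex, its finite positive weight together with $\mu_\infty\le\muVar\mu_0$ and $\gamma_\infty\le\gammaVar\gamma_0$ forces all four quantities into $(0,\infty)$; cf.\ \Rem{unif.weighted.graph}), and the assumption ``with corresponding compatible metric graph lengths'' is precisely the requirement that $M$ satisfies \eqref{eq:mu.nu.comp.mg}--\eqref{eq:ell.tau}. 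Hence \Thm{mg.q-u-e} applies and yields $\delta$-quasi-unitary equivalence with $\delta^2=2(\gamma_\infty/\gamma_0)(\mu_\infty/\gamma_0)$.

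Next I would estimate the two factors separately. The first is handled by \Defenum{unif.weighted.graph}{unif.weighted.graph.b}, which gives $\gamma_\infty/\gamma_0\le\gammaVar$ at once. For the second I would invoke the left-hand inequality in \eqref{eq:rel.weight.bdd}, namely $(\gammaVar d_\infty\muVar)^{-1}(\mu_\infty/\gamma_0)\le 1/\rho(v)$, which holds for every $v\in V$; taking the supremum over $v$ and using $\sup_{v\in V}1/\rho(v)=1/\inf_{v\in V}\rho(v)=1/\rho_0$ (recall $\rho_0>0$ by \eqref{eq:rel.weight.lower}) gives $\mu_\infty/\gamma_0\le\gammaVar d_\infty\muVar/\rho_0$. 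Multiplying the two bounds,
\begin{equation*}
  \delta^2 = 2\frac{\gamma_\infty}{\gamma_0}\cdot\frac{\mu_\infty}{\gamma_0}
  \le 2\gammaVar\cdot\frac{\gammaVar d_\infty\muVar}{\rho_0}
  = 2\gammaVar^2 d_\infty\muVar\cdot\frac1{\rho_0},
\end{equation*}
which is the asserted value; as in the proof of \Thm{mg.q-u-e}, the final inequality may be recorded as an explicit error term, turning it into the stated equality.

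There is essentially no genuine obstacle here: the corollary is bookkeeping on top of \Thm{mg.q-u-e}. The only point needing the slightest care is the direction of monotonicity when passing from the per-vertex quantity $1/\rho(v)$ to the uniform constant $1/\rho_0$ --- since $\rho_0$ is the \emph{infimum} of $\rho$, its reciprocal is the \emph{supremum} of $1/\rho$, which is exactly what converts the pointwise bound \eqref{eq:rel.weight.bdd} into the uniform estimate needed. Everything else is elementary arithmetic.
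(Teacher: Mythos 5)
Your argument is correct and is exactly the route the paper intends: the corollary is stated as an immediate consequence of \Thm{mg.q-u-e} together with \Defenum{unif.weighted.graph}{unif.weighted.graph.b} and the left-hand inequality in~\eqref{eq:rel.weight.bdd}, which is precisely the bookkeeping you carry out (including the harmless passage from the derived inequality to the stated value of $\delta^2$, since quasi-unitary equivalence is monotone in $\delta$). No gaps.
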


\subsubsection*{Metric graphs approximated by discrete weighted
  subdivision graphs}
We have another application of our result: A \emph{subdivision graph}
$SG$ of a discrete graph $G=(V,E,\partial)$ is a discrete graph with
additional vertices on the edges.  We denote the graph objects
associated with $SG$ by $V(SG)$, $E(SG)$ etc.

If $M$ is a metric graph with underlying discrete graph $G$ and length
function $\map \ell E {(0,\infty)}$, then we call $SM$ a \emph{metric
  subdivision graph} if the lengths $\ell_{e_1},\dots,\ell_{e_r}$ of
the additional edges $e_1,\dots,e_r$ on the original edge $e$ add up
to the original length $\ell_e$, i.e.,
\begin{equation*}
  \sum_{j=1}^r \ell_{e_j}=\ell_e.
\end{equation*}
Note that additional vertices of degree $2$ on an edge lead to
unitarily equivalent metric graph energy forms and Laplacians with
natural unitary map.  In particular, the energy form and the Laplacian
on a metric subdivision graph are unitarily equivalent with the energy
form and the Laplacian on the original metric graph.  We define
\begin{equation*}
  \ell_0(SM):= \inf_{e \in E(SG)} \ell_e
  \qquadtext{and}
  \ell_\infty(SM):= \sup_{e \in E(SG)} \ell_e,
\end{equation*}
the \emph{minimal} and the \emph{maximal mesh width} of the
subdivision graph $SM$, respectively.  We have now the following
result:
\begin{corollary}
  \label{cor:subdiv.graph}
  Assume that $M$ is a metric graph with edge length fulfilling
  $0<\ell_0\le \ell_e \le \ell_\infty < \infty$ for all $e \in E$ and
  uniformly bounded degree, i.e., $\deg v \le d_\infty$ for all $v \in
  V$.  Then there is a sequence of metric subdivision graphs $SM_m$
  and compatible weighted discrete subdivision graphs
  $(SG_m,\mu_m,\gamma_m)$ such that the associated discrete energy
  form $\energy_m$ is $\delta_m$-unitarily equivalent with the energy
  form $\energy_M$ of the original metric graph, where $\delta_m^2\le
  d_\infty \ell_\infty(SM_m)^3/\ell_0(SM_m)$.
\end{corollary}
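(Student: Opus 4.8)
The plan is to realise the metric graph $M$, up to unitary equivalence, as a metric subdivision graph with arbitrarily fine mesh, to equip the subdividing discrete graph with the weights that \Def{mg.comp} forces on a compatible graph (taking the free constants $c=\tau=1$), and then to invoke \Thm{mg.q-u-e} together with transitivity of quasi-unitary equivalence. The construction is essentially the whole content. Given that $M$ has all edge lengths in $[\ell_0,\ell_\infty]$ and degree $\le d_\infty$, for $m\in\N$ I would take $SM_m$ to be the metric subdivision graph obtained by cutting each edge $e\in E$ into $m$ sub-edges of equal length $\ell_e/m$. This is a legitimate metric subdivision graph (the new lengths on $e$ sum to $\ell_e$), its energy form $\energy_{SM_m}$ is unitarily equivalent to $\energy_M$ because inserting vertices of degree $2$ does not change a metric graph energy form (as recalled just before the statement), and clearly $\ell_0(SM_m)=\ell_0/m$, $\ell_\infty(SM_m)=\ell_\infty/m$.

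Next I would put on the underlying discrete subdivision graph $SG_m$ the weights $\gamma_{m,e}:=1/\ell_{m,e}$ on each sub-edge $e$ (with $\ell_{m,e}$ its length in $SM_m$) and $\mu_m(v):=\tfrac12\sum_{e\in E_v}\ell_{m,e}$ on each vertex $v$. With these choices \eqref{eq:mu.nu.comp.mg} and \eqref{eq:ell.tau} become $\frac1{2\mu_m(v)}\sum_{e\in E_v}\ell_{m,e}=1$ and $\ell_{m,e}\gamma_{m,e}=1$, which hold by construction, so $SM_m$ and $(SG_m,\mu_m,\gamma_m)$ are compatible with $c=\tau=1$; moreover $\mu_{m,\infty}<\infty$ and $0<\gamma_{m,0}\le\gamma_{m,\infty}<\infty$ since the lengths lie between $\ell_0/m$ and $\ell_\infty/m$. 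Hence \Thm{mg.q-u-e} applies, and since $\tau=1$ the rescaled form $\wt\energy=\tau\energy_{SM_m}$ is just $\energy_{SM_m}$: the discrete energy form $\energy_m$ of $(SG_m,\mu_m,\gamma_m)$ is $\delta_m$-quasi-unitarily equivalent to $\energy_{SM_m}$ with
\begin{equation*}
  \delta_m^2
  =2\,\frac{\gamma_{m,\infty}}{\gamma_{m,0}}\cdot\frac{\mu_{m,\infty}}{\gamma_{m,0}}
  \le d_\infty\,\frac{\ell_\infty(SM_m)^3}{\ell_0(SM_m)}
  =\frac{d_\infty\,\ell_\infty^3}{\ell_0\,m^2},
\end{equation*}
where I used $\gamma_{m,0}=1/\ell_\infty(SM_m)$, $\gamma_{m,\infty}=1/\ell_0(SM_m)$ and $\mu_{m,\infty}\le\tfrac12 d_\infty\ell_\infty(SM_m)$ (assuming, harmlessly, $d_\infty\ge2$, so the degree-$2$ vertices created by the subdivision are covered by the degree bound). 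Composing with the unitary — i.e.\ $0$-quasi-unitary — equivalence of $\energy_{SM_m}$ and $\energy_M$ via transitivity of quasi-unitary equivalence (\Prp{trans.q-u-e}) yields that $\energy_m$ and $\energy_M$ are $\delta_m$-quasi-unitarily equivalent with the claimed bound, and in particular $\delta_m\to0$ as $m\to\infty$.

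I do not expect a genuine obstacle: the corollary is a repackaging of \Thm{mg.q-u-e}. The two points one must get right are that the compatibility relations \eqref{eq:mu.nu.comp.mg}--\eqref{eq:ell.tau} dictate $\gamma_{m,e}\propto1/\ell_{m,e}$ and $\mu_m(v)\propto\sum_{e\in E_v}\ell_{m,e}$ rather than a constant relative weight (cf.\ \Rem{mg.comp}), and that one must keep $\tau=1$ in order to land exactly on $\energy_M$ and not on a rescaled form; once the subdivision is chosen uniformly per edge, the ratio $\ell_\infty(SM_m)/\ell_0(SM_m)$ remains equal to $\ell_\infty/\ell_0$ and the $m^{-2}$ factor forces the error to zero.
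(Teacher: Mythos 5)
Your proposal is correct and follows essentially the same route as the paper: the same weights $\gamma_{m,e}=1/\ell_{m,e}$, $\mu_m(v)=\tfrac12\sum_{e\in E_v}\ell_{m,e}$ giving compatibility with $c=\tau=1$, the same application of \Thm{mg.q-u-e} yielding $\delta_m^2\le d_\infty\,\ell_\infty(SM_m)^3/\ell_0(SM_m)$, and the same use of the unitary equivalence of $\energy_{SM_m}$ and $\energy_M$. The only (harmless) differences are that you fix a concrete equal-length subdivision into $m$ pieces, whereas the paper allows any subdivision sequence with $\ell_0(SM_m)\to0$.
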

\begin{proof}
  Let $SM_m$ be a sequence of metric subdivision graphs with edge
  length function denoted by $\map{\ell_m}{E(SM_m)} {(0,\infty)}$ with
  $\ell_0(SM_m) \to 0$.  Define the weights $\mu_m$ and $\gamma_m$ of
  the discrete underlying subdivision graph $SG_m$ by
  \begin{equation*}
    \mu_m(v):=\frac 12 \sum_{e \in E_v(SG_m)} \ell_{m,e}
    \quadtext{and}
    \gamma_{m,e}:=\frac 1{\ell_{m,e}}.
  \end{equation*}
  In particular, we then have $c=1$ and $\tau=1$ and $SM_m$ and
  $(SG_m,\mu_m,\gamma_m)$ are compatible.  From \Thm{mg.q-u-e} we
  conclude that the energy forms associated with the metric
  subdivision graph $\energy_{SM_m}$ and the weighted discrete
  subdivision graph $\energy_{SG_m}$ are $\delta_m$-quasi-unitarily
  equivalent with
  \begin{equation*}
    \delta_m^2
    =\frac {2\ell_\infty(SM_m)}{\ell_0(SM_m)} \cdot
       \frac{\sup_{v \in V}\sum_{e \in E_v(SG_m)}\ell_{m,e}/2}
            {\ell_\infty(SM_m)^{-1}}
    \le d_\infty \ell_\infty(SM_m)^3/\ell_0(SM_m).
  \end{equation*}
  Note that the maximal degree of a subdivision graph is the same as
  for the original graph and that $\energy_{SM_m}$ and $\energy_M$ are
  unitarily equivalent.
\end{proof}

%
\section{Convergence of energy forms on graph-like manifolds and
  discrete graphs}
\label{sec:mfds}
%

\subsection{Graph-like manifolds}
\label{ssec:mfds}
We introduce here the notion of a graph-like manifold.  More details
can be found in~\cite{post:12}.  Let $\Mfd$ be a Riemannian manifold
of dimension $d \ge 2$.  The standard example of a graph-like manifold
with boundary is the thickened metric graph as in \Ex{emb} or
\Fig{graph-like-mfd}.
\begin{definition}
  \label{def:gl-mfd}
  We say that $\Mfd$ is a \emph{graph-like manifold} with associated
  discrete graph $(V,E,\bd)$ and edge length function $\map \ell E
  {(0,\infty)}$ if there are compact subsets $\cXv$ and $\Xe$ of $X$
  with the following properties:
  \begin{enumerate}
  \item
    \label{gl-mfd.a}
    $\Mfd=\bigcup_{v \in V} \cXv \cup \bigcup_{e \in E}\Xe$ and $\cXv
    \cap \Xe \neq \emptyset$ if and only if $e \in E_v$; all other
    sets $\cXv$ and $\Xe$ are pairwise disjoint;
  \item
    \label{gl-mfd.b}
    $\Xe$ is isometric with $\Me \times Y_e$, where $\Me=[0,\ell_e]$
    for some $\ell_e>0$ and some $(d-1)$-dimensional Riemannian
    manifold $Y_e$;
  \item
    \label{gl-mfd.c}
    there exists $\kappa \in (0,1]$ such that $\bd_e \cXv:= \cXv \cap
    \Xe$ (isometric with $Y_e$) has a $\kappa\ell_e$-collar
    neighbourhood $\Xve$ inside $\cXv$, i.e., $\Xve$ is isometric with
    $[0,\kappa \ell_e]\times Y_e$; we assume that $(\Xve)_{e \in E_v}$
    are pairwise disjoint.
  \end{enumerate}
  We call $\cXv$ the \emph{core vertex neighbourhood} of $v \in V$ and
  $\Xe$ the \emph{edge neighbourhood} of $e \in E$.  We call $Y_e$ the
  \emph{transversal manifold} of $e$.  Moreover, we call $\Xv:=\cXv
  \cup \bigcup_{e \in E_v} \Xe$ the \emph{(enlarged) vertex
    neighbourhood} of $v$.
\end{definition}
\begin{figure}[h]
  \label{fig:graph-like-mfd}
  \centering
  \setlength{\unitlength}{1mm}
  \begin{picture}(120,55)
    \includegraphics[width=0.8\textwidth]{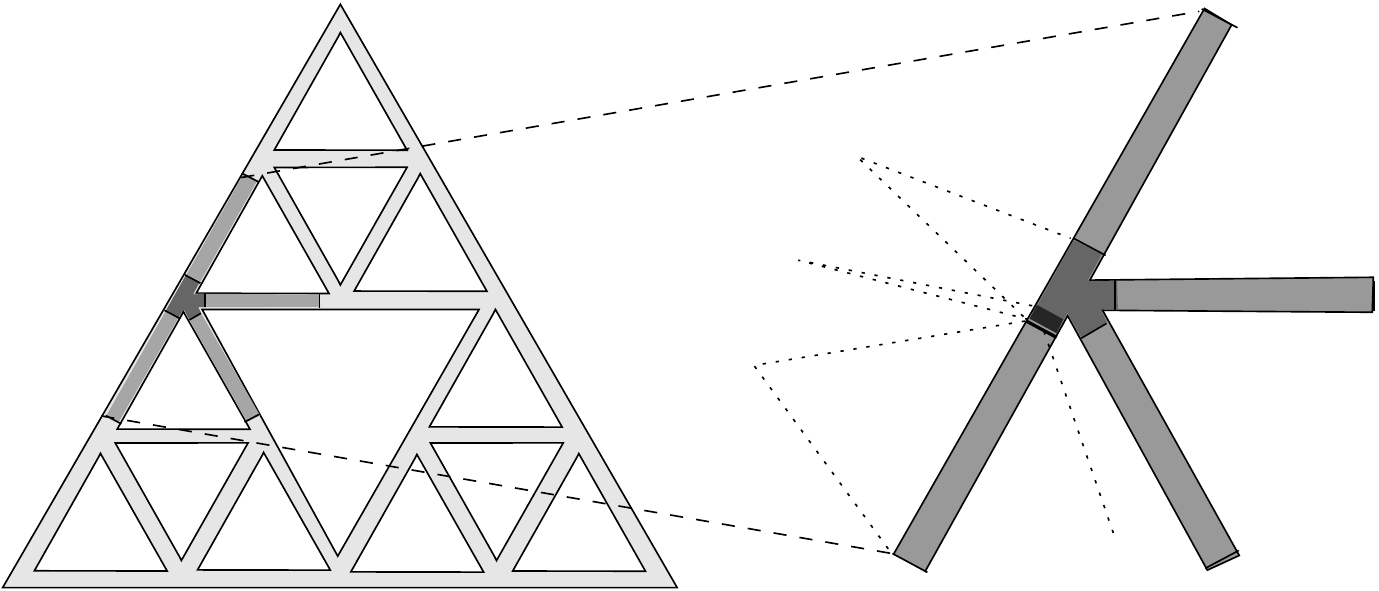}  
    \put(-104,47){$X$}
    \put(-13,47){$\Xv$}
    \put(-53,39){$\cXv$}
    \put(-61,29){$\Xve$}
    \put(-63,19){$\Xe$}
    \put(-29,2){$\bd_e\cXv$}
  \end{picture}
  \caption{Left: A graph-like manifold. Right: The vertex
    neighbourhood $\Xv$ (grey) with its core vertex manifold $\cXv$
    (dark grey) and the collar neighbourhood (very dark grey) with the
    boundary component $\bd_e \cXv$ (thick black line); the edge $e$
    corresponds to the lower left leg.}
\end{figure}
\begin{remark*}
  \indent
  \begin{enumerate}
  \item A graph-like manifold may have boundary or not; the boundary
    may even be Lipschitz (see \cite[App.~A]{mitrea-taylor:99}) for a
    precise definition).
  \item A graph-like manifold $\Mfd$ can be constructed from a metric
    graph $\MetGr$ with the same edge length function $\ell$.  In this
    case, $\Mfd$ is defined as an abstract space.  For the case when
    the metric graph and the graph-like manifold are embedded in
    $\R^d$, see \Ex{emb}.
  \item The decomposition into vertex and edge neighbourhoods is, of
    course, not unique.
  \item For a compact graph-like manifold, condition
    \Defenum{gl-mfd}{gl-mfd.c} follows from
    \itemref{gl-mfd.a}-\itemref{gl-mfd.b}: Take away a little piece of
    $\Xe$ and add it to $\Xv$, this means that $\ell_e$ becomes a bit
    smaller.
  \end{enumerate}
\end{remark*}
An important example is a neighbourhood of a metric graph embedded in
$\R^d$:
\begin{example}[Thickened metric graph as graph-like manifold]
  \label{ex:emb}
  Let $\MetGr$ be a compact metric graph embedded in $\R^d$ in such a
  way that the edges $\Me$ are line segments.  Then the edge lengths
  of an edge $e$ is $\abs{\bd_+e-\bd_-e}$ (where $V$ is considered as
  a subset of $\R^d$).  Let $\Mfd$ be the closed $\eps$-neighbourhood
  of $\MetGr$ in $\R^d$.  If $\eps>0$ is small enough, it can be seen
  that $\MetGr$ is a graph-like manifold.  In particular, one can
  choose $\ell_e=(1-2\eps) \abs{\bd_+e-\bd_-e}$ as edge length
  function.
\end{example}
Metric graphs with edges embedded as curved segments can also be
treated as a perturbation of abstract metric graphs (not necessarily
embedded) with straight edges, see~\cite[Sec.~5.4 and~6.7]{post:12}).

Let $\nu$ denote the Riemannian measure on $\Mfd$.  The associated
Hilbert space is
\begin{equation*}
  \wt \HS = \Lsqr{\Mfd, \nu}, \qquad
  \normsqr[\Lsqr{\Mfd,\nu}] u
  = \int_\Mfd \abssqr{u(x)} \dd \nu(x).
\end{equation*}
The energy form on $\Mfd$ is
\begin{equation}
  \label{eq:mfd.energy}
  \energy_\Mfd(u)=\int_\Mfd \abssqr[x]{\nabla u(x)} \dd\nu(x)
\end{equation}
where $\nabla$ is the gradient and $\abs[x]\cdot$ is the norm induced
by the Riemannian metric tensor at $x \in \Mfd$ and where
$\wt\HS^1=\Sob \Mfd$ is the closure of Lipschitz continuous functions
with compact support in $\Mfd$ with respect to the norm given by
$\normsqr[\Sob \Mfd] u = \normsqr[\Lsqr X] u + \energy_\Mfd(u)$.

\subsection{Quasi-unitary equivalence of discrete graphs and
  graph-like manifolds}
\label{ssec:q-u-e.graph.mfd}

The choice of the partition of unity $\Psi=(\psi_v)_{v \in V}$ is
almost obvious now: Let
\begin{equation*}
  \psi_v(x)=1 \quadtext{if $x \in \cXv$ \quad and}
  \psi_v(x)=\frac 1{\ell_e}\, t,
  \quad\text{where $x=(t,y)$, $t \in \Me$, $y \in Y_e$}
\end{equation*}
are coordinates on $\Xe$.  We assume here that $v=\bd_+e$ is the
terminal vertex, i.e., $v$ corresponds to $\ell_e \in \Me=[0,\ell_e]$.
Let $\psi_v(x)=0$ for any other point $x$ not in $\cXv$ and $\Xe$, $e
\in E_v$.

In particular, $\psi_v$ is Lipschitz continuous on $\Mfd$.  As $\Xe$ is
a product, $\psi_v$ is harmonic on $\Xe$ (affine linear in the
longitudinal direction times a constant function in the transversal
direction).  Now, it is obvious, that $\Xe$ is the edge neighbourhood
of $e \in E$ also in the sense of \Def{graph.emb.in.met.space}, and $\cXv$ is
the (core) vertex neighbourhood of $v \in V$ in $\Mfd$.  Moreover, the
(enlarged) vertex neighbourhood $\Xv$ is
\begin{equation*}
  \Xv = \cXv \cup \bigcup_{e \in E_v} \Xe.
\end{equation*}
The vertex measure here is
\begin{equation}
  \label{eq:vx.meas.mfd}
  \nu(v) = \int_\Mfd \psi_v \dd x
  = \sum_{e \in E_v} \vol Y_e \int_0^{\ell_e} \frac1{\ell_e}t \dd t
  + \vol \cXv
  =\frac 12 \sum_{e \in E_v} \vol \Xe + \vol \cXv
\end{equation}
and hence $\vol \Xv/2 \le \nu(v) \le \vol \Xv$.  Here, $\vol = \nu$ is
the $d$-dimensional volume on $\Mfd$; with the exception that $\vol
Y_e$ denotes the $(d-1)$-dimensional volume of $Y_e$.
For the decomposition of $\nu(v)$ in~\eqref{eq:def.nu.decomp} we choose
\begin{equation}
  \label{eq:def.nu0}
  \nu_0(v)
  := \frac 12\sum_{e \in E_v} \vol \Xe
  = \frac 12\sum_{e \in E_v} \ell_e (\vol Y_e)
  \qquadtext{and}
  \check \nu(v) := \vol \cXv
\end{equation}
for all $v \in V$.  The following definition assures that a graph-like
manifold is well-adopted to a given weighted graph:
\begin{definition}
  \label{def:mfd.comp}
  \indent
  \begin{subequations}
    \begin{enumerate}
    \item
      \label{mfd.comp.a}
      We say that a graph-like manifold $\Mfd$ and a weighted
      discrete graph $(G,\mu,\gamma)$ are \emph{compatible}, if the
      underlying discrete graphs are the same and if there exists
      $c>0$ and $\tau>0$ such that the edge length function $\ell$,
      the weights $\mu$ and $\gamma$ and the transversal volumes
      $(\vol Y_e)_e$ fulfil
      \begin{equation}
        \label{eq:comp.mfd}
        \frac 1{2\mu(v)}\sum_{e \in E_v} \ell_e (\vol Y_e)
        = \frac 1{c^2}
      \end{equation}
      for all $v \in V$ and
      \begin{equation}
        \label{eq:ell.tau.y}
        \frac{\gamma_e \ell_e}{\vol Y_e}
        = c^2 \tau
      \end{equation}
      for all $e \in E$.
    \item
      \label{mfd.comp.b}
      We say that $\Mfd$ has \emph{uniformly small (core) vertex
        neighbourhoods}, if
      \begin{equation}
        \label{eq:def.alpha.mfd}
        \alpha_\infty
        =\sup_{v \in V} \alpha(v)
        \le \frac 12
        \quadtext{and}
        \alpha_0
        =\inf_{v \in V} \alpha(v)
        >0,
      \end{equation}
      where
      \begin{equation}
        \label{eq:def.alpha.mfd.v}
        \alpha(v)=\frac{2\vol \cXv}{\sum_{e \in E_v}\vol \Xe},
      \end{equation}
      and if
      \begin{equation}
        \label{eq:lambda2.cxv}
        \check \lambda_2 := \inf_{v \in V} \lambda_2(\cXv) >0.
      \end{equation}
    \item
      \label{mfd.comp.c}
      We say that $\Mfd$ has \emph{uniform transversal volume} if
      \begin{equation}
        \label{eq:vol.y.unif}
        0<\vol_0
        := \inf_{e \in E} \vol Y_e 
        \le \vol_\infty 
        := \sup_{e \in E} \vol Y_e 
        < \infty.
      \end{equation}
    \item
      \label{mfd.comp.d}
      We say that a graph-like manifold $\Mfd$ and a weighted discrete
      graph $(G,\mu,\gamma)$ are \emph{uniformly compatible}, if they
      are compatible and if $\Mfd$ has uniformly small vertex
      neighbourhoods and uniform transversal volume.
    \end{enumerate}
  \end{subequations}
\end{definition}

\begin{remark}
  \indent
  \begin{enumerate}
  \item Condition~\eqref{eq:comp.mfd} is the compatibility of the
    vertex weights $\nu_0$ and $\mu$. 
  \item Note that if we consider an embedded metric graph with length
    function $\ell$ together with a small $\eps$-neighbourhood as
    graph-like manifold $\Mfd$ as in \Ex{emb}, then the edge length
    function of $\Mfd$ is $(1-2\eps)\ell$; the common factor
    $(1-2\eps)$ does not destroy the compatibility in the sense of
    \Defenum{mfd.comp}{mfd.comp.a}, it just changes the factors $c$
    and $\tau$ slightly.

  \item Equation~\eqref{eq:forms.eq} forces that $\gamma_e$ is given
    by~\eqref{eq:ell.tau.y}.

  \item As for metric graphs, there is still some freedom in the
    choice of the parameters $\ell_e$, $\vol Y_e$, $\tau$ and $c$, as
    they have to fulfil only two equations in~\eqref{eq:comp.mfd}
    and~\eqref{eq:ell.tau.y}. Nevertheless, from~\eqref{eq:comp.mfd}
    and~\eqref{eq:ell.tau.y} we conclude
    \begin{equation}
      \label{eq:c.tau.mfd}
      c=\Bigl( \frac 1{2\mu(v)}\sum_{e \in E_v} \ell_e \vol Y_e \Bigr)^{-1/2}
      \qquadtext{and}
      \tau
      = \frac 1{2\mu(v)}\sum_{e \in E_v} \ell_e^2\gamma_e.
    \end{equation}

  \item As for metric graphs, the compatibility
    conditions~\eqref{eq:comp.mfd} and~\eqref{eq:ell.tau.y} give a
    restriction on the vertex and edge weights of the weighted graph
    and the transversal volume $\vol Y_e$, namely that
  \begin{equation}
    \label{eq:cond.weighted.graph.mfd}
    \frac1{2\mu(v)} \sum_{e \in E_v} \frac {(\vol Y_e)^2}{\gamma_e}
    = \frac1{c^4 \tau}
  \end{equation}
  is independent of $v \in V$.

\item Note that $\alpha(v)=\check \nu(v)/\nu_0(v)>0$ and
  $\alpha_\infty=\sup_v\alpha(v)$ as in~\eqref{eq:def.alpha}.  The
  upper bound $\alpha_\infty$ is needed for $J'$ being close to the
  adjoint of $J$, see \Prp{q-u.b}, while the lower bound $\alpha_0$ is
  needed in \Lem{diff.av.mfd}.
  \end{enumerate}
\end{remark}

As ``evaluation'' $\Gamma_v u$, we set
\begin{equation*}
  \Gamma_v u := \dashint_{\cXv} u \dd \nu.
\end{equation*}
Note that the pointwise evaluation $u \mapsto u(v)$ does not make
sense here, as $\Mfd$ is at least $2$-dimensional and hence evaluation
on points is not defined on $\Sob \Mfd$.

Let us first check the condition in~\eqref{eq:met.q.u.c}:
\begin{lemma}
  \label{lem:diff.av.mfd}
  We have
  \begin{equation}
    \label{eq:mfd.q.u.c}
    \nu_0(v) \Bigabssqr{\Gamma_v u - \dashint_X u \dd \nu_v}
    \le \wtdeltaC(v)^2 \energy_{\Xv}(u)
  \end{equation}
  with
  \begin{equation*}
    \wtdeltaC(v)^2 
    = \frac 1 {\alpha(v)}
    \max_{e \in E_v}
    \Bigl\{
      \frac 9 {2\lambda_2(\Xv,\psi_v)},
      4\kappa \ell_e^2
    \Bigr\}
  \end{equation*}
  (see~\eqref{eq:def.nu0} for the definition of $\nu_0(v)$
  and~\eqref{eq:def.alpha.mfd.v} for the definition of $\alpha(v)$).
\end{lemma}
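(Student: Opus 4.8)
The plan is to reduce the estimate to the weighted Poincaré inequality of \Lem{2nd.ev}. Put $m := \dashint_X u \dd \nu_v$ and $w := u - m\1_{\Xv}$; then $w \in \dom \energy_{\Xv}$ (the constant $\1_{\Xv}$ lies in $\dom\energy_{\Xv}$ by \Defenum{graph.emb.in.met.space}{graph.emb.in.met.space.c}), $\energy_{\Xv}(w) = \energy_{\Xv}(u)$ since $\1_{\Xv}$ spans the kernel of $\energy_{\Xv}$, and $\dashint_X w \dd \nu_v = 0$. Since $\Gamma_v$ is linear with $\Gamma_v \1_{\Xv} = 1$, the quantity to be bounded is simply $\Gamma_v u - \dashint_X u \dd \nu_v = \Gamma_v w = \dashint_{\cXv} w \dd \nu$.

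Next I would estimate, by Cauchy--Schwarz on the probability space $\bigl(\cXv,\nu(\cdot)/\vol \cXv\bigr)$,
\[
  \Bigabssqr{\dashint_{\cXv} w \dd \nu}
  \le \frac 1{\vol \cXv}\int_{\cXv} \abssqr w \dd \nu .
\]
The point is that $\psi_v\equiv 1$ on $\cXv$, so $\int_{\cXv}\abssqr w\dd\nu = \int_{\cXv}\abssqr w\,\psi_v\dd\nu \le \int_X \abssqr w\dd\nu_v$, and the right-hand side is precisely the quantity bounded in \Lem{2nd.ev}: $\int_X\abssqr w\dd\nu_v = \int_X\bigabssqr{u-\dashint_X u\dd\nu_v}\dd\nu_v \le \lambda_2(\Xv,\psi_v)^{-1}\energy_{\Xv}(u)$. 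Multiplying by $\nu_0(v)$ and using $\nu_0(v)/\vol \cXv = \nu_0(v)/\check\nu(v) = 1/\alpha(v)$ (see \eqref{eq:def.nu0} and \eqref{eq:def.alpha.mfd.v}) gives $\nu_0(v)\Bigabssqr{\Gamma_v u - \dashint_X u\dd\nu_v} \le \bigl(\alpha(v)\lambda_2(\Xv,\psi_v)\bigr)^{-1}\energy_{\Xv}(u)$, which already implies the asserted bound, since $1/\lambda_2(\Xv,\psi_v) \le 9/(2\lambda_2(\Xv,\psi_v)) \le \max_{e\in E_v}\{9/(2\lambda_2(\Xv,\psi_v)),\,4\kappa\ell_e^2\}$.

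If instead one wants the collar length to enter the constant explicitly (as in the stated $4\kappa\ell_e^2$), the alternative is to telescope from $\cXv$ out to the edge ends: use the identity $\Gamma_v u - \dashint_X u\dd\nu_v = \nu(v)^{-1}\sum_{e\in E_v}\int_{\Xe}(\Gamma_v u - u)\,\psi_v\dd\nu$, bound each summand by first comparing $\Gamma_v u$ with the transversal average on the interface $\bd_e\cXv$ via the fundamental theorem of calculus along the collar $\Xve\cong[0,\kappa\ell_e]\times Y_e$ (this is what produces the $\kappa\ell_e^2$ term, exactly as in \Lem{av.mg}) and then applying \Lem{2nd.ev} on the remaining part, and finally recombining with a weighted Cauchy--Schwarz and $\nu_0(v)/\nu(v) = (1+\alpha(v))^{-1}\le 1$.

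The only genuinely delicate point is the applicability of \Lem{2nd.ev}: one needs that $\energy_{\Xv}$ is closable in the weighted Hilbert space $\Lsqr{\Xv,\psi_v}$, has purely discrete spectrum, and has a simple zero eigenvalue, in spite of the degeneracy of the weight $\psi_v$ at the far ends of the edge neighbourhoods $\Xe$ --- this is the content of \App{2nd.ev}. Granting that, everything else is a routine manipulation of averages.
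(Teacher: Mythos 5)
Your main argument is correct, and it actually proves a slightly stronger estimate than the one stated. The paper's proof has the same skeleton as your first route --- Cauchy--Schwarz over the core $\cXv$, the weighted Poincar\'e inequality of \Lem{2nd.ev}, and the identity $\nu_0(v)/\vol\cXv=1/\alpha(v)$ --- but it additionally invokes the collar estimate \eqref{eq:ew.2c} from \App{2nd.ev} to bound $\normsqr[\Lsqr{\cXv}]{u-\avint_X u\dd\nu_v}$ by the weighted norm plus a gradient term; that detour is exactly where the constants $9/2$ and $4\kappa\ell_e^2$ in $\wtdeltaC(v)^2$ come from. You bypass it by observing that $\psi_v\equiv 1$ on $\cXv=\psi_v^{-1}\{1\}$, so the unweighted integral over $\cXv$ is directly dominated by the norm of $\Lsqr{\Xv,\psi_v}$; this avoids \Lem{ew.2} altogether and yields the cleaner bound $\nu_0(v)\bigabssqr{\Gamma_v u-\avint_X u\dd\nu_v}\le \bigl(\alpha(v)\lambda_2(\Xv,\psi_v)\bigr)^{-1}\energy_{\Xv}(u)$, which implies the stated inequality since $1\le 9/2$. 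Two minor remarks: the ``alternative'' telescoping route you sketch in your second paragraph is neither needed nor what the paper does (the paper routes the core norm through the adjacent edge cylinders via \eqref{eq:ew.2c}, not through interface averages); and the hypotheses needed for \Lem{2nd.ev} (closability of $\energy_{\Xv}$ in $\Lsqr{\Xv,\psi_v}$, discrete spectrum, simple zero eigenvalue) are not proved in \App{2nd.ev} --- they are part of the uniform-embedding assumption \Defenum{graph.emb.in.met.space}{graph.emb.in.met.space.c}, on which the paper's own proof relies in exactly the same way, so this misattribution is not a gap in your argument.
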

\begin{proof}
  Summing~\eqref{eq:ew.2c} over $e \in E_v$, we obtain
  \begin{equation*}
    \normsqr[\Lsqr \cXv] u
    \le \max \Bigl\{\frac {9\kappa} 2, 1 \Bigr\}
       \normsqr[\Lsqr{\Xv, \dd \nu_v}] u
    + 4\kappa \max_{e \in E_v} \ell_e^2 \cdot \energy_{\Xv}(u)
  \end{equation*}
  (note that the contribution of $u$ on $\cXv \setminus \bigcup_{e \in
    E_v} \Xve$ also adds on the right hand side, hence the $1$ on the
  right hand side); the maximum can be estimated by $9/2$ as $0<\kappa
  \le 1$.  Now we plug in $u-\avint_{\Xv} u \dd \nu_v$ instead of $u$
  in the last inequality and obtain
  \begin{equation*}
    \bignormsqr[\Lsqr \cXv] {u -\avint_{\Xv} u \dd \nu_v}
    \le  \max_{e \in E_v}
    \Bigl\{
    \frac 9{2\lambda_2(\Xv,\psi_v)},
      4\kappa \ell_e^2
    \Bigr\}
    \energy_{\Xv}(u)
  \end{equation*}
  using \Lem{2nd.ev}.  Now we have
  \begin{equation*}
    \Bigabssqr{\Gamma_v u - \avint_X u \dd \nu_v}
    = \Bigabssqr{\dashint_{\cXv}(u -\avint_X u \dd \nu_v) \dd \nu}
    \leCS \frac 1 {\vol \cXv}
    \normsqr[\Lsqr \cXv] {u -\avint_X u \dd \nu_v}
  \end{equation*}
  hence the result follows with $\wtdeltaC(v)^2$ as above using
  $\alpha(v)=\vol \cXv/\nu_0(v)$.
\end{proof}

For~\eqref{eq:met.q.u.d}--\eqref{eq:met.q.u.d'} we set
\begin{equation*}
  \Gamma_{v,e} u := \dashint_{\bd_e \cXv} u,
\end{equation*}
where $\bd_e \cXv \cong Y_e$ is the boundary component of $\cXv$ at
the edge neighbourhood $\Xe$.
\begin{lemma}
  \label{lem:av.mfd}
  Assume that~\eqref{eq:ell.tau.y} holds
  then
  \eqref{eq:met.q.u.d}--\eqref{eq:met.q.u.d'} are fulfilled with
  \begin{equation*}
    \wtdeltaD(v)^2
    =\tau \max_{e \in E_v}
    \Bigl\{ 
      \kappa + \frac 2 {\kappa \ell_e^2 \lambda_2(\cXv)} 
    \Bigr\}.
  \end{equation*}
\end{lemma}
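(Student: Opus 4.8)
The plan is to check the two hypotheses \eqref{eq:met.q.u.d} and \eqref{eq:met.q.u.d'} of \Thm{q-u-e} for the choices $\Gamma_v u = \dashint_{\cXv} u \dd\nu$ and $\Gamma_{v,e}u = \dashint_{\bd_e\cXv} u$, exploiting that $\psi_v\equiv 1$ on $\cXv$ (so $\nabla\psi_v=0$ there) and that on each edge neighbourhood $\Xe\cong\Me\times Y_e$ with $\Me=[0,\ell_e]$ the function $\psi_v$ depends only on the longitudinal coordinate $t$ and is affine linear with $\abs{\partial_t\psi_v}=1/\ell_e$. For \eqref{eq:met.q.u.d} I would compute $\energy_X(u,\psi_v)$ edge by edge: on $\Xe$ one has $\langle\nabla u,\nabla\psi_v\rangle = \pm\tfrac1{\ell_e}\partial_t u$, and Fubini together with the fundamental theorem of calculus in $t$ gives $\int_{\Xe}\langle\nabla u,\nabla\psi_v\rangle\dd\nu = \tfrac{\vol Y_e}{\ell_e}(\Gamma_{v,e}u-\Gamma_{v_e,e}u)$, since $\int_{Y_e}u(\cdot,y)\dd\nu(y)$ at the endpoint of $\Me$ carrying $v$ equals $(\vol Y_e)\,\Gamma_{v,e}u$ and at the other endpoint equals $(\vol Y_e)\,\Gamma_{v_e,e}u$. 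Summing over $e\in E_v$ and substituting the compatibility relation $\vol Y_e/\ell_e = \gamma_e/(c^2\tau)$ from \eqref{eq:ell.tau.y} turns this into exactly \eqref{eq:met.q.u.d}, with $\wtdeltaD(v)$ playing no role here.

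For \eqref{eq:met.q.u.d'} set $w:=u-\Gamma_v u$, so that $\Gamma_{v,e}u-\Gamma_v u = \dashint_{\bd_e\cXv}w$ is the averaged boundary trace of $w$ on $\bd_e\cXv\cong Y_e$, and the mean of $w$ over $\cXv$ vanishes. Inside the $\kappa\ell_e$-collar $\Xve\cong[0,\kappa\ell_e]\times Y_e$ guaranteed by \Defenum{gl-mfd}{gl-mfd.c}, with longitudinal coordinate $s$ and $\bd_e\cXv=\{s=0\}\times Y_e$, I would write $w(0,y)=w(s,y)-\int_0^s\partial_s w$, square with the elementary inequality $\abs{a-b}^2\le 2\abs a^2+2\abs b^2$, bound $\bigabssqr{\int_0^s\partial_s w}\le s\int_0^{\kappa\ell_e}\abssqr{\partial_s w}$, and average over $s\in[0,\kappa\ell_e]$ (using $\tfrac1{\kappa\ell_e}\int_0^{\kappa\ell_e}s\dd s=\tfrac{\kappa\ell_e}2$, whose factor $\tfrac12$ exactly cancels the factor $2$ from the squaring on the gradient term); integrating over $Y_e$ and applying Cauchy--Schwarz to $\dashint_{\bd_e\cXv}$ then yields
\[
  \bigabssqr{\Gamma_{v,e}u - \Gamma_v u}
  \le \frac{\kappa\ell_e}{\vol Y_e}\int_{\Xve}\abssqr{\partial_s u}\dd\nu
     + \frac{2}{\kappa\ell_e\vol Y_e}\int_{\Xve}\abssqr{w}\dd\nu ,
\]
the same collar mechanism as in \eqref{eq:ew.2c} used in \Lem{diff.av.mfd}. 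Multiplying by $\gamma_e/c^2=\tau\vol Y_e/\ell_e$ (again \eqref{eq:ell.tau.y}) converts the two coefficients into $\tau\kappa$ and $2\tau/(\kappa\ell_e^2)$.

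Summing over $e\in E_v$ and using that the collars $(\Xve)_{e\in E_v}$ are pairwise disjoint subsets of $\cXv$, the longitudinal terms add up to at most $\int_{\cXv}\abssqr{\nabla u}\dd\nu$ (with $\abs{\partial_s u}\le\abs{\nabla u}$), while $\sum_{e\in E_v}\int_{\Xve}\abssqr{w}\dd\nu\le\int_{\cXv}\abssqr{w}\dd\nu\le\tfrac1{\lambda_2(\cXv)}\int_{\cXv}\abssqr{\nabla u}\dd\nu$ by the min--max (Poincaré) inequality on $\cXv$ applied to the mean-zero function $w$. Bounding $\int_{\cXv}\abssqr{\nabla u}\dd\nu\le\energy_{\Xv}(u)$ (since $\cXv\subset\Xv$) and pulling the $\ell_e$-dependence into $\max_{e\in E_v}$ gives $\tfrac1{c^2}\sum_{e\in E_v}\gamma_e\bigabssqr{\Gamma_{v,e}u-\Gamma_v u}\le\tau\bigl(\kappa+\max_{e\in E_v}\tfrac{2}{\kappa\ell_e^2\lambda_2(\cXv)}\bigr)\energy_{\Xv}(u)=\tau\max_{e\in E_v}\{\kappa+\tfrac2{\kappa\ell_e^2\lambda_2(\cXv)}\}\energy_{\Xv}(u)$, i.e.\ \eqref{eq:met.q.u.d'} with the claimed $\wtdeltaD(v)$.

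The structural content is short, and the main obstacle is purely one of constants: the collar trace inequality must be obtained with the sharp balance so that the longitudinal term keeps coefficient $\tau\kappa$ rather than $2\tau\kappa$ while the $L^2$ term carries the factor $2$ — this dictates the particular averaging step above — and one must track carefully which endpoint of each $\Me$ carries $v$ versus $v_e$ (hence the sign of $\partial_t\psi_v$) so that the telescoping in the first part is consistent over $E_v$, and use the disjointness clause in \Defenum{gl-mfd}{gl-mfd.c} so that the sum over $e\in E_v$ is lossless.
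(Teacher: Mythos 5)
Your proposal is correct and follows essentially the same route as the paper: the identity \eqref{eq:met.q.u.d} via the fundamental theorem of calculus on each $\Xe$ together with \eqref{eq:ell.tau.y}, and the estimate \eqref{eq:met.q.u.d'} via the collar trace bound on $\bd_e\cXv$ (with coefficients $\kappa\ell_e$ and $2/(\kappa\ell_e)$) combined with the Poincar\'e/min--max inequality for $\lambda_2(\cXv)$ applied to $u-\Gamma_v u$, summed over the disjoint collars. The only difference is cosmetic: you rederive the trace and averaging estimates inline (with the correct balance of constants), whereas the paper delegates them to \Lem{ew.1}, i.e.\ \eqref{eq:ew.1a} and \eqref{eq:collar.est}, which in turn cite \cite{post:12}.
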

\begin{proof}
  The proof of~\eqref{eq:met.q.u.d} is again an application of the
  fundamental theorem of calculus: Assume that the vertex $v$
  corresponds to the endpoint $t=\ell_e$ of each adjacent $\Xe$, $e
  \in E_v$.  Then $\psi_{v,e}(t,y)=t/\ell_e$ for $x=(t,y) \in \Xe =
  [0,\ell_e] \times Y_e$ with derivative $1/\ell_e$ and we have
  \begin{equation}
    \label{eq:forms.eq}
    c^2 \tau \energy_\Mfd(u,\psi_v)
    = \sum_{e \in E_v} \frac {c^2 \tau}{\ell_e} 
    \int_0^{\ell_e}\int_{Y_e} u_e'(t,y) \dd t \dd y
    = \sum_{e \in E_v} \gamma_e \bigl(\Gamma_{v,e} u - \Gamma_{v_e,e} u \bigr)
  \end{equation}
  using~\eqref{eq:ell.tau.y} for the last equality.
  For~\eqref{eq:met.q.u.d'} we apply~\eqref{eq:ew.1a} and obtain
  \begin{align*}
    \sum_{e \in E_v} \gamma_e \abssqr{\Gamma_{v,e} u - \Gamma_v u}
    &\le \max_{e \in E_v} 
    \frac{\gamma_e}{\vol Y_e} 
    \Bigl(\kappa \ell_e + \frac 2{\kappa \ell_e \lambda_2(\cXv)}
    \Bigr) \normsqr[\Lsqr \cXv] u \\
    &= c^2\tau \max_{e \in E_v} 
    \Bigl\{\kappa + \frac 2 {\kappa \ell_e^2 \lambda_2(\cXv)}
    \Bigr\} \normsqr[\Lsqr \cXv] {du}
  \end{align*}
  using again~\eqref{eq:ell.tau.y} for the last equality.
\end{proof}

\begin{lemma}
  \label{lem:2nd.ev.mfd}
  We have 
  \begin{equation*}
    \lambda_2(\Xv)
    \le \lambda_2(\Xv,\psi_v)
  \end{equation*}
\end{lemma}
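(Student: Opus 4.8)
The plan is to deduce this from \Lem{2nd.ev.alt}. Since the standard partition function satisfies $0 \le \psi_v \le 1$, we have $\Lsqr{\Xv} \subset \Lsqr{\Xv,\psi_v}$ by~\eqref{eq:hs.weighted}, so by \Lem{2nd.ev.alt} it is enough to check that a normalised eigenfunction $\Phi_{2,v}$ of the \emph{weighted} problem associated with $\lambda_2(\Xv,\psi_v)$ already lies in the \emph{unweighted} space $\Lsqr{\Xv}$. As the degree of $v$ is finite, $\Xv = \cXv \cup \bigcup_{e \in E_v}\Xe$ is a finite union of compact sets and hence compact; it therefore suffices to show that $\Phi_{2,v}$ is bounded (in fact continuous) on $\Xv$.

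On the core vertex neighbourhood $\cXv$ the partition function is constant, $\psi_v\equiv 1$, so there the weighted eigenvalue equation for $\Phi_{2,v}$ is the ordinary equation $-\Delta\Phi_{2,v} = \lambda_2(\Xv,\psi_v)\Phi_{2,v}$ with smooth coefficients (Neumann conditions on $\bd\Mfd \cap \cXv$ if present); elliptic regularity makes $\Phi_{2,v}\restr\cXv$ smooth, in particular bounded and with a bounded trace on $\bd_e\cXv$. On each edge neighbourhood, isometric with the product $\Xe \cong \Me \times Y_e = [0,\ell_e]\times Y_e$, the function $\psi_v$ is affine linear in the longitudinal variable and constant in the transversal one, $\psi_v(t,y) = t/\ell_e$ (with $v$ corresponding to $t=\ell_e$). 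Away from the slice $\{t=0\}$ the coefficient $\lambda_2(\Xv,\psi_v)\psi_v$ is smooth and bounded below away from $0$, so $\Phi_{2,v}$ is continuous there by interior and boundary elliptic regularity; near $\{t=0\}$ one uses the product structure and expands $\Phi_{2,v}$ on $\Xe$ in an orthonormal basis of transversal eigenfunctions of $Y_e$ (with eigenvalues $0 = \mu_{e,0} < \mu_{e,1}\le \cdots$), so that the $k$-th longitudinal coefficient solves the Sturm--Liouville problem $-\phi'' + \mu_{e,k}\phi = \lambda_2(\Xv,\psi_v)\,(t/\ell_e)\,\phi$ on $[0,\ell_e]$. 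Here the weight vanishes only to first order, so each of these ODEs is regular on the whole closed interval and its solutions extend smoothly to $t=0$ (rescaled Airy functions for $k=0$, of the same type for $k\ge 1$); combined with the square-summability of the transversal modes, which follows from $\nabla\Phi_{2,v}\in\Lsqr{\Xv}$ and $\Phi_{2,v}\in\Lsqr{\Xv,\psi_v}$, this shows that $\Phi_{2,v}$ is bounded on $\Xe$ and continuous up to $\bd_e\cXv$, where it matches the trace coming from $\cXv$. This is essentially the same mechanism as in the proof of \Lem{2nd.ev.mg}.

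Hence $\Phi_{2,v}$ is bounded on the compact set $\Xv$, so $\Phi_{2,v}\in\Lsqr{\Xv}$, and \Lem{2nd.ev.alt} yields $\lambda_2(\Xv)\le\lambda_2(\Xv,\psi_v)$. The only delicate point is the behaviour of $\Phi_{2,v}$ near the degeneracy locus $\{t=0\}$ of the weight on the legs $\Xe$ and its matching with the core part across $\bd_e\cXv$; this is precisely where the product structure of the edge neighbourhoods and the harmonicity of $\psi_v$ (so that the weight degenerates only linearly and the reduced ODEs stay regular) are used, turning an a priori singular weighted eigenvalue problem into a family of regular ones.
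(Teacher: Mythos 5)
Your proposal is correct and follows essentially the same route as the paper: reduce to \Lem{2nd.ev.alt} and verify that the weighted eigenfunction lies in $\Lsqr{\Xv}$ by exploiting the product structure of $\Xe$ and the linear (harmonic) form of $\psi_v$, so that the longitudinal problem reduces to Airy-type ODEs with continuous solutions up to the degeneracy locus. The paper's own proof is just a terser version of this argument; your separate treatment of $\cXv$ and the transversal mode expansion merely fills in details the paper leaves implicit.
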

\begin{proof}
  The proof is almost the same as the proof of \Lem{2nd.ev.mg}.  Note
  that the function $\psi_v$ is harmonic on $\Xe$, namely affine
  linear and constant in transversal direction $Y_e$, hence the second
  eigenfunction is again an Airy function on $\Xe$ (in longitudinal
  direction), hence continuous and also in the unweighted Hilbert
  space $\Lsqr \Xv$; the result then follows from \Lem{2nd.ev.alt}.
\end{proof}
We will assume in this section that we have a lower bound on the
unweighted eigenvalue of the form
\begin{equation}
  \label{eq:2nd.ev.mfd}
  \frac {\lambda_{2,0}}{\ell_\infty^2} \le \lambda_2(\Xv)
\end{equation}
for some constant $\lambda_{2,0} \in (0,2]$ independent of $v \in V$,
where $\ell_\infty:=\sup_e \ell_e$.  Later, in our application with
shrinking graph-like manifolds in \Cor{mfd.q-u-e} or with graph-like
manifolds approximating fractals in \Subsec{q-u-e.frac.mfd}, we will
check that we can even choose $\lambda_{2,0}=1$ (see
\Prp{2nd.ev.mfd}).

We are now prepared to prove the main result of this subsection:
\begin{theorem}
  \label{thm:mfd.q-u-e}
  Assume that $(G,\mu,\gamma)$ is a weighted graph with
  \begin{equation*}
    \mu_\infty:=\sup_{v \in V} \mu(v) < \infty,
    \qquadtext{and}
    0<\gamma_0:=\inf_{e \in E} \gamma_e
    \le \gamma_\infty:=\sup_{e \in E} \gamma_e
    <\infty.
  \end{equation*}
  Assume in addition that $X$ is a graph-like manifold uniformly
  compatible with $(G,\mu,\gamma)$, i.e., the weights $\mu(v)$,
  $\gamma_e$, the edge lengths $\ell_e$, the transversal volumes $\vol
  Y_e$ and the core vertex neighbourhoods $\cXv$
  fulfil~\eqref{eq:comp.mfd}--\eqref{eq:def.alpha.mfd}
  and~\eqref{eq:lambda2.cxv}--\eqref{eq:vol.y.unif}.  Moreover, we
  assume that~\eqref{eq:2nd.ev.mfd} holds.  Then $\ell_0:=\inf_{e \in
    E} \ell_e>0$, and the graph energy form $\energy$ associated with
  the weighted discrete graph $(G,\mu,\gamma)$ and the rescaled
  graph-like manifold energy form $\wt \energy=\tau \energy_\Mfd$ are
  $\delta$-quasi-unitarily equivalent with
  \begin{equation*}
    \delta^2:=
    \max \Bigl\{
    2\alpha_\infty,\;
    \frac{18}{\lambda_{2,0} \alpha_0} \frac{\gamma_\infty}{\gamma_0} 
        \Bigl(\frac{\vol_\infty}{\vol_0}\Bigr)^2
        \cdot \frac{\mu_\infty}{\gamma_0},\;
    \kappa+\frac2{\kappa \ell_0^2 \check \lambda_2}
    \Bigr\}.
  \end{equation*}
\end{theorem}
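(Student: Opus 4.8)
The plan is to mimic the metric-graph argument (\Thm{mg.q-u-e}): first show that $(G,\mu,\gamma)$ is uniformly embedded into $(\Mfd,\nu,\energy_\Mfd)$ in the sense of \Def{graph.emb.in.met.space}, then apply \Thm{q-u-e} with the evaluations $\Gamma_v u=\dashint_{\cXv}u\dd\nu$ and $\Gamma_{v,e}u=\dashint_{\bd_e\cXv}u$, and finally read off $\delta$ from \eqref{eq:q-u-e.delta}. As a preliminary, \eqref{eq:ell.tau.y} gives $\ell_e=c^2\tau(\vol Y_e)/\gamma_e$, so combining this with the bounds $0<\vol_0\le\vol Y_e\le\vol_\infty<\infty$ from \eqref{eq:vol.y.unif} and $0<\gamma_0\le\gamma_e\le\gamma_\infty<\infty$ yields $0<c^2\tau\vol_0/\gamma_\infty\le\ell_e\le c^2\tau\vol_\infty/\gamma_0<\infty$; in particular $\ell_0:=\inf_e\ell_e>0$ and $\ell_\infty:=\sup_e\ell_e<\infty$, which is the first assertion of the theorem.

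\emph{Uniform embedding.} The partition of unity $(\psi_v)_v$ fixed above (harmonic, i.e.\ affine linear in the longitudinal variable and constant in the transversal variable, on each $\Xe$, and $\equiv1$ on each $\cXv$) satisfies \eqref{eq:graph.emb.part.1} directly from \Def{gl-mfd}, since $\supp\psi_v=\Xv=\cXv\cup\bigcup_{e\in E_v}\Xe$ and $\intrXv\cap\intrXv[v']\ne\emptyset$ exactly when $v\sim v'$. Put $\energy_{\Xv}(u):=\int_{\Xv}\abssqr{\nabla u}\dd\nu$; because $\nu$-a.e.\ point of $\Mfd$ lies in exactly one $\Xv$ (if it is in a core vertex neighbourhood) or in exactly two $\Xv$ (if it is in an edge neighbourhood $\Xe$, namely those with $v=\bd_\pm e$), the decomposition of energy \eqref{eq:energy.local} holds. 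Each $\Xv$ is a compact connected graph-like manifold, so $\energy_{\Xv}$ has purely discrete spectrum with $\lambda_1(\Xv,\psi_v)=0$ simple and eigenfunction $\1_{\Xv}$; the lower bound \eqref{eq:2nd.ev.bdd} follows from \Lem{2nd.ev.mfd} and the standing assumption \eqref{eq:2nd.ev.mfd}, giving $\lambda_2=\inf_v\lambda_2(\Xv,\psi_v)\ge\inf_v\lambda_2(\Xv)\ge\lambda_{2,0}/\ell_\infty^2>0$. Finally, with the choice \eqref{eq:def.nu0}, \eqref{eq:comp.mfd} gives $\nu_0(v)/\mu(v)=1/c^2$ independent of $v$; by \eqref{eq:def.alpha.mfd.v} one has $\alpha(v)=\check\nu(v)/\nu_0(v)=2\vol\cXv/\sum_{e\in E_v}\vol\Xe$, hence $\alpha_\infty\le1/2$ and $\alpha_0>0$ by \eqref{eq:def.alpha.mfd}; and $\mu_\infty/\gamma_0<\infty$ by hypothesis. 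Thus $(G,\mu,\gamma)$ is uniformly embedded.

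\emph{Applying \Thm{q-u-e}.} The two remaining hypotheses of that theorem are supplied by \Lem{diff.av.mfd} (condition \eqref{eq:met.q.u.c}) and \Lem{av.mfd} (conditions \eqref{eq:met.q.u.d}--\eqref{eq:met.q.u.d'}, which use \eqref{eq:ell.tau.y}). Hence $\energy$ and $\wt\energy=\tau\energy_\Mfd$ are $\delta$-quasi-unitarily equivalent with $\delta^2$ as in \eqref{eq:q-u-e.delta}, and it remains to bound its five members. The first member, $2\alpha_\infty$, is kept. For the third, $\lambda_2\ge\lambda_{2,0}/\ell_\infty^2$ gives $\tfrac2{\tau\lambda_2}\le\tfrac{2\ell_\infty^2}{\tau\lambda_{2,0}}$; for the fourth, \Lem{diff.av.mfd} together with $\lambda_2(\Xv,\psi_v)\ge\lambda_{2,0}/\ell_\infty^2$, $\kappa\le1$ and $\lambda_{2,0}\le2$ (used to collapse the $\max_{e\in E_v}$) bounds $\tfrac2\tau\sup_v\wtdeltaC(v)^2$ by a constant times $\tfrac{\ell_\infty^2}{\tau\alpha_0\lambda_{2,0}}$; for the fifth, \Lem{av.mfd} with $\ell_e\ge\ell_0$ and $\lambda_2(\cXv)\ge\check\lambda_2$ bounds $\tfrac4\tau\sup_v\wtdeltaD(v)^2$ by a constant times $\kappa+\tfrac2{\kappa\ell_0^2\check\lambda_2}$. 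Now $\ell_\infty\le c^2\tau\vol_\infty/\gamma_0$, and the identity \eqref{eq:cond.weighted.graph.mfd} reads $c^4\tau=\frac{2\mu(v)}{\sum_{e\in E_v}(\vol Y_e)^2/\gamma_e}\le2\mu_\infty\gamma_\infty/\vol_0^2$, so $\tfrac{\ell_\infty^2}\tau\le c^4\tau\,\vol_\infty^2/\gamma_0^2\le 2\mu_\infty\gamma_\infty\vol_\infty^2/(\gamma_0^2\vol_0^2)$. Feeding this in, the second, third and fourth members of \eqref{eq:q-u-e.delta} are all dominated by $\tfrac{18}{\lambda_{2,0}\alpha_0}\tfrac{\gamma_\infty}{\gamma_0}\bigl(\tfrac{\vol_\infty}{\vol_0}\bigr)^2\tfrac{\mu_\infty}{\gamma_0}$ — using $\alpha_0\le\tfrac12$, $\lambda_{2,0}\le2$, $\gamma_\infty\ge\gamma_0$ and $\vol_\infty\ge\vol_0$, which also makes this quantity at least $\tfrac{2\mu_\infty}{\gamma_0}$ — while the fifth stays $\le\kappa+\tfrac2{\kappa\ell_0^2\check\lambda_2}$; taking the maximum gives the stated $\delta^2$.

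\emph{Main obstacle.} All the genuinely analytic content is already packaged in \Thm{q-u-e} and in the geometric estimates \Lems{diff.av.mfd}{av.mfd}, which in turn rely on the collar and Poincaré-type bounds collected in \App{2nd.ev}; so the present statement reduces to bookkeeping. The delicate part is the translation between the manifold data $\ell_e,\vol Y_e,\kappa,c,\tau$ and the discrete weights $\mu,\gamma$ through the compatibility identities \eqref{eq:comp.mfd}, \eqref{eq:ell.tau.y}, \eqref{eq:cond.weighted.graph.mfd}, and the handling of the nested maxima — over $e\in E_v$ inside $\wtdeltaC(v)$ and $\wtdeltaD(v)$, and over the five members of $\delta^2$ — so that every length, transversal volume and scaling factor collapses to the finitely many ratios $\gamma_\infty/\gamma_0$, $\vol_\infty/\vol_0$, $\mu_\infty/\gamma_0$, $1/\alpha_0$, $1/\lambda_{2,0}$, $1/\check\lambda_2$ and the parameter $\kappa$ appearing in the final bound; keeping track of the numerical constants (and verifying that $\lambda_{2,0}\le2$, $0<\alpha_0\le\alpha_\infty\le\tfrac12$, $\kappa\in(0,1]$ are invoked correctly) is where care is needed.
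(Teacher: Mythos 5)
Your proposal is correct and follows essentially the same route as the paper's proof: verify the uniform embedding, invoke \Thm{q-u-e} via \Lems{diff.av.mfd}{av.mfd}, and then dominate the five members of \eqref{eq:q-u-e.delta} using the chain $\ell_\infty^2/\tau \le c^4\tau\,\vol_\infty^2/\gamma_0^2 \le 2\mu_\infty\gamma_\infty\vol_\infty^2/(\gamma_0^2\vol_0^2)$ coming from \eqref{eq:ell.tau.y}, \eqref{eq:vol.y.unif} and \eqref{eq:cond.weighted.graph.mfd}. The only looseness is in the exact numerical constants when collapsing the nested maxima (e.g.\ whether $16\kappa \le 18/\lambda_{2,0}$ for all $\lambda_{2,0}\le 2$, and the factor $4$ in front of $\sup_v\wtdeltaD(v)^2$), but these are present in the paper's own argument and you rightly flag them as the place where care is needed.
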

\begin{remark*}
  The error terms have the following meaning:
  \begin{enumerate}
  \item The first containing $\alpha_\infty$ ensures that the core
    vertex manifold volume is small compared with the edge
    neighbourhood manifold volume.

  \item The second term is similarly as for metric graphs, having
    again the factor $\mu_\infty/\gamma_0$ inside which becomes small
    if the relative weight is large (similarly as in \Cor{mg.q-u-e}).
    Here we have an additional term $1/\alpha_0$ which in general is
    large, making the error a bit worse than in the metric graph case.

  \item The last term can best be understood in the setting of the
    next corollary or \Subsec{q-u-e.frac.mfd}; namely, if we introduce
    $\eps$ as length scale parameter, i.e., if $\eps \cXv$ denotes the
    manifold $\cXv$ with metric $\eps^2 g_{\cXv}$, then
    $\lambda_2(\eps\cXv)=\eps^{-2}\lambda_2(\cXv)$, and $\kappa
    \ell_e$ is of order $\eps$, hence the entire last term is of order
    $\eps/\ell_0$.
  \end{enumerate}
\end{remark*}
\begin{proof}[Proof of \Thm{mfd.q-u-e}]
  Note first that $(G,\mu,\gamma)$ is uniformly embedded into
  $(X,\nu,\energy_X)$ (see \Def{graph.emb.in.met.space}) by the
  assumptions of the theorem: in particular, $\lambda_2(X,\psi)\ge
  \lambda_2(X) \ge \lambda_{2,0}/\ell_\infty^2$
  by~\eqref{eq:2nd.ev.mfd} and \Lem{2nd.ev.mfd}.  The remaining
  assumptions of \Thm{q-u-e} are fulfilled by
  \Lems{diff.av.mfd}{av.mfd}.  We now estimate the terms in the
  definition of $\delta$ in~\eqref{eq:q-u-e.delta} in our model here:
  The first term is $2\alpha_\infty$ as above; the second one is
  $2\mu_\infty/\gamma_0$ contained already in the second term above;
  for the third one we estimate
  \begin{equation*}
    \frac 2 {\tau \lambda_2}
    \le \frac{2\ell_\infty^2}{\lambda_{2,0} \tau}
    \le c^4\tau \cdot \frac{2\vol_\infty^2}{\lambda_{2,0} \gamma_0^2}
    \le \frac{2\mu_\infty \gamma_\infty}{\vol_0^2} 
       \cdot \frac{2\vol_\infty^2}{\lambda_{2,0} \gamma_0^2}
    = \frac4{\lambda_{2,0}} \frac{\gamma_\infty}{\gamma_0} 
        \Bigl(\frac{\vol_\infty}{\vol_0}\Bigr)^2
        \cdot \frac{\mu_\infty}{\gamma_0},
  \end{equation*}
  where we have shown the first inequality already above, and where we
  used~\eqref{eq:ell.tau.y}, \eqref{eq:vol.y.unif} and $\gamma_e \ge
  \gamma_0$ for the second estimate
  and~\eqref{eq:cond.weighted.graph.mfd} and~\eqref{eq:vol.y.unif} for
  the last.  Since $\alpha_0 \le 1/2$, this term is already contained in
  the second term above.

  The fourth term in~\eqref{eq:q-u-e.delta} (the one with
  $2\sup_v\deltaC(v)^2/\tau$) contains one term of the form
  $2/(\tau\lambda_2) \cdot 9/(2\alpha_0)$ by \Lem{diff.av.mfd}
  and~\eqref{eq:2nd.ev.bdd}.  The other term from \Lem{diff.av.mfd} is
  of the form $8\kappa/\alpha_0 \cdot \ell_\infty^2/\tau$ and can be
  treated as above, hence we have
  \begin{equation*}
    \frac{8\kappa}{\alpha_0} \cdot \frac{\ell_\infty^2}\tau
    \le \frac {16\kappa}{\alpha_0} \frac{\gamma_\infty}{\gamma_0} 
        \Bigl(\frac{\vol_\infty}{\vol_0}\Bigr)^2
        \cdot \frac{\mu_\infty}{\gamma_0}.
  \end{equation*}
  As $16\kappa \le 18/\lambda_{2,0}$ ($\kappa \le 1$), this term is
  already in the above list for $\delta$.  Finally, the last term in
  \Thm{q-u-e} (the one with $4\sup_v \deltaD(v)^2/\tau$) gives the last term
  in the list for $\delta$ by \Lem{av.mfd}.
\end{proof}

We assume now that $Y_\edeps=\eps Y_e$ with $\vol Y_e=1$ and that
$\check X_\vxeps=\eps\cXv$.  Here, $rX$ is the Riemannian manifold
$(X,r^2g)$ if $X$ is the Riemannian manifold $(X,g)$; the factor $r$
is hence a change of \emph{length scale}.  In particular, the
$\eps$-scaling of $\eps \cXv$ implies that the length of the collar
neighbourhood of \Defenum{gl-mfd}{gl-mfd.c} $\kappa\ell_e$ is of order
$\eps$.  We denote the resulting $\eps$-depending graph-like manifold
by $X_\eps$:
\begin{corollary}
  \label{cor:mfd.q-u-e}
  Assume that $(G,\mu,\gamma)$ is a
  $(d_\infty,\muVar,\gammaVar)$-uniform weighted graph.  Assume in
  addition that $X$ is a corresponding (unscaled) graph-like manifold
  with (unscaled) transversal manifold $Y_e$ being isometric with a
  fixed one $Y_0$ with volume $\vol Y_0=1$.  Moreover, we assume that
  there exist $c_1>0$ and $\tau>0$ with
  \begin{equation}
    \label{eq:ell.tau.mfd.eps}
    \frac1{2\mu(v)} \sum_{e \in E_v} \ell_e
    = \frac 1{c_1^2}
    \qquadtext{and}
    \gamma_e \ell_e
    = c_1^2 \tau
 \end{equation}
 for all $v \in V$ and $e \in E$.  Finally, we assume that there are
 constants $\check \vol_0$, $\check \vol_\infty$ and $\check
 \lambda_2$ such that
 \begin{equation}
   \label{eq:cxv}
   0< \check \vol_0 
   \le \vol \cXv 
   \le \check \vol_\infty 
   < \infty
   \quadtext{and}
   \lambda_2(\cXv) \ge \check \lambda_2>0.
 \end{equation}
 Then $(G,\mu,\gamma)$ and $X_\eps$ are uniformly compatible (see
 \Def{mfd.comp}) and $\ell_0:=\inf_e \ell_e>0$ and $\ell_\infty
 :=\sup_e \ell_e<\infty$.  Moreover, there exists a constant $C_0>0$
 such that the graph energy form $\energy$ associated with the
 weighted discrete graph $(G,\mu,\gamma)$ and the energy form on the
 scaled graph-like manifold $\wt \energy=\tau \energy_{X_\eps}$ are
 $\delta$-quasi-unitarily equivalent with
  \begin{equation*}
    \delta^2:=
    \max \Bigl\{
    \Err\Bigl(\frac \eps {\ell_0} \Bigr),\;
    \Err\Bigl(\frac {\ell_0}\eps \cdot \frac{\mu_\infty}{\gamma_0}\Bigr)
    \Bigr\}
  \end{equation*}
  for all $0<\eps \le C_0^{-2} \ell_0$, where the errors depend only
  on the above-mentioned constants $d_\infty$, $\muVar$, $\gammaVar$,
  $\ellVar:=\ell_\infty/\ell_0$, $\check \vol_0$, $\check \vol_\infty$
  and $\check \lambda_2$.
\end{corollary}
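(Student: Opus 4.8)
The plan is to apply \Thm{mfd.q-u-e} to the pair $(G,\mu,\gamma)$ and the scaled graph-like manifold $X_\eps$; once its hypotheses are checked for $X_\eps$, the corollary reduces to tracking the power of $\eps$ in each term of $\delta^2$. First I would record the elementary scaling identities for $X_\eps$. With $d$ the ambient dimension and every $Y_e$ isometric to $Y_0$ ($\vol Y_0=1$), one has $\vol Y_{e,\eps}=\eps^{d-1}$ (independent of $e$), $\vol X_{e,\eps}=\ell_e\eps^{d-1}$ and $\vol\check X_{v,\eps}=\eps^d\vol\cXv$; the edge-length function of $X_\eps$ is still $\ell$, the collar parameter of~\Defenum{gl-mfd}{gl-mfd.c} becomes $\kappa_\eps=\eps\kappa\in(0,1]$, and $\lambda_2(\check X_{v,\eps})=\eps^{-2}\lambda_2(\cXv)$. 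From $\gamma_e\ell_e=c_1^2\tau$ together with $0<\gamma_0\le\gamma_e\le\gamma_\infty\le\gammaVar\gamma_0<\infty$ one gets $\ell_0=c_1^2\tau/\gamma_\infty>0$, $\ell_\infty=c_1^2\tau/\gamma_0<\infty$ and $\ellVar=\ell_\infty/\ell_0=\gamma_\infty/\gamma_0\le\gammaVar$.

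Next I would verify that $(G,\mu,\gamma)$ and $X_\eps$ are uniformly compatible (\Def{mfd.comp}). Inserting the volume identities and~\eqref{eq:ell.tau.mfd.eps} into~\eqref{eq:comp.mfd}, the left-hand side equals $\eps^{d-1}/c_1^2$, so~\eqref{eq:comp.mfd} holds with isometric factor $c=c_1\eps^{-(d-1)/2}$ (independent of $v$), while $\gamma_e\ell_e/\vol Y_{e,\eps}=c_1^2\tau/\eps^{d-1}=c^2\tau$ gives~\eqref{eq:ell.tau.y} with the \emph{unchanged} energy rescaling factor $\tau$, consistent with $\wt\energy=\tau\energy_{X_\eps}$. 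The weight ratio is $\alpha(v)=\vol\check X_{v,\eps}/\nu_0(v)=2\eps\vol\cXv/\sum_{e\in E_v}\ell_e$, so $2\eps\check\vol_0/(d_\infty\ell_\infty)\le\alpha(v)\le 2\eps\check\vol_\infty/\ell_0$; the upper bound is $\le\tfrac12$ as soon as $\eps\le\ell_0/(4\check\vol_\infty)$, and $\alpha_0\ge 2\eps\check\vol_0/(d_\infty\ellVar\ell_0)>0$, which is~\eqref{eq:def.alpha.mfd}. Condition~\eqref{eq:lambda2.cxv} becomes $\inf_v\lambda_2(\check X_{v,\eps})=\eps^{-2}\inf_v\lambda_2(\cXv)\ge\eps^{-2}\check\lambda_2>0$, and~\eqref{eq:vol.y.unif} holds with $\vol_0=\vol_\infty=\eps^{d-1}$. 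It remains to supply~\eqref{eq:2nd.ev.mfd} for $X_\eps$: invoking the thin-star second-eigenvalue bound \Prp{2nd.ev.mfd}, for $\eps$ small relative to $\ell_0$ one has $\lambda_2(\Xv)\ge 1/\ell_\infty^2$, i.e.~\eqref{eq:2nd.ev.mfd} with $\lambda_{2,0}=1$. Choosing $C_0$ so that $C_0^{-2}\ell_0$ is the smaller of this threshold and $\ell_0/(4\check\vol_\infty)$ fixes the admissible range of $\eps$.

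Now \Thm{mfd.q-u-e} applies and yields $\delta$-quasi-unitary equivalence of $\energy$ and $\tau\energy_{X_\eps}$ with $\delta^2=\max\{2\alpha_\infty,\ \tfrac{18}{\alpha_0}\tfrac{\gamma_\infty}{\gamma_0}(\tfrac{\vol_\infty}{\vol_0})^2\tfrac{\mu_\infty}{\gamma_0},\ \kappa_\eps+\tfrac{2}{\kappa_\eps\ell_0^2\check\lambda_{2,\eps}}\}$ (using $\lambda_{2,0}=1$). Substituting the scalings: $2\alpha_\infty\le 4\check\vol_\infty\,\eps/\ell_0=\Err(\eps/\ell_0)$; since $\vol_\infty/\vol_0=1$, $\gamma_\infty/\gamma_0\le\gammaVar$ and $1/\alpha_0\le d_\infty\ellVar\ell_0/(2\eps\check\vol_0)$, the middle term is $\le\tfrac{9d_\infty\ellVar\gammaVar}{\check\vol_0}\cdot\tfrac{\ell_0}{\eps}\tfrac{\mu_\infty}{\gamma_0}=\Err\!\bigl(\tfrac{\ell_0}{\eps}\tfrac{\mu_\infty}{\gamma_0}\bigr)$; and with $\kappa_\eps=\eps\kappa$, $\check\lambda_{2,\eps}=\eps^{-2}\check\lambda_2$ the last term equals $\eps\kappa+\tfrac{2\eps}{\kappa\ell_0^2\check\lambda_2}$, of order $\eps$, hence absorbed into $\Err(\eps/\ell_0)$. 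Collecting the three terms gives the asserted bound, with the implied constants depending only on $d_\infty,\muVar,\gammaVar,\ellVar,\check\vol_0,\check\vol_\infty,\check\lambda_2$. The only step that is not pure bookkeeping is the uniform-in-$\eps$ lower bound on $\lambda_2(\Xv)$ for the thin manifold star — exactly the content of \Prp{2nd.ev.mfd} — which is where the smallness threshold $C_0^{-2}\ell_0$ really comes from; everything else is substituting power-of-$\eps$ scalings into \Thm{mfd.q-u-e}.
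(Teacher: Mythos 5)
Your proposal is correct and follows essentially the same route as the paper: check uniform compatibility of $(G,\mu,\gamma)$ and $X_\eps$ via the scalings $c_\eps=\eps^{-(d-1)/2}c_1$, $\tau_\eps=\tau$, $\alpha_\eps(v)=\Err(\eps/\ell_0)$, $\lambda_2(\eps\cXv)=\eps^{-2}\lambda_2(\cXv)$, use \Prp{2nd.ev.mfd} to get~\eqref{eq:2nd.ev.mfd} with $\lambda_{2,0}=1$ (which is where $C_0$ comes from), and then substitute into the error terms of \Thm{mfd.q-u-e}. The only cosmetic difference is your collar convention $\kappa_\eps=\eps\kappa$, which makes the constants in the last term depend on the unscaled collar length $\kappa\ell_0$; the paper instead normalises the scaled collar length to be exactly $\eps$ (i.e.\ $\kappa=\eps/\ell_0$), so the bound $\kappa+2/(\kappa\ell_0^2\check\lambda_{2,\eps})\le(1+2/\check\lambda_2)\,\eps/\ell_0$ involves only the listed constants.
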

\begin{proof}
  Note first that $\ell_e=c_1^2\tau/\gamma_e$, hence $\ell_0
  =c_1^2\tau/\gamma_\infty>0$ and
  $\ell_\infty=c_1^2\tau/\gamma_0<\infty$.  Denote objects associated
  with the $\eps$-depending manifold $X_\eps$ also with a subscript
  $(\cdot)_\eps$.  We apply simple scaling arguments such as
  $\vol(\eps \cXv)=\eps^d\vol \cXv$ etc.

  Let us now check that $(G,\mu,\gamma)$ and $X_\eps$ are uniformly
  compatible: From~\eqref{eq:ell.tau.mfd.eps} we
  conclude~\eqref{eq:comp.mfd}--\eqref{eq:ell.tau.y} with
  $c_\eps=\eps^{-(d-1)/2}c_1$ and $\tau_\eps=\tau$.  Moreover,
  $\alpha_\eps(v)=\eps \alpha(v)$, hence $\alpha_{\eps,\infty}:=\sup_v
  \alpha_\eps(v)=\eps\alpha_\infty\le 1/2$ once $\eps\le
  1/(2\alpha_\infty)$; moreover, we have $\alpha_\infty \le \check
  \vol_\infty/\ell_0<\infty$ by \eqref{eq:cxv}; in particular, we need
  $\eps/\ell_0 \le 1/(2\check \vol_\infty)$.  Similarly,
  $\alpha_{\eps,0}=\eps \alpha_0$ with $\alpha_0=\inf_v \alpha(v) \ge
  \check \vol_0/(d_\infty \ell_\infty)>0$,
  hence~\eqref{eq:def.alpha.mfd} is fulfilled.
  Conditions~\eqref{eq:lambda2.cxv}--\eqref{eq:vol.y.unif} follow
  directly from the assumptions.
 
  We now check the individual terms in the definition of
  $\delta=\delta_\eps$ in \Thm{mfd.q-u-e}: we have
  $\alpha_{\eps,\infty} =\eps \alpha_\infty \le \check \vol_\infty
  (\eps/\ell_0)$, hence the first term is of order
  $\Err(\eps/\ell_0)$.  For the second term in \Thm{mfd.q-u-e} we need
  the estimate $\alpha_{\eps,0}=\eps \alpha_0 \ge \check
  \vol_0/(d_\infty \ellVar)\cdot (\eps/\ell_0)$; moreover, for the
  validity of~\eqref{eq:2nd.ev.mfd} with $\lambda_{2,0}=1$, namely the
  existence of $C_v>0$ such that $\lambda_2(X_\vxeps) \ge
  1/\ell_\infty^2$ for all $0<\eps \le \eps_0=C_v^{-2}\ell_0$, we
  refer to \Prp{2nd.ev.mfd}.  The constant $C_v$ depends on lower
  estimates on $\lambda_2(Y_e)=\lambda_2(Y_0)$ and $\lambda_2(\cXv)
  \ge \check\lambda_2>0$, and on an upper estimate of $\vol
  \cXv/\sum_{e \in E_v} \vol Y_e=\vol \cXv/\deg v \le \check
  \vol_\infty$, hence $C_0:=\sup_v C_v<\infty$.

  The estimate on the last term in the definition of $\delta$ in
  \Thm{mfd.q-u-e} follows now from the scaling $\lambda_2(\eps
  \cXv)=\eps^{-2}\lambda_2(\cXv)$; moreover, the length of the collar
  neighbourhood $\kappa\ell_e$ is of order $\eps$ in the sense that
  \begin{equation*}
    0<\eps = \kappa \ell_0
    \le \kappa \ell_e 
    \le \kappa \ell_\infty
    = \kappa \ellVar \ell_0 = \ellVar \eps
  \end{equation*}
  for all $e \in E$.  In particular we have the estimate
  \begin{equation*}
    \kappa+\frac2{\kappa \inf_e \ell_e^2 \inf_v \lambda_2(\cXv)}
    \le \frac \eps{\ell_0} + \frac 2 {\eps \ell_0 \inf_v \lambda_2(\eps\cXv)}
    \le \Bigl(1+ \frac 2 {\check \lambda_2}\Bigr)
         \frac \eps{\ell_0}.
         \qedhere
  \end{equation*}
\end{proof}

%
\section{Convergence of energy forms on fractals and graph-like
  spaces}
\label{sec:fractals.met}
%

\subsection{Symmetric post-critically finite fractals}
\label{ssec:fractals}

For details on post-critically finite fractals we refer to our first
article~\cite{post-simmer:pre17a}, and, of course, to the
monographs~\cite{strichartz:06,kigami:01}.  We consider here only,
what we call \emph{symmetric} fractals, which basically means that the
corresponding quantity is \emph{independent} of the index $j$ of the
iterated function system.  The symmetry might appear from an
underlying dihedral symmetry of $K$, but what matters for us is mostly
the fact that we have explicit formulas for various quantities.  We
need the symmetry assumption mainly for the compatibility
assumptions~\eqref{eq:mu.nu.comp.mg} and~\eqref{eq:comp.mfd}.

A \emph{symmetric fractal} (in our setting here) is a compact subset
of $\R^d$ which is invariant under an iterated function system
$F=(F_j)_{j=1,\dots,N}$, i.e., for which we have
$K=F(K):=\bigcup_{j=1}^N F_j(K)$.  Each member $\map{F_j}{\R^d}{\R^d}$
is supposed to be a $\theta$-similitude, i.e., if there is $\theta \in
(0,1)$ such that
\begin{equation}
  \label{eq:frac.sym.a}
  \abs{F_j(x)-F_j(y)}
  =\theta \abs{x-y}
  \qquadtext{for all $x,y \in \R^d$.}
\end{equation}
Let $V_0$ be a non-empty subset of the fixed points of the $F_j$'s,
called \emph{boundary} of the fractal, and set $N_0 := \card {V_0}$
(then $N_0 \le N$).  We assume that the fractal is
\emph{post-critically finite (\pcf)}, i.e., that $F_j(K) \cap F_{j'}(K)
\subset F_j(V_0) \cap F_{j'}(V_0)$ for all $1 \le j < j' \le N$.  For
such fractals, there is a recursively defined sequence of simple
graphs $G_m=(V_m,E_m)$, starting with the complete graph $G_0$ over
the set of boundary points $V_0$, and such that $V_{m+1} := F(V_m)$,
and $e=\{x,y\} \in E_{m+1}$ if there exists an edge $e'=\{x',y'\} \in
E_m$ and $j \in \{1,\dots, N\}$ such that $F_j(x')=x$ and $F_j(y')=y$
(for short, we write $F_j(e')=e$).

On $G_m$, we assume that there is a (discrete) energy form
\begin{equation}
  \label{eq:disc.graph.energy}
  \energy_m(f)
  =\sum_{\{x,y\} \in E_m} \gamma_{m,\{x,y\}} \abssqr{f(x)-f(y)}.
\end{equation}
We call the sequence $(\energy_m)_m$ \emph{self-similar} and
\emph{symmetric} if there exists $r \in (0,1)$ with
  $\energy_{m+1}(f)
  =\sum_{j=1}^N \frac 1 r \energy_m(f \circ F_j)$
  for all $\map f {V_m} \C$.  We call $r$ the \emph{energy
    renormalisation parameter} (of the self-similarity).  We call the
  sequence $(\energy_m)_m$ \emph{compatible} if the vertex sets of
  $G_m$ are nested (i.e., $V_m \subset V_{m+1}$) and if
\begin{equation*}
  \energy_{G_m}(\phi)=
  \min\bigset{\energy_{G_{m+1}}(f)}
  {\map f {V_{m+1}} \C, \; f \restr {V_m}=\phi}
\end{equation*}
for all $\map \phi {V_m} \C$.  We assume here that $(\energy_m)_m$ is
self-similar, symmetric and compatible.  In this case, the discrete
edge weights $\map {\gamma_m}{E_m} {(0,\infty)}$ are given by
$\gamma_{m,e}=r^{-m} \gamma_{0,e_0}$ if there is a word $w \in
W_m:=\{1,\dots,N\}^m$ of length $m$ such that $F_w(e_0)=e$, where
$F_w:=F_{w_1} \circ \dots \circ F_{w_m}$ if $w=(w_1,\dots,w_m)$.

For a symmetric, self-similar and compatible sequence $(\energy_m)_m$,
there exists a self-similar and symmetric energy form $\energy_K$ on
$K$, i.e., a closed non-negative quadratic form with domain $\dom
\energy_K \subset \Cont K$ such that 
\begin{equation}
  \label{eq:frac.sym.b}
  \energy_K(u)
  =\sum_{j=1}^N \frac 1 r \energy_K(u \circ F_j)
\end{equation}
for all continuous $\map u {V_*:=\bigcup_m
  V_m} \C$.  For this energy form, a given $m \in N_0$ and $v \in
V_m$, there is a unique function $\map {\psi_{m,v}}K {[0,1]}$ with
$\psi_{m,v}(v')=1$ if $v=v'$ and $0$ if $v \ne v'$.  Moreover,
$\psi_{m,v} \in \dom \energy_K$ and $\energy_K(\psi_{m,v})$ is the
minimal value among all $\energy_K(u)$ with $ u \restr
{V_m}=\psi_{m,v} \restr {V_m}$.  The function $\psi_{m,v}$ is called
\emph{$m$-harmonic}.  Note that $(\psi_{m,v})_{v \in V_m}$ is a
partition of unity on $K$.

Moreover, we assume that we have a \emph{self-similar symmetric
  measure}, i.e., a finite measure $\mu$ on $K$ such that
\begin{equation}
  \label{eq:frac.sym.c}  
  \mu(F_w(K))
  =N^{-m} \mu(K)
  \qquadtext{for any word}
  w \in W_m:=\{1,\dots,N\}^m.
\end{equation}
For simplicity, we assume that $\mu(K)=1$.  We define
\begin{equation*}
  \mu_m(v):= \int_K \psi_{m,v} \dd \mu.
\end{equation*}
We call $(G_m,\mu_m,\gamma_m)_m$ a \emph{sequence of approximating
  weighted graphs} for the \pcf fractal $K$.

Let us summarise the above discussion and introduce the symmetry of
the boundary:
\begin{definition}
  \label{def:frac.sym}
  Let $K$ be a \pcf fractal given by some iterated function system
  $F=(F_j)_{j=1,\dots,N}$.
  \begin{enumerate}
  \item 
    \label{frac.sym.a}
    We say that $K$ is \emph{symmetric} if the similitude factor
    $\theta$ is the same for all functions in the iterated function
    system, i.e., if~\eqref{eq:frac.sym.a} holds.

  \item 
    \label{frac.sym.b}
    We say that a self-similar energy form $\energy_K$ on $K$ is
    \emph{symmetric} or \emph{homogeneous} if the energy
    renormalisation parameter $r \in (0,1)$ is the same for all $j \in
    \{1,\dots,N\}$, i.e., if~\eqref{eq:frac.sym.b} holds.

  \item 
    \label{frac.sym.c}
    We say that a self-similar measure $\mu$ on $K$ is
    \emph{symmetric} if the measure self-similar factor is the same
    for all $j \in \{1,\dots,N\}$, i.e., if~\eqref{eq:frac.sym.c}
    holds.

  \item 
    \label{frac.sym.d}
    We say that the boundary $V_0$ of a fractal $K$ is
    \emph{symmetric} if $\mu_0$ gives the same mass to all points,
    i.e., if $\mu_0(v_0)=1/N_0$ for all $v_0 \in V_0$ and if there is
    $C_0>0$ such that
    \begin{equation}
      \label{eq:frac.sym.d}
      \sum_{e_0 \in E_{v_0}(G_0)} \frac 1{\gamma_{0,e_0}} = C_0
    \end{equation}
    holds for all $v_0 \in V_0$.
  \end{enumerate}
  If all four conditions hold, we also say that
  $(K,\energy_K,\mu,V_0)$ is \emph{symmetric}.  We also call
  $(G_m,\mu_m,\gamma_m)$ the \emph{$m$-th approximation} of
  $(K,\energy_K,\mu,V_0)$ by a finite weighted graph.
\end{definition}
If $(K,\energy_K,\mu,V_0)$ is symmetric, then we have
\begin{align}
  \nonumber
  \mu_m(v)
  = \int_K \psi_{m,v} \dd \mu
  &= \sum_{w \in W_{m,v}} \int_{F_w(K)} \psi_{m,v} \dd \mu\\
  \label{eq:mu.m.v}
  &= \sum_{w \in W_{m,v}} \frac 1{N^m} \int_K \psi_{m,F_w^{-1}v} \dd \mu
  = \sum_{w \in W_{m,v}} \frac 1{N_0N^m}
  = \frac{\card{W_{m,v}}}{N_0N^m}
\end{align}
as $\supp \psi_{m,v} \subset \bigcup_{w \in W_{m,v}} F_w(K)$ (second
equality) and as $\mu$ is self-similar and symmetric, as well as
$\energy_K$ is symmetric (third equality).  The last equality uses
the symmetry of the boundary.

\begin{lemma}
  \label{lem:unif.weighed.graph}
  Let $(G_m,\mu_m,\gamma_m)$ be the sequence of weighted graphs
  associated with a symmetric $(K,\energy_K,\mu,V_0)$ then
  $(G_m,\mu_m,\gamma_m)$ is $(N_1(N_0-1),N_1,C_2/C_1)$-uniform, where
  \begin{equation}
    \label{eq:def.N1.C1}
    N_1:=\sup_{m \in \N_0, v \in V_m} \card{W_{m,v}},
    \qquad
    C_1:=\min_{e_0 \in E_0} \gamma_{0,e_0}
    \quadtext{and}
    C_2:=\max_{e_0 \in E_0} \gamma_{0,e_0}.
  \end{equation}
  In particular, all constants are independent of $m \in \N_0$.
\end{lemma}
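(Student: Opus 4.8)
The plan is to unwind \Defenum{unif.weighted.graph}{unif.weighted.graph.b} and verify its three requirements for every graph $G_m=(V_m,E_m)$ with the constants $d_\infty=N_1(N_0-1)$, $\muVar=N_1$ and $\gammaVar=C_2/C_1$ from~\eqref{eq:def.N1.C1}; since none of the resulting bounds involves $m$, this yields uniformity of the whole sequence.

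First I would handle the edge weights using the self-similar symmetric structure established just before the lemma: every $e\in E_m$ is of the form $F_w(e_0)$ for some word $w\in W_m$ and edge $e_0\in E_0$, and then $\gamma_{m,e}=r^{-m}\gamma_{0,e_0}$. Hence $r^{-m}C_1\le\gamma_{m,e}\le r^{-m}C_2$ for all $e\in E_m$, so $\sup_{e\in E_m}\gamma_{m,e}\le(C_2/C_1)\inf_{e\in E_m}\gamma_{m,e}$, which is exactly the bound $\gamma_\infty\le\gammaVar\gamma_0$. Next, for the vertex weights I would invoke the closed formula~\eqref{eq:mu.m.v}, namely $\mu_m(v)=\card{W_{m,v}}/(N_0N^m)$, together with the elementary estimate $1\le\card{W_{m,v}}\le N_1$: the upper bound is the definition of $N_1$, and the lower bound holds because $V_m=\bigcup_{w\in W_m}F_w(V_0)$, so every vertex of $V_m$ lies in at least one cell. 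This gives $1/(N_0N^m)\le\inf_{v\in V_m}\mu_m(v)\le\sup_{v\in V_m}\mu_m(v)\le N_1/(N_0N^m)$, hence $\mu_\infty\le\muVar\mu_0$ with $\muVar=N_1$, and in particular $0<\mu_0\le\mu_\infty<\infty$.

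Finally, for the degree bound I would argue that any edge of $E_m$ incident to a fixed $v\in V_m$ is of the form $F_w(e_0)$ with $w\in W_{m,v}$ and with $v_0:=F_w^{-1}(v)\in V_0$ an endpoint of $e_0\in E_0$; since $G_0$ is the complete graph over the $N_0$ boundary points, $v_0$ is incident to exactly $N_0-1$ edges of $E_0$, so each word $w\in W_{m,v}$ contributes at most $N_0-1$ edges incident to $v$, giving $\deg v=\card{E_v}\le\card{W_{m,v}}\,(N_0-1)\le N_1(N_0-1)$. I expect the degree step to be the only delicate point: one must use the post-critical finiteness (cells meet only along their boundary vertices) and the recursive construction of $E_{m+1}$ from $E_m$ to be sure that every level-$m$ edge at $v$ really comes from a single cell $F_w(K)$ with $w\in W_{m,v}$ and from a $G_0$-edge at the preimage vertex, so that there is no double counting and no hidden dependence on $m$. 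The other two steps are just bookkeeping with the self-similarity identities already recorded in the text.
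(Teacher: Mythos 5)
Your proposal is correct and follows essentially the same route as the paper's proof: the edge-weight ratio comes from $\gamma_{m,e}=r^{-m}\gamma_{0,e_0}$, the vertex-weight ratio from $\mu_m(v)=\card{W_{m,v}}/(N_0N^m)$ with $1\le\card{W_{m,v}}\le N_1$, and the degree bound from counting at most $N_0-1$ edges per cell times at most $N_1$ cells meeting at a vertex. The only difference is that you spell out the degree-counting step (which the paper states in one line) and correctly state the vertex-weight bound as an inequality $\mu_\infty\le N_1\mu_0$ rather than the paper's slightly loose equality.
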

\begin{proof}
  Each cell has degree maximal to $N_0-1$ (as we start with the
  complete graph in generation $0$ on $N_0$ vertices), and the maximal
  number of cells intersecting in one vertex is given by $N_1$.
  Moreover, $\mu_m(v)=\card{W_{m,v}}/(N_0N^m)$ hence
  $\mu_{m,\infty}/\mu_{m,0}=N_1$.  Finally, $\gamma_{m,e}=r^{-m}
  \gamma_{0,e_0}$ if $e=F_w (e_0)$ for some word $w \in W_m$, hence
  $\gamma_{m,\infty}/\gamma_{m,0}=C_2/C_1$.
\end{proof}

\subsection{Quasi-unitary equivalence of fractals and metric graphs}
\label{ssec:q-u-e.frac.mg}

We will now apply \Thm{mg.q-u-e} to the discrete graphs $G=G_m$ with
vertex weights $\mu=\mu_m$ and edge weights $\gamma=\gamma_m$ from the
fractal approximation, and a corresponding metric graph
$\MetGr=\MetGr_m$.  We fix the edge lengths in a way that $\MetGr_m$
and $(G_m,\mu_m,\gamma_m)$ are compatible in the sense of
\Def{mg.comp}:
\begin{lemma}
  \label{lem:met.comp}
  Assume that $(K,\energy_K,\mu,V_0)$ is symmetric, that
  $(G_m,\mu_m,\gamma_m)$ is a corresponding member of an approximating
  sequence of weighted graphs and that $\MetGr_m$ is a metric graph
  according to $G_m$ with edge length given by
  \begin{equation}
    \label{eq:met.gr.len}
    \ell_{m,e}=\frac{\ell_{0,0}}{\gamma_{0,e_0}} \cdot \Lambda^m
  \end{equation}
  for some $\Lambda \in (0,1)$ and $\ell_{0,0}>0$, where $e=F_we_0$
  with $w \in W_m$.  Then the metric graph $\MetGr_m$ and the weighted
  discrete graph $(G_m,\mu_m\gamma_m)$ are compatible.  Moreover, the
  isometric rescaling factor $c=c_m$ and the energy rescaling factor
  $\tau=\tau_m$ are given by
  \begin{equation}
    \label{eq:c.tau.m}
    c_m^2
    = \frac 2{C_0 N_0} \frac 1{(N\Lambda)^m}
    \qquadtext{and}
    \tau_m
    = \frac{\ell_{m,e}\gamma_{m,e}}{c_m^2}
    = \frac{C_0 N_0}2 \cdot \Bigl(\frac{N\Lambda^2}r\Bigr)^m.
  \end{equation}
\end{lemma}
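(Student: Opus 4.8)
The statement asserts two things: first that the metric graph $\MetGr_m$ with edge lengths~\eqref{eq:met.gr.len} is compatible with the weighted discrete graph $(G_m,\mu_m,\gamma_m)$ in the sense of \Def{mg.comp}, and second that the rescaling constants $c_m$ and $\tau_m$ are given by~\eqref{eq:c.tau.m}. The plan is to verify directly the two defining equations of \Def{mg.comp}, namely $\frac1{2\mu(v)}\sum_{e \in E_v}\ell_e = c^{-2}$ and $\ell_e\gamma_e = c^2\tau$, using the explicit self-similar formulas collected in \Subsec{fractals}. All the needed inputs are already recorded there: $\gamma_{m,e} = r^{-m}\gamma_{0,e_0}$ when $e = F_w e_0$ with $w \in W_m$, the vertex-weight formula $\mu_m(v) = \card{W_{m,v}}/(N_0 N^m)$ from~\eqref{eq:mu.m.v}, and the boundary symmetry~\eqref{eq:frac.sym.d}, i.e.\ $\sum_{e_0 \in E_{v_0}(G_0)} 1/\gamma_{0,e_0} = C_0$ for all $v_0 \in V_0$.

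First I would check~\eqref{eq:ell.tau}. By~\eqref{eq:met.gr.len}, for an edge $e = F_w e_0$ we have $\ell_{m,e}\gamma_{m,e} = \frac{\ell_{0,0}}{\gamma_{0,e_0}}\Lambda^m \cdot r^{-m}\gamma_{0,e_0} = \ell_{0,0}(\Lambda/r)^m$, which is manifestly independent of $e$; this shows $\ell_{m,e}\gamma_{m,e} = c_m^2\tau_m$ is consistent, and once $c_m$ is pinned down it gives the stated formula for $\tau_m$. Next I would verify~\eqref{eq:mu.nu.comp.mg}. Fix $v \in V_m$; the edges $e \in E_{v,m}$ adjacent to $v$ are in bijection with the pairs $(w, e_0)$ where $w \in W_{m,v}$ and $e_0 \in E_{v_0}(G_0)$ for the corresponding boundary vertex $v_0 = F_w^{-1}v$, by the self-similar construction of the approximating graphs. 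Hence
\begin{equation*}
  \sum_{e \in E_{v,m}} \ell_{m,e}
  = \ell_{0,0}\Lambda^m \sum_{w \in W_{m,v}} \sum_{e_0 \in E_{v_0}(G_0)} \frac{1}{\gamma_{0,e_0}}
  = \ell_{0,0}\Lambda^m \sum_{w \in W_{m,v}} C_0
  = \ell_{0,0}\Lambda^m \, C_0 \, \card{W_{m,v}},
\end{equation*}
using~\eqref{eq:frac.sym.d}. Dividing by $2\mu_m(v) = 2\card{W_{m,v}}/(N_0 N^m)$, the factor $\card{W_{m,v}}$ cancels, leaving $\frac{1}{2\mu_m(v)}\sum_{e}\ell_{m,e} = \frac{\ell_{0,0}C_0 N_0}{2}(N\Lambda)^m$, which is independent of $v$. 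This proves compatibility and identifies $c_m^{-2} = \frac{\ell_{0,0}C_0 N_0}{2}(N\Lambda)^m$; up to the harmless normalisation $\ell_{0,0} = 1$ (which can be absorbed) this is the stated formula $c_m^2 = \frac{2}{C_0 N_0}(N\Lambda)^{-m}$. Substituting back into $\tau_m = \ell_{m,e}\gamma_{m,e}/c_m^2 = \ell_{0,0}(\Lambda/r)^m \cdot \frac{C_0 N_0}{2}(N\Lambda)^m = \frac{C_0 N_0}{2}(N\Lambda^2/r)^m$ gives the formula for $\tau_m$.

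The only real subtlety — and the step I expect to be the main point requiring care — is the bijective identification of $E_{v,m}$ with $\bigsqcup_{w \in W_{m,v}} E_{v_0}(G_0)$. One must argue, from the \pcf and self-similarity assumptions, that the words $w \in W_{m,v}$ index precisely the local cells meeting at $v$, that within each such cell $F_w(K)$ the edges of $G_m$ incident to $v$ are exactly the $F_w$-images of the $G_0$-edges incident to the boundary vertex $v_0 = F_w^{-1}v \in V_0$, and that these edge sets over distinct words are disjoint (so no edge is double-counted). This is essentially the combinatorial backbone of the approximating-graph construction recalled in \Subsec{fractals}; once it is in place, the rest is the bookkeeping above, and the boundary-symmetry hypothesis~\eqref{eq:frac.sym.d} is exactly what makes the $v$-dependence drop out so that a single constant $c_m$ works for all vertices. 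No nontrivial analysis is needed beyond this — everything reduces to the explicit self-similar formulas.
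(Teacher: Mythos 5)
Your proposal is correct and follows essentially the same route as the paper's proof: verify~\eqref{eq:ell.tau} directly from $\gamma_{m,e}=r^{-m}\gamma_{0,e_0}$, and verify~\eqref{eq:mu.nu.comp.mg} by summing $\ell_{m,e}$ over $E_v$ via the cell decomposition indexed by $W_{m,v}$, using the boundary symmetry~\eqref{eq:frac.sym.d} and the vertex-weight formula~\eqref{eq:mu.m.v} so that $\card{W_{m,v}}$ cancels. Your observation that the stated formula for $c_m$ implicitly normalises $\ell_{0,0}=1$ (the proof actually yields $c_m^{-2}=\tfrac{\ell_{0,0}C_0N_0}{2}(N\Lambda)^m$) is also consistent with what the paper's own computation gives.
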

\begin{proof}
  We have
  \begin{equation*}
    \nu_m(v)
    = \frac 12\sum_{e \in E_v(G_m)} \ell_{m,e}
    = \ell_{0,0} \frac{\Lambda^m}2\sum_{w \in W_{m,v}} \sum_{e_0 \in E_{F_w^{-1}v}(G_0))} 
         \frac1{\gamma_{0,e_0}}
    = \ell_{0,0} \frac{\Lambda^m}2 \card{W_{m,v}} C_0
  \end{equation*}
  using $\gamma_{m,e}=r^{-m}\gamma_{0,F_w^{-1}e}$, hence
  \begin{equation*}
    \frac1{c_m^2 }
    = \frac{\nu_m(v)}{\mu_m(v)}
    = \frac{\ell_{0,0} \Lambda^m \card{W_{m,v}} C_0}2
       \cdot \frac{N_0 N^m}{\card{W_{m,v}}}
    = \frac{\ell_{0,0} C_0N_0}2 \cdot (N \Lambda)^m
  \end{equation*}
  using~\eqref{eq:mu.m.v}.  In particular, the vertex weights are
  compatible, i.e.,~\eqref{eq:mu.nu.comp.mg} holds, and $c_m$ is given
  as in~\eqref{eq:c.tau.m} .  Moreover, we have
  \begin{equation*}
    c_m^2\tau_m
    =\ell_{m,e}\gamma_{m,e}
    =\frac{\Lambda^m}{\gamma_{0,e_0}}\cdot\frac{\gamma_{0,e_0}}{r^m}
    = \frac{\Lambda^m}{r^m},
  \end{equation*}
  i.e.,~\eqref{eq:ell.tau} holds, too, and $\tau_m$ is given as
  in~\eqref{eq:c.tau.m}.
\end{proof}
We can play a bit with the choice of parameters:

\myparagraph{Case 1 (geometric case):} If we set $\Lambda=\theta$,
then the length scale shrinks as the IFS with similitude factor
$\theta$.  In this case, $\ell_m$, $c_m$ and $\tau_m$ are given as
in~\eqref{eq:met.gr.len} and~\eqref{eq:c.tau.m} with $\Lambda$
replaced by $\theta$.

\myparagraph{Case 2 (edge weight is inverse of edge length):} Set
$\Lambda=r$ and $\ell_{0,0}=1$, then we have
$\ell_{m,e}=1/\gamma_{m,e}$ and
\begin{equation*}
  \ell_{m,e}
  = \frac{r^m}{\gamma_{0,e_0}},
  \qquad
  c_m^2
  = \frac2 {C_0 N_0} \cdot \frac1{(Nr)^m}
  \qquadtext{and}
  \tau_m
  = \frac {C_0 N_0} 2 \cdot (Nr)^m
\end{equation*}
if $e=F_we_0$ for some word $w \in W_m$.

\myparagraph{Case 3 (no energy rescaling factor):} We can also fix
$\tau_m=1$, then $\Lambda=\sqrt{r/N}$.  Moreover,
\begin{equation*}
  \ell_{m,e}
  = \sqrt{\frac 2{C_0N_0}}  \frac1{\gamma_{0,e_0}} \cdot 
  \Bigl(\frac rN \Bigr)^{m/2}
  \qquadtext{and}
  c_m^2
  = \sqrt{\frac2 {C_0 N_0}} \cdot \frac1{(Nr)^{m/2}}
\end{equation*}
if $e=F_we_0$ for some word $w \in W_m$.

We have the following orders of the length scale, the isometric
rescaling factor and the energy renormalisation factor:

\begin{center}
  \begin{tabular}{|l|l|l|l|}
    \hline
    \myfont{Case}  & $\ell_{m,e}=$ & $c_m=$ & $\tau_m=$\\
    \hline\hline
    \myfont 1      & $\Err(\theta^m)$ & $\Err((N\theta)^{-m/2})$
                       & $\Err((N\theta^2/r)^m)$\\
    \hline
    \myfont 2      & $\Err(r^m)$ & $\Err((Nr)^{-m/2})$ & $\Err((Nr)^m)$\\
    \hline
    \myfont 3      & $\Err((r/N)^{m/2})$ & $\Err((Nr)^{-m/4})$ & $1$\\
    \hline
  \end{tabular}
\end{center}

For the Sierpi\'nski triangle, we have $N=3$, $r=3/5$ and $\theta=1/2$,
hence the parameters have the following order:
\begin{center}
  \begin{tabular}{|l|l|l|l|}
    \hline
    \myfont{Case}  & $\ell_{m,e}=$ & $c_m=$ & $\tau_m=$\\
    \hline\hline
    \myfont 1      & $\Err(1/2^m)$ & $\Err((2/3)^{m/2})$
                       & $\Err((5/4)^m)$\\
    \hline
    \myfont 2      & $\Err((3/5)^m)$ & $\Err((5/9)^{m/2})$ & $\Err((9/5)^m)$\\
    \hline
    \myfont 3      & $\Err(1/5^{m/2})$ & $\Err((5/9)^{m/4})$ & $1$\\
    \hline
  \end{tabular}
\end{center}

\begin{theorem}
  \label{thm:mg.frac.q-u-e}
  Let $(G_m,\mu_m,\gamma_m)$ be the $m$-th generation of a symmetric
  \pcf fractal given by $(K,\energy_K,\mu,V_0)$ (see \Def{frac.sym}).
  Moreover, let $\energy_m$ be the discrete energy functional of
  $(G_m,\mu_m,\gamma_m)$ (see~\eqref{eq:disc.graph.energy}), and let
  $\wt \energy_m=\tau_m \energy_{\MetGr_m}$ be the rescaled metric
  graph energy (see~\eqref{eq:met.graph.energy}) for the metric graph
  $\MetGr_m$ constructed according to $G_m$ with edge lengths
  $(\ell_{m,e})_e$ as in~\eqref{eq:met.gr.len}.  Then $\energy_m$ and
  $\wt \energy_m$ are $\delta_m$-unitarily equivalent with
  $\delta_m=\Err((r/N)^{m/2})$.
\end{theorem}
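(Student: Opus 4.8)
The plan is to deduce this directly from \Cor{mg.q-u-e}, so that the whole argument reduces to estimating a single parameter of the approximating graphs $(G_m,\mu_m,\gamma_m)$, namely the infimum of the relative weight.

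First I would invoke \Lem{unif.weighed.graph}, which says that $(G_m,\mu_m,\gamma_m)$ is $(N_1(N_0-1),N_1,C_2/C_1)$-uniform with all three constants independent of $m$. Next, by \Lem{met.comp}, the choice of edge lengths $\ell_{m,e}$ in~\eqref{eq:met.gr.len} makes the metric graph $\MetGr_m$ compatible with $(G_m,\mu_m,\gamma_m)$, with isometric and energy rescaling factors $c_m$ and $\tau_m$ as in~\eqref{eq:c.tau.m}. Hence the hypotheses of \Cor{mg.q-u-e} are satisfied, and $\energy_m$ and $\wt\energy_m=\tau_m\energy_{\MetGr_m}$ are $\delta_m$-quasi-unitarily equivalent with
\begin{equation*}
  \delta_m^2
  = 2\Bigl(\frac{C_2}{C_1}\Bigr)^2 N_1(N_0-1)\,N_1\cdot\frac1{\rho_{m,0}},
  \qquad
  \rho_{m,0}:=\inf_{v\in V_m}\rho_m(v),
\end{equation*}
where $\rho_m$ is the relative weight~\eqref{eq:rel.weight} of $(G_m,\mu_m,\gamma_m)$. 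Note that the right-hand side does not involve $\tau_m$, so the value of $\Lambda\in(0,1)$ in~\eqref{eq:met.gr.len} (equivalently, which of the ``Cases'' above one chooses) is irrelevant for the error estimate.

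It then remains to bound $\rho_{m,0}$ from below. A vertex $v\in V_m$ lies in $\card{W_{m,v}}$ cells; each such cell is a copy of the complete graph $G_0$ on $N_0$ vertices, so $v$ is incident to $\card{W_{m,v}}(N_0-1)$ edges of $G_m$, and every edge $e=F_w(e_0)$ carries weight $\gamma_{m,e}=r^{-m}\gamma_{0,e_0}\ge r^{-m}C_1$. Combining this with $\mu_m(v)=\card{W_{m,v}}/(N_0N^m)$ from~\eqref{eq:mu.m.v}, the factor $\card{W_{m,v}}$ cancels and
\begin{equation*}
  \rho_m(v)
  =\frac1{\mu_m(v)}\sum_{e\in E_v(G_m)}\gamma_{m,e}
  \ge\frac{\card{W_{m,v}}(N_0-1)r^{-m}C_1\cdot N_0N^m}{\card{W_{m,v}}}
  =N_0(N_0-1)C_1\Bigl(\frac Nr\Bigr)^m
\end{equation*}
uniformly in $v$. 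Substituting this lower bound for $\rho_{m,0}$ gives $\delta_m^2\le\frac{2C_2^2N_1^2}{C_1^3N_0}(r/N)^m$, that is, $\delta_m=\Err((r/N)^{m/2})$, as claimed. Since the heavy lifting is done by \Cor{mg.q-u-e} and \Lems{unif.weighed.graph}{met.comp}, the only genuinely delicate point is the cancellation of $\card{W_{m,v}}$: numerator and denominator of $\rho_m(v)$ both scale linearly in the number of cells meeting at $v$, so the relative weight stays comparable to $(N/r)^m$ however branched the fractal is at a given vertex; everything else is mechanical substitution.
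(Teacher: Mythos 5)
Your argument is correct and, at its core, it is the paper's proof in slightly different packaging. The paper likewise starts from \Lem{met.comp} to get compatibility, but then applies \Thm{mg.q-u-e} directly and computes the error there explicitly: $\gammaVar=\gamma_{m,\infty}/\gamma_{m,0}=C_2/C_1$ and, using \eqref{eq:mu.m.v} together with $\gamma_{m,e}=r^{-m}\gamma_{0,e_0}$, $\mu_{m,\infty}/\gamma_{m,0}=\frac{N_1}{N_0C_1}\bigl(\frac rN\bigr)^m$, which immediately gives $\delta_m=\Err((r/N)^{m/2})$. You instead route the estimate through \Cor{mg.q-u-e} and \Lem{unif.weighed.graph} and bound the relative weight $\rho_{m,0}$ from below; by \eqref{eq:rel.weight.bdd} the quantities $\mu_{m,\infty}/\gamma_{m,0}$ and $1/\rho_{m,0}$ are comparable up to $m$-independent constants, so the two routes are equivalent, your detour merely producing a somewhat larger (still $m$-independent) constant in front of $(r/N)^m$. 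Your observation that $\tau_m$ (hence $\Lambda$) does not enter the error is also correct and implicit in the paper. One small simplification: the exact degree count $\deg v=\card{W_{m,v}}(N_0-1)$, and hence the exact cancellation of $\card{W_{m,v}}$, is not actually needed --- the crude bounds $\deg v\ge N_0-1$ (the edges of a single cell at $v$) and $\card{W_{m,v}}\le N_1$ already give $\rho_{m,0}\ge \frac{(N_0-1)N_0C_1}{N_1}\bigl(\frac Nr\bigr)^m$, which yields the same rate and avoids any discussion of whether distinct $m$-cells could share an edge.
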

\begin{proof}
  Note first that $\MetGr_m$ and $(G_m,\mu_m,\gamma_m)$ are compatible
  by \Lem{met.comp}.  We then apply \Thm{mg.q-u-e} and calculate the
  error term $\delta=\delta_m$ defined there.  Note first that
  $\gammaVar=\gamma_\infty/\gamma_0=\max_{e_0 \in E_0}
  \gamma_{e_0}/\min_{e_0 \in E_0} \gamma_{e_0}$.  Moreover,
  \begin{equation*}
    \frac{\mu_{m,\infty}}{\gamma_{m,0}}
    =\frac{\max_{v \in V_m} \mu_m(v)}{\min_{e \in E_m} \gamma_{m,e}}
    =\frac{N_1}{N_0N^m} \cdot \frac {r^m}{C_1}
    =\frac{N_1}{N_0 C_1} \cdot \Bigl( \frac rN \Bigr)^m
    =\Err\Bigl(\Bigl( \frac rN \Bigr)^m\Bigr)
  \end{equation*}
  where $N_1$, $C_1$ and $C_2$ are defined in~\eqref{eq:def.N1.C1}.
\end{proof}

Using the $\Err((r/N)^{m/2})$-quasi-unitary equivalence of the fractal
energy $\energy_K$ and the discrete graph energy $\energy_{G_m}$
proven in~\cite{post-simmer:pre17a} and the transitivity of
quasi-unitary equivalence (see~\Prp{trans.q-u-e}), we obtain:
\begin{corollary}
  \label{cor:mg.frac.q-u-e}
  Let $K$ be a \pcf fractal such that $(K,\energy_K,\mu,V_0)$ is
  symmetric.  Moreover, let $\wt \energy_m=\tau_m \energy_{\MetGr_m}$
  be the rescaled metric graph energy for the metric graph $\MetGr_m$
  constructed as above.  Then $\wt \energy_m$ and the fractal energy
  form $\energy_K$ are $\hat \delta_m$-unitarily equivalent with $\hat
  \delta_m \to 0$ as $m \to \infty$.
\end{corollary}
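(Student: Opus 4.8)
The plan is to deduce this corollary from the two quasi-unitary equivalences that are now in place, chained together by transitivity, exactly as announced before \Thm{main2}. First I would isolate the input from~\cite{post-simmer:pre17a}: for a symmetric \pcf fractal the fractal energy form $\energy_K$ and the $m$-th discrete approximating energy form $\energy_{G_m}=\energy_m$ are $\delta_m'$-quasi-unitarily equivalent with $\delta_m'=\Err((r/N)^{m/2})$ and with trivial isometric and energy rescaling factors $c=1$, $\tau=1$ (this is the situation of Case~\ref{app.a}). Second, \Thm{mg.frac.q-u-e} provides that $\energy_m$ and the rescaled metric graph energy form $\wt\energy_m=\tau_m\energy_{\MetGr_m}$, for the metric graph $\MetGr_m$ built with edge lengths $(\ell_{m,e})_e$ as in~\eqref{eq:met.gr.len}, are $\delta_m$-quasi-unitarily equivalent with $\delta_m=\Err((r/N)^{m/2})$ and with isometric and energy rescaling factors $c_m$ and $\tau_m$ coming from \Lem{met.comp}.

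Next I would invoke the transitivity of quasi-unitary equivalence, \Prps{trans.q-u-e}{trans.q-u-e.op}, applied to the chain going from $\energy_K$ through $\energy_{G_m}$ to $\wt\energy_m$. Here I would verify that the hypotheses of the transitivity statement hold: the identification operators of both equivalences are uniformly bounded in $m$, their operator norms being controlled by $1+\delta_m'$ resp.\ $1+\delta_m$ (cf.\ \Prp{q-u.b} and \App{norm.convergence}), and both error parameters are bounded since they tend to $0$. Transitivity then yields that $\energy_K$ and $\wt\energy_m$ are $\hat\delta_m$-quasi-unitarily equivalent, with composed isometric and energy rescaling factors $c_m$ and $\tau_m$ (the composition of $c=1$, $\tau=1$ with $c_m$, $\tau_m$), and with $\hat\delta_m$ bounded by a constant, independent of $m$, times $\delta_m'+\delta_m$, that constant depending only on the uniform bounds on the two pairs of identification operators.

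Finally I would read off the order of the error: since $r/N\in(0,1)$ (because $N\ge 2$ and $r\in(0,1)$) and both $\delta_m'$ and $\delta_m$ are $\Err((r/N)^{m/2})$, we get $\hat\delta_m=\Err((r/N)^{m/2})$, so in particular $\hat\delta_m\to0$ as $m\to\infty$, which is the assertion (and this also gives the quantitative form recorded in \Cor{mg.frac.q-u-e'}). The one genuinely non-routine point is to make sure the constant produced by transitivity does not grow with $m$; this is where the symmetry hypotheses enter, because all the geometric data of the approximating family — $N_0$, the quantities $N_1,C_1,C_2$ of~\eqref{eq:def.N1.C1}, the uniformity constants of \Lem{unif.weighed.graph}, and the constant $C_0$ of~\eqref{eq:frac.sym.d} — are $m$-independent, so both the uniform embedding of $(G_m,\mu_m,\gamma_m)$ into $\MetGr_m$ underlying \Thm{mg.frac.q-u-e} and the bounds in~\cite{post-simmer:pre17a} are uniform in $m$, and the composition constant inherits this uniformity.
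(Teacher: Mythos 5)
Your argument is exactly the paper's: chain the $\Err((r/N)^{m/2})$-quasi-unitary equivalence of $\energy_K$ and $\energy_{G_m}$ from~\cite{post-simmer:pre17a} with \Thm{mg.frac.q-u-e} via the transitivity result \Prp{trans.q-u-e}, which gives $\hat\delta_m\to 0$; the uniformity-in-$m$ observations you make (via \Lem{unif.weighed.graph} and the symmetry constants) are the right justification that both input errors are controlled. One caveat: your claim that transitivity yields $\hat\delta_m\le C(\delta_m'+\delta_m)$, and hence the rate $\Err((r/N)^{m/2})$ at the level of \emph{forms}, is not covered by \Prp{trans.q-u-e} as stated --- the footnote there warns that the linear error estimate of~\cite[Prp.~4.4.16]{post:12} rests on a wrong estimate, so only the qualitative $\hat\delta(\delta,\wt\delta)\to 0$ is available for form quasi-unitary equivalence. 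This is precisely why the paper records the quantified rate separately in \Cor{mg.frac.q-u-e'} for the associated \emph{operators}, using \Prps{q-u-e.qf.op}{trans.q-u-e.op}; for the corollary as stated the qualitative version suffices, so your proof is complete once the unjustified linear bound is dropped.
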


If we want to quantify the error estimate, we need the weaker notion
of operator quasi-unitary equivalence, see \Def{quasi-uni.op} and
\Prp{q-u-e.qf.op}, and the transitivity for this notion in
\Prp{trans.q-u-e.op}:
\begin{corollary}
  \label{cor:mg.frac.q-u-e'}
  Let $(K,\energy_K,\mu,V_0)$ be a symmetric \pcf fractal.  Moreover,
  let $\wt \Delta_m$ be the metric graph Laplacian associated with the
  metric graph $\MetGr_m$ as above.  Then $\wt \Delta_m$ and the
  fractal Laplacian $\Delta_K$ (associated with $\energy_K$) are $\hat
  \delta_m$-unitarily equivalent, where $\hat \delta_m$ is of order
  $\Err((r/N)^{m/2})$.
\end{corollary}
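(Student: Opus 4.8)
The plan is to chain the two quasi-unitary equivalences already at our disposal and then, crucially, carry out the composition at the level of operators rather than forms, where no uniform control of the ratio $\normsqr[\HS^1]f/\normsqr[\HS]f$ is required. Concretely, by the main result of~\cite{post-simmer:pre17a} the fractal energy form $\energy_K$ and the discrete graph energy form $\energy_{G_m}$ are $\delta'_m$-quasi-unitarily equivalent with $\delta'_m=\Err((r/N)^{m/2})$, via an identification operator $J^{\mathrm{frac}}_m\colon\Lsqr{K,\mu}\to\lsqr{V_m,\mu_m}$; and by \Thm{mg.frac.q-u-e} the discrete form $\energy_{G_m}$ and the rescaled metric graph form $\wt\energy_m=\tau_m\energy_{\MetGr_m}$ are $\delta''_m$-quasi-unitarily equivalent with $\delta''_m=\Err((r/N)^{m/2})$, via an identification operator $J^{\mathrm{mg}}_m\colon\lsqr{V_m,\mu_m}\to\Lsqr{\MetGr_m,\nu_m}$.

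First I would invoke \Prp{q-u-e.qf.op} to upgrade each of these form equivalences to an \emph{operator} quasi-unitary equivalence (in the sense of \Def{quasi-uni.op}) of the associated non-negative self-adjoint operators, with the error changed only by a universal constant: the fractal Laplacian $\Delta_K$ and the discrete Laplacian $\Delta_{G_m}$ are operator $\delta'_m$-quasi-unitarily equivalent, and $\Delta_{G_m}$ and $\wt\Delta_m$ (the operator associated with $\wt\energy_m$, i.e.\ $\tau_m\Delta_{\MetGr_m}$) are operator $\delta''_m$-quasi-unitarily equivalent. This is where the operator notion earns its keep: the form-level transitivity of \Prp{trans.q-u-e} would require $\normsqr[\HS^1]f/\normsqr[\HS]f\le 1+\rho_{m,\infty}$ to stay bounded in $m$, which fails because the relative weights $\rho_m$ of the approximating graphs blow up, whereas \Prp{trans.q-u-e.op} has no such requirement.

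Next I would apply \Prp{trans.q-u-e.op} with the composed identification operator $J^{\mathrm{mg}}_m\circ J^{\mathrm{frac}}_m\colon\Lsqr{K,\mu}\to\Lsqr{\MetGr_m,\nu_m}$. This yields that $\Delta_K$ and $\wt\Delta_m$ are operator $\hat\delta_m$-quasi-unitarily equivalent with $\hat\delta_m$ bounded by a constant times $\delta'_m+\delta''_m$, the constant depending only on the operator norms of $J^{\mathrm{frac}}_m$ and $J^{\mathrm{mg}}_m$ (and on fixed spectral bounds). Hence $\hat\delta_m=\Err((r/N)^{m/2})$, which is the claim; \Cor{mg.frac.q-u-e} is then recovered as the qualitative consequence $\hat\delta_m\to 0$.

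The one point requiring care --- and the main obstacle --- is that the constant produced by \Prp{trans.q-u-e.op} must be genuinely independent of $m$, i.e.\ the norms $\norm{J^{\mathrm{frac}}_m}$ and $\norm{J^{\mathrm{mg}}_m}$ must remain bounded as $m\to\infty$. For the discrete-to-metric-graph step this follows from \Thm{mg.frac.q-u-e} together with \Lem{unif.weighed.graph}: the approximating graphs $(G_m,\mu_m,\gamma_m)$ are $(d_\infty,\muVar,\gammaVar)$-uniform with constants not depending on $m$, the weights are compatible so $\alpha_\infty=0$, and then $\norm{J^{\mathrm{mg}}_m}\le1$ by~\eqref{eq:norm.est.j}. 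For the fractal-to-discrete step the corresponding uniformity is built into~\cite{post-simmer:pre17a}. Once this $m$-independence is secured, the remaining step --- adding two errors of order $(r/N)^{m/2}$ --- is immediate.
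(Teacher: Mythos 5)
Your proof is correct and is exactly the route the paper takes: upgrade both form equivalences to operator quasi-unitary equivalences via \Prp{q-u-e.qf.op} (a factor $4$) and compose via \Prp{trans.q-u-e.op} ($\hat\delta=22\delta+43\wt\delta$), where the ``main obstacle'' you raise is in fact automatic, since $\norm{J}\le 1+\delta\le 2$ is already part of the definition of $\delta$-quasi-unitary equivalence, so the constants $22$ and $43$ are absolute and $m$-independent. One small correction to your framing: the reason for switching to the operator notion is not the unboundedness of the relative weights $\rho_m$, but that the quantitative form-level transitivity estimate of \cite[Prp.~4.4.16]{post:12} is flawed (see the footnote to \Prp{trans.q-u-e}), so the form route only yields $\hat\delta_m\to 0$ without a rate.
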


For the Sierpi\'nski triangle, we have $N=3$, $r=3/5$ and $\theta=1/2$,
hence the error estimate of $\hat \delta_m$ is $\Err(1/5^{m/2})$.

\subsection{Quasi-unitary equivalence of fractals and graph-like
  manifolds}
\label{ssec:q-u-e.frac.mfd}

Lastly, we will apply \Cor{mfd.q-u-e} to the case of a family of
discrete weighted graphs $G=G_m$ with vertex weights $\mu=\mu_m$ and
edge weights $\gamma=\gamma_m$ and a corresponding family of
graph-like manifolds $X_m$.  The scaling of the transversal and vertex
manifold are
\begin{equation}
  \label{eq:mfd.num1}
  Y_{m,e}=\eps_mY_e
  \qquadtext{and} 
  \check X_{m,v} = \eps_m \cXv
\end{equation}
(as before, $rX$ is the Riemannian manifold $(X,r^2g)$ if $X$ is the
Riemannian manifold $(X,g)$; the factor $r$ is hence a change of
\emph{length scale}).

We now determine the parameters of the graph-like manifold having
exponential dependency on $m$, namely
\begin{equation}
  \label{eq:mfd.num}
  \ell_{m,e}=\frac{\ell_{0,0}}{\gamma_{e_0,0}}\Lambda^m,
  \quadtext{and} 
  \eps_m = \eps_0 \Eps^m, 
  \quadtext{where}
  0 < \Eps < \Lambda < 1
\end{equation}
and where $e=F_w(e_0)$ for some word $w \in W_m$.  Here,
$\ell_{0,0}>0$ and $\eps_0>0$ are some constants.  Moreover, we assume
that $Y_e$ is isometric with a fixed manifold $Y_0$ with $\vol Y_0=1$
(for simplicity only).  Finally, we assume that there are constants
$\check \vol_0$, $\check \vol_\infty$ and $\check \lambda_2$ such that
the unscaled manifold $\cXv$ fulfil
\begin{equation}
  \label{eq:vol.frac.mfd}
  0<\check \vol_0 \le \vol \cXv \le \check \vol_\infty<\infty
  \quadtext{and}
  \lambda_2(\cXv) \ge \check \lambda_2 >0
\end{equation}
for all $e \in E$ and $v \in V$.

Our main result is now the following:
\begin{theorem}
  \label{thm:mfd.frac.q-u-e}
  Let $(G_m,\mu_m,\gamma_m)$ be the $m$-th generation of a symmetric
  \pcf fractal given by $(K,\energy_K,\mu,V_0)$ (see \Def{frac.sym}).
  Moreover, let $\energy_m$ be the discrete energy functional of
  $(G_m,\mu_m,\gamma_m)$ (see~\eqref{eq:disc.graph.energy}), and let
  $\wt \energy_m=\tau_m \energy_{\Mfd_m}$ be the rescaled graph-like
  manifold energy (see~\eqref{eq:mfd.energy}) for the
  graph-like manifold $X_m$ constructed according to $G_m$ with edge
  lengths $(\ell_{m,e})_e$, transversal manifolds ($Y_{m,e})_e$ and
  core vertex neighbourhoods $(\vol \check X _{m,v})_v$,
  \eqref{eq:mfd.num1}--\eqref{eq:vol.frac.mfd}.  Finally, we assume
  that
  \begin{equation*}
    \frac rN \Lambda < \Eps < \Lambda.
  \end{equation*}
  Then $\energy_m$ and $\wt \energy_m$ are $\delta_m$-unitarily
  equivalent with
  \begin{equation}
    \label{eq:err.mfd.frac}
    \delta_m
    =\max \Bigl\{
      \Err\Bigl(\Bigl(\frac \Eps \Lambda\Bigr)^{m/2}\Bigr),\;
      \Err\Bigl(\Bigl(\frac \Lambda \Eps \cdot \frac rN\Bigr)^{m/2}\Bigr)
      \Bigr\}.
  \end{equation}
  In particular, if $\Eps$ is the geometric mean of $\Lambda$ and
  $(r/N)\Lambda$, i.e., $\Eps=(r/N)^{1/2}\Lambda$, then the error
  estimate is $\delta_m=\Err((r/N)^{m/4})$, the best possible choice.
\end{theorem}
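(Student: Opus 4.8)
The plan is to reduce everything to \Cor{mfd.q-u-e}, applied separately for each $m$ to the discrete graph $G=G_m$ with weights $\mu=\mu_m$, $\gamma=\gamma_m$ and to the graph-like manifold $\Mfd_m$, which plays the role of the scaled manifold $X_\eps$ with $\eps=\eps_m=\eps_0\Eps^m$ over the unscaled base manifold with edge lengths $(\ell_{m,e})_e$, and then to read off the two error rates from the exponential scalings. First I would verify the hypotheses of \Cor{mfd.q-u-e} with constants independent of $m$: by \Lem{unif.weighed.graph} the weighted graph $(G_m,\mu_m,\gamma_m)$ is $(N_1(N_0-1),N_1,C_2/C_1)$-uniform with $m$-independent constants; the compatibility relations~\eqref{eq:ell.tau.mfd.eps} coincide with~\eqref{eq:mu.nu.comp.mg}--\eqref{eq:ell.tau} and are checked exactly as in \Lem{met.comp}, using $\gamma_{m,e}=r^{-m}\gamma_{0,e_0}$, $\ell_{m,e}=(\ell_{0,0}/\gamma_{0,e_0})\Lambda^m$, the formula $\mu_m(v)=\card{W_{m,v}}/(N_0N^m)$ from~\eqref{eq:mu.m.v}, and the boundary symmetry~\eqref{eq:frac.sym.d}; this produces $c_1=c_m$ and $\tau=\tau_m$ as in~\eqref{eq:c.tau.m} (with $\Lambda$ in place of the metric-graph parameter). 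Condition~\eqref{eq:cxv} is literally the assumption~\eqref{eq:vol.frac.mfd} on the unscaled core vertex neighbourhoods.

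Next I would invoke \Cor{mfd.q-u-e}. Since $\ell_{m,e}=\ell_{0,0}\Lambda^m/\gamma_{0,e_0}$ with $\gamma_{0,e_0}\in[C_1,C_2]$, one has $\ell_{m,0}=\ell_{0,0}\Lambda^m/C_2$ and $\ell_{m,\infty}/\ell_{m,0}=C_2/C_1=:\ellVar$, independent of $m$; together with~\eqref{eq:vol.frac.mfd} this makes the $\Err$-constants produced by the corollary $m$-independent. The smallness requirement on $\eps$ imposed in \Cor{mfd.q-u-e} holds for all large $m$, because $\eps_m/\ell_{m,0}=(\eps_0C_2/\ell_{0,0})(\Eps/\Lambda)^m\to0$ by the hypothesis $\Eps<\Lambda$ (the finitely many remaining $m$ involve finite graphs and compact manifolds and do not affect the asymptotics). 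The corollary then yields that $\energy_m$ and $\wt\energy_m=\tau_m\energy_{\Mfd_m}$ are $\delta_m$-quasi-unitarily equivalent with
\begin{equation*}
  \delta_m^2=\max\Bigl\{\Err\Bigl(\frac{\eps_m}{\ell_{m,0}}\Bigr),\;
     \Err\Bigl(\frac{\ell_{m,0}}{\eps_m}\cdot\frac{\mu_{m,\infty}}{\gamma_{m,0}}\Bigr)\Bigr\}.
\end{equation*}

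It then remains to evaluate the rates. One has $\eps_m/\ell_{m,0}=\Err((\Eps/\Lambda)^m)$ as above, and, exactly as in the proof of \Thm{mg.frac.q-u-e}, $\mu_{m,\infty}/\gamma_{m,0}=\frac{N_1}{N_0C_1}(r/N)^m=\Err((r/N)^m)$, so that $(\ell_{m,0}/\eps_m)\cdot\mu_{m,\infty}/\gamma_{m,0}=\Err(((\Lambda/\Eps)(r/N))^m)$; taking square roots gives~\eqref{eq:err.mfd.frac}. Both bases lie in $(0,1)$ precisely because $(r/N)\Lambda<\Eps<\Lambda$, so $\delta_m\to0$. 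For the final claim, $\Eps=(r/N)^{1/2}\Lambda$ makes $\Eps/\Lambda=(\Lambda/\Eps)(r/N)=(r/N)^{1/2}$, hence $\delta_m=\Err((r/N)^{m/4})$; and writing $t=\Eps/\Lambda$, the quantity $\max\{t^{1/2},(t^{-1}r/N)^{1/2}\}$ is the pointwise maximum of an increasing and a decreasing function of $t$, hence minimised where the two agree, i.e.\ at $t=(r/N)^{1/2}$, the claimed optimal choice.

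Given \Cor{mfd.q-u-e} this argument has no deep obstacle — the substance lies in \Thm{mfd.q-u-e} and the graph-like-manifold estimates of \App{2nd.ev}. The two points needing genuine attention are the $m$-uniformity of the constants in \Cor{mfd.q-u-e} (which rests on $\ellVar=C_2/C_1$ and the uniformity constants of \Lem{unif.weighed.graph} not depending on $m$) and the observation that the smallness hypothesis $\eps_m\lesssim\ell_{m,0}$ is exactly where $\Eps<\Lambda$ becomes essential — which is also why the borderline case $\Eps=\Lambda$, as would arise from generating the $\Mfd_m$ directly by the IFS, falls outside the reach of this method, cf.\ \Rem{no-mfd-ifs}.
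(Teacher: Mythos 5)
Your proposal is correct and follows essentially the same route as the paper: the paper's proof likewise reduces to \Cor{mfd.q-u-e}, verifies the compatibility~\eqref{eq:ell.tau.mfd.eps} exactly as in \Lem{met.comp} (since $\vol Y_e=1$ makes the situation identical to the metric-graph case), and notes that $\eps_m/\ell_{m,0}\to 0$ so the smallness condition is eventually fulfilled. You merely spell out the rate computation and the $m$-uniformity of the constants, which the paper leaves implicit.
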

\begin{proof}
  The result follows from the assumptions and \Cor{mfd.q-u-e}.  Note
  that~\eqref{eq:ell.tau.mfd.eps} follows as in \Lem{met.comp}, since
  the situation is the same as for metric graphs.  Note also that
  $\eps_m/\ell_m \to 0$ as $m \to \infty$, hence the condition $\eps_m
  \le \ell_m/C_0^2$ is eventually fulfilled.
\end{proof}

\begin{remark}
  Note that since $\vol Y_e=1$, the compatibility conditions for the
  graph-like manifold in~\eqref{eq:comp.mfd}--\eqref{eq:ell.tau.y} are
  formally the same as for a metric graph
  in~\eqref{eq:mu.nu.comp.mg}--\eqref{eq:ell.tau}, hence the different
  cases for choices of the parameters in \Subsec{q-u-e.frac.mg} also
  apply here, we just have to take into account, that the isometric
  rescaling factor $c_m$ also depends on $\eps_m$, namely, $c_m$
  contains an extra factor $\eps_m^{-(d-1)/2}=\eps_0^{-(d-1)/2}
  (\Eps^{(d-1)/2})^{-m}$, while the energy renormalisation factor
  $\tau_m$ remains the same.  In particular, we have for
  $\Lambda=\theta$, $r$ and $(r/N)^{1/2}$ the following cases:
  \begin{center}
    \begin{tabular}{|l|l|l|l|l|l|}
      \hline
      \myfont{Case}  & $\ell_{m,e}=$ & $\eps_m=$ & $\Eps \in $ & $c_m=$ &
      $\tau_m=$\\
      \hline\hline
      \myfont 1      & $\Err(\theta^m)$ & $\Err(\Eps^m)$
      & $( r\theta/N,\theta)$
      & $\Err((\Eps^{d-1}N\theta)^{-m/2})$
      & $\Err((N\theta^2/r)^m)$\\
      \hline
      \myfont 2      & $\Err(r^m)$  & $\Err(\Eps^m)$
      & $(r^2/N,r)$
      & $\Err((\Eps^{d-1}Nr)^{-m/2})$ & $\Err((Nr)^m)$\\
      \hline
      \myfont 3      & $\Err((r/N)^{m/2})$  & $\Err(\Eps^m)$
      & $((r/N)^{3/2},(r/N)^{1/2})$
      & $\Err((\Eps^{d-1}Nr)^{-m/4})$ & $1$\\
      \hline
    \end{tabular}
  \end{center}
\end{remark}

\begin{example}
  \label{ex:mfd.sierpinski}
  For a Sierpi\'nski triangle we have $N=3$, $r=3/5$ and we can choose
  $\Lambda=1/2$, the length scale factor.  If we choose
  $\Eps=1/(2\sqrt 5)$, then the error estimate has the optimal rate
  $\delta_m=\Err((1/5)^{m/4})$.  In particular, the approximating
  manifold has longitudinal edge lengths scaling as $\Lambda^m=1/2^m$
  in generation $m$, while the transversal manifold has radius of
  order $\Eps^m=1/(2\sqrt 5)^m$, which shrinks \emph{faster} than the
  longitudinal scale $\Lambda^m$.  This is Case~1 in the tabular
  below.  Other choices for $\Lambda$ are $1/2$, $3/5$ and $1/5^{1/2}$
  (the $\Eps$ with optimal error rate then is $1/(2\sqrt 5)$,
  $3/(5\sqrt 5)$ and $1/5$):
  \begin{center}
    \begin{tabular}{|l|l|l|l|l|l|}
      \hline
      \myfont{Case}  & $\ell_{m,e}=$ & $\eps_m=$ & $\Eps \in $ & $c_m=$ &
      $\tau_m=$\\
      \hline\hline
      \myfont 1      & $\Err(1/2^m)$ & $\Err(\Eps^m)$
      & $(1/10,1/2)$
      & $\Err((\Eps^{-(d-1)}2/3)^{m/2})$
      & $\Err((5/4)^m)$\\
      \hline
      \myfont 2      & $\Err((3/5)^m)$  & $\Err(\Eps^m)$
      & $(3/25,3/5)$
      & $\Err((\Eps^{-(d-1)}5/9)^{m/2})$ & $\Err((9/5)^m)$\\
      \hline
      \myfont 3      & $\Err((1/5^{m/2})$  & $\Err(\Eps^m)$
      & $(1/5^{3/2},1/5^{1/2})$
      & $\Err((\Eps^{-(d-1)}5/9)^{m/4})$ & $1$\\
      \hline
    \end{tabular}
  \end{center}
  We can also choose $\Eps$ as close to $\Lambda$ (i.e., as large) as
  we want, the price is a worse error estimate.
\end{example}

\begin{remark}
  \label{rem:no-mfd-ifs}
  Note that we cannot assume directly $\Eps=\Lambda$.  This case is
  interesting since it would allow to apply directly the IFS to a
  suitable starting compact neighbourhood $X_0$ of the (metric) graph
  associated with $G_0$. In this case $\Eps=\Lambda=\theta$, where
  $\theta$ is the the similitude factor, but here, the graph-like
  manifold is not shrinking fast enough in transversal direction (the
  transversal scale is $\eps_m=\eps_0 \Eps^m$, while the longitudinal
  scale $\ell_{m,e} $ is of order $\Lambda^m$).

  Nevertheless we conjecture that if $\Eps=\Lambda$ and if the
  starting transversal parameter $\eps_0$ is sufficiently small, one
  can still conclude convergence results such as convergence of
  eigenvalues of graph-like manifolds (i.e., a sequence of graph-like
  manifolds $X_m$ generated by the IFS) with corresponding (Neumann)
  eigenvalues converging to the eigenvalues of the fractal.  Note that
  $\Eps<\Lambda$ is only used in the parameter $\eps/\ell_0$ resp.\
  $\alpha_\infty$ in \Cor{mfd.q-u-e} resp.\ \Cor{mfd.q-u-e}, and
  $\alpha_\infty$ is only needed for $\norm{J'-J^*} \le
  2\alpha_\infty$ in \Prpenum{q-u.b}{q-u.b.c}.  We will treat such
  questions in a subsequent publication.
\end{remark}

Using again the $\Err((r/N)^{m/2})$-quasi-unitary equivalence of the
fractal energy $\energy_K$ and the discrete graph energy
$\energy_{G_m}$ proven in~\cite{post-simmer:pre17a} and the
transitivity of quasi-unitary equivalence (see~\Prp{trans.q-u-e}), we
obtain:
\begin{corollary}
  \label{cor:mfd.frac.q-u-e}
  Let $K$ be a \pcf fractal such that $(K,\energy_K,\mu,V_0)$ is
  symmetric.  Moreover, let $\wt \energy_m=\tau_m \energy_{\Mfd_m}$ be
  the rescaled energy form of the graph-like manifold $\Mfd_m$
  constructed according to $G_m$ as in \Thm{mfd.frac.q-u-e}.  Then
  $\wt \energy_m$ and the fractal energy form $\energy_K$ are $\hat
  \delta_m$-unitarily equivalent with $\hat \delta_m \to 0$ as $m \to
  \infty$.
\end{corollary}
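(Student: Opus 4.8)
The plan is to combine the two quasi-unitary-equivalence results already at our disposal by the transitivity principle stated earlier. Specifically, by \Thm{mfd.frac.q-u-e}, the discrete energy form $\energy_m$ of $(G_m,\mu_m,\gamma_m)$ and the rescaled graph-like manifold energy form $\wt\energy_m=\tau_m\energy_{\Mfd_m}$ are $\delta_m$-quasi-unitarily equivalent with $\delta_m=\Err((r/N)^{m/2})$ (choosing $\Eps$ appropriately, e.g.\ as the geometric mean $\Eps=(r/N)^{1/2}\Lambda$). On the other hand, the main result of~\cite{post-simmer:pre17a} gives that the discrete energy form $\energy_{G_m}=\energy_m$ and the fractal energy form $\energy_K$ are $\delta_m'$-quasi-unitarily equivalent with $\delta_m'=\Err((r/N)^{m/2})$. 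Both error terms tend to $0$ as $m\to\infty$.

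First I would invoke \Prp{trans.q-u-e}: if $\energy_K$ and $\energy_m$ are $\delta_m'$-quasi-unitarily equivalent (with some isometric rescaling factor, which is $1$ in the fractal-to-discrete-graph case) and $\energy_m$ and $\wt\energy_m$ are $\delta_m$-quasi-unitarily equivalent (with isometric rescaling factor $c_m$ and energy rescaling factor $\tau_m$), then $\energy_K$ and $\wt\energy_m$ are $\hat\delta_m$-quasi-unitarily equivalent, where $\hat\delta_m$ is a universal function of $\delta_m$ and $\delta_m'$ vanishing as both arguments vanish — typically $\hat\delta_m \le C(\delta_m+\delta_m')$ up to some controlled dependence on the rescaling factors, or at worst a polynomial combination. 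Since $\delta_m\to 0$ and $\delta_m'\to 0$, we conclude $\hat\delta_m\to 0$ as $m\to\infty$, which is exactly the assertion. The composite identification operators are $J^{\mathrm{comp}}=J_m\circ J_K$ and $(J^{\mathrm{comp}})'=(J_K)'\circ (J_m)'$, and the composite rescaling factors are the products $c_m\cdot 1=c_m$ and $\tau_m\cdot 1=\tau_m$.

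The only point requiring care is checking that the hypotheses of \Thm{mfd.frac.q-u-e} are indeed met for the manifolds $\Mfd_m$ constructed in the corollary's statement, i.e.\ that the scaling parameters $\Lambda$ and $\Eps$ can be chosen satisfying $(r/N)\Lambda<\Eps<\Lambda$ and that the unscaled core vertex manifolds $\cXv$ satisfy the uniform bounds $0<\check\vol_0\le \vol\cXv\le\check\vol_\infty<\infty$ and $\lambda_2(\cXv)\ge\check\lambda_2>0$; by the symmetry assumption on $(K,\energy_K,\mu,V_0)$ there are only finitely many isometry types of cells, so these bounds hold automatically. The main (minor) obstacle is thus purely bookkeeping: verifying that transitivity applies with matching Hilbert spaces and that the rescaling factors compose correctly; there is no genuinely hard analytic step, since all the work was already done in \Thm{mfd.frac.q-u-e} and in~\cite{post-simmer:pre17a}. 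Hence the proof is essentially a one-line application of \Prp{trans.q-u-e} to those two inputs.
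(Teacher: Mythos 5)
Your proof is correct and is exactly the paper's argument: the corollary follows by applying the transitivity of quasi-unitary equivalence (\Prp{trans.q-u-e}) to \Thm{mfd.frac.q-u-e} and to the $\Err((r/N)^{m/2})$-quasi-unitary equivalence of $\energy_K$ and $\energy_{G_m}$ from the earlier paper. (One minor slip: with the optimal choice $\Eps=(r/N)^{1/2}\Lambda$ the manifold-to-graph error in \Thm{mfd.frac.q-u-e} is $\Err((r/N)^{m/4})$, not $\Err((r/N)^{m/2})$ --- irrelevant for the qualitative conclusion $\hat\delta_m\to 0$.)
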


As for the metric graph approximation, we obtain a quantified error
estimate using the weaker notion of operator quasi-unitary
equivalence, see \Def{quasi-uni.op} and \Prp{q-u-e.qf.op}.  The
transitivity for this notion in \Prp{trans.q-u-e.op} gives a precise
error estimate:
\begin{corollary}
  \label{cor:mfd.frac.q-u-e'}
  Let $K$ be a \pcf fractal such that $(K,\energy_K,\mu,V_0)$ is
  symmetric.  Moreover, let $\wt \Delta_m=\tau_m \Delta_{\Mfd_m}$ be
  the Laplacian (associated with $\wt
  \energy_m=\tau_m\energy_{\Mfd_m}$) on the the graph-like manifold
  $\Mfd_m$ constructed as above.  Then $\wt \Delta_m$ and the fractal
  Laplacian $\Delta_K$ (associated with $\energy_K$) are $\hat
  \delta_m$-unitarily equivalent, where $\hat \delta_m$ is of order as
  in~\eqref{eq:err.mfd.frac}.
\end{corollary}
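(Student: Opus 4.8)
The plan is to obtain \Cor{mfd.frac.q-u-e'} by chaining together two quasi-unitary equivalences and invoking transitivity, working throughout with the \emph{operators} rather than the energy forms, so that a quantitative error estimate survives (this is exactly the point of passing to \Def{quasi-uni.op} and \Prp{trans.q-u-e.op} instead of the form-level statement \Cor{mfd.frac.q-u-e}).

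First, \Thm{mfd.frac.q-u-e} already provides the core estimate: the discrete energy form $\energy_m$ on $G_m$ and the rescaled graph-like manifold form $\wt\energy_m=\tau_m\energy_{\Mfd_m}$ are $\delta_m$-quasi-unitarily equivalent, with $\delta_m$ of the order in~\eqref{eq:err.mfd.frac}. Passing to the associated self-adjoint operators via \Prp{q-u-e.qf.op} turns this into operator quasi-unitary equivalence (in the sense of \Def{quasi-uni.op}) of the discrete Laplacian $\Delta_{G_m}$ and $\wt\Delta_m=\tau_m\Delta_{\Mfd_m}$, with the parameter preserved up to a universal factor (the relevant resolvent norms being bounded by $1$). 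Second, the main result of~\cite{post-simmer:pre17a} gives that the fractal energy form $\energy_K$ and the discrete form $\energy_{G_m}$ are $\delta_m'$-quasi-unitarily equivalent with $\delta_m'=\Err((r/N)^{m/2})$; again by \Prp{q-u-e.qf.op}, $\Delta_K$ and $\Delta_{G_m}$ are operator $\delta_m'$-quasi-unitarily equivalent.

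Then I would apply \Prp{trans.q-u-e.op} to the two links $\Delta_K$--$\Delta_{G_m}$ and $\Delta_{G_m}$--$\wt\Delta_m$, composing the respective identification operators. This yields operator quasi-unitary equivalence of $\Delta_K$ and $\wt\Delta_m$ with parameter $\hat\delta_m\le C(\delta_m+\delta_m')$, where $C$ depends only on uniform upper bounds for the norms of the identification operators and resolvents in the two links. These norms are $m$-independent: on the graph-like-manifold side they are controlled by $\alpha_\infty$, $\mu_\infty/\gamma_0$, etc.\ as in \Prp{q-u.b}, which are uniform by the construction underlying \Thm{mfd.frac.q-u-e} (and \Lem{unif.weighed.graph}); on the fractal side the corresponding uniform bounds are established in~\cite{post-simmer:pre17a}.

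Finally I would check that $\hat\delta_m$ really has the order claimed in~\eqref{eq:err.mfd.frac}, i.e.\ that the term $\delta_m'$ from the fractal--graph link is absorbed. Since the parameters satisfy $(r/N)\Lambda<\Eps<\Lambda$, one has $\Lambda/\Eps>1$, hence $(r/N)^{m/2}\le\bigl((\Lambda/\Eps)(r/N)\bigr)^{m/2}$, so $\delta_m'$ is dominated by the second term of~\eqref{eq:err.mfd.frac} and therefore $\hat\delta_m=\Err(\delta_m)$; for the optimal choice $\Eps=(r/N)^{1/2}\Lambda$ this reads $\Err((r/N)^{m/4})$, matching \Ex{mfd.sierpinski}. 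I expect no genuinely new estimate to be needed: the only real content is the exponent bookkeeping in~\eqref{eq:err.mfd.frac} together with verifying that the constants feeding into \Prp{trans.q-u-e.op} are uniform in $m$, so the main ``obstacle'' is organizational — tracking which error term originates from which of the two chained equivalences and confirming the orders line up.
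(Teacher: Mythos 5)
Your proposal is correct and follows essentially the same route as the paper: chain the $\Err((r/N)^{m/2})$-equivalence of $\energy_K$ and $\energy_{G_m}$ from~\cite{post-simmer:pre17a} with \Thm{mfd.frac.q-u-e}, pass to operators via \Prp{q-u-e.qf.op}, and apply the quantitative transitivity \Prp{trans.q-u-e.op}. Your final observation that $(r/N)^{m/2}$ is dominated by the second term of~\eqref{eq:err.mfd.frac} because $\Lambda/\Eps>1$ is exactly the bookkeeping needed to see that the composed error retains the stated order.
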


\appendix
%
\section{An abstract norm resolvent convergence result}
\label{app:norm.convergence}
%

In this appendix, we briefly present a general framework which assures
a \emph{generalised} norm resolvent convergence for operators
$\Delta_m$ converging to $\Delta_\infty$ as $\eps \to 0$,
see~\cite{post:12} for details.  Each operator $\Delta_m$ acts in a
Hilbert space $\HS_m$ for $m \in \N$; and the Hilbert spaces are
allowed to depend on $m$.

In one of our applications, the Hilbert spaces $\HS_m$ are of the form
$\Lsqr{X_m}=\lsqr{V_m,\mu_m}$ and a ``limit'' metric measure space
$(X,\mu)$ with Hilbert space $\wt \HS=\Lsqr{X,\mu}$.

In order to define the convergence, we define a sort of ``distance''
$\delta_m$ between $\Delta := \Delta_m$ and $\wt \Delta :=
\Delta_\infty$, in the sense that if $\delta_m \to 0$ then $\Delta_m$
converges to $\Delta_\infty$ in the above-mentioned generalised norm
resolvent convergence.  We start now with the general concept:

Let $\HS$ and $\wt \HS$ be two separable Hilbert spaces.  We say that
$(\energy,\HS^1)$ is an \emph{energy form in $\HS$} if $\energy$ is a
closed, non-negative quadratic form in $\HS$, i.e., if
$\energy(f)\coloneqq \energy(f,f)$ for some sesquilinear form $\map
{\energy}{\HS^1 \times \HS^1} \C$, denoted by the same symbol, if
$\energy(f)\ge 0$ and if $\HS^1=:\dom \energy$, endowed with the norm
defined by
\begin{equation}
  \label{eq:qf.norm}
  \normsqr[1] f
  \coloneqq \normsqr[\HS^1] f
  \coloneqq \normsqr[\HS] f + \energy(f),
\end{equation}
is itself a Hilbert space and dense (as a set) in $\HS$.  We call the
corresponding non-negative, self-adjoint operator by $\Delta$ (see
e.g.~\cite[Sec.~VI.2]{kato:66}) the \emph{energy operator} associated
with $(\energy,\HS^1)$.  Similarly, let $(\wt \energy,\wt \HS^1)$ be an
energy form in $\wt \HS$ with energy operator $\wt \Delta$.

Associated with an energy operator $\Delta$, we can define a natural
\emph{scale of Hilbert spaces} $\HS^k$ defined via the \emph{abstract
  Sobolev norms}
\begin{equation}
  \label{eq:def.abstr.sob.norm}
  \norm[\HS^k] f
  \coloneqq \norm[k] f 
  \coloneqq \norm{(\Delta+1)^{k/2}f}.
\end{equation}
Then $\HS^k=\dom \Delta^{k/2}$ if $k \ge 0$ and $\HS^k$ is the
completion of $\HS=\HS^0$ with respect to the norm $\norm[k] \cdot$
for $k<0$.  Obviously, the scale of Hilbert spaces for $k=1$ and its
associated norm agrees with $\HS^1$ and $\norm[1]\cdot$ defined above
(see~\cite[Sec.~3.2]{post:12} for details).  Similarly, we denote by
$\wt \HS^k$ the scale of Hilbert spaces associated with $\wt \Delta$.

We now need pairs of so-called \emph{identification operators} acting
on the Hilbert spaces and later also pairs of identification operators
acting on the form domains.

\begin{definition}
  \label{def:quasi-uni}
  \begin{subequations}
    \label{eq:quasi-uni}
    Let $\delta \ge 0$, and let $\map J \HS {\wt \HS}$ and $\map {J'}
    {\wt \HS}\HS$ be bounded linear operators.  Moreover, let $\delta
    \ge 0$, and let $\map {J^1} {\HS^1} {\wt \HS^1}$ and $\map
    {J^{\prime1}} {\wt \HS^1}{\HS^1}$ be bounded linear operators
    on the energy form domains.
    \begin{enumerate}
    \item We say that $J$ is \emph{$\delta$-quasi-unitary} with
      \emph{$\delta$-quasi-adjoint} $J'$ if and only if
      \begin{gather}
        \label{eq:quasi-uni.a}
        \norm{Jf}\le (1+\delta) \norm f, \quad
        \bigabs{\iprod {J f} u - \iprod f {J' u}}
        \le \delta \norm f \norm u
        \qquad (f \in \HS, u \in \wt \HS),\\
        \label{eq:quasi-uni.b}
        \norm{f - J'Jf}
        \le \delta \norm[1] f, \quad
        \norm{u - J'Ju}
        \le \delta \norm[1] u \qquad (f \in \HS^1, u \in \wt \HS^1).
      \end{gather}
      
    \item We say that $J^1$ and $J^{\prime1}$ are
      \emph{$\delta$-compatible} with the identification operators $J$
      and $J'$ if
      \begin{equation}
        \label{eq:quasi-uni.c}
        \norm{J^1f - Jf}\le \delta \norm[1]f, \quad
        \norm{J^{\prime1}u - J'u} \le \delta \norm[1] u
        \qquad (f \in \HS^1, u \in \wt \HS^1).
      \end{equation}
      
    \item We say that the energy forms $\energy$ and $\wt \energy$ are
      \emph{$\delta$-close} if and only if
      \begin{equation}
        \label{eq:quasi-uni.d}
        \bigabs{\wt \energy(J^1f, u) - \energy(f, J^{\prime1}u)} 
        \le \delta \norm[1] f \norm[1] u
        \qquad (f \in \HS^1, u \in \wt \HS^1).
      \end{equation}
      
    \item We say that $\energy$ and $\wt \energy$ are
      \emph{$\delta$-quasi-unitarily equivalent},
      if~\eqref{eq:quasi-uni.a}--\eqref{eq:quasi-uni.d} are fulfilled,
      i.e., if the following operator norm estimates hold:
      \begin{gather}
        \label{eq:quasi-uni.a'}
        \tag{\ref{eq:quasi-uni.a}'}
        \norm J \le 1+\delta, \qquad \norm{J^* - J'} \le \delta\\
        \label{eq:quasi-uni.b'}
        \tag{\ref{eq:quasi-uni.b}'}
        \norm{(\id_\HS - J'J)R^{1/2}} \le \delta, \qquad
        \norm{(\id_{\wt \HS} - J J')\wt R^{1/2}} \le \delta,\\
        \label{eq:quasi-uni.c'}
        \tag{\ref{eq:quasi-uni.c}'}
        \norm{(J^1-J)R^{1/2}} \le \delta, \qquad
        \norm{(J^{\prime1}-J')\wt R^{1/2}} \le \delta,\\
        \label{eq:quasi-uni.d'}
        \tag{\ref{eq:quasi-uni.d}'}
        \norm{\wt R^{1/2}(\wt \Delta J^1- J^{-1} \Delta)R^{1/2}} \le \delta,
      \end{gather}
      where $R\coloneqq(\Delta+1)^{-1}$ resp.\ $\wt R\coloneqq(\wt
      \Delta+1)^{-1}$ denotes the resolvent of $\Delta$ resp.\ $\wt \Delta$
      in $-1$.  Moreover, $\map{J^{-1} \coloneqq
        (J^{\prime 1})^*}{\HS^{-1}}{\wt \HS^{-1}}$, where $(\cdot)^*$ denotes
      here the dual map with respect to the dual pairing $\HS^1 \times
      \HS^{-1}$ induced by the inner product on $\HS$ and similarly on $\wt
      \HS$.  Moreover, $\Delta$ is interpreted as $\map \Delta {\HS^1}
      {\HS^{-1}}$, and similarly for $\wt \Delta$.
    \end{enumerate}
  \end{subequations}
\end{definition}

\begin{remark}
  \label{rem:quasi-uni}
  Let us explain the notation in two extreme cases assuring that
  $\delta$ is in some sense a ``distance'' between the two forms:
  \begin{enumerate}
  \item \emph{``$\delta$-quasi-unitary equivalence'' is a quantitative
    generalisation of ``unitary equivalence'':}

  Note that if $\delta=0$, $J$ is $0$-quasi-unitary if and only if $J$
  is unitary with $J^*=J'$.  Moreover, $\energy$ and $\wt \energy$ are
  $0$-quasi-unitarily equivalent if and only if $\Delta$ and $\wt
  \Delta$ are unitarily equivalent (in the sense that $JR=\wt RJ$).

\item \emph{``$\delta_m$-quasi-unitary equivalence'' is a
    generalisation of ``norm resolvent convergence'':} If $\HS=\wt
  \HS$, $J=J'=\id_\HS$, then the first two
  conditions~\eqref{eq:quasi-uni.a}--\eqref{eq:quasi-uni.b} are
  trivially fulfilled with $\delta=0$.  Moreover, if $\wt
  \Delta=\Delta_m$ and $\delta=\delta_m \to 0$ as $m \to \infty$, then
  $\energy$ and $\energy_m$ are $\delta_m$-quasi-unitarily equivalent
  if and only if $\norm{R_m^{-1/2}(R - R_m)R^{-1/2}} \to 0$ as $m \to
  \infty$, hence this implies that $\norm{R_m-R} \to 0$, i.e.,
  $\Delta_m$ \emph{converges} to $\Delta$ \emph{in norm resolvent
    sense}.
  \end{enumerate}
\end{remark}

Now from the concept of quasi-unitary equivalence of forms, many more
results follow (see e.g.~\cite{post:12}).

We now state the \emph{transitivity} of $\delta$-quasi-unitary
of \emph{energy forms}.  Assume that $\HS$, $\wt \HS$ and $\widehat
\HS$ are three Hilbert spaces with non-negative operators $\energy$,
$\wt {\energy}$ and $\widehat {\energy}$, respectively.  Moreover, assume
that
\begin{gather*}
  \map J \HS {\wt \HS}, \quad
  \map {\wt J} {\wt \HS} {\widehat \HS}, \quad
  \map {\wt J'} {\widehat \HS} {\wt \HS} \quadtext{and}
  \map {J'} {\wt \HS} \HS,\\
  \map {J^1} {\HS^1} {\wt \HS^1}, \;
  \map {\wt J^1} {\wt \HS^1} {\widehat \HS^1}, \:
  \map {\wt J^{\prime 1}} {\widehat \HS^1} {\wt \HS^1} \;\text{and}\;
  \map {J^{\prime 1}} {\wt \HS^1} {\HS^1}
\end{gather*}
are bounded operators. We define
\begin{align*}
  \map {\hat J &:= \wt J J} \HS {\widehat \HS},&
  \map {\hat J' &:= J' \wt J'}{\widehat \HS}  \HS,\\
  \map {\hat J^1 &:= \wt J^1 J^1} {\HS^1} {\widehat \HS^1},&
  \map {\hat J^{\prime 1} &:= J^{\prime 1} \wt J^{\prime 1}}
             {\widehat \HS^1}  {\HS^1}.
\end{align*}
In addition to \Def{quasi-uni} we assume that the identification
operators $J^1$ and $\wt J^1$ in \Def{quasi-uni} are $(1+\delta)$-
resp.\ $(1+\wt \delta)$-bounded, i.e.\
\begin{equation*}
  \norm[1 \to 1]{J^1} \le 1+\delta
      \qquadtext{resp.}
  \norm[1 \to 1]{\wt J^1} \le 1 + \wt \delta.
\end{equation*}
\begin{proposition}[{\cite[Prp.~4.4.16]{post:12}}]
  \label{prp:trans.q-u-e}
  Assume that $0 \le \delta$ and $\wt \delta \le 1$.  Assume in
  addition that $\energy$ and $\wt {\energy}$ are
  $\delta$-quasi-unitarily equivalent with identification operators
  $J$, $J^1$, $J'$ and $J^{\prime 1}$, and that $\wt {\energy}$ and
  $\widehat {\energy}$ are $\wt \delta$-quasi-unitarily equivalent
  with identification operators $\wt J$, $\wt J^1$, $\wt J'$ and $\wt
  J^{\prime 1}$.  Then ${\energy}$ and $\widehat {\energy}$ are $\hat
  \delta$-quasi-unitarily equivalent with identification operators
  $\hat J$, $\hat J^1$, $\hat J'$ and $\hat J^{\prime 1}$,
  where\footnote{The result in~\cite[Prp.~4.4.16]{post:12} is stated
    with a linear error term $\hat
    \delta=\Err(\delta)+\Err(\wt\delta)$, relying on a wrong estimate
    in~\cite[Thm.~4.2.9]{post:12}.  We will correct this estimate in a
    forthcoming publication.} $\hat \delta=\hat
  \delta(\delta,\wt\delta) \to 0$ as $\delta \to 0$ and $\wt \delta
  \to 0$.
\end{proposition}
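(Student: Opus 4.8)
The plan is to verify, for the composed identification operators $\hat J=\wt J J$, $\hat J'=J'\wt J'$, $\hat J^1=\wt J^1 J^1$ and $\hat J^{\prime1}=J^{\prime1}\wt J^{\prime1}$, each of the four conditions \eqref{eq:quasi-uni.a}--\eqref{eq:quasi-uni.d} defining $\delta$-quasi-unitary equivalence, and to read off $\hat\delta$ at the very end as the largest error that occurs. Every estimate follows the same pattern: insert an intermediate vector living in the \emph{middle} space $\wt\HS$ (or $\wt\HS^1$), split by the triangle inequality, and bound one piece by the hypotheses for the pair $(\energy,\wt\energy)$ and the other by those for $(\wt\energy,\widehat\energy)$. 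Before starting I would record the a priori operator bounds used throughout: $\norm J\le 1+\delta$, hence $\norm{J^*}\le 1+\delta$ and, by \eqref{eq:quasi-uni.a'}, $\norm{J'}\le 1+2\delta$; the analogous bounds for $\wt J$ and $\wt J'$; the hypotheses $\norm[1\to1]{J^1}\le 1+\delta$ and $\norm[1\to1]{\wt J^1}\le 1+\wt\delta$; and a bound $\norm[1\to1]{J^{\prime1}}\le C$ (and likewise for $\wt J^{\prime1}$), extracted from \eqref{eq:quasi-uni.c'}, the boundedness of $J'$, and the closedness of the forms, as in \cite[Ch.~4]{post:12}.

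Next I would dispatch \eqref{eq:quasi-uni.a}--\eqref{eq:quasi-uni.c}, which are bookkeeping. For \eqref{eq:quasi-uni.a}: $\norm{\hat J f}\le(1+\wt\delta)(1+\delta)\norm f$, and for near-adjointness I would write $\iprod{\hat J f}u-\iprod f{\hat J' u}=\bigl(\iprod{\wt J Jf}u-\iprod{Jf}{\wt J' u}\bigr)+\bigl(\iprod{Jf}{\wt J' u}-\iprod f{J'\wt J' u}\bigr)$, the first bracket $\le\wt\delta(1+\delta)\norm f\norm u$ and the second $\le\delta(1+2\wt\delta)\norm f\norm u$. For \eqref{eq:quasi-uni.b} with $f\in\HS^1$ I would split $f-\hat J'\hat J f=(f-J'Jf)+J'(Jf-\wt J'\wt J Jf)$; the first summand is $\le\delta\norm[1]f$, and in the second I would replace the argument $Jf$ — which need not lie in $\wt\HS^1$ — by $J^1 f\in\wt\HS^1$, paying $\norm{J^1 f-Jf}\le\delta\norm[1]f$ via \eqref{eq:quasi-uni.c} and then using the middle pair's estimate on $J^1 f$ together with $\norm[1]{J^1 f}\le(1+\delta)\norm[1]f$; the reverse inequality of \eqref{eq:quasi-uni.b} is symmetric. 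For \eqref{eq:quasi-uni.c} I would use $\hat J^1 f-\hat J f=(\wt J^1-\wt J)(J^1 f)+\wt J(J^1 f-Jf)$, the first term $\le\wt\delta(1+\delta)\norm[1]f$ and the second $\le(1+\wt\delta)\delta\norm[1]f$; the estimate for $\hat J^{\prime1}-\hat J'$ is handled the same way on the other side.

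The core of the argument is the $\hat\delta$-closeness \eqref{eq:quasi-uni.d}. For $f\in\HS^1$ and $u\in\widehat\HS^1$ I would insert the middle term $\wt\energy(J^1 f,\wt J^{\prime1}u)$ and write
\begin{equation*}
  \widehat\energy(\hat J^1 f,u)-\energy(f,\hat J^{\prime1}u)
  =\bigl(\widehat\energy(\wt J^1 J^1 f,u)-\wt\energy(J^1 f,\wt J^{\prime1}u)\bigr)
   +\bigl(\wt\energy(J^1 f,\wt J^{\prime1}u)-\energy(f,J^{\prime1}\wt J^{\prime1}u)\bigr),
\end{equation*}
bounding the first bracket by $\wt\delta\,\norm[1]{J^1 f}\,\norm[1]u\le\wt\delta(1+\delta)\norm[1]f\norm[1]u$ using \eqref{eq:quasi-uni.d} for $(\wt\energy,\widehat\energy)$ with $g=J^1 f\in\wt\HS^1$, and the second by $\delta\,\norm[1]f\,\norm[1]{\wt J^{\prime1}u}\le C\delta\,\norm[1]f\norm[1]u$ using \eqref{eq:quasi-uni.d} for $(\energy,\wt\energy)$ with $u'=\wt J^{\prime1}u\in\wt\HS^1$ and the $1\to1$-bound on $\wt J^{\prime1}$. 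Collecting all the contributions from \eqref{eq:quasi-uni.a}--\eqref{eq:quasi-uni.d}, each a sum of terms $\delta,\wt\delta,\delta\wt\delta,\delta^2,\wt\delta^2$ times absolute constants, and using $\delta,\wt\delta\le1$, then yields an explicit $\hat\delta=\hat\delta(\delta,\wt\delta)$ with $\hat\delta\to0$ as $\delta,\wt\delta\to0$.

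I expect the genuine obstacle to be exactly the $1\to1$-boundedness of the ``prime'' operators $J^{\prime1},\wt J^{\prime1}$ entering the last step: it is here that the naive \emph{linear} control claimed in \cite[Thm.~4.2.9]{post:12} fails — the error referred to in the footnote — so one must be content with a weaker, possibly non-linear bound. Since the statement only asks for $\hat\delta\to0$, such a bound suffices; recovering a clean dependence $\hat\delta=\Err(\delta)+\Err(\wt\delta)$ is precisely the point that would require the corrected version of that intermediate estimate.
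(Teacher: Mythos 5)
The paper itself does not prove this proposition --- it imports it verbatim from \cite[Prp.~4.4.16]{post:12}, the footnote only downgrading the conclusion to $\hat\delta(\delta,\wt\delta)\to0$ --- so there is no in-paper argument to compare with; judged on its own, your composition-plus-intermediate-space verification is the standard route and is essentially sound. Conditions \eqref{eq:quasi-uni.a} and \eqref{eq:quasi-uni.c} for $\hat J,\hat J',\hat J^1,\hat J^{\prime1}$ are indeed immediate, \eqref{eq:quasi-uni.b} is correctly repaired by replacing $Jf$ (resp.\ $\wt J'u$, in the direction you call ``symmetric'') by $J^1f$ (resp.\ $\wt J^{\prime1}u$), and the splitting of \eqref{eq:quasi-uni.d} through $\wt\energy(J^1f,\wt J^{\prime1}u)$ is exactly right. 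The one ingredient you assert rather than prove --- $\norm[1\to 1]{J^{\prime1}}\le C$ and likewise for $\wt J^{\prime1}$ --- does hold, but be aware that it does not follow from \eqref{eq:quasi-uni.c} and boundedness of $J'$ alone: one needs the form-closeness \eqref{eq:quasi-uni.d} together with the assumed bound $\norm[1\to1]{J^1}\le1+\delta$. Concretely, $\norm{J^{\prime1}u}\le\norm{J'u}+\delta\norm[1]u\le(1+3\delta)\norm[1]u$, and putting $f=J^{\prime1}u$ into \eqref{eq:quasi-uni.d} gives $\energy(J^{\prime1}u)\le\bigabs{\wt\energy(J^1J^{\prime1}u,u)}+\delta\norm[1]{J^{\prime1}u}\norm[1]u\le(1+2\delta)\norm[1]{J^{\prime1}u}\norm[1]u$, so the quadratic inequality in $\norm[1]{J^{\prime1}u}$ yields $\norm[1]{J^{\prime1}u}\le C\norm[1]u$ with an absolute constant once $\delta\le1$. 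With that supplied, every error term you collect is an absolute constant times $\delta$ or $\wt\delta$, which gives $\hat\delta\to0$ --- all the proposition claims. Your closing speculation that the failure of the linear rate in \cite{post:12} is located precisely in this $1\to1$-bound is not needed for the statement and is not obviously where the flaw in \cite[Thm.~4.2.9]{post:12} lies (that estimate concerns the operator-level conditions); I would simply omit that remark rather than lean on it.
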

If we want to quantify the error we need a slightly weaker notion of
unitary equivalence for \emph{operators}:

\begin{definition}
  \label{def:quasi-uni.op}
  \begin{subequations}
    \label{eq:quasi-uni.op}
    Let $\delta \ge 0$, and let $\map J \HS {\wt \HS}$ and $\map {J'}
    {\wt \HS}\HS$ be bounded linear operators.
    \begin{enumerate}
    \item We say that $J$ is \emph{$\delta$-quasi-unitary} with
      \emph{$\delta$-quasi-adjoint} $J'$ (for the operators $\Delta$
      and $\wt \Delta$) if and only if
      \begin{gather}
        \label{eq:quasi-uni.op.a}
        \norm{Jf}\le (1+\delta) \norm f, \quad
        \bigabs{\iprod {J f} u - \iprod f {J' u}}
        \le \delta \norm f \norm u
        \qquad (f \in \HS, u \in \wt \HS),\\
        \label{eq:quasi-uni.op.b}
        \norm{f - J'Jf}
        \le \delta \norm[2] f, \quad
        \norm{u - J'Ju}
        \le \delta \norm[2] u \qquad (f \in \HS^2, u \in \wt \HS^2).
      \end{gather}
      
    \item We say that the operators $\Delta$ and $\wt \Delta$ are
      \emph{$\delta$-close} if and only if
      \begin{equation}
        \label{eq:quasi-uni.op.d}
        \bigabs{\iprod[\wt \HS]{Jf}{\wt \Delta u} - 
                \iprod[\HS] {J\Delta f}u}
        \le \delta \norm[2] f \norm[2] u
        \qquad (f \in \HS^2, u \in \wt \HS^2).
      \end{equation}
      
    \item We say that $\Delta$ and $\wt \Delta$ are
      \emph{$\delta$-quasi-unitarily equivalent},
      if~\eqref{eq:quasi-uni.op.a}--\eqref{eq:quasi-uni.op.d} are fulfilled,
      i.e.,
      we have the following operator norm estimates
      \begin{gather}
        \label{eq:quasi-uni.op.a'}
        \tag{\ref{eq:quasi-uni.op.a}'}
        \norm J \le 1+\delta, \qquad \norm{J^* - J'} \le \delta\\
        \label{eq:quasi-uni.op.b'}
        \tag{\ref{eq:quasi-uni.op.b}'}
        \norm{(\id_\HS - J'J)R} \le \delta, \qquad
        \norm{(\id_{\wt \HS} - J J')\wt R} \le \delta,\\
        \label{eq:quasi-uni.op.d'}
        \tag{\ref{eq:quasi-uni.op.d}'}
        \norm{\wt R J - J R} \le \delta.
      \end{gather}
    \end{enumerate}
  \end{subequations}
\end{definition}
We have the following relation:
\begin{proposition}[{\cite[Prp.~4.4.15]{post:12}}]
  \label{prp:q-u-e.qf.op}
  If the forms $\energy$ and $\wt \energy$ are $\delta$-quasi
  unitarily equivalent then the operators $\Delta$ and $\wt \Delta$
  are $4\delta$-quasi-unitarily equivalent.
\end{proposition}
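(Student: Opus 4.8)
The plan is to unwind \Def{quasi-uni} and \Def{quasi-uni.op} and check each defining condition of $4\delta$-quasi-unitary equivalence of the operators $\Delta,\wt\Delta$ directly from the four hypotheses \eqref{eq:quasi-uni.a}--\eqref{eq:quasi-uni.d} on the forms, keeping the \emph{same} $J,J'$. The conditions involving only $J$ and $J'$ come essentially for free: \eqref{eq:quasi-uni.op.a} is verbatim \eqref{eq:quasi-uni.a} (and $1+\delta\le 1+4\delta$), while \eqref{eq:quasi-uni.op.b} follows from \eqref{eq:quasi-uni.b} once one observes the contractive inclusion $\norm[1]g\le\norm[2]g$ --- valid since $\Delta+1\ge 1$ forces $(\Delta+1)^{1/2}\le\Delta+1$, hence $\norm{(\Delta+1)^{1/2}g}\le\norm{(\Delta+1)g}$, and likewise on the tilde side. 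So the whole content sits in the operator-closeness estimate \eqref{eq:quasi-uni.op.d}, which I would derive from the form-closeness \eqref{eq:quasi-uni.d} by trading the form-level identification operators $J^1,J^{\prime1}$ for $J,J'$ via the compatibility \eqref{eq:quasi-uni.c} and the near-adjointness in \eqref{eq:quasi-uni.a}.

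Concretely, fix $f\in\HS^2=\dom\Delta$ and $u\in\wt\HS^2=\dom\wt\Delta$. Since $J^1 f\in\wt\HS^1=\dom\wt\energy$ and $J^{\prime1}u\in\HS^1=\dom\energy$, the definition of the energy operators (namely $\wt\energy(v,u)=\iprod v{\wt\Delta u}$ for $v\in\dom\wt\energy$, $u\in\dom\wt\Delta$, and the analogue for $\energy$) gives $\wt\energy(J^1 f,u)=\iprod{J^1 f}{\wt\Delta u}$ and $\energy(f,J^{\prime1}u)=\iprod{\Delta f}{J^{\prime1}u}$. One then has the telescoping identity
\begin{align*}
  \iprod{Jf}{\wt\Delta u}-\iprod{J\Delta f}{u}
  &=\iprod{(J-J^1)f}{\wt\Delta u}
   +\bigl(\wt\energy(J^1 f,u)-\energy(f,J^{\prime1}u)\bigr)\\
  &\quad+\iprod{\Delta f}{(J^{\prime1}-J')u}
   +\bigl(\iprod{\Delta f}{J'u}-\iprod{J\Delta f}{u}\bigr),
\end{align*}
whose four summands are bounded, in order, by $\delta\norm[1]f\,\norm{\wt\Delta u}$ (by \eqref{eq:quasi-uni.c}), by $\delta\norm[1]f\,\norm[1]u$ (by \eqref{eq:quasi-uni.d}), by $\delta\norm{\Delta f}\,\norm[1]u$ (by \eqref{eq:quasi-uni.c}), and by $\delta\norm{\Delta f}\,\norm u$ (by \eqref{eq:quasi-uni.a}). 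Using $\norm{\Delta f}\le\norm[2]f$ and $\norm{\wt\Delta u}\le\norm[2]u$ (each because the associated energy is non-negative) together with $\norm[1]\cdot\le\norm[2]\cdot$, every term is at most $\delta\norm[2]f\,\norm[2]u$, whence
\begin{equation*}
  \bigabs{\iprod{Jf}{\wt\Delta u}-\iprod{J\Delta f}{u}}\le 4\delta\,\norm[2]f\,\norm[2]u,
\end{equation*}
i.e.\ exactly \eqref{eq:quasi-uni.op.d} with parameter $4\delta$; the operator-norm reformulations \eqref{eq:quasi-uni.op.a'}--\eqref{eq:quasi-uni.op.d'} then follow from these pointwise bounds by the usual identification of the resolvents $R,\wt R$ with maps between the Sobolev scales.

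I do not expect a genuinely hard step --- the proposition is bookkeeping --- but the one thing to watch is the passage between the first- and second-order Sobolev norms. The conceptual reason a loss occurs at all is that quasi-unitary equivalence of forms is intrinsically a first-order statement (it controls $\wt\Delta J^1-J^{-1}\Delta$ between $\HS^1$ and $\wt\HS^{-1}$), whereas its operator counterpart only demands control of $\wt R J-JR$ seen on $\HS^2$; translating one into the other spends a power of $(\Delta+1)$ on each side, and the factor $4$ is precisely the number of elementary replacements ($J^1$ by $J$, $\wt\energy$ by $\energy$ across \eqref{eq:quasi-uni.d}, $J^{\prime1}$ by $J'$, and $\iprod{\Delta f}{J'u}$ by $\iprod{J\Delta f}{u}$) needed to get from \eqref{eq:quasi-uni.d} to \eqref{eq:quasi-uni.op.d}, each costing one $\delta$.
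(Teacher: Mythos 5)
Your proof is correct: the paper itself gives no argument for this proposition (it is quoted from \cite[Prp.~4.4.15]{post:12}), and your telescoping decomposition of $\iprod{Jf}{\wt\Delta u}-\iprod{J\Delta f}{u}$ into the four replacements controlled by \eqref{eq:quasi-uni.c}, \eqref{eq:quasi-uni.d}, \eqref{eq:quasi-uni.c} and \eqref{eq:quasi-uni.a}, together with the monotonicity $\norm[1]\cdot\le\norm[2]\cdot$ and $\norm{\Delta f}\le\norm[2]f$, is exactly the standard argument from the cited source. The only cosmetic point is that the subscript $\HS$ on the second inner product in \eqref{eq:quasi-uni.op.d} is a typo for $\wt\HS$, which you have implicitly (and correctly) read that way.
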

The transitivity for operator quasi-unitary equivalence gives a more
explicit error estimate:
\begin{proposition}[{\cite[Prp.~4.2.5]{post:12}}]
  \label{prp:trans.q-u-e.op}
  Assume that $0 \le \delta$ and $\wt \delta \le 1$.  Assume in
  addition that $\Delta$ and $\wt \Delta$ are $\delta$-quasi-unitarily
  equivalent with identification operators $J$ and $J'$, and that $\wt
  \Delta$ and $\widehat \Delta$ are $\wt \delta$-quasi-unitarily
  equivalent with identification operators $\wt J$ and $\wt J'$.  Then
  $\Delta$ and $\widehat \Delta$ are $\hat \delta$-quasi-unitarily
  equivalent with identification operators $\hat J=\wt J J$ and $\hat
  J'=J'\wt J$, where $\hat \delta=22\delta+43\wt\delta$.
\end{proposition}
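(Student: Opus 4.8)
The plan is to verify directly, for the composed identification operators $\hat J=\wt J J$ and $\hat J'=J'\wt J'$, the three groups of operator-norm estimates of \Def{quasi-uni.op}, i.e.\ \eqref{eq:quasi-uni.op.a'}, \eqref{eq:quasi-uni.op.b'} and \eqref{eq:quasi-uni.op.d'}, with the resolvents $R=(\Delta+1)^{-1}$, $\wt R=(\wt\Delta+1)^{-1}$ and $\widehat R=(\widehat\Delta+1)^{-1}$, all of operator norm $\le 1$. As preliminary bounds, $\norm{J^*-J'}\le\delta$ together with $\norm J\le 1+\delta$ gives $\norm{J'}\le 1+2\delta$, and symmetrically $\norm{\wt J'}\le 1+2\wt\delta$; under the hypothesis that $\delta$ and $\wt\delta$ both lie in $[0,1]$ these yield the crude bounds $\norm J,\norm{\wt J}\le 2$ and $\norm{J'},\norm{\wt J'}\le 3$, which I will use freely to linearise products of the two small parameters.

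The norm bound is immediate: $\norm{\hat J}\le\norm{\wt J}\,\norm J\le(1+\wt\delta)(1+\delta)\le 1+\hat\delta$. The quasi-adjoint estimate follows from the telescoping identity $\hat J^*-\hat J'=J^*(\wt J^*-\wt J')+(J^*-J')\wt J'$, so $\norm{\hat J^*-\hat J'}\le(1+\delta)\wt\delta+\delta\,\norm{\wt J'}$. For the intertwining estimate \eqref{eq:quasi-uni.op.d'} I insert $\pm\,\wt J\wt R J$ and apply the corresponding estimate for each of the two given pairs:
\[
  \widehat R\hat J-\hat J R
  =(\widehat R\wt J-\wt J\wt R)J+\wt J(\wt R J-J R),
\]
hence $\norm{\widehat R\hat J-\hat J R}\le\wt\delta\,\norm J+\norm{\wt J}\,\delta$.

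The bulk of the work lies in the pair of estimates in \eqref{eq:quasi-uni.op.b'}. For $\id_\HS-\hat J'\hat J$ I telescope as $\id_\HS-\hat J'\hat J=(\id_\HS-J'J)+J'(\id_{\wt\HS}-\wt J'\wt J)J$; multiplied by $R$, the first summand contributes at most $\delta$ by \eqref{eq:quasi-uni.op.b'}. The second summand is the delicate point: I want to use $\norm{(\id_{\wt\HS}-\wt J'\wt J)\wt R}\le\wt\delta$, but $J R$ need not map into $\wt\HS^2=\dom\wt\Delta$, so the second-order estimate \eqref{eq:quasi-uni.op.b} cannot be applied to it directly. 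The remedy is to first trade $J R$ for $\wt R J$ at cost $\norm{\wt R J-J R}\le\delta$ (again \eqref{eq:quasi-uni.op.d'}), then bound $\norm{(\id_{\wt\HS}-\wt J'\wt J)\wt R J}\le\wt\delta\,\norm J$, and control the remaining term $(\id_{\wt\HS}-\wt J'\wt J)(J R-\wt R J)$ by $\norm{\id_{\wt\HS}-\wt J'\wt J}\le 1+\norm{\wt J'}\,\norm{\wt J}$ times $\delta$. For $\id_{\widehat\HS}-\hat J\hat J'$ I telescope symmetrically as $(\id_{\widehat\HS}-\wt J\wt J')+\wt J(\id_{\wt\HS}-J J')\wt J'$; to apply $\norm{(\id_{\wt\HS}-J J')\wt R}\le\delta$ I must first move $\widehat R$ past $\wt J'$, which I do by taking adjoints in $\norm{\widehat R\wt J-\wt J\wt R}\le\wt\delta$ (resolvents are self-adjoint) and inserting $\pm\,\wt J^*$ with $\norm{\wt J^*-\wt J'}\le\wt\delta$ on both sides, obtaining $\norm{\wt J'\widehat R-\wt R\wt J'}\le 3\wt\delta$; the remaining factors are then estimated as before, the crude bound $\norm{\id_{\wt\HS}-J J'}\le 1+\norm J\,\norm{J'}\le 7$ multiplying the $3\wt\delta$, so the two summands of this estimate sum to at most $6\delta+43\wt\delta$ — this is where the coefficient $43$ originates. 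Tallying these with the smaller $\delta$-contributions from the norm, quasi-adjoint and intertwining estimates of the previous paragraph, and absorbing every product $\delta^2$, $\delta\wt\delta$, $\wt\delta^2$ into a linear term via $\delta,\wt\delta\le 1$, one arrives at the stated $\hat\delta=22\delta+43\wt\delta$.

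The genuine obstacle is exactly that the identification operators do not respect the Sobolev scales $\HS^2$, $\wt\HS^2$, $\widehat\HS^2$, so the second-order estimates of \eqref{eq:quasi-uni.op.b} cannot be chained: each time an expression is to be fed into such an estimate one must first swap a resolvent on one Hilbert space for a resolvent on the adjacent one through \eqref{eq:quasi-uni.op.d'}, paying $\delta$ or $\wt\delta$, and these errors are then amplified by operator norms such as $\norm{\id_\HS-J'J}$ and $\norm{\id_{\wt\HS}-\wt J'\wt J}$ — this amplification is precisely why the constants $22$ and $43$ are as large as they are. Pinning down the sharp numerical constants is routine bookkeeping; what genuinely requires the a priori hypothesis $\delta,\wt\delta\le 1$ is obtaining a \emph{finite} (linear) constant at all, since the quadratic remainders would otherwise not be absorbable.
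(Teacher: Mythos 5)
The paper gives no proof of this proposition—it is quoted from \cite[Prp.~4.2.5]{post:12}—so there is nothing internal to compare against; your direct verification is correct, follows the standard telescoping strategy, and in fact reproduces the exact constants ($\delta+6\wt\delta+21\delta$ for $\norm{(\id_\HS-\hat J'\hat J)R}$ and $\wt\delta+6\delta+42\wt\delta$ for $\norm{(\id_{\widehat\HS}-\hat J\hat J')\widehat R}$, whose maxima yield $22\delta+43\wt\delta$). You also correctly (and silently) repair the typo in the statement: the composed operator must be $\hat J'=J'\wt J'$, since $\wt J$ is not defined on $\widehat\HS$.
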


%
\section{Some estimates on graph-like manifolds}
\label{app:2nd.ev}
%

We need some estimates on our graph-like manifold with respect to a
norm weighted by a harmonic function and also a lower bound on the
second eigenvalue of a graph-like manifold.  We use the notation of
\Sec{mfds}.

\begin{lemma}
  \label{lem:ew.2} We have
  \begin{subequations}
    \begin{gather}
      \label{eq:ew.2a} 
      \normsqr[\Lsqr{Y_e}]{u(t,\cdot)} 
      \le \frac 9 {2 \ell_e} \normsqr[\Lsqr{X_e,\psi_v \dd \nu}] u 
         + 4\ell_e \normsqr[\Lsqr{X_e \cup \Xve}] {du},\\
    \label{eq:ew.2c} 
    \normsqr[\Lsqr \Xve] u 
    \le \frac {9\kappa} 2 \normsqr[\Lsqr{X_e,\psi_v \dd \nu}] u 
    + 4 \kappa \ell_e^2 \normsqr[\Lsqr{X_e \cup \Xve}] {du}.
    \end{gather}
  \end{subequations}
\end{lemma}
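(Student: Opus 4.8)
The plan is to prove both inequalities by one device: the fundamental theorem of calculus along the longitudinal fibres of the cylindrical pieces $\Xe$ and $\Xve$, combined with a weighted Cauchy--Schwarz/Jensen inequality whose averaging weight is exactly the one induced by $\psi_v\dd\nu$. Recall from \Sec{mfds} that $\Xe$ carries coordinates $(t,y)\in[0,\ell_e]\times Y_e$ with $\psi_v(t,y)=t/\ell_e$, that $\Xve$ is a collar of $\bd_e\cXv$ inside $\cXv$, isometric to $[0,\kappa\ell_e]\times Y_e$ and carrying $\psi_v\equiv 1$, and that $\Xve$ is glued to $\Xe$ along the slice $\{t=\ell_e\}\cong Y_e$, so that the fibre $\{y\}\times[0,\ell_e]$ of $\Xe$ continues as the fibre $\{y\}\times[0,\kappa\ell_e]$ of $\Xve$; the two pieces $\Xe$ and $\Xve$ overlap in a null set only. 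By density of Lipschitz functions in $\Sob\Mfd$ it suffices to argue for smooth $u$, for which $r\mapsto u(r,y)$ is absolutely continuous on a.e.\ fibre, and to pass to the limit by Fubini.

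For \eqref{eq:ew.2a}, fix a slice transversal to the longitudinal direction --- either $\{t=t_0\}$ in $\Xe$ or a slice of the collar $\Xve$. For any reference parameter $t'\in[0,\ell_e]$, the fundamental theorem of calculus along the fibre over $y$ shows that the value of $u$ on the slice at $y$ is, in absolute value, at most $\abs{u(t',y)}$ plus $\int_0^{\ell_e}\abs{\partial_r u(r,y)}\dd r$, plus a further term $\int_0^{\kappa\ell_e}\abs{\partial_r u(r,y)}\dd r$ when the slice lies in $\Xve$. Now average the reference parameter $t'$ against the probability measure $\tfrac{2t'}{\ell_e^2}\dd t'$ on $[0,\ell_e]$ --- this is $\psi_v\dd\nu$ normalised along the fibre, using $\int_0^{\ell_e}\psi_v\dd t'=\ell_e/2$ --- apply Jensen to the averaged term and Cauchy--Schwarz $\bigl(\int_0^L\abs g\bigr)^2\le L\int_0^L\abs g^2$ to each line integral, and split the square of the resulting sum by Cauchy--Schwarz with positive weights in the ratio $8:5:5$ (the third weight dropped when there is no collar term). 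Integrating over $y\in Y_e$ and using $\abs{\partial_r u}\le\abs{du}$ turns the averaged term into $\tfrac{9}{2\ell_e}\normsqr[\Lsqr{\Xe,\psi_v\dd\nu}]u$ and the line-integral terms into $\tfrac{18}{5}\ell_e\normsqr[\Lsqr\Xe]{du}+\tfrac{18}{5}\kappa\ell_e\normsqr[\Lsqr\Xve]{du}$; since $\kappa\le1$ and $\Xe,\Xve$ are essentially disjoint, this sum is at most $4\ell_e\normsqr[\Lsqr{\Xe\cup\Xve}]{du}$, which is \eqref{eq:ew.2a}. (The constant $9/2$ is deliberately generous, so that the two gradient coefficients can both be kept below $4$; the $\Xe\cup\Xve$ in the gradient term is exactly what makes the statement uniform over slices that may lie in the collar.)

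Inequality \eqref{eq:ew.2c} is then \eqref{eq:ew.2a} applied to the collar slices $\{s=s_0\}$ and integrated over $s_0\in[0,\kappa\ell_e]$: the left-hand side becomes $\normsqr[\Lsqr\Xve]u$, and the right-hand side acquires the factor $\kappa\ell_e$, so that $\tfrac{9}{2\ell_e}$ turns into $\tfrac{9\kappa}{2}$ and $4\ell_e$ turns into $4\kappa\ell_e^2$, as required. No new idea enters here.

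The only real work is the numerology. One must take the fibrewise averaging weight to be precisely the one coming from $\psi_v\dd\nu$ --- any other weight degrades the coefficient of the $\psi_v$-weighted term (whose sharp value is $2/\ell_e$, attained by constant functions) --- and then distribute the remaining slack in the three-term Cauchy--Schwarz so that the coefficient of $\normsqr[\Lsqr{\Xe,\psi_v\dd\nu}]u$ stays at $9/2$ while both gradient coefficients stay at most $4$; the ratio $8:5:5$ does exactly that. Keeping track of the two collar factors of $\kappa$ (absorbed via $\kappa\le1$) and of the fact that $\Xe$ and $\Xve$ meet only in a null set finishes the bookkeeping; everything else is routine integration and Fubini.
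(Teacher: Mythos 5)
Your argument is correct and follows essentially the paper's own proof: the fundamental theorem of calculus along the longitudinal fibres of $\Xe\cup\Xve$, a Cauchy--Schwarz step arranged so that the zeroth-order term carries exactly the fibre-wise weight $\psi_v$, and then integration over the collar of length $\kappa\ell_e$ to pass from \eqref{eq:ew.2a} to \eqref{eq:ew.2c}. The only difference is one of bookkeeping: the paper produces the weight via the cutoff $\chi_e(t)=(t/\ell_e)^{3/2}$ (vanishing at $t=0$, equal to $1$ on the collar) and a two-term Cauchy--Schwarz giving $2(1+\kappa)\ell_e\le 4\ell_e$, while you average the reference point against $2t/\ell_e^2\,\dd t$ and split with weights $8{:}5{:}5$; both devices yield the same constants $9/(2\ell_e)$ and $4\ell_e$.
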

\begin{proof} 
  The proof of the first assertion is again an application of the
  fundamental theorem of calculus: Assume that the vertex $v$
  corresponds to the endpoint $\ell_e$ of each adjacent $\Xe$, $e \in
  E_v$.  Note that $\Xe \cong [0,\ell_e]\times Y_e$ and $\Xve \cong
  [\ell_e,(1+\kappa)\ell_e] \times Y_e$; so we can use a common
  coordinate $t \in [0, (1+\kappa)\ell_e]$ for the first variable on
  both $\Xe$ and $\Xve$.  Then $\psi_{v,e}(x)=t/\ell_e$ for $x=(t,y)$
  and $t \in M_e$ with derivative $1/\ell_e$.

  Let $\chi_e(t)=(t/\ell_e)^{3/2}$ if $t \in [0,\ell_e]$ and
  $\chi_e(t)=1$ if $t \in [\ell_e,(1+\kappa)\ell_e]$.  Then $\chi_e$
  is Lipschitz continuous, $\chi_e u \in \Sob{X_e \cup \Xve}$ and
  $\chi_e(0)=0$.  Moreover,
  \begin{equation*}
    u(t,y) =u(t,y)\chi_e(t) 
    =\int_0^t (\chi_e u)'(s,y)\dd s
    =\frac 3{2\ell_e} \int_0^t
         \Bigl(\frac{s}{\ell_e}\Bigr)^{1/2} u(s,y) \dd s 
         + \int_0^t \chi_e(s) u'(s,y) \dd s
  \end{equation*}
  for $t \in [\ell_e,(1+\kappa)\ell_e]$, where $(\cdot)'$ denotes the
  derivative with respect to the first variable.  In particular,
  Cauchy-Schwarz and integrating with respect to $y \in Y_e$ gives
  \begin{align*} 
    \normsqr[\Lsqr{Y_e}]{u(t,\cdot)} 
    &= \int_{Y_e} \abssqr{u(\ell_e,y)} \dd y\\
    &\leCS \frac 9{2\ell_e} \int_{\Xe} \psi_v(s,y)
    \abssqr{u(s,y)} \dd y \dd s + 2 (1+\kappa)\ell_e \int_{\Xe\cup\Xve}
    \abssqr{u'(s,y)} \dd y \dd s
  \end{align*}
  using the fact that $\psi_v(s,y)=s/\ell_e$ on $(s,y) \in \Xe$ and
  that $\chi_e(s) \in [0,1]$.  Note also that $1+\kappa \le 2$.

  The second estimate follows from the first one by integrating over
  $t \in [\ell_e, (1+\kappa)\ell_e]$.
\end{proof}

\begin{lemma}
  \label{lem:ew.1}
  We have
  \begin{gather}
    \label{eq:ew.1a}
    \sum_{e \in E_v} \gamma_e \bigabssqr{\avint_{\cXv} u - \avint_{\bd_e
        \cXv} u}
    \le \max_{e \in E_v} 
    \frac{\gamma_e }{\vol Y_e}
    \Bigl(\kappa\ell_e + \frac 2{\kappa \ell_e \lambda_2(\cXv)}\Bigr)
    \normsqr[\Lsqr \cXv] {du}. 
  \end{gather}
\end{lemma}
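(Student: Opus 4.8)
The plan is to reduce \eqref{eq:ew.1a} to a one–dimensional trace estimate along each collar neighbourhood and then to glue the per–edge estimates together using the pairwise disjointness of the collars together with the Poincaré inequality on $\cXv$. Concretely, I would first put $w := u - \avint_{\cXv} u$, so that $\avint_{\cXv} w = 0$, $dw = du$, and $\avint_{\cXv}u - \avint_{\bd_e\cXv}u = -\avint_{\bd_e\cXv} w$; hence it is enough to bound $\sum_{e\in E_v}\gamma_e\bigabssqr{\avint_{\bd_e\cXv}w}$. Fix $e \in E_v$ and use the isometry $\Xve \cong [0,\kappa\ell_e]\times Y_e$ of \Defenum{gl-mfd}{gl-mfd.c}, with the longitudinal coordinate $s$ chosen so that $\bd_e\cXv$ is the face $\{0\}\times Y_e$; writing $w'$ for the $s$–derivative, so that $\abs{w'}\le\abs{dw}$ pointwise on $\Xve$, I would integrate the elementary identity $\abssqr{w(0,y)} = \abssqr{w(s,y)} - 2\int_0^s\Re\bigl(\conj{w(t,y)}\,w'(t,y)\bigr)\dd t$ against $\dd s$ over $[0,\kappa\ell_e]$ and apply Cauchy–Schwarz, tuning the parameter in Young's inequality to the collar length $\kappa\ell_e$. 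This produces the trace estimate $\int_{\bd_e\cXv}\abssqr w \le \tfrac 2{\kappa\ell_e}\normsqr[\Lsqr\Xve]{w} + \kappa\ell_e\normsqr[\Lsqr\Xve]{dw}$, which combined with $\vol Y_e\cdot\bigabssqr{\avint_{\bd_e\cXv}w}\le\int_{\bd_e\cXv}\abssqr w$ (Cauchy–Schwarz once more) gives the per–edge bound
\begin{equation*}
  \gamma_e\bigabssqr{\avint_{\bd_e\cXv}w}
  \le \frac{\gamma_e}{\vol Y_e}
     \Bigl(\frac 2{\kappa\ell_e}\normsqr[\Lsqr\Xve]{w} + \kappa\ell_e\normsqr[\Lsqr\Xve]{dw}\Bigr).
\end{equation*}

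It then remains to sum over $e\in E_v$. Here I would exploit that the collars $(\Xve)_{e\in E_v}$ are pairwise disjoint subsets of $\cXv$ (again \Defenum{gl-mfd}{gl-mfd.c}), so that $\sum_{e\in E_v}\normsqr[\Lsqr\Xve]{w}\le\normsqr[\Lsqr\cXv]{w}$ and likewise with $dw$ in place of $w$, and then apply the Poincaré inequality $\normsqr[\Lsqr\cXv]{w}\le\lambda_2(\cXv)^{-1}\normsqr[\Lsqr\cXv]{dw}$, which is the min–max characterisation of $\lambda_2(\cXv)$ (cf.\ \Lem{2nd.ev}) and is applicable because $\avint_{\cXv}w = 0$. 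Using $dw = du$ and collecting the two contributions gives \eqref{eq:ew.1a}.

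The routine–but–delicate point, and the step I would be most careful with, is the constant bookkeeping in this last stage. One must apply the Poincaré inequality on $\cXv$ \emph{once and globally}, after summing the local terms $\normsqr[\Lsqr\Xve]{w}$ over the disjoint collars, rather than collar by collar — otherwise the $1/\lambda_2(\cXv)$ contribution would pick up a spurious factor $\deg v$. Moreover, the one–dimensional trace estimate has to be carried out with a sufficiently sharp weight (equivalently, with a linear cutoff vanishing at the far end of the collar) so as to produce exactly the coefficients $\kappa\ell_e$ and $2/(\kappa\ell_e\lambda_2(\cXv))$ rather than the cruder ones coming from a naive $\abs{a+b}^2\le 2\abs a^2 + 2\abs b^2$. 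Everything else in the argument is just the fundamental theorem of calculus together with Cauchy–Schwarz.
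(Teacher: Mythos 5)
Your proof is correct and is essentially the paper's own argument: write the difference as the boundary average of the mean-zero function $w=u-\avint_{\cXv}u$, apply Cauchy--Schwarz, the collar trace estimate $\int_{\bd_e\cXv}\abssqr w\le \kappa\ell_e\normsqr[\Lsqr\Xve]{dw}+\tfrac2{\kappa\ell_e}\normsqr[\Lsqr\Xve]{w}$, and the min--max (Poincar\'e) inequality on $\cXv$ --- the only difference being that you derive the trace estimate by hand (with exactly the right constants), whereas the paper quotes it from \cite[Prp.~5.1.1 and Cor.~A.2.12]{post:12}. The bookkeeping point you flag is indeed the delicate spot: summing over the disjoint collars and applying the Poincar\'e inequality once globally yields the constant $\max_{e\in E_v}\tfrac{\gamma_e\kappa\ell_e}{\vol Y_e}+\max_{e\in E_v}\tfrac{2\gamma_e}{\vol Y_e\,\kappa\ell_e\,\lambda_2(\cXv)}$, which is also what the paper's displayed chain actually produces; it coincides with the stated combined maximum when both maxima are attained at the same edge and exceeds it by at most a factor $2$ in general, which is harmless for the later applications.
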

\begin{proof}
  We have the following standard estimates for the average and the
  contribution of the boundary component $\bd_e \cXv$ in terms of the
  collar neighbourhood $\Xve$ from \Defenum{gl-mfd}{gl-mfd.c}, namely
  the estimates
  \begin{subequations}
    \label{eq:collar.est}
    \begin{align}
    \label{eq:collar.est.a}
      \bignormsqr{u - \avint_{\cXv} u}
      &\le \frac 1 {\lambda_2(\cXv)} \normsqr[\Lsqr \cXv]{du}
      \quad\text{and}\\
    \label{eq:collar.est.b}
      \int_{\bd_e \cXv} \abssqr u
      &\le \kappa \ell_e \normsqr[\Lsqr \Xve]{du}
      + \frac 2 {\kappa \ell_e} \normsqr[\Lsqr \Xve] u 
    \end{align}
  \end{subequations}
  see e.g.~\cite[Prp.~5.1.1 and Cor.~A.2.12]{post:12} (we omit the natural
  measures of the spaces).  We then have
  \begin{align*}
    \sum_{e \in E_v} \gamma_e \bigabssqr{\avint_{\cXv} u - \avint_{\bd_e \cXv} u}
    &= \sum_{e \in E_v} \gamma_e 
          \bigabssqr{\avint_{\bd_e \cXv}(u - \avint_{\cXv} u)}\\
    &\leCS \sum_{e \in E_v} \frac {\gamma_e}{\vol Y_e}
    \int_{\bd_e \cXv} \abssqr{u - \avint_{\cXv} u}\\
    &\le\sum_{e \in E_v} \frac{\gamma_e}{\vol Y_e}
     \Bigl(
       \kappa \ell_e \normsqr[\Lsqr \Xve]{du}
       + \frac 2 {\kappa \ell_e} \bignormsqr[\Lsqr \Xve] {u - \avint_{\cXv} u}
     \Bigr)\\
    &\le\max_{e \in E_v} 
    \frac{\gamma_e}{\vol Y_e} 
     \Bigl(\kappa\ell_e + \frac 2{\kappa \ell_e \lambda_2(\cXv)}\Bigr) 
     \normsqr[\Lsqr \cXv] {du}
  \end{align*}
  using~\eqref{eq:collar.est} for the last two estimates.
\end{proof}

Let us now provide a lower bound on the second eigenvalue of a
graph-like manifold $X_\vxeps$ \emph{independently} of the shrinking
parameter $\eps$.  Here, $X_\vxeps$ is a graph-like manifold
according to a star graph $M_v$ with central vertex $v$ and $e \in
E_v$ adjacent edges isometric with $M_e=[0,\ell_e]$ with vertices
$v_e$ of degree $1$: The core vertex neighbourhood is scaled as $\eps
\cXv$, the edge neighbourhood as $X_\edeps=[0,\ell_e] \times \eps
Y_e$ (the notation is explained in the paragraph before
\Cor{mfd.q-u-e}).
\begin{proposition}
  \label{prp:2nd.ev.mfd}
  Assume that 
  \begin{equation*}
    0 < \ell_0 \le \ell_e \le \ell_\infty < \infty
    \qquad\text{for all $e \in E$.}
  \end{equation*}
  Then there exists a constant $C_v$ depending
  only on $\ellVar=\ell_\infty/\ell_0$ and upper estimates on the
  unscaled quantities $\vol \cXv/(\sum_{e \in E_v} \vol Y_e)$,
  $1/\lambda_2(Y_e)$ and $1/\lambda_2(\cXv)$ such that
  \begin{equation}
    \label{eq:2nd.ev.mfd'}
    \lambda_2(X_\vxeps) \ge \frac 1{\ell_\infty^2}
    \quadtext{for all}
    0<\eps \le \eps_0:=\frac{\ell_0}{C_v^2}.
  \end{equation}
\end{proposition}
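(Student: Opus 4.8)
The plan is to prove a uniform lower bound on $\lambda_2(X_\vxeps)$ by a min--max argument combined with a decomposition of the graph-like manifold into edge pieces $X_\edeps = [0,\ell_e]\times \eps Y_e$ and the scaled core piece $\eps\cXv$. First I would observe that it suffices to bound the Rayleigh quotient $\energy_{X_\vxeps}(u)/\normsqr[\Lsqr{X_\vxeps}]{u - \avint u}$ from below, where $\avint u$ is the (unweighted) average over $X_\vxeps$; equivalently, for any $u$ orthogonal to the constants one needs $\normsqr[\Lsqr{X_\vxeps}] u \le \ell_\infty^2 \energy_{X_\vxeps}(u)$. On each edge piece $X_\edeps$ with product coordinates $(t,y)\in[0,\ell_e]\times \eps Y_e$ one splits $u$ into its transversal average $\bar u_e(t) := \avint_{\eps Y_e} u(t,\cdot)$ and the transversal fluctuation $u - \bar u_e$. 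The fluctuation contributes $\normsqr{u-\bar u_e} \le \lambda_2(\eps Y_e)^{-1}\normsqr{du} = \eps^2\lambda_2(Y_e)^{-1}\normsqr{du}$, which is $\Err(\eps^2)$ and hence harmless once $\eps$ is small; the averaged part $\bar u_e(t)$ reduces to a one-dimensional problem on an interval of length $\ell_e\le\ell_\infty$, which together with the core piece behaves like a weighted metric-graph (star-graph) eigenvalue problem whose second eigenvalue is bounded below by $c/\ell_\infty^2$ as in \Lem{2nd.ev.mg}.

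Next I would handle the core piece $\eps\cXv$ similarly: writing $\bar u_v := \avint_{\eps\cXv} u$, the estimate \eqref{eq:collar.est.a} (rescaled) gives $\normsqr[\Lsqr{\eps\cXv}]{u - \bar u_v} \le \lambda_2(\eps\cXv)^{-1}\normsqr{du} = \eps^2\lambda_2(\cXv)^{-1}\normsqr{du}$, again $\Err(\eps^2)$. So up to $\Err(\eps^2)$ errors controlled by $\energy_{X_\vxeps}(u)$, the function $u$ is, on each edge, close to a function depending only on the longitudinal variable $t$, and on the core, close to the constant $\bar u_v$; moreover the collar estimate \eqref{eq:collar.est.b} lets one relate the trace of $u$ on $\bd_e\cXv$ to $\bar u_v$ and $\normsqr[\Lsqr{\eps\Xve}]{du}$, so the matching at the junction vertices is controlled. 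The remaining ``reduced'' quadratic form is exactly (a constant multiple of) the energy form of a weighted star metric graph with edge lengths $\ell_e\in[\ell_0,\ell_\infty]$ and vertex weight proportional to $\vol\cXv/\sum_e\vol Y_e$; its first nonzero eigenvalue has a lower bound of the form $c_0/\ell_\infty^2$ where $c_0$ depends only on $\ellVar$ and on an upper bound for $\vol\cXv/\sum_e\vol Y_e$, by the same Airy-function / monotonicity-in-length reasoning used in \LemS{2nd.ev.mg}{2nd.ev.mfd}.

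Assembling, one gets $\normsqr[\Lsqr{X_\vxeps}]{u-\text{avg}} \le (c_0^{-1}\ell_\infty^2 + C'\eps^2)\,\energy_{X_\vxeps}(u)$ for constants $c_0, C'$ depending only on $\ellVar$ and on upper bounds for $\vol\cXv/\sum_e\vol Y_e$, $1/\lambda_2(Y_e)$, $1/\lambda_2(\cXv)$. Choosing $\eps$ small enough that $C'\eps^2 \le (1-c_0)\ell_\infty^{-2}$ — concretely $\eps \le \ell_0/C_v^2$ for a suitable $C_v$ absorbing all these constants, using $\ell_\infty = \ellVar\ell_0$ — gives $\normsqr{u-\text{avg}} \le \ell_\infty^2\energy_{X_\vxeps}(u)$, i.e. $\lambda_2(X_\vxeps)\ge 1/\ell_\infty^2$, which is \eqref{eq:2nd.ev.mfd'}.

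The main obstacle I expect is the junction/matching bookkeeping: making rigorous the statement ``$u$ is close to a reduced star-graph function'' requires carefully writing $u = u^\flat + u^\sharp$ with $u^\flat$ the longitudinal/core average and showing both that $\energy_{X_\vxeps}(u^\flat) \le \energy_{X_\vxeps}(u) + \Err(\eps)$-type terms and that $\normsqr{u}$ is comparable to $\normsqr{u^\flat} + \Err(\eps^2)\energy_{X_\vxeps}(u)$, all while respecting the continuity of $u$ across $\bd_e\cXv$ — this is precisely where \LemS{ew.1}{ew.2} of \App{2nd.ev} (the collar and trace estimates) must be invoked in the right combination, and where the hypothesis $\ell_0>0$ is essential so that the $1/(\kappa\ell_e)$ factors in \eqref{eq:collar.est.b} stay bounded. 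Everything else is a routine min--max computation.
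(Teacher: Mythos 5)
Your route is genuinely different from the paper's. The paper's proof is a two-step shortcut: rescale by $\ell_0^{-1}$ so that the edge lengths lie in $[1,\ellVar]$ and the transversal parameter becomes $\kappa=\eps/\ell_0$, then quote the known spectral convergence of graph-like manifolds to the underlying metric star graph (\cite{exner-post:05}, \cite[Thm.~6.4.1, Thm.~4.6.4]{post:12}), which gives $\abs{\lambda_2(\ell_0^{-1}X_\vxeps)-\lambda_2(\ell_0^{-1}M_v)}\le C_v\kappa^{1/2}$, and finally combine this with the star-graph bound of \Lem{2nd.ev.mg} and scale back. You instead propose to prove the eigenvalue bound directly by decomposing $X_\vxeps$ into edge cylinders and the scaled core, separating transversal fluctuations (controlled by $\eps^2/\lambda_2(Y_e)$ and $\eps^2/\lambda_2(\cXv)$) from the longitudinal/core averages, and comparing the reduced problem with a weighted star graph. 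If carried out, this is a self-contained re-derivation of the special (star-graph) case of the cited convergence theorem, and it would even produce an explicit $C_v$ with exactly the dependencies demanded in the statement; the paper's route buys brevity by offloading all junction analysis to the literature, yours buys transparency at the cost of redoing it.

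Two caveats. First, the step you label ``routine'' --- constructing the reduced function $u^\flat$ (edge averages plus the core constant), repairing its discontinuity at the junctions via \Lems{ew.2}{ew.1} resp.\ \eqref{eq:collar.est.a}--\eqref{eq:collar.est.b}, and comparing Rayleigh quotients --- is precisely the content of the theorems the paper cites; it is the entire difficulty, not bookkeeping, so in a complete write-up it must be done in full (or you may as well cite \cite{exner-post:05} after rescaling, as the paper does). Moreover the trace terms carry factors $1/(\kappa\ell_e)$, so the junction error is of order $\eps/\ell_0$ (the cited results even only give $(\eps/\ell_0)^{1/2}$), not $\Err(\eps^2)$ as you assert; this is harmless for the conclusion, since any error vanishing as $\eps/\ell_0\to0$ can be absorbed into the slack between the unperturbed star-graph gap ($\ge 2/\ell_\infty^2$) and the target $1/\ell_\infty^2$, but the bookkeeping should reflect it. Second, a small algebraic slip: to conclude you need $c_0^{-1}\ell_\infty^2 + C'\eps^2 \le \ell_\infty^2$, i.e.\ $C'\eps^2\le (1-c_0^{-1})\ell_\infty^2$ with $c_0>1$, not ``$C'\eps^2\le(1-c_0)\ell_\infty^{-2}$''; this works because the reduced star-graph gap exceeds $2/\ell_\infty^2$ when the vertex point mass (of relative size $\Err(\eps\vol\cXv/(\ell_0\sum_{e\in E_v}\vol Y_e))$) is small, which is where the upper bound on $\vol\cXv/\sum_{e\in E_v}\vol Y_e$ and the threshold $\eps\le \ell_0/C_v^2$ enter.
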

\begin{proof}
  We use a simple scaling argument to replace the parameters $\ell_e$
  and $\eps$ by $\ell_e/\ell_0 \in [1,\ellVar]$ and
  $\kappa=\eps/\ell_0$.  Denote by $\ell_0^{-1}X_\vxeps$ the scaled
  graph-like manifold (with metric $\ell_0^{-2}g_\vxeps$) and
  similarly denote by $\ell_0^{-1}M_v$ the metric graph with edge
  length $\ell_e/\ell_0$.  Now, the edge lengths are in $[1,\ellVar]$,
  and we can apply the convergence result for graph-like manifolds,
  proven e.g.\ in~\cite{exner-post:05}, giving us
  \begin{equation*}
    \bigabs{\lambda_k(\ell_0^{-1}X_\vxeps)-\lambda_k(\ell_0^{-1}M_v)}
    \le C_v \kappa^{1/2}
  \end{equation*}
  (the error estimate $\kappa^{1/2}$ is proven e.g.\
  in~\cite[Thm.~6.4.1 and Thm.~4.6.4]{post:12}; note that $\kappa$ is
  the transversal thickness, called $\eps$ in the cited works).  Since
  \begin{equation*}
    \lambda_k(\ell_0^{-1}X_\vxeps)
    = \ell_0^2\lambda_k(X_\vxeps)
    \quadtext{and}
    \lambda_k(\ell_0^{-1}M_v)
    = \ell_0^2 \lambda_k(M_v)
  \end{equation*}
  we obtain
  \begin{equation*}
    \lambda_2(X_\vxeps) 
    \ge \lambda_2(M_v) - \frac{C_v}{\ell_0^2}
        \Bigl(\frac \eps {\ell_0}\Bigr)^{1/2}
    \ge \frac 2{\ell_0^2} - \frac{C_v}{\ell_0^2}
        \Bigl(\frac \eps {\ell_0}\Bigr)^{1/2}
  \end{equation*}
  using \Lem{2nd.ev.mg} for the last estimate.  Choosing $0 < \eps \le
  \eps_0 :=C_v^{-2} \ell_0$ and $\ell_0 \le \ell_\infty$ we obtain the
  desired result.
\end{proof}

%
%

 \def\cprime{$'$}
\providecommand{\bysame}{\leavevmode\hbox to3em{\hrulefill}\thinspace}
\providecommand{\MR}{\relax\ifhmode\unskip\space\fi MR }
\providecommand{\MRhref}[2]{%
  \href{http://www.ams.org/mathscinet-getitem?mr=#1}{#2}
}
\providecommand{\href}[2]{#2}

\end{document}